\definecolor{myred}{rgb}{0.75,0,0}
\definecolor{mygreen}{rgb}{0,0.5,0}
\definecolor{myblue}{rgb}{0,0,0.65}
\theoremstyle{plain}
\newtheorem{theorem}[subsubsection]{Theorem}
\newtheorem{construction}[subsubsection]{Construction}
\newtheorem{proposition}[theorem]{Proposition}
\newtheorem{lemma}[theorem]{Lemma}
\theoremstyle{definition}
\newtheorem{definition}[theorem]{Definition}
\newtheorem{remark}[theorem]{Remark}
\newtheorem{example}[theorem]{Example}
\newtheorem{question}[theorem]{Question}
\newtheorem{conjecture}[theorem]{Conjecture} 
\newtheorem{warn}[theorem]{Warning}
\newtheorem{notation}[theorem]{Notation}
\theoremstyle{remark}
\numberwithin{equation}{section}
\newcommand\nc{\newcommand}
\nc\on{\operatorname}
\nc\renc{\renewcommand}
\newcommand*{\shom}{\mathscr{H}\kern -.5pt om}
\newcommand*{\stor}{\mathscr{T}\kern -.5pt or}
\newcommand*{\sext}{\mathscr{E}\kern -.5pt xt}
\providecommand\@dotsep{5}
\renewcommand{\listoftodos}[1][\@todonotes@todolistname]{%
\@starttoc{tdo}{#1}}
\def\Ddots{\mathinner{\mkern1mu\raise\p@
		\vbox{\kern7\p@\hbox{.}}\mkern2mu
		\raise4\p@\hbox{.}\mkern2mu\raise7\p@\hbox{.}\mkern1mu}}
	\newcommand{\NB}[1]{\todo[color=gray!40]{#1}}
	\newcommand{\TODO}[1]{\todo[color=red]{#1}}
	\newcommand{\NB}[1]{}
	\newcommand{\TODO}[1]{}
	\renewcommand{\todo}[1]{}
	\renewcommand{\todo}[1]{}
\newcommand{\customlabel}[2]{\protected@write \@auxout {}{\string \newlabel {#1}{{#2}{\thepage}{#2}{#1}{}} }\hypertarget{#1}{#2}}
\DeclareMathOperator\id{id}
\DeclareMathOperator\Alg{Alg}
\DeclareMathOperator\unit{\mathbb{1}}
\DeclareMathOperator\pic{Pic}
\DeclareMathOperator\spec{Spec}
\DeclareMathOperator\Mod{Mod}
\DeclareMathOperator\im{im}
\DeclareMathOperator\pgl{PGL}
\DeclareMathOperator\ab{ab}
\DeclareMathOperator\std{std}
\DeclareMathOperator\CHur{CHur}
\DeclareMathOperator\gal{Gal}
\DeclareMathOperator\tr{tr}
\DeclareMathOperator\Hur{Hur}
\DeclareMathOperator\ord{ord}
\DeclareMathOperator\inv{inv}
\DeclareMathOperator\invc{inv}
\DeclareMathOperator\aut{Aut}
\newcommand\ZZ{\mathbb{Z}}
\newcommand\RR{\mathbb{R}}
\newcommand\NN{\mathbb{N}}
\newcommand\FF{\mathbb{F}}
\newcommand\EE{\mathbb{E}}
\DeclareMathOperator\fib{fib}
\DeclareMathOperator\Tot{Tot}
\DeclareMathOperator\conf{Conf}
\DeclareMathOperator\colim{colim}
\DeclareFontFamily{U}{wncy}{}
\DeclareFontShape{U}{wncy}{m}{n}{<->wncyr10}{}
\DeclareSymbolFont{mcy}{U}{wncy}{m}{n}
\DeclareMathSymbol{\Sha}{\mathord}{mcy}{"58}
\newcommand{\phur}[4]{\operatorname{Hur}^{#1,#3}_{#2,#4}}
\newcommand{\cphur}[4]{\operatorname{CHur}^{#1,#3}_{#2, #4}}
\newcommand{\quohur}[5]{[\operatorname{Hur}^{#1,#4}_{#3,#5}/#2]}
\newcommand{\cgquohur}[5]{[\operatorname{C^{#2}Hur}^{#1,#4}_{#3,#5}/#2]}
\newcommand{\cquohur}[5]{[\operatorname{CHur}^{#1,#4}_{#3,#5}/#2]}
\newcommand{\phurc}[2]{\operatorname{Hur}^{#2}_{#1}}
\newcommand{\cphurc}[2]{\operatorname{CHur}^{#2}_{#1}}
\DeclareMathOperator\surj{Surj}
\newcommand{\hura}[4]{[\operatorname{Hur}^{#1, #3, \partial \in #4}_{#2}/#1]}
\newcommand{\hurp}[3]{[\operatorname{Hur}^{#1, #3}_{\mathbb P^1, #2}/#1]}
\newcommand{\hurz}[3]{[[\operatorname{Hur}^{#1, #3}_{\mathbb P^1, #2}/#1]/\pgl_2]}
\newcommand{\chura}[4]{[\operatorname{CHur}^{#1, #3, \partial \in #4}_{#2}/#1]}
\newcommand{\churp}[3]{[\operatorname{CHur}^{#1, #3}_{\mathbb P^1, #2}/#1]}
\newcommand{\churz}[3]{[[\operatorname{CHur}^{#1, #3}_{\mathbb P^1, #2}/#1]/\pgl_2]}
\newcommand{\churzb}[3]{\overline{\mathscr H}^{#1,#3}_{\mathbb P^1,
#2}}
\def\listtodoname{List of Todos}
\def\listoftodos{\@starttoc{tdo}\listtodoname}
\title{Homological stability for Hurwitz spaces and applications}
\subjclass[2020]{Primary 11R29; Secondary 11R11, 11R58, 55P43}
\keywords{Hurwitz spaces, homological stability,
Picard rank conjecture, Malle's conjecture, The Cohen-Lenstra heuristics}
\author{Aaron Landesman}
\author{Ishan Levy}
\begin{document}

\begin{abstract}
We show the homology of the Hurwitz space associated to an arbitrary finite rack stabilizes integrally in a suitable sense. We also compute the dominant part of its stable homology after inverting finitely many primes. This proves a conjecture of Ellenberg--Venkatesh--Westerland and improves upon our previous results for non-splitting racks. We obtain applications to Malle's conjecture, the Picard rank conjecture, and the Cohen--Lenstra--Martinet heuristics.
\end{abstract}

\maketitle
\tableofcontents

\section{Introduction}

In this paper, we show Hurwitz spaces parameterizing connected $G$-covers of $\mathbb A^1$ with specified monodromy
have homology which stabilizes.
After inverting finitely many primes, we also compute the
dominant part of the stable homology, i.e., the stable value of the homology
of covers where every conjugacy class appears as inertia sufficiently many times.
This improves upon our previous work in
\cite{landesmanL:the-stable-homology-of-non-splitting}
by removing the very stringent non-splitting assumption that appeared there.
As a consequence, 
we prove versions of Malle's conjecture,
prove an asymptotic version of the Picard rank conjecture,
and compute the 
moments predicted by Cohen-Lenstra-Martinet heuristics over $\mathbb F_q(t)$, for
$q$ suitably large depending on the moment.
We start by explaining our main results toward these applications, and then
proceed to explain our main results toward homological stability.

These three applications are merely meant to be a sampling of some of the
conjectures that our main homological stability results imply.
Just as Bhargava's thesis opened the gate to make significant progress
in arithmetic statistics problems over $\mathbb Q$, we hope that the
homological stability results we begin to develop here will give arithmetic statisticians the necessary
tools to
explore arithmetic statistics problems over function
fields. 

\subsection{Application 1: Malle's conjecture}

The well-known inverse Galois problem predicts that for any finite group $G$,
there is an extension of $\mathbb Q$ with Galois group $G$.
Malle's conjecture is a refinement of the inverse Galois problem, which not only
predicts that such a $G$ extension exists, but moreover predicts the asymptotic number of
such $G$ extensions.
One application of our homological stability results is that we can compute this
number over the function field $\FF_q(t)$ when $q$ is sufficiently large,
depending on $G$, and of suitable characteristic.

An interesting aspect of Malle's conjecture is that it is incorrect
in general. Kl\"uners gave
the first counterexample in
\cite{kluners:a-counterexample-to-malles-conjecture},
and since then many similar counterexamples have been constructed.
From the function field perspective, Malle's original conjecture seems to be
correct when one only counts $G$ covers which are geometrically connected, as
was observed in \cite{turkelli:connected-components-of-hurwitz-schemes}.
Moreover, a fix to Malle's conjecture was proposed
in \cite{turkelli:connected-components-of-hurwitz-schemes}. 
In \cite[Theorem 1.3]{wang:counterexamples-for-turkelli},
Wang shows that there are still counterexamples to T\"urkelli's modification over number fields and 
Wang proposes a new version of Malle's Conjecture on number fields in 
\cite[Conjecture 7]{wang:counterexamples-for-turkelli}.
See also
\cite[Conjecture 6]{wang:counterexamples-for-turkelli}.
Nevertheless, in this paper, we resolve this question over function fields
$\mathbb F_q(t)$, at least when $q$ is
sufficiently large depending on $G$ and relatively prime to $|G|$.
We show that, in this setting, T\"urkelli's prediction is actually correct.

We next state a special case of one of our results toward Malle's conjecture.
Let $\Delta(\mathbb F_q(t), G-\id, X)$ denote the number of $G$ field extensions $K$ over $\mathbb
F_q(t)$ such that 
the discriminant of $\mathscr O_K$ over $\mathbb F_q[t]$ is at most $X$;
here $\mathscr O_K$ is defined to be the normalization of
$\mathbb F_q[t]$ in $K$.

\begin{theorem}
	\label{theorem:malle-intro}
	Fix a finite permutation group $G \subset S_d$. There are constants
	$a(G-\id,\Delta), b_T(\mathbb F_q(t),
(G - \id)_\Delta)$, defined later in \autoref{notation:malle}, and a constant
$C$,
depending on $G$, with the following property: 
For $X$ sufficiently large, and $q > C$ a prime power with $\gcd(q,|G|) =1$, 
there are constants $C_-^{q,G}$ and $C_+^{q,G}$ so that
\begin{align*}
	C_-^{q,G}X^{\frac{1}{a(G-\id,\Delta)}} (\log
	X)^{b_T(\mathbb F_q(t), (G - \id)_{\Delta})-1} &\leq \Delta(\mathbb
	F_q(t),G-\id, X) \\
	&\leq
C_+^{q,G}X^{\frac{1}{a(G-\id,\Delta)}} (\log
X)^{b_T(\mathbb F_q(t), (G - \id)_{\Delta})-1}.
\end{align*}
\end{theorem}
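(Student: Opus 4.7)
The plan is to translate the extension-counting problem into a point-counting problem on Hurwitz spaces and then apply the homological stability theorem and stable-homology computation that form the technical heart of this paper. A $G$-field extension of $\mathbb F_q(t)$ corresponds, up to finite bookkeeping factors (involving $\aut(G)$, the $\pgl_2$-action, and the passage between connected and geometrically connected covers), to an $\mathbb F_q$-point of a connected Hurwitz space $\CHur^{G,\mathbf{c}}_n$, where $\mathbf{c}=(n_c)_c$ records, for each nontrivial conjugacy class $c \subset G$, how many branch points of type $c$ appear. The discriminant is linear in $\mathbf c$: if $d_c$ denotes the contribution to $\deg \disc$ of a single branch point of type $c$, then counting $G$-extensions with discriminant at most $X$ reduces to summing $|\CHur^{G,\mathbf{c}}(\mathbb F_q)|$ over all $\mathbf{c}$ with $\sum_c n_c d_c \leq \log_q X$.

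Next, I would apply the Grothendieck--Lefschetz trace formula together with Poincaré duality to rewrite each $|\CHur^{G,\mathbf{c}}(\mathbb F_q)|$ as $\sum_i (-1)^i \tr(\frob_q \mid H_i(\CHur^{G,\mathbf{c}}, \mathbb Q_\ell))$ weighted by appropriate powers of $q$ and Tate twists. The homological stability theorem proved in this paper says that $H_i(\CHur^{G,\mathbf{c}})$ stabilizes in each fixed degree as the entries of $\mathbf{c}$ grow, and the stable-homology computation identifies the dominant $\frob$-equivariant contribution (the $i=0$ piece) with an explicit constant. In the stable range this yields a two-sided estimate $c_{\mathbf{c}}^{-} q^{n_{\mathbf{c}}} \leq |\CHur^{G,\mathbf{c}}(\mathbb F_q)| \leq c_{\mathbf{c}}^{+} q^{n_{\mathbf{c}}}$ with $n_{\mathbf{c}} = \dim \CHur^{G,\mathbf{c}} = \sum_c n_c$, once $q$ exceeds the finitely many primes that must be inverted.

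Summing these main terms over $\mathbf c$ with $D := \sum_c n_c d_c \leq \log_q X$, the exponent $n_{\mathbf{c}}$ is maximized (for fixed $D$) precisely when $\mathbf{c}$ is concentrated on the conjugacy classes $c$ minimizing $d_c$; the resulting maximum is $D / a(G-\id,\Delta)$, recovering the Malle exponent $1/a$. The Türkelli invariant $b_T$ is (modulo arithmetic subtleties coming from the Galois action on classes) the number of optimal classes, so the number of ways to distribute $D$ discriminant-budget among $b_T$ such classes grows like $D^{b_T-1}$, and summing over $D \leq \log_q X$ produces the target $X^{1/a}(\log X)^{b_T-1}$ up to multiplicative constants depending on $q$ and $G$.

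The main obstacle I anticipate is handling configurations $\mathbf c$ that lie outside the stability range: these occur whenever some entry $n_c$ is small, which is still compatible with $\sum_c n_c d_c$ being as large as $\log_q X$, so such configurations must be bounded either by uniform Betti-number estimates coming from the proof of stability itself or absorbed into the constants $C_{\pm}^{q,G}$. This is precisely why the conclusion gives matching upper and lower orders of growth rather than a single asymptotic with an explicit leading constant. A secondary subtlety is the passage between Hurwitz points and genuine field extensions, where the Türkelli-modified invariant $b_T$ (rather than Malle's original $b$) enters naturally, since Galois acts on conjugacy classes and only classes fixed by the relevant Galois action contribute to the field-extension count.
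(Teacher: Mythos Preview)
Your broad strategy---translate to $\mathbb F_q$-points of Hurwitz spaces, apply Grothendieck--Lefschetz, use homological stability to control higher cohomology, and sum over branch-type configurations---matches the paper's. But there is a genuine gap in your explanation of where the T\"urkelli exponent $b_T$ comes from, and it is not a detail.

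You describe the passage between connected and geometrically connected covers as a ``finite bookkeeping factor,'' and you say $b_T$ is ``(modulo arithmetic subtleties \ldots) the number of optimal classes.'' Neither is correct. A connected $G$-extension $L/\mathbb F_q(t)$ need not remain connected after base change to $\overline{\mathbb F}_q$: instead $L\otimes_{\mathbb F_q}\overline{\mathbb F}_q$ splits into $|G|/|N|$ copies of an $N$-cover for some normal $N\triangleleft G$ with $G/N$ cyclic (generated by Frobenius). Counting such $L$ therefore requires counting $\mathbb F_q$-points of Hurwitz spaces of \emph{$N$}-covers for every such $N$, with an additional condition ensuring the $\mathbb F_q$-point does not lift to any intermediate $N'$. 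The paper carries out this decomposition explicitly (\autoref{lemma:connected-cover-observation}, \autoref{lemma:n-extensions-constant}), proves a bound for each $N$ separately (\autoref{lemma:component-count-malle-upper}, \autoref{lemma:component-count-malle-lower}), and then takes the maximum over $N$; this maximum is precisely the definition of $b_T$ in \autoref{notation:malle}. If you only look at $N=G$, you recover Malle's original $b_M$, which is known to be wrong (Kl\"uners' counterexample). So this step is not bookkeeping: it changes the power of $\log X$.

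Two further points you pass over too quickly. First, knowing that the stable homology is ``an explicit constant'' is not enough; you need the Frobenius action on the stable cohomology, and the paper devotes all of \autoref{section:frobenius-equivariance} to proving (via log geometry) that the stabilization maps are Frobenius-equivariant, so that stable Frobenius traces are well-defined and periodic (\autoref{lemma:component-point-bound}). Second, for the discriminant (as opposed to the reduced discriminant), summing $q^{\sum_c n_c}$ over $\sum_c n_c d_c \le D$ is not an elementary lattice-point count; the paper uses a Tauberian theorem (\autoref{lemma:tuple-counting}) to extract the asymptotic, and the unstable configurations are shown to contribute only $O(n^{b-2})$ components rather than being absorbed by brute force.
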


We prove \autoref{theorem:malle-intro} as a special case of
\autoref{theorem:turkelli}, where we moreover verify asymptotics where we
restrict the ramification types of these $G$ extensions to lie in certain unions
of conjugacy classes, and also count by general invariants instead of only the
discriminant.
\begin{remark}
	\label{remark:}
	The constant $b_T(\mathbb F_q(t),
(G - \id)_\Delta)$ is the constant predicted by T\"urkelli
\cite[Conjecture 6.7]{turkelli:connected-components-of-hurwitz-schemes}
and so this confirms T\"urkelli's modified version of Malle's conjecture over
suitable function fields.
\end{remark}

\begin{remark}
	\label{remark:malle-periodic}
	Readers familiar with Malle's conjecture in the number field setting may
	be used to a formulation of Malle's conjecture stating that the
	number of $G$ Galois extensions of discriminant at most $X$ is of the form
$C^{q,G}X^{\frac{1}{a(G-\id,\Delta)}} (\log
X)^{b_T(\mathbb F_q(t), (G - \id)_{\Delta})-1}$. 
However,
in \autoref{theorem:malle-intro}, we only bound the number of extensions
above and below by different constants times
$X^{\frac{1}{a(G-\id, \Delta)}} (\log X)^{b_T(\mathbb F_q(t), (G - \id)_{\Delta})-1}$.
The reason that we only have upper and lower bounds with different constants in
\autoref{theorem:malle-intro}
is that, unlike in the number field case, it is simply false that the
limit 
\begin{align}
	\label{equation:malle-limit}
	\lim_{X \to \infty} \frac{\Delta(\mathbb
	F_q(t),G-\id, X)}{X^{\frac{1}{a(G-\id, \Delta)}}(\log X)^{b_T(\mathbb F_q(t), (G -
\id)_{\Delta})-1}}
\end{align}
exists in the function field setting.
See \autoref{example:malle-periodicity} for an explicit example of this.
However, we prove this limit does exist if one only counts geometrically connected
extensions by reduced discriminant, and takes the number of such instances to have fixed
residues modulo $|G|^2$.
We prove this variant in \autoref{theorem:malle-g-connected}.
See \autoref{question:extend-constant} for possible extensions of this.
\end{remark}

\begin{example}
	\label{example:malle-periodicity}
	In general, the discriminant of any extension of $\mathbb F_q(t)$ has a
	power of $q$, so \eqref{equation:malle-limit} cannot exist unless one
	takes the discriminant $X$ to range over powers of $q$. However, even restricting $X$ to
	powers of $q$, the limit rarely will exist.
	Consider the case $G = \mathbb Z/3 \mathbb Z$ and $q
	\equiv 2 \bmod 3$. 
	In this case, the discriminant is always a square, which again implies
	that if we counted by discriminant, and took a limit over all powers of
	$q$, the limit would not exist. Instead, we count by the reduced
	discriminant, see \autoref{example:reduced-discriminant}, which is
	equivalent to counting by the square root of the discriminant in this
	case.
	Then, one can show there are no $G$ extensions
	of $\mathbb F_q(t)$ of reduced discriminant $q^{n}$ for $n$ odd,
since there must always be the same number of geometric points with inertia 1 as
with inertia $2 \in \mathbb Z/3 \mathbb Z$. However, there are many extensions
of reduced discriminant $q^n$ for $n$ even. 
Let
$\on{rDisc}(\mathbb F_q(t),G-\id,  X)$ denote the number of $G$ Galois extensions of
$\mathbb F_q(t)$ with reduced discriminant at most $X$.
For $q \equiv 2 \bmod 3$, there is a constant $C$ so that the growth of
$\on{rDisc}(\mathbb F_q(t),G-\id,  X)$ 
is asymptotic to $C X$ when $X$ ranges over integers of the form $q^{2n}$,
for $n$ an integer,
and is asymptotic to $\frac{C}{q} X$ when $X$ ranges over integers of the
form $q^{2n+1}$ because $\on{rDisc}(\mathbb F_q(t),G-\id,  q^{2n+1}) =
\on{rDisc}(\mathbb F_q(t),G-\id,  q^{2n})$.
Although the situation is $2$-periodic here, and one of these cases has no
extensions, we believe in general that this
periodicity can become arbitrarily complicated when counting by discriminant.
For general groups, if one counts by reduced discriminant, and only counts geometrically connected
extensions, a ``periodic'' version of the limit from
\eqref{equation:malle-limit} does exist, as shown in
\autoref{theorem:malle-g-connected}.
\end{example}

\begin{remark}[Comparison with Ellenberg--Tran--Westerland]
	\label{remark:etw}
	Prior to this paper,
there has been substantial
progress toward proving Malle's conjecture over function fields. Namely,
\cite{ellenbergTW:fox-neuwirth-fuks-cells}
prove a weak upper bound for the number of $G$ extensions with the correct power
of $X$, but with a power of $\log X$ that is not correct in general (with the
	same restriction on $q$ that $q > C$ and $\gcd(q,|G|) = 1$ as in
\autoref{theorem:malle-intro}).
Additionally, they do not obtain a lower bound for the number of $G$ extensions.
In contrast, our \autoref{theorem:malle-intro} obtains both upper and lower
bounds, as well as the correct power of $\log X$.
\end{remark}

\begin{remark}
	\label{remark:}
	For past work on counting components of Hurwitz spaces, we refer the reader to
\cite{ellenbergV:counting-extensions} which dealt with counting components
parameterizing geometrically connected covers of $\mathbb A^1_{\mathbb F_q}$,
\cite{turkelli:connected-components-of-hurwitz-schemes}, which dealt with
counting components parameterizing connected covers of $\mathbb A^1_{\mathbb F_q}$, and
\cite{seguin:counting-components-of-hurwitz-spaces}, which dealt with counting
components parameterizing connected covers of 
$\mathbb A^1_{\mathbb C}$.
\end{remark}

\subsection{Application 2: The asymptotic Picard rank conjecture}

Let $G$ be a finite group and $c \subset G-\id$ be a conjugacy class generating
$G$.
We use $\churz {G} n c$ to denote the Hurwitz stack over $\mathbb C$
parameterizing geometrically connected $G$ covers of genus $0$ curves with $n$ branch points, each of
which have inertia in $c$.
See \autoref{notation:hurwitz-picard} for a precise definition.
An important special case is where $G = S_d$ and $c$ is the conjugacy class of
transpositions;
this Hurwitz space then also parameterizes simply branched covers of genus $0$ curves of degree $d$, which have
genus $g$, where $n = 2g - 2 + 2d$.
This is the setting of the original Picard rank conjecture, which predicts
$\pic(\churz {S_d} n c) \otimes \mathbb Q \simeq 0$
whenever $g \geq 0$ is an
integer, so $n \geq 2d-2$ is even
\cite[Conjecture 2.49(1)]{harris2006moduli}.
(See also the closely related
\cite[Conjecture 3]{diazE:towards-the-homology-of-hurwitz-spaces},
although there they work with Hurwitz spaces where they do not quotient by the
$\pgl_2$ action.)
We note that the roots of this conjecture extend further back, and a version of
it appears in work of Ciliberto from 1986
\cite[Conjecture 3.2]{ciliberto:rationally-determined-line-bundles} and even in
work of Enriques from 1919
\cite[p. 371]{enriques:questioni};
see the discussion surrounding 
\cite[Conjecture 3.2]{ciliberto:rationally-determined-line-bundles}
for more explanation on the relation to Enriques' work.

\begin{conjecture}[Picard rank conjecture]
	\label{conjecture:picard-rank}
	For all $d > 0$, even $n \geq 2d-2$, and $c \subset S_d$ the conjugacy
	class of transpositions, we have $\pic(\churz {S_d} n c) \otimes \mathbb Q \simeq 0.$
\end{conjecture}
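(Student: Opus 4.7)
The plan is to combine the homological stability machinery developed in this paper with direct geometric analysis of an explicit unstable range, treating the conjecture as a "stability plus base case" statement rather than attacking it uniformly in $n$.

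Step one is a cohomological translation. Over $\mathbb{C}$ the stack $\churz{S_d}{n}{c}$ is a smooth Deligne--Mumford stack, so by the exponential sequence the rational Picard group injects into $H^2$ with rational coefficients. Via the Leray spectral sequence for the $\pgl_2$-quotient and the $S_d$-quotient, and using $H^*(B\pgl_2;\mathbb{Q}) = \mathbb{Q}[c_2]$ with generator in degree $4$, the vanishing of $\pic(\churz{S_d}{n}{c}) \otimes \mathbb{Q}$ reduces to showing $H^1(\cphurc{n}{S_d,c};\mathbb{Q})^{S_d} = 0$ and $H^2(\cphurc{n}{S_d,c};\mathbb{Q})^{S_d} = 0$, together with a small check on the low-degree terms of the Serre spectral sequence for the $\pgl_2$-action.

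Step two handles the stable range. The main homological stability theorems of this paper, applied to the transposition rack $c \subset S_d$ with values in $\mathbb{Q}$, yield explicit stable values for $H^1$ and $H^2$. For the transposition rack these stable values should vanish: the dominant part of $H^1$ vanishes because the rack generates $S_d$ with a unique orbit, and the dominant part of $H^2$ is captured by classes that are killed either by the $S_d$-invariance or by the $\pgl_2$-quotient (the relevant class being a multiple of $c_2$). This gives the conjecture for all even $n \geq N(d)$ for some explicit threshold $N(d)$ coming from the stability range.

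The hard part, and where the real work lies, is the unstable window $2d-2 \leq n < N(d)$. The proposal is a two-pronged attack: (a) for small $d$ (where $N(d)$ is manageable), one invokes the classical computations of Mumford, Harer, and Deopurkar--Patel treating $\cphurc{n}{S_d,c}$ as a complement of a discriminant in a projective bundle or a Severi variety; (b) for larger $d$, one uses a reverse-stability or Gysin argument in which the divisor in $\cphurc{n+2}{S_d,c}$ cut out by "two colliding simple branch points" fits into a long exact sequence relating the Picard groups at levels $n$ and $n+2$, so that vanishing propagates downward from the stable range to the conjectural range. The main obstacle is precisely this downward propagation: the natural stabilization map moves in the wrong direction, so one must carefully identify a Gysin-type boundary contribution and show it does not create new divisor classes, which requires a fine understanding of the local structure of the Hurwitz space along its degeneration strata --- something beyond what pure homological stability provides and where genuinely new geometric input (e.g.\ the Harer--Ivanov--type analysis adapted to Hurwitz space boundaries) will be needed.
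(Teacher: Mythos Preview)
The statement you are trying to prove is a \emph{conjecture}, and the paper does not prove it. The paper establishes only the asymptotic version (\autoref{theorem:stable-picard-intro} and \autoref{theorem:stable-picard}): the conclusion holds for $n$ sufficiently large depending on $d$, and \autoref{remark:known-picard-rank} explicitly records that the range $d \geq 6$, $g > d$ (equivalently $n > 4d-2$ below the stability threshold) remains open.

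Your Step two is, up to technical differences, exactly what the paper accomplishes. Two corrections are worth noting. First, in Step one you apply the exponential sequence directly on $\churz{S_d}{n}{c}$, but this stack is not proper, so the Hodge-theoretic vanishing of $H^1(\mathscr O)$ is not immediate; the paper instead passes through the Abramovich--Corti--Vistoli compactification $\churzb{S_d}{n}{c}$ and uses excision on Picard groups (see \autoref{proposition:injection} and \autoref{lemma:compactified-injection}). Second, the stable $H^2$ does not vanish: the paper shows $H^2(Z;\mathbb Z[\tfrac{1}{2|G|}]) \cong (\mathbb Z/(2n-2)\mathbb Z)\otimes \mathbb Z[\tfrac{1}{2|G|}]$ by comparison with $\on{Conf}_{\mathbb P^1,n}$ (\autoref{lemma:vanishing-open}), and rational vanishing of the Picard group follows because this group is torsion, not because a class is absorbed by $c_2$.

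The genuine gap is your Step three, and you have correctly identified it yourself. The downward propagation argument via a Gysin sequence for the collision divisor is not a proof: the stabilization map increases $n$, and there is no mechanism in the paper (nor, to our knowledge, anywhere) that pushes the vanishing from the stable range down to $n = 2d-2$. The ``genuinely new geometric input'' you flag as needed is precisely the content of the open conjecture. What you have written is therefore a proof plan for the asymptotic theorem together with an honest statement of why the full conjecture remains open, not a proof of the conjecture.
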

\begin{remark}
	\label{remark:known-picard-rank}
	\autoref{conjecture:picard-rank} has been proven for $d \leq 5$ by Deopurkar-Patel
\cite{deopurkar2015picard}, but this proof relies on explicit parameterizations
of low degree Hurwitz spaces quite similar to those Bhargava used to count
number fields of degree at most $5$.
We also note that Mullane has proven \autoref{conjecture:picard-rank} whenever
$n \leq 4d - 2$ \cite{mullane:the-hurwitz-space-picard-rank-conjecture}. 
(He proves the result when the genus $g$ of the covering curve satisfies $g \leq
d$, and $n = 2g-2+2d$ is the number of branch points of a degree $d$ simply
branched cover.)
However, the case that $d \geq 6$ and $g > d$ remains open.
\end{remark}

\begin{figure}
	\centering
	\includegraphics[scale=.5]{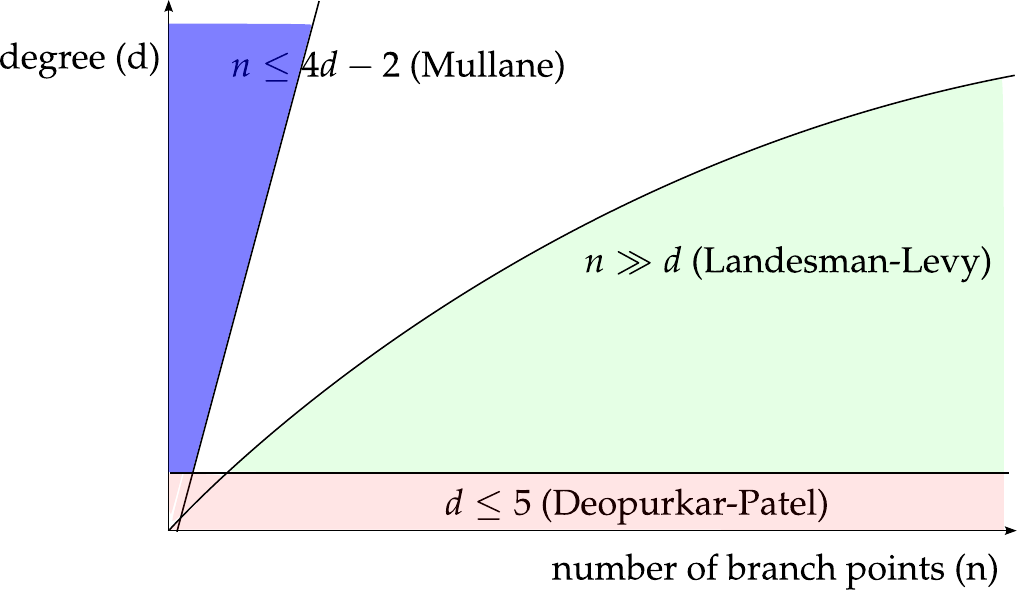}
	\caption{This figure depicts known cases of the Picard rank conjecture,
		see \autoref{remark:known-picard-rank} and
		\autoref{theorem:stable-picard-intro}.
}\label{figure:picard-rank-chart}
\end{figure}

We prove an asymptotic version of the Picard rank conjecture. That is, we prove
it when $n$, the number of branch points, or equivalently the genus of the
cover, is sufficiently large. 
\begin{theorem}
       \label{theorem:stable-picard-intro}
               Let $c \subset S_d$ denote the conjugacy class of
	       transpositions in the symmetric group acting on $d$ elements.
	For $n$ even and large enough depending on $d$, 
	$\pic(\churz {S_d} n c) \otimes \mathbb Q= 0$.
\end{theorem}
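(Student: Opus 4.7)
The plan is to deduce the Picard vanishing from the paper's stable homology results by passing through rational cohomology. First, I would realize $\churz{S_d}{n}{c} = [U_n/\pgl_2]$, where $U_n = \hurp{S_d}{n}{c}$ is the Hurwitz stack of geometrically connected simply-branched degree-$d$ covers of a fixed $\mathbb{P}^1$ with $n$ branch points. The branch-locus morphism is finite étale onto (an open substack of) $\mathcal{M}_{0,n}/S_n$, which is a smooth affine Deligne--Mumford stack; hence $\churz{S_d}{n}{c}$ is itself a smooth affine DM stack over $\mathbb{C}$. For any such stack $Y$, Serre vanishing gives $H^{>0}(Y,\mathcal{O}_Y)=0$, so the exponential sequence produces an injection
\[
\pic(Y) \otimes \mathbb{Q} \hookrightarrow H^2(Y;\mathbb{Q}).
\]

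Second, I would transport the needed cohomology vanishing from $[U_n/\pgl_2]$ to $U_n$ using the Leray--Serre spectral sequence of the principal $\pgl_2$-bundle $U_n \to [U_n/\pgl_2]$. Since $\pgl_2(\mathbb{C})$ is homotopy equivalent to $SO(3)$, whose rational cohomology lives only in degrees $0$ and $3$, the $E_2$-page has nothing to contribute to $H^{\le 2}$ beyond the row $q=0$. Thus $H^2([U_n/\pgl_2];\mathbb{Q})$ injects into $H^2(U_n;\mathbb{Q})$, and it suffices to show $H^2(U_n;\mathbb{Q})=0$ for $n$ even and large.

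Third, I would invoke the paper's homological-stability theorem, which guarantees that $H^*(U_n;\mathbb{Q})$ stabilizes in a fixed degree as $n$ grows through the parities allowed by connectedness, and then its computation of the dominant part of the stable homology. Because $c$ is the only relevant conjugacy class, for $n$ sufficiently large every class appears as inertia arbitrarily often, so the whole stable homology is dominant. The key remaining input is that, rationally, the dominant stable homology of the Hurwitz space for the transposition rack in $S_d$ vanishes in positive degrees; this matches the classical expectation going back to Ellenberg--Venkatesh--Westerland for the transposition case. Combining this with Steps 1 and 2 yields $\pic(\churz{S_d}{n}{c})\otimes\mathbb{Q}=0$ for $n$ even and large. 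The main obstacle is precisely this last verification: extracting vanishing of dominant stable rational $H^2$ for the transposition rack from the paper's general formula, and making sure the stacky/affine reduction in Step 1 correctly accounts for contributions of the finite stabilizers (which is where a rigidification or a direct argument through the finite étale cover by $\mathcal{M}_{0,n}/S_n$ becomes essential).
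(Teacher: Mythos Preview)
Your reduction in Steps 1--3 is sound and in fact slightly cleaner than the paper's route. For $d\geq 3$ and $n\geq 3$, the space $\churz{S_d}{n}{c}$ really is a smooth affine variety: $Z(S_d)=1$ makes $\churp{S_d}{n}{c}$ a scheme, the $\pgl_2$-action is free on $\geq 3$ unordered points, and the quotient is finite \'etale over the affine scheme $[\on{Conf}_{\mathbb{P}^1,n}/\pgl_2]\cong\mathcal{M}_{0,n}/S_n$. So Serre vanishing plus the exponential sequence gives the injection $\pic\otimes\mathbb{Q}\hookrightarrow H^2(\,\cdot\,;\mathbb{Q})$ directly, without the compactification and Hodge--de~Rham argument the paper uses. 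Your Leray--Serre step is also fine and matches the paper's use of the Serre spectral sequence for the $\pgl_2$-bundle.

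The genuine gap is Step 4. The paper's homological stability theorems (\autoref{theorem:some-large-homology-stabilizes} and \autoref{theorem:all-large-stable-homology}) are stated and proved for Hurwitz spaces over $\mathbb{A}^1$, not for your $U_n=\churp{S_d}{n}{c}$ over $\mathbb{P}^1$; you cannot invoke them directly on $U_n$. Moreover, your description of the stable value is incorrect: the dominant stable rational homology of the $\mathbb{A}^1$ Hurwitz space for a single conjugacy class is that of $\on{Conf}_n$, which is $\mathbb{Q}$ in degree $0$ and in degree $1$ (coming from $H_1(B_n)\cong\mathbb{Z}$), not zero in all positive degrees. The vanishing you actually need, $H^2(\churp{S_d}{n}{c};\mathbb{Q})=0$, is a statement about the $\mathbb{P}^1$ space and requires an additional argument. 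The paper supplies it in \autoref{lemma:vanishing-open}: one stratifies $\churp{S_d}{n}{c}$ by whether $\infty$ is a branch point, gets a Gysin sequence whose open and closed strata are $\mathbb{A}^1$-type Hurwitz spaces with $n$ and $n-1$ branch points, compares via the five lemma to the corresponding Gysin sequence for $\on{Conf}_{\mathbb{P}^1,n}$, and then reads off $H^2(\on{Conf}_{\mathbb{P}^1,n};\mathbb{Z})\cong\mathbb{Z}/(2n-2)$, which is torsion. Without this bridge from $\mathbb{A}^1$ to $\mathbb{P}^1$, your argument does not close; once you insert it, your affineness shortcut completes the rational statement more quickly than the paper's compactification route.
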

\autoref{theorem:stable-picard-intro} follows from
a stronger integral version stated in 
\autoref{theorem:stable-picard} below.
We also prove a version which not only applies to the symmetric group with
transpositions, but to arbitrary groups with a specified conjugacy class that
generates the group.

\begin{remark}
	\label{remark:}
	Although we do not explicitly work out the bound on $n$ as a function of
	$d$ appearing in \autoref{theorem:stable-picard-intro}, it is possible to
	make our proof effective by tracing through the proof and computing a bound.
	Without trying to optimize things, we found that the cohomology should
	be bounded for $n$ which grows just a little faster than $2^{\binom{d}{2}}$.
	Specifically, we found it is enough to take $n > 2 \cdot 2^{\binom{d}{2}} (2+\binom{d}{2})$.
\end{remark}

\subsection{Application 3: Cohen-Lenstra-Martinet heuristics}

In addition to Malle's conjecture on counting the number of $G$ extensions,
another fundamental suite of conjectures in arithmetic statistics are the
Cohen-Lenstra-Martinet heuristics. These conjectures are about the distribution
of class groups of $\Gamma$-extensions of $\mathbb Q$, for $\Gamma$ a fixed
finite group. The case $\Gamma = \mathbb Z/2 \mathbb Z$ was originally developed
by Cohen-Lenstra, and this case is known as the Cohen-Lenstra heuristics.
In \cite[Theorem 1.2.1]{landesmanL:the-stable-homology-of-non-splitting}, we were able to
compute the moments predicted by the Cohen-Lenstra heuristics over suitable
function fields.
Due to the limitation of the homological stability results there,
which only
applied to certain ``non-splitting'' Hurwitz spaces, we were only able to prove
the case of the Cohen-Lenstra-Martinet heuristics when $\Gamma = \mathbb Z/2
\mathbb Z$,
see \cite[Remark 5.5.2]{landesmanL:the-stable-homology-of-non-splitting}.
In this paper, we generalize our results toward homological
stability to apply to Hurwitz spaces associated to arbitrary racks.
As a consequence, we are able to compute many moments predicted by the more general Cohen-Lenstra-Martinet heuristics.

As those working in the area know, there are quite a number of variants of the
Cohen-Lenstra-Martinet heuristics. There are variants where one allows arbitrary
roots of unity in the base field. There are also variants where one considers
the distribution of the maximal unramified extension, instead of just the
distribution of the class group (which is the maximal {\em abelian} unramified
extension).
To illustrate the power of our methods, we state and prove a fairly general
version of the moments predicted by the Cohen-Lenstra-Martinet heuristics.
Namely, we prove a non-abelian version with roots of unity. However, we restrict
to the case that the extensions are split completely at infinity.

The conjecture for the non-abelian Cohen-Lenstra-Martinet  heuristics is laid out in 
\cite[Conjecture 1.3]{liuWZB:a-predicted-distribution},
and the generalization to the case with roots of unity in the base field,
was described in
\cite[Conjecture 1.2]{liu:non-abelian-cohen-lenstra-in-the-presence-of-roots},
where the author also keeps track of the additional data of a lifting invariant.
We now introduce some notation to explain the cases of this conjecture we can
prove.

\begin{notation}
	\label{notation:non-abelian}
	We say a finite extension $L/\mathbb F_q(t)$ is {\em split completely
	over $\infty$} if 
the place $\infty$ of $\mathbb
F_q(t)$, corresponding to the point $\infty := \mathbb P^1_{\mathbb F_q} -
\mathbb A^1_{\mathbb F_q} \in \mathbb P^1$ has $\deg(L/ \mathbb F_q(t))$
places over it in $L$.
We say a profinite extension $L/\mathbb F_q(t)$ is 
{\em split completely over $\infty$} if each intermediate finite extension of
$\mathbb F_q(t)$ is split completely over $\infty$.

	Fix a finite group $\Gamma$, let $K$ be a $\Gamma$
	extension of $\mathbb F_q(t)$, and let
$K^\sharp_\emptyset/K$ denote the maximal unramified extension of $K$ that 
has degree prime to $|\Gamma| q$ and
is
split completely over $\infty$.
Define
$G^\sharp_\emptyset(K) := \gal(K^\sharp_\emptyset/K)$.
Then, $G^\sharp_\emptyset(K)$ has an action of $\Gamma$ coming
from the $\Gamma = \gal(K/\mathbb F_q(t))$ action on $K$.
This action is well-defined using the Schur-Zassenhaus lemma; for further
explanation, see the paragraph prior to \cite[Definition
2.1]{liuWZB:a-predicted-distribution}.
For $H$ a group with a $\Gamma$ action,
we use $\surj_{\Gamma}(G^\sharp_\emptyset(K), H)$ to denote the
set of $\Gamma$ equivariant surjections $G^\sharp_\emptyset(K)
\to H$.
We use
$\hat{\mathbb Z}(1)_{(|\Gamma|q)'}$ to denote the pro-prime to $|\Gamma| q$ completion of
$\widehat{\mathbb Z}(1)_{\overline{\mathbb F}_q} := \lim_n \mu_n(\overline{\mathbb F}_q)$.
Here, for $\beta \mid \alpha$ the maps $\mu_\alpha(\overline{\mathbb F}_q) \to \mu_\beta(\overline{\mathbb F}_q)$
are given by $x \mapsto x^{\alpha/\beta}$.
We also use $(|\Gamma| q)'$ subscript on an abelian group to denote the prime to
$|\Gamma|q$ quotient of that group.
For $L/K$ an extension of fields, there is a certain map
$\omega_{L/K} : \hat{\mathbb Z}(1)_{(|\Gamma|q)'} \to H_2(\gal(L/\mathbb F_q(t)),
\mathbb Z)_{(|\Gamma|q)'}$ defined in \cite[Definition
2.13]{liu:non-abelian-cohen-lenstra-in-the-presence-of-roots}.
For $\pi \in \surj_{\Gamma}(G^\sharp_\emptyset(K), H)$, we use
$\pi_* : H_2( \gal(K^\sharp_\emptyset/\mathbb F_q(t)), \mathbb Z)_{(|\Gamma| q)'} \to
H_2(H \rtimes \Gamma, \mathbb Z)_{(|\Gamma| q)'}$ to denote the corresponding map
induced by $\pi$.
We say that $H$ is an {\em admissible $\Gamma$ group} 
if $\gcd(|H|, |\Gamma|) = 1$
and $H$ is generated 
by elements of the form $h^{-1} \cdot \gamma(h)$ for $h \in H$ and
$\gamma \in \Gamma$.
Also, let $E_\Gamma(D, \mathbb F_q(t))$ denote the set of pairs $(K, \iota)$
where $K$ is an extension of $\mathbb F_q(t)$ split completely at $\infty$
with reduced discriminant (meaning the radical of the ideal of the discriminant)
equal to $D$
and $\iota$ an isomorphism $\gal(K/\mathbb F_q(t)) \xrightarrow{\iota} \Gamma$.
\end{notation}

\begin{restatable}{theorem}{clm}
	\label{theorem:clm}
	With notation as in \autoref{notation:non-abelian}, suppose $H$ is an
	admissible $\Gamma$ group.
	Fix a prime power $q$ with $\gcd(q, |\Gamma||H|) = 1$.
	Let $\delta: \hat{\mathbb Z}(1)_{(|\Gamma| q)'} \to H_2(H \rtimes
	\Gamma, \mathbb Z)_{(|\Gamma| q)'}$
	be a group homomorphism with $\on{ord}(\im \delta) \mid q - 1$.
	Then, there is some constant $C$, depending on $H$ and $\Gamma$, so that if $q > C$,
\begin{equation}
	\label{equation:clm-ratio}
	\lim_{N \to \infty} \frac{\sum_{n \leq N} \sum_{K \in E_\Gamma (q^n,
		\mathbb F_q(t))} \left| \left\{ \pi \in \surj_\Gamma(
	G^\sharp_\emptyset(K), H) : \pi_* \circ \omega_{K^\sharp/K} = \delta
	\right\} \right| }{\sum_{n \leq N} |E_\Gamma(q^n, \mathbb F_q(t))| } =
	\frac{1}{[H : H^\Gamma]}.
\end{equation}
\end{restatable}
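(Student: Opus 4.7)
The plan is to reinterpret both sides of \eqref{equation:clm-ratio} as $\mathbb F_q$-point counts on suitable Hurwitz stacks, apply the Grothendieck--Lefschetz trace formula, and then invoke this paper's main homological stability results together with the computation of the dominant part of the stable homology to pass to $N \to \infty$. By the usual correspondence between finite covers of $\mathbb A^1_{\mathbb F_q}$ and field extensions of $\mathbb F_q(t)$, the denominator $|E_\Gamma(q^n, \mathbb F_q(t))|$ is essentially an $\mathbb F_q$-point count on the Hurwitz stack $\churz{\Gamma}{n}{c_\Gamma}$ parameterizing connected $\Gamma$-covers of $\mathbb P^1$ with $n$ branch points and inertia in the non-identity conjugacy classes of $\Gamma$, subject to a completely-split condition at $\infty$ and a rigidification identifying the deck group with $\Gamma$. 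By Galois theory and the definition of $G^\sharp_\emptyset(K)$, each triple $(K,\iota,\pi)$ counted by the inner numerator corresponds similarly to an $H \rtimes \Gamma$-cover of $\mathbb P^1$, unramified over $\infty$, with ramification in the conjugacy classes of $H\rtimes\Gamma$ projecting to $\Gamma-\id$; the admissibility hypothesis on $H$ ensures that the associated cover over $\overline{\mathbb F}_q$ is connected. Under the Ellenberg--Venkatesh--Westerland labelling of connected components of Hurwitz spaces by lifting invariants, the condition $\pi_* \circ \omega_{K^\sharp/K} = \delta$ carves out a specified union of connected components of $\churz{H\rtimes\Gamma}{n}{c_{H\rtimes\Gamma}}$.

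With both sides expressed as point counts on Hurwitz stacks of dimension $n$, I would apply Grothendieck--Lefschetz together with Deligne's weight bounds to write each count as $q^n \cdot (\text{rank of top cohomology on the relevant components}) + O(q^{n-1/2})$, where the implied constant is controlled by the total Betti numbers. For $q$ chosen larger than the constant $C$ produced by the main stability theorems of the paper, once $n$ exceeds the stability threshold the top cohomology is given by the dominant-part computation. Summing over $n \leq N$, the stabilized contribution grows like $\sum_{n \leq N} q^n \sim q^{N+1}/(q-1)$ in both numerator and denominator, while the finitely many unstable terms together with the lower-weight corrections of order $q^{n-1/2}$ contribute only $o(q^N)$ to this sum. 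Thus the potential periodic dependence on $n$ illustrated in \autoref{example:malle-periodicity} is washed out by the Ces\`aro-type average, and \eqref{equation:clm-ratio} reduces in the limit to the ratio of stable top cohomology ranks on the appropriate components.

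Finally, I identify this ratio as $1/[H:H^\Gamma]$. The hypothesis $\on{ord}(\im \delta) \mid q-1$ is crucial at this stage, as it ensures that the components cut out by $\delta$ are individually defined over $\mathbb F_q$, so that Frobenius acts by the expected scalar on their top cohomology rather than permuting components non-trivially. Relating the selected components in the numerator to all components in the denominator via the forgetful map and its $H$-torsor fiber structure, and taking into account the $H^\Gamma$-action by conjugation on $\Gamma$-equivariant surjections, yields the combinatorial factor $|H^\Gamma|/|H| = 1/[H:H^\Gamma]$. The main obstacle I anticipate is precisely this final bookkeeping: making rigorous the correspondence between the arithmetic lifting invariants of \cite{liu:non-abelian-cohen-lenstra-in-the-presence-of-roots} and the topological components controlled by the stable homology, and verifying that the Frobenius action on these components, under the stated order hypothesis on $\delta$, gives exactly the constant $1/[H:H^\Gamma]$ and not a periodic function of $n$.
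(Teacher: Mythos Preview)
Your overall architecture matches the paper's: translate both sides of \eqref{equation:clm-ratio} into $\mathbb F_q$-point counts on Hurwitz spaces for $G := H \rtimes \Gamma$ and for $\Gamma$ respectively (this is \cite[Lemma 4.6]{liu:non-abelian-cohen-lenstra-in-the-presence-of-roots}, and is indeed where the factor $1/[H:H^\Gamma]$ enters), then control these counts via Grothendieck--Lefschetz together with the stability theorems. But the analytic step contains a genuine gap.

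You assert that the lower-weight corrections of size $O(q^{n-1/2})$ contribute only $o(q^N)$ to $\sum_{n\le N}$. This is false: $\sum_{n\le N} q^{n-1/2} \sim q^{-1/2}\cdot q^{N+1}/(q-1)$, which is a fixed proportion $q^{-1/2}$ of the main term, not $o$ of it. With only the weak Lang--Weil type bound $|Z(\mathbb F_q)| = q^n + O(q^{n-1/2})$ per geometrically irreducible component, the ratio in \eqref{equation:clm-ratio} is pinned down only up to a multiplicative factor $1+O(q^{-1/2})$, not exactly. The Ces\`aro average does kill the periodic dependence on the residue of $n$, but it cannot kill an error that is uniform in $n$.

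The paper closes this gap in two steps. First, \autoref{lemma:component-point-bound} gives the refined estimate $|Z(\mathbb F_q)|/q^n = \phi_\alpha + O\bigl((C/\sqrt q)^{(n-J)/I}\bigr)$, where $\phi_\alpha$ is the trace of arithmetic Frobenius on the \emph{full} stable cohomology of the component $\alpha$, and the error now tends to zero with $n$. Second, and this is the point your sketch does not articulate, \autoref{lemma:component-comparison} shows that these constants $\phi_\alpha$ agree component-by-component between the $G$-Hurwitz space and the $\Gamma$-Hurwitz space. The mechanism is that the projection $G\to\Gamma$ induces a map of Hurwitz schemes defined over $\mathbb F_q$; by \autoref{theorem:all-large-stable-homology} each side has stable cohomology equal to that of multicolored configuration space, so this $\mathbb F_q$-map is a Frobenius-equivariant isomorphism on $H^i$ in the stable range. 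Combined with the component-count matching from \cite{liu:non-abelian-cohen-lenstra-in-the-presence-of-roots,liuWZB:a-predicted-distribution} (which gives $\pi^\delta_{G,c_1}(q,n)=\pi_{\Gamma,c_2}(q,n)+O(n^{d-2})$), this yields the exact ratio $1$ between the two Hurwitz point counts, hence \eqref{equation:clm-ratio}.

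In short: comparing only ranks of top cohomology is not enough; you need that the entire stable Frobenius trace matches, and this comes from having an actual $\mathbb F_q$-morphism between the two Hurwitz spaces, not just an abstract identification of each with configuration-space cohomology.
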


We prove this in \autoref{subsubsection:proof-clm}.

\begin{remark}
	\label{remark:}
	We note that our results will not prove the Cohen-Lenstra-Martinet heuristics in
full because if one fixes $q$, the Cohen-Lenstra-Martinet predict the $H$-moment
of the class group of $\Gamma$ extensions for arbitrary finite abelian groups $H$, and we can only
compute this for finitely many $H$. However, if one fixes a finite group $H$, we can compute
these moments for all $q$ which are relatively prime to $|H| |\Gamma|$ and which
are sufficiently large, depending on $H$.
\end{remark}

\begin{remark}
	\label{remark:}
	An additional generalization one may desire would be not to require that the
	$\Gamma$
	extension $K$ over $\mathbb F_q(t)$ is split completely over $\infty$,
	but instead allow
	different types of ramification over $\infty$.
It seems likely that one
could make a suitable conjecture 
and then prove it by combining the results of this
paper and with a generalization of the results of
\cite{liu:non-abelian-cohen-lenstra-in-the-presence-of-roots} to this setting. 
We believe it would be interesting to carry this out.
\end{remark}

\subsection{Homological stability results}

We next focus on explaining our advances toward understanding the stable homology
of Hurwitz spaces, which led to the above described applications.

Although Hurwitz spaces are usually constructed in the setting where one has a
group $G$ and a union of conjugacy classes $c$ generating that group, we will work
in the more general setting of a rack $c$ with orbits $c_1, \ldots, c_\upsilon$ and a reduced structure group $G^0_c$.
See \autoref{subsection:rack} for background on racks.
We suggest the reader unfamiliar with racks focus on the case that the rack
$c$ is a union of $\upsilon$ conjugacy classes $c = c_1 \cup \ldots \cup c_\upsilon$ which
generate a group $G$; in this case, the Hurwitz space
$\cphurc {n_1, \ldots, n_\upsilon} c$ parameterizes 
connected $G$ covers
of $\mathbb A^1_{\mathbb C}$ whose
branch divisor has $n_i$ points with inertia in $c_i$.
See \autoref{definition:rack-pointed-hurc} for a definition of Hurwitz spaces
over $\mathbb C$ in
the more general context of racks.
Our first main result shows that the integral homology of Hurwitz spaces
stabilizes in a
suitable sense.
We use $[g]$ to denote the map $\cphurc n c \to \cphurc {n+1} c$, viewed as a
map of homotopy quotients $c^n/B_n \to c^{n+1}/B_{n+1}$ induced by
$(g_1, \ldots, g_n) \mapsto (g_1, \ldots, g_n, g)$.

\begin{theorem}
	\label{theorem:some-large-homology-stabilizes}
	Let $c$ be a finite rack whose connected components are $c_1, \ldots, c_\upsilon$.
	for any
	$i$ and $\lambda$, with $i \geq 0$, $1 \leq \lambda \leq \upsilon$, 
	there are constants $I$ and $J$, depending only on $|c_\lambda|$ and the
	maximum order of any element of $c_\lambda$, so that 
	for
	$n_\lambda > Ii + J$,
	the maps $[g]$ for $g \in c_\lambda$
	all induce isomorphisms
	$H_i(\cphurc {n_1, \ldots, n_\upsilon} c, \mathbb Z) \to H_i(\cphurc
	{n_1,\ldots, n_{\lambda-1}, n_\lambda + 1, n_{\lambda+1}, \ldots, n_\upsilon} c, \mathbb Z)$.
\end{theorem}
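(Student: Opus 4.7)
The plan is to generalize the Ellenberg--Venkatesh--Westerland strategy that we used in \cite{landesmanL:the-stable-homology-of-non-splitting} for non-splitting racks. I would first package the homologies into a multigraded ring
\[
R = \bigoplus_{(n_1,\ldots,n_\upsilon) \in \mathbb{Z}_{\geq 0}^\upsilon} H_*\bigl(\cphurc{n_1,\ldots,n_\upsilon}{c}, \mathbb Z\bigr),
\]
where the product comes from concatenation of branch divisors, i.e.\ from the $E_1$-algebra structure on $\bigsqcup_n c^n/B_n$. An element $g \in c_\lambda$ then represents a class $[g] \in H_0$ in the $\lambda$-th unit direction of $R$, and the stabilization map in the theorem is exactly left multiplication by $[g]$. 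The task becomes showing that this multiplication is an isomorphism on $H_i$ once $n_\lambda > Ii + J$.

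The next step is to replace $[g]$ by a canonical central element. Let $U_\lambda$ be the product over all $g \in c_\lambda$ of $[g]$ (or a suitable power thereof); a standard braid-group calculation shows $U_\lambda$ is central in $R$, and its action factors as a product of the individual stabilization maps $[g']$ for $g' \in c_\lambda$. Hence it suffices to show that multiplication by $U_\lambda$ is an isomorphism on $H_i$ in the stable range, after which a two-out-of-three argument—combined with the fact that any two generators $g, g' \in c_\lambda$ are braid-conjugate and thus induce homotopic stabilization maps up to a twist absorbed by the range—recovers the per-element statement. To control multiplication by $U_\lambda$, I would build an ``arc complex'' $C_\bullet$ whose reduced homology computes $\tor^R_*(\mathbb Z, -)$ relative to the ideal $(U_\lambda)$; if $C_\bullet$ is $\bigl((n_\lambda - J')/I'\bigr)$-connected, then the resulting spectral sequence forces multiplication by $U_\lambda$ to act as an isomorphism on $H_i$ in the claimed range.

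The main obstacle, and the reason this theorem goes beyond our previous results, is proving this connectivity estimate for $C_\bullet$ in the general rack setting. In the non-splitting case, the braid action on tuples in $c$ is transitive in a strong enough sense that the connectivity estimate can be made combinatorial and essentially elementary. For a general rack, the braid orbits on $c^n$ are intricate, and the direct transitivity argument collapses. I would attack the connectivity estimate by induction on the number $\upsilon$ of components, stabilizing in one orbit $c_\lambda$ at a time while filtering a tuple by its interaction with the remaining orbits $c_\mu$ ($\mu \neq \lambda$). On each associated graded piece of this filtration, the connectivity question should reduce either to a restriction of the non-splitting case (where the prior argument applies within the single orbit) or to a purely combinatorial statement about the rack operation restricted to $c_\lambda$; the symmetry inside a single orbit is what makes this tractable. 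Finally, once the connectivity of $C_\bullet$ is established with slope $1/I$ and intercept $-J$, feeding this back through the spectral sequence yields the theorem.
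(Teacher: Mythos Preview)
Your proposal has a genuine gap at the heart of the connectivity argument. You hope that filtering by ``interaction with the other components $c_\mu$'' will reduce the connectivity estimate for $C_\bullet$ to either the non-splitting case within a single orbit $c_\lambda$ or to an unspecified combinatorial statement. But the paper explicitly notes that a single conjugacy class can already fail the non-splitting condition---transpositions in $S_d$ for $d\ge 4$ being the basic example---so restricting attention to one orbit does not restore the transitivity that made the EVW arc-complex argument work. Your filtration does nothing to address this; the difficulty is internal to $c_\lambda$, not in how $c_\lambda$ interacts with the other components. The ``purely combinatorial statement about the rack operation restricted to $c_\lambda$'' is precisely the hard part, and you have not said what it is.

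The paper takes a completely different route and avoids ever proving connectivity of an arc complex directly for $CA_c/\alpha_s$. Instead, it first quotients by central powers $\alpha_x^{\ord(x)}$ of \emph{every} element $x\in c$, shows the resulting Koszul-type quotient $A_c/(\alpha_c^{\ord(c)})$ is bounded in a linear range via a general $\EE_1$-algebra criterion (this step is easy because the quotient has nilpotent augmentation ideal, so a pigeonhole argument handles $H_0$), and then passes to $CA_c$ by a filtration whose associated graded is controlled by $CA_{c''}$ for smaller subracks. The crucial new idea is a \emph{descending} induction on the number of quotients: to remove one $\alpha_y$ from the list, it suffices (by a simple cofiber-sequence lemma) to show that $CA_c/(\alpha_x^{\ord(x)}: x\in S)[\alpha_y^{-1}]=0$. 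This vanishing is a stable-homology statement, and it is proved by a rigidity criterion (comparing base change along $A_c\to A_{N_c(c')}$ after localization) whose key input is a bar-construction equivalence from the earlier paper. None of the localization, descending-induction, or rigidity machinery appears in your proposal, and it is exactly this machinery that bypasses the connectivity problem you are stuck on.
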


We deduce \autoref{theorem:some-large-homology-stabilizes} from 
\autoref{theorem:spectra-homology-stabilization} in
\autoref{subsubsection:some-large-homology-stabilizes-proof}.

We next wish to compute the stable value of the homology, when all the $n_1,
\ldots, n_\upsilon$ are sufficiently large. We refer to this as the {\em
dominant} stable homology.
We use $\conf_{n_1, \ldots, n_\upsilon}$ to denote the configuration space of points in
$\mathbb C$ with $n_i$ points of color $i$, as defined in
\autoref{definition:conf}.
Recall we use $G^0_c$ to denote the reduced structure group of the rack $c$.

\begin{theorem}
	\label{theorem:all-large-stable-homology}
	Let $c$ be a finite rack whose connected components are $c_1, \ldots, c_\upsilon$.
	Then there are constants $I$ and $J$, depending on $c$, so that for any
	$i \geq 0$ and $n_1, \ldots, n_\upsilon > Ii + J$, 
	and any component $Z \subset \cphurc {n_1, \ldots, n_\upsilon} c$,
	the map
	$H_i(Z, \mathbb Z[\frac 1 {|G^0_c|}]) \to H_i(\conf_{n_1, \ldots, n_\upsilon}, \mathbb
	Z[\frac 1 {|G^0_c|}])$
	is an isomorphism.
\end{theorem}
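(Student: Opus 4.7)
The strategy is to package the Hurwitz spaces and the colored configuration spaces into $E_2$-algebras under horizontal juxtaposition of branch divisors in $\mathbb A^1_{\mathbb C}$, and to compare them via scanning / group-completion. Set $A := \bigsqcup_{\vec n} \cphurc{\vec n}{c}$ and $B := \bigsqcup_{\vec n} \conf_{\vec n}$; the natural forgetful map $A \to B$ remembering only the branch divisor together with its orbit-labels is an $E_2$-algebra map. The first step is to reduce to a stabilized setting: \autoref{theorem:some-large-homology-stabilizes}, together with its classical analogue for $B$, says that for any choice of representatives $g_\lambda \in c_\lambda$, multiplication by $g_\lambda$ is a homology isomorphism in the stated range. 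So one can form mapping telescopes $A_\infty$ and $B_\infty$ localizing at $g_1, \ldots, g_\upsilon$; their components compute the homology of the components of $\cphurc{\vec n}{c}$ and of $\conf_{\vec n}$ in the dominant range, and it suffices to show $A_\infty \to B_\infty$ is a homology isomorphism with $\mathbb Z[\tfrac{1}{|G^0_c|}]$-coefficients, component by component.

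Next, I would use the $E_2$-bar construction (equivalently the group-completion theorem, identifying a component of $A_\infty$ with $\Omega^2$ of a configuration-type space built out of the rack $c$, and similarly for $B_\infty$ built from $\{c_1, \ldots, c_\upsilon\}$) to reduce the comparison to the level of derived $E_2$-indecomposables. On the configuration side, the indecomposables at the $\pi_0$-level are the free module on $\{c_1, \ldots, c_\upsilon\}$ and the higher terms are classical. On the Hurwitz side, the indecomposables are built from $c$ itself with its intrinsic $G^0_c$-action, and the comparison map realizes at this level as the projection $c \to \{c_1, \ldots, c_\upsilon\}$ collapsing each orbit. After inverting $|G^0_c|$, a standard averaging / transfer argument splits every $G^0_c$-representation as a summand of its coinvariants, matching the Hurwitz indecomposable contribution from each orbit $c_\lambda$ with the single configuration contribution from $\lambda$.

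The main obstacle is two-fold. First, one must set up a Conway--Parker-type statement identifying the components of $\cphurc{\vec n}{c}$ in the dominant range and exhibiting each as mapping stably into a well-defined component of $B_\infty$, so that a component-by-component comparison even makes sense. Second, and more seriously, one must propagate the identification at the level of $E_2$-indecomposables into an identification of full stable homology, which amounts to controlling the bar spectral sequence $H_\ast Q^{E_2}(A)[\tfrac{1}{|G^0_c|}] \Rightarrow H_\ast(A_\infty)[\tfrac{1}{|G^0_c|}]$ (and its $B$-analogue) and showing the two degenerate compatibly. This control is the technical heart of the argument: the higher filtration pieces on the Hurwitz side involve iterated rack-twisted tensor products of $c$, whose $G^0_c$-action must be averaged away using the transfer that inverting $|G^0_c|$ makes available. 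Managing this filtration combinatorially for a general (splitting) rack is precisely the improvement that \autoref{theorem:some-large-homology-stabilizes} enables over the non-splitting treatment of \cite{landesmanL:the-stable-homology-of-non-splitting}.
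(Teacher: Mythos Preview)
Your high-level strategy matches the paper's: use \autoref{theorem:some-large-homology-stabilizes} to pass to a stabilized object, then invoke group-completion to reduce to a structured comparison with colored configuration space. Where you diverge is in the implementation of the stable comparison, and your version carries a gap that the paper avoids.

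The paper does not work with $E_2$-indecomposables or a bar spectral sequence. Instead, it uses only the $E_1$ group-completion theorem (so $\Omega B\Hur^c$, a single loop space) and compares classifying spaces directly. The key construction is the auxiliary space $D^c := B\pi_0\Hur^{c} \times_{B\pi_0\Hur^{c/c}} B\Hur^{c/c}$; the map $\CHur^c \to \Omega D^c$ is exactly what replaces each Hurwitz component by the corresponding configuration component, so $D^c$ resolves your ``obstacle 1'' in one stroke. Then the stable comparison (\autoref{theorem:dominantstablehomology}) is reduced, via Serre spectral sequences and a weak-simplicity argument, to two facts: (i) the fiber of $D^c \to B\Hur^{c/c}$ is $BK$ with $K$ finite of order invertible in $\mathbb Z[\tfrac{1}{|G^0_c|}]$ (\autoref{lemma:components-factors}), and (ii) the rack-space map $B\Hur^c \to B\Hur^{c/c}$ is already a $\mathbb Z[\tfrac{1}{|G^0_c|}]$-homology equivalence (cited from \cite[Lemma 4.5.6]{landesmanL:the-stable-homology-of-non-splitting}). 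No iterated bar, no filtration of rack-twisted tensor products.

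Your approach via $E_2$-indecomposables is not wrong in principle---an isomorphism on derived $E_2$-indecomposables does propagate to $B^2$ and hence to group-completions---but your ``obstacle 2'' is real and unfilled. You assert that the $E_2$-indecomposables of $\Hur^c$ are ``built from $c$ itself with its intrinsic $G^0_c$-action,'' but $\Hur^c$ is not a free $E_2$-algebra when the rack is nontrivial, and computing $Q^{E_2}$ of it is a genuine calculation you have not supplied. The paper's $E_1$/classifying-space route sidesteps this entirely: the only computation of indecomposables it needs is the $E_1$ one (\autoref{lemma:e1cellshurwitz}), used for the stability theorem, not for the stable-value comparison.
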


When working in the case that $c \subset G$ is a union of conjugacy classes
generating a finite group $G$, we have $G^0_c = G/Z(G)$, for $Z(G)$ the center
of $G$, and so the above
computes the stable homology after inverting $|G/Z(G)|$.

\autoref{theorem:all-large-stable-homology} follows from
\autoref{theorem:dominantstablehomology}, computing the homology with all
elements of $c$ inverted,
and \autoref{theorem:some-large-homology-stabilizes},
showing that the $i$th homology of Hurwitz space does stabilize when $n_1, \ldots,
n_\upsilon > Ii +J$.
We omit further details, except to say that
this deduction is analogous to the deduction of
\cite[Theorem 1.4.7]{landesmanL:the-stable-homology-of-non-splitting},
carried out in \cite[\S4.2.3]{landesmanL:the-stable-homology-of-non-splitting},
from \cite[Theorem
4.2.2]{landesmanL:the-stable-homology-of-non-splitting},
computing the relevant homology with elements of $c$ inverted,
and
\cite[Theorem
6.1]{EllenbergVW:cohenLenstra},
showing the relevant homology stabilizes.

\begin{remark}
	\label{remark:}
	If one takes $c$ to be the set of transpositions in the symmetric group
	$S_d$, \autoref{theorem:all-large-stable-homology} implies
	\cite[Conjecture 1.5]{EllenbergVW:cohenLenstra}.
\autoref{theorem:all-large-stable-homology} also implies a recent conjecture of
the authors stated in
	\cite[Conjecture
	1.6.4]{landesmanL:the-stable-homology-of-non-splitting}.
\end{remark}

\subsection{Proof ideas}

\subsubsection{Proof of the main homological stability results}
We now explain the main ideas in the proof of our homological stability
results.
We focus on the case that $c \subset G$ is a single conjugacy class
generating $G$, but a similar explanation applies to arbitrary finite racks. 
Our proof builds crucially both on
Ellenberg-Venkatesh-Westerland's Annals paper
showing that non-splitting Hurwitz spaces have homology which stabilizes
\cite{EllenbergVW:cohenLenstra}
(and the alternate proof outline in Oscar Randal-Williams Bourbaki article
\cite{randal-williams:homology-of-hurwitz-spaces})
and our prior work computing the stable value of this homology in the
non-splitting case
\cite{landesmanL:the-stable-homology-of-non-splitting}.

A key point in the work of 
\cite{EllenbergVW:cohenLenstra} proving homological stability for certain
Hurwitz spaces is that one can often prove that $H_i(\Hur_n^c)$ stabilizes as $n
\to \infty$ by induction on $i$, once one knows that $H_0(\Hur_n^c)$ stabilizes. In the case that $c$ is a single conjugacy class, this only happens when $c$ satisfies the \textit{non-splitting condition} of 
\cite{EllenbergVW:cohenLenstra}, i.e., the intersection of $c$ with any proper
subgroup $G' \subset G$ must either be empty or remain a single conjugacy class. We note that this condition is quite restrictive, and fails already when $c$ is the conjugacy class of transpositions in $S_d$ for $d \geq4$. 

One can rephrase the condition that the operator $x: H_*(\Hur_n^c) \to
H_*(\Hur_{n+1}^c)$ induces homological stability (meaning that $x$ induces an
isomorphism $H_i(\Hur_n^c) \to
H_i(\Hur_{n+1}^c)$ for $i$ sufficiently large, depending on $n$)
as follows: Consider $C_*(\Hur^c)$, the graded
differential graded algebra of singular chains of $\Hur^c =
\coprod_{n=0}^{\infty}\Hur^c_n$. Then proving homological stability for
$C_*(\Hur^c)$ is the same as proving that $C_*(\Hur^c)/x$ is \textit{bounded in
a linear range}, i.e., it has homology groups vanishing in a range of degrees of
the form $*>An+B$ with $A>0$. Here $C_*(\Hur^c)/x$ refers to the cofiber (or mapping cone) of multiplication by $x$ on $C_*(\Hur^c)$, whose homology groups encode the kernel and cokernel of multiplication by $x$ on the homology of $\Hur^c$. 

The first ingredient in our proof is to show that 
 for a sequence of central operators $x_1,\dots,x_m$, if the Koszul complex $C_*(\Hur^c)/(x_1,\dots,x_m)$ has homology bounded in degree $0$, then its homology is bounded in a linear range. The proof is a natural generalization of what the
techniques of \cite{EllenbergVW:cohenLenstra} (specifically as presented in
\cite{randal-williams:homology-of-hurwitz-spaces}) allow one to prove with
regards to homological stability. However, directly proving boundedness of $C_*(\Hur^c)/(x_1,\dots,x_m)$ in a linear range allows us to significantly simplify the previous techniques for proving homological stability, even in the non-splitting case. We take the $x_1,\dots,x_m$ to be powers of each of the elements of $c$, so that a simple pigeonhole principle argument shows that $C_*(\Hur^c)/(x_1,\dots,x_m)$ is bounded in degree $0$.

%

So far, this proves $C_*(\Hur^c)/(x_1,\dots,x_m)$ is bounded in a linear range,
which is not homological stability, but rather a polynomial stability result,
roughly saying that $H_i$ asymptotically grows like a polynomial of degree at
most $m-1$.\footnote{See \cite[Theorem A]{bianchiM:polynomial-stability} for a statement of similar flavor.}

Because $\Hur^c$ is a disjoint union of connected Hurwitz spaces for conjugacy classes of various subgroups of $G$, a filtration argument along with induction on the size of $c$ implies that $C_*(\CHur^c)/(x_1,\dots,x_m)$ is also bounded in a linear range.

The next step is to decrease the number of cofibers taken by descending induction, until we have proven homological stability. More precisely, we show by descending induction on $1\leq k \leq m$ that $C_*(\CHur^c)/(x_1,\dots,x_k)$ is bounded in a linear range. The key ingredient in the inductive step is proving that $C_*(\CHur^c)/(x_1,\dots,x_k)[x_{k+1}^{-1}] = 0$. Because we are inverting an operator, this is a stable homology question, so we prove this using the techniques in \cite{landesmanL:the-stable-homology-of-non-splitting} for understanding stable homology, as well as another filtration argument to go between $\Hur^c$ and $\CHur^c$.

In the end, we learn that $C_*(\CHur^c)/x_1$ is bounded in a linear range, which
is equivalent to the claimed homological stability result. Computing the
dominant part of the stable homology then proceeds very similarly to the proof
of \cite[Proposition 4.5.1]{landesmanL:the-stable-homology-of-non-splitting}.

\subsubsection{Proof idea for the Picard rank conjecture}
\label{subsubsection:picard-rank-idea}
The rough idea for proving the Picard rank conjecture is that the tangent space
to the Picard group of a variety should be controlled by its first
cohomology and the component group should be controlled by its second
cohomology.
At least this is true for smooth projective varieties.
We use our computation of the stable values of these two cohomology groups
to compute the Picard group of Hurwitz space via applying the above ideas to 
a suitable compactification of Hurwitz space.

\subsubsection{Proof idea for the Cohen-Lenstra-Martinet moments}

The Cohen-Lenstra-Martinet conjectures can be rephrased as showing that the
number of $\mathbb F_q$ points on certain Hurwitz spaces for the group $H \rtimes \Gamma$ agree with the number
of $\mathbb F_q$ points on certain Hurwitz spaces for $\Gamma$.
We reduce the question of counting their $\mathbb F_q$ points to showing their
cohomologies over $\overline{\mathbb F}_q$ agree and the Frobenius actions also
agree.
The fact that their cohomologies agree boils down to
our computation of the stability homology of Hurwitz spaces from \autoref{theorem:all-large-stable-homology}.

\subsubsection{Proof idea for Malle's conjecture}

In order to count $G$ extensions of $\mathbb F_q(t)$, we can equivalently count
$\mathbb F_q$ points of Hurwitz spaces for $G$.
As usual, we can reduce this to understanding the trace of Frobenius on the
cohomology of these Hurwitz spaces over $\overline{\mathbb F}_q$.
Using our main homological stability results, along with equivariance of the
Frobenius actions along the stability maps,
which we prove in \autoref{section:frobenius-equivariance}, we can essentially reduce the
problem to counting the components of Hurwitz spaces.
Many of the basic ideas for how one could count these were outlined by
T\"urkelli \cite{turkelli:connected-components-of-hurwitz-schemes}.
We give our own rendition of these ideas, utilizing the machinery of the lifting invariant
of Ellenberg-Venkatesh-Westerland in a paper of Wood
\cite{wood:an-algebraic-lifting-invariant}
to count these components.

A crucial ingredient beyond homological stability is the verification that the topological stabilizations
maps are suitably equivariant for the action of Frobenius on cohomology.
To verify this, we use methods from logarithmic geometry, similar to
\cite[Appendix
A]{ellenbergL:homological-stability-for-generalized-hurwitz-spaces}. 
There are a number of additional subtleties in our present setting,
especially relating to the fact that the boundary monodromy may be nontrivial.

\subsection{Outline}

This paper is organized as follows.
The first part of the paper concerns our topological results.
In \autoref{section:background}, we review background and collect various
notation we use throughout the paper for Hurwitz spaces.
In \autoref{section:stabquot} we prove a weak form of homological stability for
Hurwitz spaces, showing that a suitable quotient (in the sense of higher algebra)
of Hurwitz spaces stabilizes.
In \autoref{section:algebra-preliminaries}
we prove a key algebraic tool \autoref{prop:rigiditymodules},
which gives a criterion for the base change along a map of ring spectra to not change a module.
We then use this proposition in
\autoref{section:homological-stability}
to show that the homology of Hurwitz spaces stabilizes, proving
\autoref{theorem:some-large-homology-stabilizes}.
We conclude our topological part of the paper in
\autoref{section:dominant-stable-homology} by computing the dominant stable
homology
Hurwitz spaces, thereby proving \autoref{theorem:all-large-stable-homology}.
We then press on to the second part of the paper, which contains our three main
applications.
In \autoref{section:picard}, we prove an asymptotic version of the Picard rank
conjecture.
In \autoref{section:frobenius-equivariance}, we use tools from log geometry to 
show the stabilization map on cohomology of Hurwitz spaces is suitably
equivariant for the action of Frobenius.
We then apply this in \autoref{section:clm} to compute the moments predicted by
Cohen-Lenstra-Martinet
and in \autoref{section:malle} to prove versions of Malle's conjecture.
We conclude with further questions in \autoref{section:further-questions}.

\subsection{Acknowledgements}
We would like to give an especially gracious acknowledgement to Andrea Bianchi,
who explained to us a way to remove an extraneous hypothesis from a technical result of our previous paper
\cite{landesmanL:the-stable-homology-of-non-splitting}
so that it was suitably general to apply in the setting of the present paper.
We thank Jiuya Wang for extensive discussions regarding Malle's conjecture, and
many detailed comments on a draft.
We thank Joe Harris and Anand Patel for helpful discussions regarding the Picard rank
conjecture.
We thank Dori Bejleri for helping us prove a technical result regarding normal
crossings compactifications of Hurwitz spaces in \cite[Appendix
B]{ellenbergL:homological-stability-for-generalized-hurwitz-spaces}, which was
needed for this paper.
We thank Sun Woo Park for extensive comments on a draft.
We also thank 
Brandon Alberts,
Dan Abramovich,
Anand Deopurkar,
Andrea Di Lorenzo,
Jordan Ellenberg,
Pavel Etingof,
Nir Gadish,
Jack Hall,
Peter Koymans,
Andrew Kresch,
Eric Larson,
Daniel Litt,
Yuan Liu,
Curt McMullen,
Martin Olsson,
Dan Petersen,
Sasha Petrov,
Tim Santens,
Will Sawin,
Phil Tosteson,
Leandro Vendramin,
Akshay Venkatesh,
Nathalie Wahl,
Craig Westerland,
Melanie Wood,
Zhiwei Yun,
and
Bogdan Zavyalov
for helpful conversations.
Landesman 
was supported by the National Science
Foundation 
under Award No.
DMS 2102955.
Levy was supported by the Clay Research
Fellowship.

\section{Background and notation for Hurwitz spaces}
\label{section:background}

In this section, we develop the theory of Hurwitz spaces associated to a general
rack. Hurwitz spaces parameterizing $G$-covers, for $G$ a group, are quite well
established, but it seems to us that the natural setting to define Hurwitz
spaces is really in the more general setting of a rack. It will be important to
our proofs that we consider Hurwitz spaces associated with quotient racks, which
may not come from unions of conjugacy classes in a group.

To this end, we review background on racks in \autoref{subsection:rack}.
We introduce Hurwitz spaces over the complex numbers associated to racks in
\autoref{subsection:complex-hurwitz}.
We then discuss Hurwitz spaces associated to unions of conjugacy classes
in groups over more general bases in \autoref{subsection:general-hurwitz-notation}.
The above Hurwitz spaces occur over $\mathbb A^1$, which are more natural from
the perspective of topology. In \autoref{subsection:hurwitz-p1}, we introduce
notation for
Hurwitz spaces over $\mathbb P^1$, which are more natural from the perspective
of algebraic geometry.

\subsection{Background on racks}
\label{subsection:rack}

We review some basic definitions associated to racks.

\begin{definition}
	\label{definition:rack}
	A {\em rack} is a set $c$ with an action map $\triangleright: c \times c
	\to c, (a,b) \mapsto a \triangleright b$ such that for all $n \geq 1$
	and all $1 \leq i \leq n-1$, the operation
	\begin{align*}
		\sigma_i : c^n & \rightarrow c^n \\
		(x_1, \ldots, x_{i-1}, x_i, x_{i+1}, x_{i+2}, \ldots, x_n) & \mapsto
		(x_1, \ldots, x_{i-1}, x_{i+1}, x_{i+1} \triangleright x_i, x_{i+2}, \ldots,
		x_n)
	\end{align*}
	defines an action of the braid group $B_n$, generated by $\sigma_1,
	\ldots, \sigma_{n-1}$, on $c^n$.
\end{definition}
\begin{remark}
	\label{remark:}
	Often, racks are defined as sets with a binary operation $\triangleright$
	such that $x\triangleright(-):c \to c$ is a bijection for each $x \in c$ and and $x \triangleright (y \triangleright z) = (x
	\triangleright y) \triangleright (x \triangleright z)$. It is
	straightforward to see the above definition is equivalent to this one
	using the defining relations of the braid group.
\end{remark}
\begin{example}
	\label{example:}
	Suppose $G$ is a group and $c \subset G$ is a conjugacy invariant
	subset, in the sense that for any $g,h \in c$ we also have $g^{-1}hg \in
	c$.
	Then the operation $g \triangleright h := g^{-1} hg$ endows $c$ with the structure
	of a rack.
\end{example}
\begin{definition}
	\label{definition:}
	The {\em reduced structure group} $G^0_c$ of a rack $c$ is the subgroup of
	$\on{Aut}(c)$, the automorphism group of the rack $c$, generated by the
	automorphisms $y \mapsto x \triangleright y$ for all $x \in c$. It
	follows from the definition of a rack that these are rack automorphisms.
\end{definition}

\begin{definition}\label{definition:abelianizationrack}
	Given a rack $c$, we let $c/c$ denote the rack with underlying set the orbits of $c$ under the action of the reduced structure group. The rack structure is commutative, i.e. $x \triangleright y = y$ for all $x,y \in c/c$. There is a natural map of racks $c \to c/c$. We refer to $c/c$ as the set of \textit{components} of the rack $c$, and refer to each fiber of the projection of the map $c \to c/c$ as a component of $c$.
\end{definition}

\subsection{Hurwitz spaces over the complex numbers}
\label{subsection:complex-hurwitz}

In this subsection, we define Hurwitz spaces over the complex
numbers associated to an arbitrary rack.

\begin{definition}
	\label{definition:conf}
	Given a scheme $B$, there is an open subscheme $U \subset \mathbb A^n_B$
	parameterizing the locus where all coordinates are distinct. There is an
	action of the symmetric group $S_n$ on $U$ by permuting the coordinates and we define $\conf_{n,B} :=
	U/S_n$ to be the {\em configuration space} of $n$ points in $\mathbb A^1$ over
	$B$.
	More generally, let $\conf_{n_1, \ldots, n_\upsilon, B} := U/S_{n_1} \times \cdots
	\times S_{n_\upsilon}$ where $S_{n_i}$ acts on the $n$ consecutive
	coordinates in the range $[n_1 + \cdots +
	n_{i-1} + 1, n_1 + \cdots + n_i]$.
	When $B = \spec R$, for $R$ a ring, we often write this as $\conf_{n_1,
	\ldots, n_\upsilon,R}$
	and when $R = \mathbb C$, we abbreviate this to $\conf_{n_1, \ldots,
	n_\upsilon}$.
\end{definition}

\begin{definition}
	\label{definition:rack-pointed-hurc}
	Let $c$ be a rack. 
	Upon identifying $\pi_1(\conf_{n}) \simeq B_n$, we can
	identify finite \'etale covers of $\conf_{n}$ with maps from
	$B_n$ to finite sets.
	Define the {\em pointed Hurwitz scheme over $\mathbb C$} to be the
	finite \'etale cover $\phurc n c \to \conf_{n}$
	corresponding to the
	map $B_n \to \aut(c^n)$ associated to the rack from its definition in
	\autoref{definition:rack}.
	If $c$ is a rack with orbits $c_1, \ldots, c_\upsilon$ and $n_1+ \cdots+ n_\upsilon =
	n$, let $c^{n_1, \ldots, n_\upsilon} \subset c^n$ denote the subset such that
	there are $n_i$ elements in orbit $c_i$. Then we define 
	$\phurc {n_1, \ldots, n_\upsilon} c \to \conf_{n_1, \ldots, n_\upsilon}$ to be the finite \'etale cover
	corresponding to the map $B_n \to \aut(c^{n_1, \ldots, n_\upsilon})$.
	This is a union of components of $\phurc {n_1, \ldots, n_\upsilon} c$.
There is a subset $(c^n)^\circ \subset c^n$ parameterizing $n$ tuples of
elements in $c$ whose actions together generate $G^0_c$.
We let $\cphurc n c$ denote the finite \'etale cover of $\conf_n$ corresponding
to the map $B_n \to \aut( (c^n)^\circ)$. Finally, we let $\cphurc {n_1, \ldots,
n_\upsilon} c$ denote the finite \'etale cover of $\conf_{n_1, \ldots, n_\upsilon}$
corresponding to the map $B_n \to \aut( (c^n)^\circ \cap c^{n_1, \ldots, n_\upsilon})$.
\end{definition}

\begin{example}
	\label{example:}
	Suppose $c \subset G$ is a union of $\upsilon$ conjugacy classes $c_1
	\cup \cdots \cup c_\upsilon$ in $G$.
	Then the complex points of 
	$\phurc n c$ parameterize $G$ covers of $\mathbb{C}$ with $n$ branch
	points whose inertia lies in $c \subset G$ with
	trivialization of the $G$ cover in some subset $[k,\infty]$ for some $k \in \RR$ (where the choice of $k$ is not part of the data of a point in the space).
	Similarly, $\cphurc n c$ parameterizes covers as above whose source is connected. 
	The complex points of
	$\phurc {n_1, \ldots, n_\upsilon} c$ parameterize 
	covers with $n_i$ branch points whose inertia lies in $c_i$,
	and $\cphurc {n_1, \ldots, n_\upsilon} c$ parameterizes such covers whose source is connected. 
\end{example}

\subsection{Notation for Hurwitz spaces over general bases}
\label{subsection:general-hurwitz-notation}

We now introduce notation for Hurwitz schemes over bases other than the complex
numbers. We start by recalling the definition of the pointed Hurwitz scheme
as defined in \cite{landesmanL:the-stable-homology-of-non-splitting}.
\begin{definition}
	\label{definition:hurwitz-over-b}
	Fix a finite group $G$
	and a subset $c \subset G$ of the form
	$c = c_1 \cup \cdots \cup c_\upsilon$ for $c_1, \ldots, c_\upsilon$
	pairwise distinct
	conjugacy classes in the subgroup of $G$ generated by $c$,
	a scheme $B$ on which $|G|$ is invertible, and positive integers $n$ and
	$v$.
	Assume either $c$ is closed under invertible powering (meaning $g \in c
	\implies g^t \in c$ for any $t$ relatively prime to $|G|$) or
	$B$ is Henselian with residue field $\spec \mathbb F_q$ and $c$ is
	closed under $q$th powering ($g \in c \implies g^q \in c$).
	We use the notation $\phur G {n} {c} B$
	to denote the {\em pointed Hurwitz scheme} as defined in
	\cite[Definition
	2.1.3]{landesmanL:the-stable-homology-of-non-splitting}.
	The scheme
	$\phur G {n} {c} B$
	was shown to exist in \cite[Remark 2.1.2 and
	2.1.4]{landesmanL:the-stable-homology-of-non-splitting}.
	The $T$ points of $\phur G {n} {c} B$ correspond to tuples 
	$(D, h' : X \to \mathscr{P}^w_T, t: T \to X \times_{h', \mathscr{P}^w_T,
	\widetilde{\infty}_T} T, i: D \to \mathbb P^1_T, X, h: X \to
	\mathbb P^1_T)$
	such that $D$ is a finite \'etale of degree $n$ over
	$T$, $i: D \subset \mathbb A^1_T$ is a closed immersion, $X$ is a smooth
	proper relative curve over $T$, $h: X \to \mathbb P^1_T$ is a finite
	locally free Galois $G$ cover \'etale away from $\infty_T \cup i(D),$
	(for $\infty_T$ the section of $\mathbb P^1_T \to T$ at $\infty$,)
	the inertia of $X \to \mathbb P^1_T$ over any geometric point of $i(D)$
	lies in $c$, $\mathcal P^w_T$ is a root stack of $\mathbb P^1$ over
	$\infty$ of order equal to the order of inertia of $h$ over $\infty$,
	$\widetilde{\infty}_T$ is the base change of the natural section
	$\widetilde{\infty} : B \to \mathscr P^w$, $h'$ is a finite locally free
	cover \'etale over $\widetilde{\infty}_T$ so that the composition with
	the coarse space map $\mathscr{P}^w_T \to \mathbb P^1_T$ is $h$, and $t$
	is a section of $h'$ over $\widetilde{\infty}_T$. 

	We let $\cphur {G}{n} {c} B \subset \phur G {n} {c} B$
	denote the union of connected components parameterizing geometrically
	connected covers $X$. 
\end{definition}

\begin{warn}
	\label{warning:}
	We have defined 
	$\phurc {n} {c}$
	over the complex numbers for an arbitrary rack $c$, but
	we have only defined 
$\phur {G} {n} {c} B$
	over a general base $B$ when $c$ is a rack which is a subset of a group $G$ whose
	order is invertible on $B$.
	The reason for this is that, when $c \subset G$, there is a stack
	parameterizing $G$ covers defined over $B$. For a general rack, we are
	not sure how to define such a stack over $B$.
\end{warn}

\begin{definition}
	\label{definition:hurwitz-stack-over-b}
	Let $G'$ be a group and $c \subset G'$ be a subset which is closed under
	conjugation.
	Suppose moreover that $c$ generates a normal subgroup $G \subset G'$.
	Fix a base scheme $B$ and
	suppose $c$ satisfies the same hypotheses as in
	\autoref{definition:hurwitz-over-b}.
	Then, the pointed Hurwitz scheme $\phur {G'} n c B$ parameterizes
	$G'$ covers of $\mathbb A^1$ together with a marked section $t$, as
	described in \autoref{definition:hurwitz-over-b}.
Note here that
since $c$ is contained in $G$, $c$ does not generate $G'$ unless $G = G'$.
The geometric points of 
$\phur {G'} n c B$
correspond to covers
consisting of disjoint unions of
$|G'|/|G|$ many $G$ covers.
The fiber over $t$ as above still has an action of $G'$.
This moreover endows both
$\phur {G'} {n} {c} B$ and $\cphur {G'}{n} {c} B$
with actions of $G'$, the latter only being nonempty if $G = G'$. For any subgroup $K \subset G'$, we define
$\quohur {G}{K}{n} {c} B$ and $\cquohur {G}{K} {n} {c} B$
to be the corresponding quotients.
We note that the definitions of these stacks implicitly involves choosing
an inclusion $K \to G'$ and an inclusion $G \subset G'$ as
a normal subgroup.
\end{definition}

The next lemma will be frequently used when comparing Hurwitz schemes over
$\mathbb C$ and $\overline{\mathbb F}_q$.
\begin{lemma}
	\label{lemma:irreducible-generization-criterion}
	Suppose $B$ is a local Henselian scheme with closed point $s$ and generic point
	$\eta$. Let $X$ be a Deligne-Mumford stack and $\pi:X \to B$ a smooth proper morphism.
	Suppose $X^\circ \subset X$ is an open immersion which is dense in
	every irreducible component over every fiber of $B$.
	If $Z \subset (X^\circ)_s \to s$ is an irreducible component of the special fiber of
	$X^\circ \to B$,
	there is an irreducible component $Z_B \subset X^\circ$ so that $Z_B
	\times_B s =Z$ and $Z_B \times_B \eta$ have the same number of geometric
	components that $Z$ has.
\end{lemma}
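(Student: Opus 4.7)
The plan is to lift $Z$ from the special fiber to an irreducible component of $X$ itself by taking its Zariski closure, then intersecting with $X^\circ$. The main tool is Stein factorization applied to a smooth proper Deligne--Mumford stack over a Henselian local base.

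First I would reduce to understanding the components of $X$ rather than $X^\circ$. Since $X$ is smooth, each irreducible component $X_i \subset X$ is smooth, hence connected, and the special fiber $X_s$ is smooth (so its connected and irreducible components coincide). Applying Stein factorization to the restriction $\pi|_{X_i} : X_i \to B$ produces a factorization $X_i \to Y_i \to B$ in which $Y_i \to B$ is finite \'etale (using that $(\pi|_{X_i})_* \mathcal{O}_{X_i}$ is a finite \'etale $\mathcal{O}_B$-algebra for smooth proper $\pi|_{X_i}$) and $X_i \to Y_i$ has geometrically connected fibers. Because $X_i$ is connected and $B$ Henselian local, $Y_i = \spec R_i$ with $R_i$ a connected (hence Henselian local) finite \'etale $\mathcal{O}_B$-algebra whose residue field $\kappa_i$ is a finite separable extension of $\kappa(s)$ of some degree $d_i$. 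It follows that $(X_i)_s$ and $(X_i)_\eta$ are each connected (and irreducible, by smoothness), and each has exactly $d_i$ geometric components, visible by splitting $R_i \otimes_{\mathcal{O}_B} \overline{\kappa(s)}$ and $R_i \otimes_{\mathcal{O}_B} \overline{K}$, where $K$ is the fraction field of $\mathcal{O}_B$.

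Next, let $\overline{Z} \subset X_s$ denote the Zariski closure of $Z$, an irreducible component of $X_s$. By the previous step there is a unique irreducible component $X_i \subset X$ with $(X_i)_s = \overline{Z}$. I would set $Z_B := X_i \cap X^\circ$. Since $X^\circ$ is dense in $X_i$ by hypothesis, $Z_B$ is an open dense substack of the irreducible $X_i$, hence irreducible, and therefore an irreducible component of $X^\circ$. Its special fiber is $(X_i)_s \cap (X^\circ)_s = \overline{Z} \cap (X^\circ)_s$, which is open and dense in $\overline{Z}$; because $Z$ is the unique irreducible component of $(X^\circ)_s$ contained in $\overline{Z}$, this forces $(Z_B)_s = Z$.

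To finish I would compare geometric-component counts. The generic fiber $(Z_B)_\eta$ is open and dense in the irreducible smooth stack $(X_i)_\eta$, and hence has the same number of geometric (equivalently, connected) components as $(X_i)_\eta$, namely $d_i$. Similarly $Z$ is open and dense in the irreducible smooth stack $(X_i)_s$, so also has $d_i$ geometric components. Hence $(Z_B)_\eta$ and $Z$ have the same number of geometric components, as required. The main technical point worth double-checking is that Stein factorization behaves for smooth proper Deligne--Mumford stacks over a Henselian local base exactly as in the scheme case (producing $Y_i \to B$ that is finite \'etale), but this is standard once one knows that smoothness and properness make $(\pi|_{X_i})_* \mathcal{O}_{X_i}$ a finite \'etale $\mathcal{O}_B$-algebra.
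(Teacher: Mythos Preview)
Your proof is correct and rests on the same core idea as the paper's: Stein factorization of a smooth proper morphism to a Henselian local base yields a finite \'etale cover of $B$, whose components over the special and generic fibers match up by the Henselian property. The organization differs slightly. The paper first reduces to $X^\circ = X$, Stein-factorizes $X \to T \to B$ globally, observes $T \to B$ is finite \'etale, reduces the statement to $T$, passes to the coarse space to get an honest finite \'etale scheme over $B$, and then invokes the equivalence $\mathrm{F\acute{E}t}(B) \simeq \mathrm{F\acute{E}t}(s)$. You instead decompose $X$ into irreducible components $X_i$ up front, Stein-factorize each one, and track the degree $d_i$ directly; you handle $X^\circ$ by intersecting at the end rather than reducing it away at the start. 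One small point worth noting: your decomposition step implicitly uses that the irreducible components of $X$ are disjoint (so each $X_i$ is closed, hence proper over $B$); this holds because $X$ is smooth over an irreducible base and hence \'etale-locally irreducible, and is automatic in all the paper's applications where $B$ is a Henselian DVR and $X$ is therefore normal. The paper's global Stein factorization sidesteps this bookkeeping.
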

\begin{proof}
	Since $X^\circ \subset X$ is dense in every irreducible component over
	every fiber of $B$, it suffices to prove the statement when $X^\circ =
	X$.
	Consider the Stein factorization $X \to T \to B$, so that by definition
	$T$ is the Deligne-Mumford stack which is the relative spectrum $\underline{\spec}_{\mathscr O_B} (\mathscr
	O_X)$.
	Note that $T\to B$ is smooth and proper because $\pi: X\to B$ is smooth
	and proper and $X \to T$ is surjective.
	Since $T \to B$ is also quasi-finite, $T \to B$ is \'etale.
	Since $X \to T$ has geometrically connected fibers, it suffices to prove
	the result for $T$ in place of $X$, and so we may reduce to the
	case that $X \to B$ is proper and \'etale.
	Now, if $S$ denotes the coarse moduli space of $X$, we have that $X \to
	S$ induces a bijection on irreducible components over each fiber. Hence,
	it suffices to prove the result for $S$ in place of $X$, and
	hence we may assume that $X$ is a scheme which is finite and \'etale over
	$B$.
	The statement then follows because finite \'etale covers of $B$ are in
	bijection with finite \'etale covers of $s$, using that $B$ is
	Henselian.
\end{proof}

One easy consequence of \autoref{lemma:irreducible-generization-criterion}
is that we can identify components of Hurwitz schemes over $\overline{\mathbb
F}_q$ and $\mathbb C$, as we next record.
We will use this bijection implicitly throughout the paper.
One may also deduce the following result from \cite[Lemma
10.3]{liuWZB:a-predicted-distribution}.
\begin{lemma}
	\label{lemma:component-bijection}
	The specialization map induces a bijection between the irreducible
	components of $\cquohur G K {n} c{\mathbb{C}}$
	and the irreducible components of $\cquohur G K {n} c {\overline{\mathbb
	F}_q}$.
\end{lemma}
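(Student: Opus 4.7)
The strategy is to spread both sides out over a single Henselian base and apply \autoref{lemma:irreducible-generization-criterion} to a smooth proper compactification of the Hurwitz stack. Since quotienting by the finite group $K$ and restricting to geometrically connected covers both preserve and reflect irreducible components in a way compatible with specialization, it suffices to prove the bijection for the underlying pointed Hurwitz stack $\phur G n c B$.

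Let $p$ denote the residue characteristic of $\mathbb F_q$, and let $R$ be the strict Henselization of $\mathbb Z_{(p)}$, with closed point $s = \spec \overline{\mathbb F}_q$, generic point $\eta$, and geometric generic point $\overline{\eta}$. Because $|G|$ is invertible in $R$, the pointed Hurwitz stack $X^\circ := \phur G n c R$ is defined and smooth over $\spec R$, with special fiber $\phur G n c {\overline{\mathbb F}_q}$. The next step would be to invoke the smooth proper normal crossings compactification $X$ of $X^\circ$ over $\spec R$ from \cite[Appendix B]{ellenbergL:homological-stability-for-generalized-hurwitz-spaces}, in which $X^\circ$ is dense in every irreducible component of every geometric fiber. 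Applying \autoref{lemma:irreducible-generization-criterion} to the pair $X^\circ \subset X$ over $\spec R$ then produces a specialization-induced bijection between the irreducible components of $X^\circ_s$ and the geometric components of $X^\circ_\eta$, i.e., the irreducible components of $X^\circ_{\overline{\eta}}$.

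To pass from $\overline{\eta}$ to $\mathbb C$, I would use that the set of irreducible components of a finite type Deligne-Mumford stack over an algebraically closed field is invariant under base change between algebraically closed fields of the same characteristic. Both $\overline{\eta}$ and $\mathbb C$ are algebraically closed of characteristic zero, and $\overline{\eta}$ is a countable algebraic closure, so $\overline{\eta}$ embeds into $\mathbb C$, inducing a bijection between components of $X^\circ_{\overline{\eta}}$ and $X^\circ_{\mathbb C}$. The composite is the specialization map, and it descends through the $K$-quotient and the geometrically connected locus to give the stated bijection for $\cquohur G K n c {(-)}$.

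The main obstacle is guaranteeing that the compactification $X$ is smooth and proper over the full integral base $\spec R$ (not merely over a field) and that $X^\circ$ is dense in each of its geometric fibers. This is precisely what the normal crossings compactification cited above furnishes; once it is in hand, the remainder of the argument reduces to a direct application of \autoref{lemma:irreducible-generization-criterion} together with the standard invariance of components under extensions of algebraically closed base fields of the same characteristic.
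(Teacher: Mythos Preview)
Your proposal is correct and follows essentially the same route as the paper: choose a Henselian base linking $\overline{\mathbb F}_q$ to characteristic zero, invoke the smooth proper normal crossings compactification of the Hurwitz stack from \cite[Appendix B]{ellenbergL:homological-stability-for-generalized-hurwitz-spaces}, apply \autoref{lemma:irreducible-generization-criterion}, and finish by base change between algebraically closed fields of characteristic zero. The only cosmetic difference is that the paper applies the compactification and \autoref{lemma:irreducible-generization-criterion} directly to $\cquohur G K n c B$, whereas you first argue for the pointed Hurwitz scheme $\phur G n c R$ and then descend through the $K$-quotient and the connected locus; both are valid since the cited compactification works at either level and the $K$-action is defined over the base.
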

\begin{proof}
	Let $B$ be a local henselian scheme with residue field
	$\overline{\mathbb F}_q$ and geometric generic point $\spec \kappa$ of characteristic $0$.
By \cite[Corollary
B.1.4]{ellenbergL:homological-stability-for-generalized-hurwitz-spaces},
there is a normal crossing compactification of
$\cquohur G K {n} c {B}$ which is smooth over $B$.
Hence, using \autoref{lemma:irreducible-generization-criterion}, the
specialization map induces a
bijection between the components of 
$\cquohur G K {n} c {\overline{\mathbb F}_q}$ and components of
$\cquohur G K {n} c {\kappa}$.
The latter is identified with the irreducible components of
$\cquohur G K {n} c {\mathbb C}$ via base change to a common algebraically
closed field containing both $\mathbb C$ and $\kappa$.
\end{proof}

We next define generalize 
$\phurc {n_1, \ldots,n_\upsilon} c$ to other base fields, when $c$ is a union of
conjugacy classes in a group.
\begin{definition}
	\label{notation:multi-hurwitz-pre-quotient}
	Continuing to use notation as in \autoref{definition:hurwitz-over-b},
	let $S$ be the spectrum of an algebraically closed field.
	Using \autoref{lemma:component-bijection} we define
	$\phur {G'} {n_1, \ldots, n_\upsilon} c S$ informally as the union of
	components of 
$\phur {G'} {n} c S$
parameterizing covers with $n_i$ branch points with inertia in $c_i$ and $n =
n_1 + \cdots + n_\upsilon.$
More formally, when $S = \spec \mathbb C$ we define it to be $\phurc {n_1, \ldots,
n_\upsilon} c$.
This is base changed from $\spec \overline{\mathbb Q}$ and we define 
$\phur {G'} {n_1, \ldots, n_\upsilon} c {\spec \overline{\mathbb Q}}$ to denote those
components of 
$\phur {G'} {n} c S$
whose base change to $\mathbb C$ is
$\phurc {n_1, \ldots,
n_\upsilon} c$.
When $S=\overline{\mathbb F}_p$,
it is the union of components of 
$\phur {G'} {n} c S$ 
corresponding to 
$\phurc {n_1, \ldots, n_\upsilon} c$ under 
\autoref{definition:hurwitz-over-b}.
Since base change between algebraically closed fields induces a bijection on
components, for general $S$, this enables us to define
$\phur {G'} {n_1, \ldots, n_\upsilon} c S$
as the set of components of
$\phur {G'} {n} c S$
obtained via base change from either $\overline{\mathbb Q}$ or
$\overline{\mathbb F}_p$.
\end{definition}

Using the above, we can define quotients of Hurwitz spaces as well.
\begin{notation}
	\label{notation:multi-hurwitz}
	We use notation from \autoref{notation:multi-hurwitz-pre-quotient}.
	Let
	$\quohur G K {n_1,\ldots, n_\upsilon} c S$
	denote those components of
$\quohur G K {n} c S$ 
in the $K$ 
orbit of components of the form
$\phur {G'} {n_1, \ldots, n_\upsilon} c S$ 
in 
the quotient of
$[\phur {G'} {n} c S/K]$.
We similarly use
$\cquohur G K {n} c S$ 
to denote the $K$ orbit of components of
$\cphur {G'} {n_1, \ldots, n_\upsilon} c S$ 
in
$[\cphur {G'}{n} c S/K]$.
\end{notation}

The next lemma gives a combinatorial description of the components of the above
Hurwitz stacks.
For the next lemma, we use the notion of descent along the Galois extension
$\spec \overline{\mathbb F_q} \to \spec \mathbb F_q$.
For a general reference on descent, we recommend
\cite[\S6.1]{BoschLR:Neron}, see also 
\cite[\S6.2 Example B]{BoschLR:Neron} for the case of finite Galois descent.

\begin{lemma}
	\label{lemma:g-irred-components}
Using notation from \autoref{definition:hurwitz-over-b},
and assume $\gcd(q, G) = 1$,
the following hold true.
	\begin{enumerate}
\item 	The components of
	$\cquohur G K {n} c {\overline{\mathbb F}_q}$
	can be described as tuples $(g_1, \ldots, g_n)$ modulo the action of
	$B_n$, up to the simultaneous $K$ conjugation action.
\item 	Such a component is the base change of a component of 
	$\cquohur G K {n} c {\mathbb F_q}$
	if and only if there is descent data for $\overline{\mathbb F}_q$ over $\mathbb
	F_q$ sending that component to
	a component of the form
	$(h^{-1} g_1 h, \ldots, h^{-1} g_n h)$,
	which is defined up to the action of $B_n$,
	for some $h \in K$.
\item 	In particular, if there are $n_i$ elements among $g_1, \ldots, g_n$ which
	lie in the conjugacy class $c_i \subset G$, 
	in order for $(g_1, \ldots, g_n)$ to correspond to a geometrically
	irreducible component of 
	$\cquohur G K {n} c {\mathbb F_q}$,
	it is necessary that there exists $h \in K$ with 
	$\sum_i n_i c_i = \sum_i n_i h c_i^{q^{-1}} h^{-1}$.
	\end{enumerate}
\end{lemma}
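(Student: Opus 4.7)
For part (1), the plan is to invoke \autoref{lemma:component-bijection} to reduce to the case of $\mathbb C$, where the description is immediate from \autoref{definition:rack-pointed-hurc}: by construction $\phurc n c \to \conf_n$ is the finite \'etale cover corresponding to the $B_n$-action on $c^n$, so its components correspond to $B_n$-orbits on $c^n$. Restricting to the subset $(c^n)^\circ$ of tuples whose actions generate $G^0_c$ (equivalently, to geometrically connected covers) yields the components of $\cphur{G'}{n}{c}{\mathbb C}$, and passing to the quotient by $K$ introduces the simultaneous $K$-conjugation action on tuples. The same description then transports to $\overline{\mathbb F}_q$ via \autoref{lemma:component-bijection}.

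For part (2), I would apply finite Galois descent along $\spec \overline{\mathbb F}_q \to \spec \mathbb F_q$: a geometric component arises from a component over $\mathbb F_q$ precisely when the arithmetic Frobenius fixes it. Since $\gcd(q,|G|)=1$, the \'etale cover $\cquohur G K n c {\mathbb F_q} \to \conf_{n,\mathbb F_q}$ is tame, so the Frobenius action on the cover is governed by its action on the tame quotient of $\pi_1^{\et}(\conf_{n,\mathbb F_q})$. Via the comparison with the topological picture of (1), this action sends the tuple $(g_1,\ldots,g_n)$ to $(g_1^{q^{-1}},\ldots,g_n^{q^{-1}})$ up to the $B_n$-action, the $q^{-1}$-st power arising from the action of Frobenius on roots of unity and hence on inertia generators. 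The component of the $K$-quotient labelled by $(g_1,\ldots,g_n)$ is therefore Frobenius-stable iff this Frobenius-translated tuple is $K$-conjugate to the original modulo $B_n$, i.e., iff there exists $h\in K$ so that $(g_1^{q^{-1}},\ldots,g_n^{q^{-1}})$ equals $(h^{-1} g_1 h, \ldots, h^{-1} g_n h)$ modulo $B_n$, which is the stated descent criterion.

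Part (3) then falls out by taking multisets of conjugacy classes in the identity from (2). The braid action preserves the multiset of conjugacy classes represented in a tuple, since each generator $\sigma_i$ only replaces an entry by a $G$-conjugate. Hence the identity in (2) forces the multiset equality $\sum_i n_i c_i^{q^{-1}} = \sum_i n_i h^{-1} c_i h$ of formal sums of conjugacy classes in $G$, and conjugating both sides by $h$ yields $\sum_i n_i c_i = \sum_i n_i h c_i^{q^{-1}} h^{-1}$ as required.

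The main obstacle is the precise computation of the Frobenius action in (2): one must pin down the $q^{-1}$-st power convention by comparing the Betti and \'etale fundamental groups of $\conf_{n,\mathbb F_q}$ and tracking the Frobenius through the cyclotomic character on tame inertia at each branch point, and one must check that $K$-conjugation by a single element $h$ (as opposed to a more elaborate cocycle) suffices, which uses that the quotient by $K$ is a finite group quotient and that $\cquohur G K n c {\mathbb F_q}$ really is obtained by passing to the quotient before descent. The hypothesis $\gcd(q,|G|) = 1$ is exactly what makes these standard tame-inertia computations available.
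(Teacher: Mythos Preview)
Your approach is essentially the same as the paper's. For (1), the paper cites \cite[Theorem 12.4]{liuWZB:a-predicted-distribution} in place of your direct appeal to \autoref{definition:rack-pointed-hurc}, and then passes to $\overline{\mathbb F}_q$ via \autoref{lemma:component-bijection} exactly as you do; the handling of the $K$-quotient via simultaneous conjugation is identical. For (2) and (3), the paper's argument is the same descent argument you outline, though it is terser: the paper simply asserts that descent along $\overline{\mathbb F}_q/\mathbb F_q$ amounts to Frobenius fixing the component, and in the proof of (3) uses without further comment that the descent data sends $(g_1,\ldots,g_n)$ to $(g_1^{q^{-1}},\ldots,g_n^{q^{-1}})$ modulo $B_n$. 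Your sketch of the tame-inertia justification for this $q^{-1}$-powering is more explicit than what the paper writes down, but the content is the same and the derivation of the multiset identity in (3) matches.
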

\begin{proof}
To prove $(1)$, the description in the case $K =\id$ is verified in
\cite[Theorem 12.4]{liuWZB:a-predicted-distribution} and
\autoref{lemma:component-bijection}
to pass between $\mathbb C$ and $\overline{\mathbb F}_q$.
For the case of general $K$, we simply note that the $K$ action corresponds to
changing the choice of marked point over $\infty$, and hence acts by conjugation
on the choice of generators of the fundamental group of a punctured $\mathbb
A^1$, so components of the quotient by $K$ correspond to $K$ orbits of
components before the quotient by $K$.

Then, $(2)$ follows from $(1)$, because the condition for a component 
to be the base change of a component over $\mathbb F_q$ is precisely the
existence of descent data which fixes that component.

Finally, $(3)$ is a consequence of $(2)$ because if 
$(g_1^{q^{-1}}, \ldots, g_n^{q^{-1}})$
	is equivalent under the $B_n$ action to
	$(h^{-1} g_1 h, \ldots, h^{-1} g_n h)$
	for some particular $h \in K$,
then we also have that $(g_1, \ldots, g_n)$
is equivalent under the $B_n$ action to 
$(h g_1^{q^{-1}}h^{-1}, \ldots, h g_n^{q^{-1}}h^{-1})$.
Because the $B_n$ action preserves the number of elements in each conjugacy
class, if there are $n_i$ elements among $(g_1, \ldots, g_n)$ in conjugacy class
$c_i$, then there will also be 
$n_i$ elements among 
$(h g_1^{q^{-1}}h^{-1}, \ldots, h g_n^{q^{-1}}h^{-1})$
in conjugacy class $c_i$.
\end{proof}

\begin{definition}
	\label{definition:powering}
	For 
	$(g_1, \ldots, g_n) \in c^n$
	as in \autoref{lemma:g-irred-components}(2),
	there is a corresponding irreducible component of 
	$W \subset \cquohur G K {n} c {\overline{\mathbb F}_q}$
	and we say $W$ is {\em indexed by} 
	$[g_1] \cdots [g_n]$.
	We note that $W$ is also indexed by
	$[\kappa^{-1} g_1 \kappa] \cdots [\kappa^{-1} g_n \kappa]$
	for any $\kappa \in K$.
	In the case $W \subset \cquohur G K {n} c {{\mathbb F}_q}$,
	we say $W$ is indexed by 
	$[g_1] \cdots [g_n]$ if any of the geometrically irreducible components
	of $\cquohur G K {n} c {\overline{\mathbb F}_q}$ mapping to $W$ are
	indexed by $[g_1] \cdots [g_n]$.

	We say such a component has {\em boundary monodromy in the $K$ orbit of
	$h \in G$} if
	$g_1 \cdots g_n = h$.
	We assume $c$ is closed under the $q$th powering, which is the operation
	sending $x \in c$ to $x^q$.
	We also assume that $q$th powering is a bijection on $c$, and we let
	$q^{-1}$ powering denote the inverse map to $q$th powering.
	We define the $q^{-1}$ {\em powering action} on components by
	$[g_1] \cdots [g_n] \mapsto [g_1^{q^{-1}}] \cdots [g_n^{q^{-1}}]$.
\end{definition}

\begin{notation}
	\label{notation:component-maps}
	For $c$ a rack and $g_1, \ldots, g_j \in c$, we use $[g_1] \cdots [g_j]
	: \phurc n c \to \phurc {n+j} c$
	for the map on hurwitz space induced by the maps $c^n/ B_n \to
	c^{n+j}/B_{n+j}$ given by sending $(h_1, \ldots, h_n) \mapsto (h_1, \ldots,
	h_n,g_1, \ldots, g_j)$.
	In particular, if $j = 1$ this just sends 
	$(h_1, \ldots, h_n) \mapsto (h_1, \ldots, h_n,g_1)$.
	By abuse of notation, for $\ell$ an auxiliary prime, we also use
	$[g_1] \cdots [g_j] : H^i(\phurc {n+j} c, \mathbb Q_\ell) \to H^i(\phurc
	n c, \mathbb Q_\ell)$ to denote the map induced on cohomology by $[g_1]
	\cdots [g_j]$.

	If $c \subset G$ is a union of conjugacy classes in a finite group,
	$G \subset G'$ is a normal subgroup of a finite group $G'$ and $K
	\subset G$, then we use
	$\sum_{\kappa \in K} [\kappa^{-1} g_1 \kappa] \cdots [\kappa^{-1} g_j \kappa]$
	to denote the map 	$\sum_{\kappa \in K} [\kappa^{-1} g_1 \kappa] \cdots [\kappa^{-1} g_j
	\kappa]:
	H^i(\quohur {G}{K} {n+j} c {\mathbb C}, \mathbb Q_\ell) \to H^i(\quohur
	{G}{K} {n} c {\mathbb C}, \mathbb Q_\ell)$
on cohomology induced via transfer along the quotient
by $K$.
\end{notation}

\subsection{Hurwitz spaces over $\mathbb P^1$}
\label{subsection:hurwitz-p1}
We mostly will work with Hurwitz spaces
over $\mathbb A^1$, but the Picard rank conjecture concerns Hurwitz spaces over
$\mathbb P^1$. We now introduce some notation, almost exclusively used in
\autoref{section:picard}, to
describe different Hurwitz spaces we will work with.

\begin{notation}
	\label{notation:hurwitz-picard}
	We now fix a finite group $G$ and a union of conjugacy classes $c \subset G$.
	Let $\beta \subset G$ be a subset closed under $G$ conjugation.
	In the case that the base scheme $B = \spec \mathbb C$ we will use 
	$\hura G n c {\beta}$ to denote the union of components of $\quohur {G}{G}n
	c{B}$ whose boundary monodromy lies in the $G$ conjugation orbits $\beta \subset G$. (See
	\autoref{definition:hurwitz-stack-over-b} and \autoref{definition:powering}.)
	Recall this parameterizes $G$-covers of $\mathbb P^1$ whose branch locus
	over $\mathbb A^1 \subset \mathbb P^1$ has degree $n$ and inertia in
	$c$, and the inertia over $\infty \in \mathbb P^1$ lies in $\beta$.
We let $\hurp G n c$ denote the Hurwitz stack of pointed $G$ covers of $\mathbb
P^1$,
branched over a degree $n$ divisor over $\mathbb P^1$, 
with all inertia above this divisor lying in $c$.
There is further an action of $\pgl_2$ on $\hurp G n c$, acting via
automorphisms of $\mathbb P^1$, and we let $\hurz G n c$ denote the quotient of
$\hurp G n c$ by this action.
We let
$\chura G n c {\beta}, \churp G n c, \churz G n c$ denote the connected components of
$\hura G n c {\beta}, \hurp G n c, \hurz G n c$ parameterizing such covers which are
geometrically connected.
Finally, we let $\churzb G n c$ denote the Abramovich-Corti-Vistoli compactification of
$\churz G n c$. 
This is defined in \cite[\S2.2]{abramovichCV:twisted-bundles}, where it is
denoted $\mathcal K^{\on{bal}}_{g,n}(\mathscr BG,0),$
where $g$ is the genus of the covering curve.
\end{notation}
\begin{remark}
	\label{remark:compactification}
	For $n \geq 3$, the Deligne-Mumford stack 
	$\churzb G n c$
	is smooth and proper by \cite[Theorem
3.0.2 and Corollary 3.0.5]{abramovichCV:twisted-bundles}.
\end{remark}
%

\section{Stability of a quotient}\label{section:stabquot}

The goal of this section is to prove a weak form of homological stability for
Hurwitz spaces. Namely, for $c$ a finite rack, we show that the homology of
$\CHur^c$ exhibits homological stability with respect to stabilization by an
element $x$ in a component of $c$, after 
$\CHur^c$ has been quotiented by all other elements of $c$ in the component of
$x$.

\begin{remark}[Comparison to \cite{randal-williams:homology-of-hurwitz-spaces}]
	\label{remark:}
	Our argument is inspired from the stability arguments in \cite{randal-williams:homology-of-hurwitz-spaces}, though we depart from it in several places. 
One key difference is that the heart of the argument in \cite{randal-williams:homology-of-hurwitz-spaces}
is based on a regularity theorem proven in \cite[Theorem
7.1]{randal-williams:homology-of-hurwitz-spaces}, which already was a
	substantial simplification of the proof given in \cite[Theorem
4.2]{EllenbergVW:cohenLenstra}.
The regularity theorem is a statement about modules over $\pi_0R$ for an
$\EE_1$-ring $R$, and a key step in the proof presented in
\cite{randal-williams:homology-of-hurwitz-spaces} is to switch between
considering $\pi_iR$ as a module over $R$ and $\pi_0R$. 
Randal-Williams then uses the regularity theorem to obtain connectivity estimates for $\pi_iR\otimes_{\pi_0R}k$. 
In contrast, our proof avoids considering $\pi_iR\otimes_{\pi_0R}k$ altogether, and makes no use of a regularity theorem. 
We directly prove connectivity estimates for an
$R$-module $M$ which is annihilated by a sufficiently large power of the
augmentation ideal of $\pi_0R$, instead of proving a statement about $R$ itself. Because of this, to see that $\pi_iM$ is bounded in grading, it is enough to see that $\pi_iM$ is generated as a $\pi_0R$-module in a bounded range of degrees.
\end{remark}

In \autoref{subsec:stabe1alg}, we prove a general stability result about $\EE_1$-algebras, which we intend to apply to Hurwitz spaces.
Then, in \autoref{subsec:hurwitzstabquotient}, we apply these results to Hurwitz spaces to prove a 
weak form of homological stability.

\subsection{Homological stability for $\EE_1$-algebras}\label{subsec:stabe1alg}

The goal of this subsection is to prove a general homological stability result
for $\EE_1$-algebras in \autoref{theorem:e1stability}. We work over a base commutative ring $k$, and use $\Mod(k)$ to refer to the symmetric monoidal $\infty$-category of $k$-modules in spectra.\footnote{The arguments of this section are quite general, and can be made to work in a presentably monoidal stable $\infty$-category with compatible $t$-structure instead of in $\Mod(k)$. However, we do not present it in this generality as it is not needed.} This is equivalent to the derived category of $k$-modules. 
If $A$ is an $\mathbb E_1$ algebra in $\Mod(k)$,
we say $M$ is {\em an $A$ module} if $M$ is an object in the $\infty$-category of left
$k$-modules in $\Mod(k)$.

In the setting of homological stability, we work with objects graded over the
natural numbers (non-negative integers) $\NN$, i.e., in $\Mod(k)^\NN$. We use $X_i$ to refer
to the $i$th component of some $X \in \Mod(k)^{\NN}$. The symmetric monoidal
structure is given via Day convolution, meaning that $(X\otimes Y)_t= \oplus_{i+j=t}X_i\otimes Y_j$.

\begin{definition}\label{definition:connectivity}
	Let $f$ be a function $\NN \to \RR$. Given an object $X$ in
	$\Mod(k)^{\NN}$, we say that $X$ is {\em $f$-bounded} if, for all $i, j\in \NN$ with
	$j > f(i)$, we have $\pi_iX_j = 0$.
\end{definition}

A key notion that we use to capture the notion of a homological stability is the notion of being bounded in a linear range.

\begin{definition}\label{defn:boundedlinearrange}
	Let $X \in \Mod(k)^{\NN}$. We say that $X$ is \textit{bounded in a
	linear range} if there are 
	real numbers $r_1\geq0$ and $r_2$ so that $f(i)
	= r_1 i + r_2$ and $X$ is $f$-bounded. 
	In this case, we also say that
	$X$ is $f_{r_1, r_2}$-bounded.
\end{definition}

\begin{example}
	\label{example:}
	Let $X$ be a graded $\EE_1$-algebra in spaces, and $x \in \pi_0X_t$. We say that $X$ has linear homological stability
	with respect to the operator $x$ if $H_i(X_j;k) \to H_i(X_{j+t};k)$ is an isomorphism for $j>Ai+B$, for some $A,B$ with $A>0$.
	
	The relationship between this notion and \autoref{defn:boundedlinearrange} is that $X$ has linear homological stability iff $C_*(X;\ZZ)/x$ is bounded in a linear range.
\end{example}
%
%

Proving the following theorem is the goal of this subsection:
\begin{theorem}\label{theorem:e1stability}	Let $R$ be an augmented
	connective $\EE_1$-algebra in $\Mod(k)^{\NN}$ such that $\pi_0R$ is generated in degrees $\leq d$. Let $M$ be a connective $R$-module. We
	make the following assumptions.
\begin{enumerate}
		\item[(a)](Bounded cells for $R$) There are real numbers
			$v$ and $w$ so that $k\otimes_{R}k$ is
			$f_{v,w}$-bounded.
		\item[(b)](Bounded cells for $M$) $k\otimes_RM$ 
			is $f$-bounded for some nondecreasing function $f$.
		\item[(c)](Uniform torsion for $\pi_iM$) If $I \subset \pi_0R$ kernel of the augmentation $\pi_0R \to k$, and $I_{>0}$ is the part of $I$ in positive degrees, then $I$ acts nilpotently on $\pi_iM$ for each $i$, and there exists $t \in \NN$ such that $I_{>0}^{t+1}$ acts by $0$ on $\pi_iM$ for each $i$.
	\end{enumerate}
	Define $g_0 := f(0)$.
	For $i>0$, define $g_i$ inductively by 
	\begin{align*}
	g_i = \max(f(i),\max_{0\leq j<i}\left((i-j+1)v+w+dt+g_{j})\right).
	\end{align*}
	Then $\pi_iM$ is generated as a $\pi_0R$-module
in degrees $\leq g_i$. In particular, if $h(i) = g_i+td$, then $M$ is
$h$-bounded. 
Moreover, if there are constants $\mu\geq 0$ and $b$ so that $f\leq
f_{\mu,b}$, then $M$ is $f_{r_1, r_2}$ bounded, where $r_1$ and $r_2$ only depend on
$\mu, b, v, w, d$ and $t$.
\end{theorem}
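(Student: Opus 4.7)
My plan is to prove the theorem by induction on the homological degree $i$, showing that $\pi_i M$ is generated over $\pi_0 R$ in degrees $\leq g_i$. For the base case $i = 0$, hypothesis (b) provides that $\pi_0(k \otimes_R M) \cong \pi_0 M / I \pi_0 M$ is supported in degrees $\leq f(0) = g_0$; since hypothesis (c) forces $I$ to act nilpotently on $\pi_0 M$, a graded Nakayama argument for $\NN$-graded modules over $\NN$-graded rings then gives the desired bound.

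For the inductive step, I would assume the conclusion holds for all $\pi_j M$ with $j < i$ and reduce, again by graded Nakayama (justified by (c)), to bounding the degrees in which $\pi_i M / I \pi_i M$ is supported. The natural tool is the bar spectral sequence
\begin{equation*}
	E^1_{p,q} = \pi_q\bigl(k \otimes \bar R^{\otimes p} \otimes M\bigr) \Longrightarrow \pi_{p+q}(k \otimes_R M)
\end{equation*}
coming from the skeletal filtration of the two-sided bar construction $B(k,R,M)$. The image of $\pi_i M / I \pi_i M$ in $E^\infty_{0,i}$ is a subquotient of $\pi_i(k \otimes_R M)$, hence is supported in degrees $\leq f(i)$ by (b), so the remaining contribution to $\pi_i M / I \pi_i M$ comes from classes killed by differentials $d^r$ originating at $E^r_{r, i-r+1}$ for $r \geq 2$.

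The core of the argument is to bound the gradings of these differential sources. Writing $r = i - j + 1$, each $E^r_{r, j}$ is a subquotient of $\pi_j$ of a tensor product involving $r$ copies of $\bar R$ together with one copy of $M$. The inductive hypothesis gives $\pi_j M$ generators in degrees $\leq g_j$, and combined with (c) together with the $\pi_0 R$-generation bound $d$, the contribution from the $M$-factor is bounded by $g_j + td$. Hypothesis (a) in turn bounds the collective contribution of the $r$ copies of $\bar R$ by $(i - j + 1)v + w$, obtained by identifying $k \otimes_R \bar R$ with the fiber of $k \to k \otimes_R k$ and tracking gradings carefully through the iterated fiber sequences to avoid accumulating copies of $w$. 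Summing the two contributions yields $(i - j + 1)v + w + dt + g_j$, matching the term in the maximum defining $g_i$.

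The additional assertions follow quickly. The $h$-bound $h(i) = g_i + td$ holds because $\pi_0 R / I^{t+1}$ is supported in degrees $\leq td$ (as it is spanned by products of at most $t$ generators of degree $\leq d$), while $\pi_i M$ is spanned over $\pi_0 R / I^{t+1}$ by generators in degrees $\leq g_i$. When $f \leq f_{A, B}$, a direct induction on $i$ shows $g_i$ is bounded by a linear function of $i$, with slope at most $\max(A, v)$, so $M$ is bounded in a linear range. I expect the main obstacle to be the careful bookkeeping in the bar spectral sequence needed to derive the bound $(i - j + 1)v + w$ on the $\bar R$-factor contribution with only a single copy of $w$; once this accounting is in place, the rest of the induction is essentially forced by the structure.
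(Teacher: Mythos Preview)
Your base case and the overall inductive scheme---reduce to bounding the gradings of $\pi_iM/I\pi_iM$ and then apply graded Nakayama, justified by (c)---match the paper exactly. The gap is in how you carry out the inductive step.

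The bar spectral sequence you invoke has $E^1_{p,q}=\pi_q(\bar R^{\otimes_k p}\otimes_k M)$, with tensor products over $k$. To bound the sources $E^r_{r,j}$ of the differentials hitting $E^r_{0,i}$ you would need control over $\pi_j(\bar R^{\otimes_k r}\otimes_k M)$ in the $\NN$-grading, and by K\"unneth-type filtrations this involves $\pi_a(\bar R)=\pi_aR$ for all $0<a\le j$. Nothing in the hypotheses bounds the grading of $\pi_aR$ for $a>0$: only $\pi_0R$ is assumed generated in degrees $\le d$, and (a) bounds $k\otimes_Rk$, not $\bar R$ or its $k$-tensor powers. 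In the intended application $R=C_*(\Hur^c;\ZZ)$ the groups $\pi_aR$ are precisely the Hurwitz homology one is trying to control, so they certainly cannot be assumed bounded. The same problem already appears at $d^1$: the image of $d^1\colon\pi_i(\bar R\otimes_k M)\to\pi_iM$ contains $I\cdot\pi_iM$ but also products of classes in $\pi_aR$ ($a>0$) with classes in $\pi_{i-a}M$, so the kernel of $\pi_iM/I\pi_iM\twoheadrightarrow E^2_{0,i}$ is itself governed by the unbounded $\pi_{>0}R$ and is not covered by your analysis of the $d^r$ with $r\ge2$. Your identification $k\otimes_R\bar R\simeq\fib(k\to k\otimes_Rk)$ is correct, but the bar filtration produces $\bar R^{\otimes_k r}$, not $k\otimes_R\bar R$ or any iterate thereof, so (a) cannot be fed in at that point.

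The paper's fix is to filter $M$ rather than $k\otimes_R M$. One uses the cofiber sequence
\[
k\otimes_R\Sigma^{-i-1}\tau_{\le i-1}M\ \longrightarrow\ k\otimes_R\Sigma^{-i}\tau_{\ge i}M\ \longrightarrow\ k\otimes_R\Sigma^{-i}M,
\]
whose middle term has $\pi_0$ equal to $\pi_iM/I\pi_iM$ and whose right term has $\pi_0$ bounded by $f(i)$. The left term is handled by the Postnikov filtration of $\tau_{\le i-1}M$, reducing to bounding $\pi_{i-j+1}(k\otimes_R\pi_jM)$ for $j<i$. Here the key move is to further filter the \emph{discrete} $\pi_0R$-module $\pi_jM$ by powers of $I$; by (c) this filtration is finite, and on the associated graded, which is a $k$-module supported in degrees $\le g_j+td$, one has $k\otimes_R(\gr\pi_jM)\simeq(k\otimes_Rk)\otimes_k(\gr\pi_jM)$. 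Now (a) applies directly and yields that $k\otimes_R\pi_jM$ is $f_{v,\,w+td+g_j}$-bounded, which is exactly the term $(i-j+1)v+w+td+g_j$ in the recursion. This two-step reduction---Postnikov on $M$, then $I$-adic on each $\pi_jM$---is what lets hypothesis (a) do any work; the skeletal filtration of $B(k,R,M)$ does not provide a route to $k\otimes_Rk$.
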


In order to prove the theorem, we first a couple lemmas relating boundedness properties
of
$\pi_i(N)$ with boundedness properties of $\pi_i(k \otimes_R N)$ in both
directions. 
Later we apply this in the situation where $N = \pi_i (M)$.

\begin{lemma}\label{lemma:assumption-a-implies-tensor-bounded}
	Suppose $R$ is an augmented connective $\mathbb E_1$-algebra in
       $\on{Mod}(k)^{\mathbb N}$ such that $\pi_0 R$ is generated in degree
       $\leq d$. 
	Assume $k \otimes_R k$ is $f_{v,w}$ bounded.
	Suppose that $N$ is a discrete graded $\pi_0R$-module such that $I$ acts nilpotently on $N$, and $I^{t+1}_{>0}N=0$, and $N$ is generated in grading $\leq L$ as a $\pi_0R$-module. 
	Then 
	$k\otimes_{R}N$ is $f_{v,w+td+ L}$-bounded.
\end{lemma}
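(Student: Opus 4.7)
The natural approach is to filter $N$ by the powers of the augmentation ideal and reduce to understanding $k \otimes_R P$ for $P$ a discrete graded $k$-module of bounded grading. Since $I^{t+1}$ annihilates $N$, we obtain a finite filtration
\[
0 = I^{t+1} N \subset I^t N \subset \cdots \subset I N \subset N,
\]
whose successive quotients $Q_j := I^j N / I^{j+1} N$ are annihilated by $I$, and hence are discrete graded modules over $\pi_0 R / I \cong k$. The assumption that $\pi_0 R$ is generated in degrees $\leq d$ implies that $I$ is generated as an ideal in degrees $\leq d$, so $I^j$ is generated as a $k$-submodule of $\pi_0 R$ in degrees $\leq jd$. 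Since $N$ is generated as a $\pi_0 R$-module in degrees $\leq L$, it follows that $I^j N$ is generated as a $\pi_0 R$-module in degrees $\leq L + jd$, and therefore $Q_j$, being a $k$-module generated in degrees $\leq L + jd$, is concentrated in gradings $\leq L + jd$.

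The key estimate is that, for any discrete graded $k$-module $P$ concentrated in gradings $\leq M$, the object $k \otimes_R P$ is $f_{v, w+M}$-bounded. Indeed, because the $R$-action on $P$ factors through the augmentation $R \to k$, we have $k \otimes_R P \simeq (k \otimes_R k) \otimes_k P$. Writing $P = \bigoplus_{b \leq M} P_b$, with each $P_b$ a discrete $k$-module placed in grading $b$, the Day-convolution formula gives
\[
\pi_i \big( (k \otimes_R k) \otimes_k P_b \big)_j = \pi_i \big( (k \otimes_R k)_{j-b} \otimes_k P_b \big),
\]
and this vanishes whenever $(k \otimes_R k)_{j-b}$ is $i$-connective, i.e.\ whenever $j - b > v i + w$, by the universal coefficient / Tor spectral sequence applied to the discrete $k$-module $P_b$. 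Summing over $b \leq M$, we see that $k \otimes_R P$ vanishes in bidegree $(i,j)$ whenever $j > vi + w + M$. Applying this to $P = Q_j$ gives that $k \otimes_R Q_j$ is $f_{v,\, w + L + jd}$-bounded.

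Finally, apply $k \otimes_R (-)$ to the cofiber sequences $I^{j+1} N \to I^j N \to Q_j$ and proceed by descending induction on $j$. The base case $j = t+1$ is trivial since $I^{t+1} N = 0$. In the inductive step, both outer terms of the cofiber sequence
\[
k \otimes_R I^{j+1} N \to k \otimes_R I^j N \to k \otimes_R Q_j
\]
are $f_{v,\, w + L + td}$-bounded (the inductive hypothesis covering the first, and the previous paragraph the third, since $jd \leq td$), whence so is the middle. Taking $j = 0$ yields the claimed bound on $k \otimes_R N$. The only point requiring care is the identification $k \otimes_R P \simeq (k \otimes_R k) \otimes_k P$ and the corresponding bookkeeping of gradings via Day convolution; I expect this to be routine, so no step is a serious obstacle.
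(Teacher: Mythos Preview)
Your proof is correct and follows essentially the same approach as the paper: filter $N$ by the powers of the augmentation ideal, identify the associated graded pieces as $(k\otimes_R k)\otimes_k Q_j$ for $k$-modules $Q_j$ concentrated in bounded grading, and combine via cofiber sequences. The paper's version is terser---it simply notes that $N$ itself is concentrated in gradings $\leq td+L$ and that the associated graded of the induced filtration on $k\otimes_R N$ is $(k\otimes_R k)\otimes_k \mathrm{gr}\,N$---but your more explicit treatment of the cofiber-sequence induction and the Day-convolution bookkeeping is a faithful unpacking of the same argument.
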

\begin{proof}
	
The assumptions that $\pi_0R$ is generated in degrees $\leq d$, $I_{>0}^{t+1}$
	acts by $0$ on $N = \pi_0N$, and $N$ is generated in grading $\leq L$
	imply that $N$ is concentrated in gradings $\leq td+L$.
	By using the finite filtration of $N$ by the powers of the augmentation ideal acting on it, the associated graded of the induced filtration on $k\otimes_RN$ is
	$(k \otimes_R k) \otimes_k \mathrm{gr} N$ where $\mathrm{gr} N$ is the associated graded of $N$. Since
	Since $k \otimes_R k$ is $f_{v,w}$ bounded by assumption,
	we obtain that  $k \otimes_R N$ is $f_{v,w + td + L}$ bounded.
\end{proof}

\begin{lemma}\label{lemma:cellstogenerators}
	Suppose that $R$ is a augmented connective $\EE_1$-algebra in $\Mod(k)^{\NN}$ and suppose that $N$ is a connective $R$-module such that $I$ acts nilpotently on $\pi_0N$
	and such that $\pi_0 (k\otimes_RN)$ 
	vanishes in gradings larger than $C$. Then $\pi_0N$ is generated as an $\pi_0R$-module in gradings $\leq C$.
\end{lemma}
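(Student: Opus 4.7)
The plan is to translate the hypothesis on $\pi_0(k\otimes_R N)$ into a statement about $\pi_0 N$ modulo $I\cdot \pi_0 N$ and then apply a graded version of Nakayama's lemma.

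First, I would identify $\pi_0(k\otimes_R N)$. Since $R$ and $N$ are connective, the relative Tor spectral sequence
\[ \on{Tor}^R_p(k,N)_q \Rightarrow \pi_{p+q}(k\otimes_R N) \]
collapses in degree zero to give
\[ \pi_0(k\otimes_R N) \;\cong\; k\otimes_{\pi_0 R} \pi_0 N \;\cong\; \pi_0 N / I\cdot \pi_0 N, \]
graded in the natural way. Thus the hypothesis becomes: the quotient $\pi_0 N / I\cdot \pi_0 N$ vanishes in gradings $>C$.

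Now set $M := \pi_0 N$ and let $M'\subset M$ be the graded $\pi_0 R$-submodule generated by the homogeneous pieces $M_j$ with $j\leq C$. I would show $M = M'$, which is exactly the desired conclusion. Consider the graded $\pi_0 R$-module $Q := M/M'$. By construction $Q$ is concentrated in gradings $>C$. From the hypothesis, every homogeneous element of $M$ in grading $>C$ lies in $IM$, so $M = IM + M'$ as graded submodules of $M$, and therefore $Q = IQ$.

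The final step is Nakayama: since $I$ acts nilpotently on $M$ by assumption, it acts nilpotently on the quotient $Q$, so there exists $t$ with $I^{t+1}Q = 0$. Combined with $Q = IQ$, iterating gives $Q = IQ = I^2 Q = \cdots = I^{t+1}Q = 0$. Hence $M = M'$, which says $\pi_0 N$ is generated as a $\pi_0 R$-module in gradings $\leq C$. The only subtle point is the identification of $\pi_0(k\otimes_R N)$ with $\pi_0 N/I\pi_0 N$ in the graded setting, but this is standard once one notes that the Tor spectral sequence respects the grading from $\Mod(k)^{\NN}$.
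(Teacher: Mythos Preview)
Your proof is correct and follows essentially the same approach as the paper: both identify $\pi_0(k\otimes_R N)$ with $\pi_0 N / I\cdot \pi_0 N$, pass to a submodule generated in low gradings, and finish with the graded Nakayama argument using the nilpotence of $I$. The only cosmetic differences are that the paper phrases the identification via the map $k\otimes_R N \to k\otimes_{\pi_0 R}\pi_0 N$ being an equivalence on $\pi_0$ (equivalent to your Tor spectral sequence observation), and defines its submodule $N'$ by lifting a specific set of generators rather than taking all of $M_{\leq C}$; also, the paper leaves the final Nakayama step implicit, whereas you spell it out.
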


\begin{proof}
The map $k\otimes_RN \to k\otimes_{\pi_0R}\pi_0N$ is an equivalence on $\pi_0$, 
so $\pi_0(k\otimes_{\pi_0R}\pi_0N)$ vanishes above grading $C$.
	Since the map $\pi_0N \to \pi_0(k\otimes_{\pi_0R}\pi_0N)$ is surjective, 
	and the target is generated in degrees $\leq C$, we can choose lifts of generators of
	$\pi_0(k\otimes_{\pi_0R}\pi_0N)$ to $\pi_0N$, and define
	$N'\subset \pi_0N$ to be the submodule generated by these lifts. 
	It suffices to show
	$N'=\pi_0N$, or equivalently $(\pi_0N)/N' = 0$.
	The map $\pi_0(k\otimes_{\pi_0R}N') \to \pi_0(k\otimes_{\pi_0R}(\pi_0N))$ is surjective by construction of $N'$, so $k\otimes_{\pi_0R}((\pi_0N)/N')$ is $1$-connective.
	This implies 
$\pi_0(k\otimes_{\pi_0R}((\pi_0N)/N')) = 0$ or equivalently
$I(\pi_0N)/N' = (\pi_0N)/N'$. Since $I$ also acts nilpotently, we conclude that $(\pi_0N)/N'=0$.
\end{proof}

We are now ready to proceed with the proof of the theorem.
\begin{proof}[Proof of \autoref{theorem:e1stability}]
	We will prove by induction on $j\geq0$ that
	
	\begin{enumerate}
		\item $\pi_jM$ is generated as a $\pi_0R$-module in degrees $\leq g_j$ and
		\item $k\otimes_{R}\pi_jM$ is
			$f_{v,w+td+g_{j}}$-bounded.
	\end{enumerate} 

We now explain why the later conclusions of the theorem follow form the above
inductive claims. First, $M$ is $h$-bounded, for $h(j) = g_j+td$,
since $I$ acts nilpotently on $\pi_jM$, $I_{>0}^{t+1}$ acts by zero on $\pi_jM$, and $R$ is generated in degrees $\leq d$. 
Moreover, it is clear from the formula defining the $g_i$ that $g_i$ is upper bounded by a function of the form $Ai+B$ if $f$ is.
	
We first prove (1) in the case $j = 0$.
By \autoref{theorem:e1stability}(b),
we have that $\pi_0((k \otimes_R M)_j )$ vanishes for $j > f(0)$.
Therefore, \autoref{lemma:cellstogenerators} implies $\pi_0M$ is generated as a $\pi_0R$-module in degrees $\leq f(0)$.

We next prove that (1) for a fixed value of $j$ implies (2) for that same value
of $j$.
	Supposing that $\pi_jM$ is generated in degrees $\leq g_j$, we apply
\autoref{lemma:assumption-a-implies-tensor-bounded}
to learn that $k\otimes_{R}\pi_jM$
	is $f_{v,w+td+g_j}$-bounded. 
	
\begin{remark}
	\label{remark:}
	For the readers familiar with \cite{EllenbergVW:cohenLenstra}, the
	remainder of the proof is a version of the spectral sequence argument in
	\cite[Theorem 6.1]{EllenbergVW:cohenLenstra}. This argument also appears
	in
	\cite[Proposition 8.1]{randal-williams:homology-of-hurwitz-spaces}.
\end{remark}

It remains to prove that if (1) and (2) both hold for values less than $j$, then
(1) also holds for $j$.
	There is a cofiber sequence
	\begin{align}
		\label{equation:cofiber-tensor}
	k\otimes_{R}\Sigma^{-j-1}\tau_{\leq j-1}M \to
	k\otimes_{R}\Sigma^{-j}\tau_{\geq j}M \to k\otimes_R\Sigma^{-j}M.
	\end{align}
	We can filter $\tau_{\leq j-1}M$ by its Postnikov tower, to see that the
	first term has a finite filtration with associated graded
	$k\otimes_{R}\Sigma^{l-j-1}\pi_lM$. Using the inductive hypothesis, we
	see that $\pi_0(k\otimes_{R}\Sigma^{l-j-1}\pi_lM)$ vanishes in degrees
	larger than $(j-l+1) v+w+td+g_{l}$. 
	It follows that $\pi_0(k\otimes_{R}\Sigma^{-j-1}\tau_{\leq j - 1}M)$ 
	vanishes in degrees larger
	than $\max_{0\leq l < j-1}(j-l+1)v+w+td+g_{l}$.
	Additionally, $\pi_0 (k\otimes_R\Sigma^{-j}M)=\pi_j(k\otimes_RM)$
	vanishes above degrees $f(j)$.
	It then follows from \eqref{equation:cofiber-tensor}
	and the definition of $g_j$
	that $\pi_0(k\otimes_{R}\Sigma^{-j}\tau_{\geq j}M)$ vanishes beyond grading $g_j$. 
	Finally, by \autoref{lemma:cellstogenerators}, this means that
	$\pi_0\Sigma^{-j}\tau_{\geq j}M = \pi_jM$ must be generated as a
	$\pi_0R$-module  in degrees $\leq g_j$, proving (1) for $j$.
\end{proof}

\subsection{The case of Hurwitz spaces}\label{subsec:hurwitzstabquotient}
We next deduce consequences for Hurwitz spaces of our general results from earlier in
this section.

\begin{notation}\label{notation:hurwitzchains}
	For a finite rack $c$, let $A_c := C_*(\Hur^{c};\ZZ)$, which is
	naturally a multigraded ring, with one grading for each component of
	$c$. That is, $A_c$ is an $\EE_1$-algebra in $\Mod(\ZZ)^{\NN^{c/c}}$. Let $CA_c$ denote $C_*(\CHur^{c};\ZZ)$, which is naturally an $A_c$-bimodule since $\CHur^c$ is a $\Hur^c$-bimodule.
\end{notation}

Because we are interested in homological stability for multigraded rings, there are many different directions in which one can consider homological stability with respect to. In the convention below, we choose to work one grading at a time, so that by proving a homological stability result with respect to each grading individually, we prove a multigraded version of homological stability. It is convenient for later to work in a slightly more general setting, where we grade with respect to a collection of components of $c$.

\begin{notation}\label{notation:gradingcomponent}
	Fix a rack $c$ and let $c' \subset c$ denote a union of components of
	$c$. Throughout this section, we consider $A$ as a (singly) graded ring,
	by using the evaluation at the components of $c'$ map $\NN^{c/c} \to
	\NN^{c'/c} \to \NN$ to restrict to a single grading, where the first map
	is the projection and the second map is given by summing the
	coordinates.
\end{notation}

In order to state our desired weak form of stability in
\autoref{theorem:quotstability}, we 
first recall that an $\EE_2$-central element in an $\EE_1$-algebra is an element
in the homotopy ring such that left and right multiplication by it refines to a bimodule map.
The following is obtained from
by applying $C_*(-; \mathbb Z)$ to the bimodule map produced in 
\cite[Lemma 4.3.1]{landesmanL:the-stable-homology-of-non-splitting}.

\begin{lemma}\label{lemma:centrality}
	If $c$ is a finite rack and $\gamma \in \pi_0\Hur^c$ is central, then the associated class $\gamma \in \pi_0A$ is $\EE_2$-central. 
\end{lemma}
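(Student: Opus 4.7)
The plan is to reduce the statement to the analogous topological assertion already carried out in our previous paper and then transport it to chains. Since $\Hur^c$ is an $\EE_1$-algebra in spaces (with multiplication given by concatenation of Hurwitz covers), and since $C_*(-;\ZZ)$ is a symmetric monoidal functor from spaces to $\Mod(\ZZ)$, it carries $\EE_1$-algebras to $\EE_1$-algebras and bimodule maps to bimodule maps. Thus it suffices to produce, at the level of spaces, a map of $\Hur^c$-bimodules from $\Hur^c$ to itself that simultaneously computes left and right multiplication by $\gamma$; this is precisely the content of \cite[Lemma 4.3.1]{landesmanL:the-stable-homology-of-non-splitting}.

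In slightly more detail, I would first invoke \cite[Lemma 4.3.1]{landesmanL:the-stable-homology-of-non-splitting}, which, given a central element $\gamma\in \pi_0\Hur^c$, constructs a homotopy between left multiplication $L_\gamma:\Hur^c \to \Hur^c$ and right multiplication $R_\gamma:\Hur^c \to \Hur^c$ as maps of $\Hur^c$-bimodules. The centrality hypothesis on $\gamma$ in the rack $\pi_0 \Hur^c$ is exactly what is needed for this braid/half-twist construction to respect both the left and the right $\Hur^c$-actions up to coherent homotopy.

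Next, I would apply $C_*(-;\ZZ)$ to this bimodule map. Since this functor is symmetric monoidal, it sends the pair $(\Hur^c, \gamma)$ to $(A_c, \gamma)$ and sends the bimodule homotopy $L_\gamma\simeq R_\gamma$ to a bimodule homotopy between left and right multiplication by the image class $\gamma \in \pi_0 A_c$. By the standard identification of $\EE_2$-central elements of an $\EE_1$-algebra with homotopy classes $\gamma\in \pi_0 A_c$ whose left and right multiplication operators are coherently identified as bimodule endomorphisms, this is the required data.

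There is no substantive obstacle here; the only thing to check is that the monoidality of $C_*(-;\ZZ)$ really does transport the required bimodule coherence data, which is formal. Consequently, I would keep the proof to a short sentence invoking \cite[Lemma 4.3.1]{landesmanL:the-stable-homology-of-non-splitting} together with the observation that $C_*(-;\ZZ):\mathrm{Spc}\to \Mod(\ZZ)$ is symmetric monoidal.
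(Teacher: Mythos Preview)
Your proposal is correct and matches the paper's approach exactly: the paper states the lemma is obtained by applying $C_*(-;\ZZ)$ to the bimodule map produced in \cite[Lemma 4.3.1]{landesmanL:the-stable-homology-of-non-splitting}. Your final sentence summarizing the proof as a one-line citation plus symmetric monoidality of chains is precisely what the paper does.
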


Given $x \in c$, 
we use $\alpha_x$ to denote the class
associated to $x$ in  $\pi_0A$.
We define $\ord(x)$ to be the order of the operator
$x\triangleright: c \to c$ given by $y \mapsto x \triangleright y$, so that $\alpha_x^{\ord(x)} \in \pi_0\Hur^c$
is central. It follows from \autoref{lemma:centrality} that $\alpha_x^{\ord(x)}$ is
$\EE_2$-central. Thus we may form the $A$-bimodule $A/(\alpha_x^{\ord(x)})$.
Occasionally, to emphasize the dependence on $c$, we may denote $\ord(x)$ by
$\ord_c(x)$.

\begin{definition}\label{definition:quotient}
	Choose a total ordering $x_1,\dots,x_{|c|}$ on the finite rack $c$ such
	that the $x_i$ in $c'$ appear first. 
	We define $A/(\alpha_c^{\ord_c(c)})$ to be the unital $A$-bimodule defined as the tensor product
	
	$$A/\alpha_{x_1}^{\ord_c(x_1)}\otimes_{A}A/\alpha_{x_2}^{\ord_c(x_2)}\dots\otimes_{A}A/\alpha_{x_{|c|}}^{\ord_c(x_{|c|})}.$$
	
	For $c'' \subset c$, with $c'' = \{x_{i_1}, \ldots, x_{i_{|c''|}}\}$, we also define
	$$A/(\alpha_{c'}^{\ord_c(c')}) := A/\alpha_{x_{i_1}}^{\ord_c(x_{i_1})}\otimes_{A}A/\alpha_{x_{i_2}}^{\ord_c(x_{i_2})}\dots\otimes_{A}A/\alpha_{x_{i_{|c''|}}}^{\ord_c(x_{i_{|c''|}})}.$$
\end{definition}
\begin{remark}
	\label{remark:}
	Technically speaking, the construction
$A/(\alpha_c^{\ord_c(c)})$
depends on a choice of ordering of the elements in $c$. This choice of ordering will not play a
substantial role in our proof and so we omit it from the notation.
More precisely, if we choose a different ordering, the proof of our main
homological stability theorems will go through
with possibly different
constants $I$ and $J$ from
\autoref{theorem:some-large-homology-stabilizes}.
\end{remark}

Our goal in this subsection is to prove the following result:

\begin{theorem}\label{theorem:quotstability}
	Let $c$ be a rack, $c' \subset c$ be a subrack which is a union of
	components of $c$, and let $A := C_*(\Hur^c;\mathbb Z)$ viewed as a
	graded ring as in \autoref{notation:gradingcomponent}, so that the
	grading keeps track of the number of labeled points in $c'$.
	Then
	$A/(\alpha_c^{\ord_c(c)})$ 
		is $f_{\mu,b}$ bounded, where $\mu$ and $b$ only depend on $|c'|$ and
		$\max_{x \in c'} \ord_c(x)$.
\end{theorem}

Our goal will be to apply \autoref{theorem:e1stability} to the ring $R = A$ and
module $M = A/(\alpha_c^{\ord_c(c)})$. 
We begin by checking condition $(c)$ from
\autoref{theorem:e1stability}.
\begin{lemma}\label{lemma:Jnilpotent}
	If $I$ is the augmentation ideal of $\pi_0R$, then left multiplication
	by $I^{1+\sum_{i=1}^{|c|}(2^{i} \ord(x_i)-1)}$ acts by $0$ on
	$(A/(\alpha_c^{\ord_c(c)}))$. Moreover if $I_{>0}$ is the part of the ideal in nonnegative degrees, then $I_{>0}^{1+\sum_{i=1}^{|c'|}(2^{i} \ord(y_i)-1)}=0$.
\end{lemma}

\begin{proof}
	We first claim that $\alpha_{x_i}^{2^{i}\ord(x_i)}$ acts by $0$ on
	$A/(\alpha_c^{\ord_c(c)})$. Indeed $\alpha_{x_i}^{2\ord(x_i)}$ acts by $0$ on
	$$A/\alpha_{x_i}^{\ord(x_i)}\otimes_{A}A/\alpha_{x_{i+1}}^{\ord(x_{i+1})}\dots\otimes_{A}A/\alpha_{x_{|c|}}^{\ord(x_{|c|})}$$
	by \cite[Lemma 3.5.2]{landesmanL:the-stable-homology-of-non-splitting}.
	By applying \cite[Lemma
	3.5.1]{landesmanL:the-stable-homology-of-non-splitting}, we find by
	descending induction on $j$ with $i \geq j\geq 1$ that
	$\alpha_{x_i}^{2^{i+1-j}\ord(x_i)}$ acts by $0$ on
	$$A/\alpha_{x_j}^{\ord(x_j)}\otimes_{A}A/\alpha_{x_{j+1}}^{\ord(x_{j+1})}\dots\otimes_{A}A/\alpha_{x_{|c|}}^{\ord(x_{|c|})}.$$
	The claim is the case that $j=1$.
	
	We conclude by showing the above claim implies that
	$I^{1+\sum_{i=1}^{|c|}(2^{i} \ord(x_i)-1)}$ acts by $0$.
	Any sequence $(\alpha_{x_1}, \ldots, \alpha_{x_\ell})$ with
	for $\ell > \sum_{i=1}^{|c|}(2^{i} \ord(x_i)-1)$, 
	must contain
	at least $2^i\ord(x_i)$ copies of $\alpha_{x_i}$ for some $i$, by the pigeonhole
	principle. Using the action of the braid group, we see that such a
	sequence is divisible by $\alpha_{x_i}^{2^i \ord(x_i)}$ for some $i$, 
	and hence acts by $0$ on $A/(\alpha_c^{\ord_c(c)})$. Any element of $\pi_0A$ is a linear combination of such sequences, so this proves the claim. 

	We claim the same argument shows that $I_{>0}^{1+\sum_{i=1}^{|c'|}(2^{i}
	\ord(y_i)-1)}=0$. Indeed, this follows upon noting that monomials appearing in elements of
	$I_{>0}$ must contain some element of $c'$ and our assumption from
	\autoref{definition:quotient} that the elements of $c'$ appear first
	among the $x_1, \ldots, x_{|c|}$.
\end{proof}

The following lemma is well known in the case $c$ is a conjugacy class of a group (see for example \cite[Theorem 6.2]{randal-williams:homology-of-hurwitz-spaces}). We outline the argument in general below.
\begin{lemma}\label{lemma:e1cellshurwitz}
	$\ZZ\otimes_A\ZZ$ is $f_{1,0}$-bounded.
\end{lemma}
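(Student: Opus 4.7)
The plan is to generalize to arbitrary racks the argument of \cite[Theorem 6.2]{randal-williams:homology-of-hurwitz-spaces}, which handles the case when $c$ is a conjugacy class in a group. The first step is to identify a small chain-level model for $A = C_*(\Hur^c;\ZZ)$ using the Fox--Neuwirth stratification. Each Hurwitz space $\Hur^c_n$ is a finite cover of $\Conf_n(\RR^2)$, and so inherits a stratification indexed by ordered partitions $(a_1,\dots,a_k)$ of $n$ (the pattern of $x$-coordinates among the $n$ points) together with a $c$-labelling of the points consistent with the rack action along column boundaries.

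Next, one observes that the $\EE_1$-multiplication $A_n \otimes A_m \to A_{n+m}$ is implemented on these cells by horizontal concatenation of column partitions. This exhibits $A$, up to $\EE_1$-equivalence, as a free $\EE_1$-algebra on the subcomplex $V \subset A$ of single-column ($k=1$) cells, equipped with a differential capturing how adjacent columns merge under the rack action. Standard computations for free $\EE_1$-algebras then yield $\ZZ \otimes_A \ZZ \simeq \ZZ \oplus \Sigma V$ at the level of chain complexes with differential, reducing the lemma to a boundedness statement about $V$. A dimension count handles this: the single-column substratum of $\Hur^c_n$ has codimension $n-1$ in the ambient $2n$-dimensional configuration space, so Poincaré--Lefschetz duality for this open manifold places the contribution of $V_n$ so that $\pi_i (\ZZ \otimes_A \ZZ)_n$ vanishes whenever $n > i$, i.e., is on or below the diagonal $n = i$.

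The hardest part will be verifying the free-$\EE_1$ identification of $A$ with the correct boundary differential, especially the bookkeeping of how the rack action appears along the column-merge strata. In the conjugacy-class case this appears in \cite[Theorem 6.2]{randal-williams:homology-of-hurwitz-spaces} using ordinary group conjugation, but for a general rack one simply replaces conjugation by the abstract operation $x \triangleright y$ in the boundary formulas. Because the braid group action on $c^n$ used to build $\Hur^c_n$ depends only on $\triangleright$ (as in \autoref{definition:rack}), and the Fox--Neuwirth cell combinatorics are independent of any group ambient, the argument transfers essentially verbatim to the setting of a finite rack, giving the $f_{1,0}$-boundedness.
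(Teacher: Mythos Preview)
Your strategy follows the Fox--Neuwirth approach of \cite[Theorem 6.2]{randal-williams:homology-of-hurwitz-spaces}, and you are right that the cell combinatorics depend only on the operation $\triangleright$, so the argument extends to finite racks. The paper, however, takes a different route: rather than modeling $A$, it models the bar construction $*_+\otimes_{\Hur^c_+}*_+$ directly via the geometric construction of \cite[Theorem A.4.9]{landesmanL:the-stable-homology-of-non-splitting}, and identifies this pointed graded space by inspection with the free associative algebra on a wedge of circles indexed by $c$, placed in grading $1$ or $0$ according as the indexing element lies in the component $z$ or not. Taking reduced chains then exhibits $\ZZ\otimes_A\ZZ$ as the tensor algebra on $|c|$ classes in topological degree $1$ with those gradings, from which the $f_{1,0}$-bound is immediate.

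There is a genuine gap in your middle step. The Fox--Neuwirth chain model is free on the single-column cells $V$ only as a \emph{graded} associative algebra: the differential sends a single-column cell of length $m$ to a sum of products of two shorter single-column cells (the boundary \emph{splits} a column rather than merging, with the rack operation entering through the label twist), so $d$ maps $V$ into $V\otimes V$ and $V$ is not a subcomplex. In particular $A$ is not the free $\EE_1$-algebra on any chain complex $V$; already for the trivial one-element rack, $V$ is concentrated in a single homological degree in each grading and hence carries zero internal differential, yet $T(V)$ with zero differential is not quasi-isomorphic to $C_*(\coprod_n\Conf_n)$. Your formula $\ZZ\otimes_A\ZZ\simeq\ZZ\oplus\Sigma V$ is nonetheless correct, but the right justification is that the Fox--Neuwirth DGA is the cobar construction $\Omega C$ of the conilpotent graded coalgebra $C=\ZZ\oplus\Sigma V$ (reduced coproduct given by the splitting differential), whence bar--cobar duality yields $\ZZ\otimes_{\Omega C}\ZZ\simeq C$. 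A final minor point: your dimension count uses the total grading by number of branch points, whereas the lemma concerns the grading by the component $z$ of \autoref{notation:gradingcomponent}; this is harmless, since every multidegree with $z$-grading $j$ has total grading at least $j$, so the total-grading bound implies the stated one.
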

\begin{proof}
	Note that
	$\ZZ\otimes_A\ZZ$ is obtained by applying the reduced $\NN$-graded
	chains functor to the bar construction in pointed $\NN$-graded spaces
	$*_+\otimes_{\Hur^c_+}*_+$,\footnote{Here $*_+$ denotes the unit in $\NN$-graded spaces, which has a single point other than the base point in grading $0$, and just the base point in positive gradings.} where the map of  $\EE_1$-algebras in $\NN$-graded pointed spaces $\Hur^{c}_+ \to *_+$ sends all of the nonidentity components to the base point. 
	The proof of \cite[Theorem
	A.4.9]{landesmanL:the-stable-homology-of-non-splitting} shows that this
	bar construction can be modeled by the ind-weak homotopy type of a
	quotient of the family of graded spaces denoted
	$\overline{Q}_\epsilon[*_+,\Hur^{c},*_+]$ in \cite[Definition A.4.1]{landesmanL:the-stable-homology-of-non-splitting}, 
	where if one of the boundary labels is the base point point, we identify the point with the base in its grading.
	However, this ind-weak homotopy type by inspection 
	is homotopy
	equivalent to the free associative algebra in graded pointed spaces on a
	wedge of circles 
	indexed over $c$, where the circle is in grading $1$ if
	it is in $c'$, and the circle is in grading $0$ otherwise.
	Thus the reduced homology of
$\overline{Q}_\epsilon[*_+,\Hur^{c},*_+]$, which is $\ZZ\otimes_A\ZZ$, is a tensor
	algebra over $\ZZ$ on classes indexed by $c$ in topological degree $1$,
	that are in grading $1$ if in $c' \subset c$ and in
	degree $0$ otherwise.
	It follows that
$\ZZ\otimes_A\ZZ$
	is $f_{1,0}$-bounded.
\end{proof}

We finish the proof of the main theorem.

\begin{proof}[Proof of \autoref{theorem:quotstability}]
	We apply \autoref{theorem:e1stability} in the case that $k = \ZZ$, $R = A$,
	and $M = A/(\alpha_c^{\ord_c(c)})$. Note $\pi_0R$ is generated in degrees
	$\leq 1$. We will verify conditions \autoref{theorem:e1stability}(a), (b),
	and (c), with parameters 
	$v, w, d, t, \mu, b$ from \autoref{theorem:e1stability} only 
	depending on $|c'|, \max_{x \in c'} \ord_c(x)$.
	First $\pi_0(A)$ is generated in degree $1$ by the elements of $c$, so
	we can take $d = 1$ in \autoref{theorem:e1stability}.
	Condition (a) is the content of \autoref{lemma:e1cellshurwitz},
	which moreover shows we can take $v = 1$ and $u = 0$ in 
	\autoref{theorem:e1stability}.
	Condition (b) follows from the fact
	that $M$ is generated under colimits by $A/(\alpha_{c'}^{\ord_c(c')})$, 
	which has finitely many cells only depending on $|c'|$ and $\max_{x \in
	c'} \ord_c(x)$.
	Finally,
	condition (c) follows from \autoref{lemma:Jnilpotent}, which also gives
	an explicit bound on $t$ depending only on $|c'|$ and $\max_{x \in c'}
	\ord_c(x)$.
\end{proof}

\section{Algebra preliminaries}
\label{section:algebra-preliminaries}

Our goal in this section is to prove \autoref{prop:rigiditymodules}, which gives
a criterion for checking a map is an equivalence, which we will use to verify
homological stability of Hurwitz spaces.

Recall that given a cosimplicial object $\Delta \to C$, we use $\Tot$ to denote the limit of this diagram in $C$ and $\Tot^n$ to denote the limit of the restriction of this diagram to $\Delta_{\leq n}$.

\begin{definition}\label{definition:nilpotentcomplete}
	Let $f:R\to S$ be a map of $\EE_1$-rings. We say that a left $R$-module $M$ is \textit{$f$-nilpotent complete} if the natural map
	
	$$M \to \Tot(S^{\otimes_{R}{\bullet+1}}\otimes_R M)$$ is an equivalence. The target of the map is called the $f$-nilpotent completion of $M$.
	
	If $R$ is a connective $\EE_1$-ring, and $I \subset \pi_0R$ is a
	two-sided ideal, we way that a left $R$-module $M$ is
	\textit{$I$-nilpotent complete} if it is nilpotent complete with respect
	to the map $R \to (\pi_0R)/I$.
\end{definition}

The proof of \cite[Proposition 2.14]{mathew2017nilpotence}\footnote{The cited reference proves the result in symmetric monoidal categories, but the proof works in general by replacing sets with linearly ordered sets in appropriate places.} shows 
the following well known result:

\begin{lemma}
	Let $C$ be an exactly\footnote{This means that the tensor product
	commutes with finite colimits in each variable.} monoidal stable $\infty$-category with unit $\unit$,
	and let $A\in \Alg(C)$. Then the tower 

	$$\cdots \fib(\unit \to \Tot^{n}(A^{\otimes \bullet+1})) \to \fib(\unit \to \Tot^{n-1}(A^{\otimes \bullet+1})) \to \cdots \to \fib(\unit \to \Tot^0(A^{\otimes \bullet+1})) $$ is equivalent to the tower
	$$\cdots I^{\otimes n+1} \to I^{\otimes n} \to \cdots \to I $$
where $I$ is the fiber of the map $\unit \to A$ and the maps are given by tensoring with the map $I \to A$ on one\footnote{In fact, the towers obtained from different choices of tensor factors are equivalent, because the two maps 
$I\otimes I \to I$ given by the inclusion of either factor into the unit are isomorphic in the slice category of $C$ over $I$.} of the tensor factors.
\end{lemma}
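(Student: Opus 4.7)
My plan is to follow Mathew's proof of Proposition 2.14 \cite{mathew2017nilpotence}, with the modification suggested in the footnote: replacing unordered finite sets (used in the symmetric monoidal setting to index tensor powers) with linearly ordered finite sets. This substitution is harmless here because the cosimplicial structure of the Čech conerve $A^{\otimes \bullet+1}$ of $\unit \to A$, together with the face/degeneracy maps inserting $\unit$ into specified positions and multiplying adjacent factors, manifestly respects linear ordering on the tensor slots.

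The proof proceeds by analyzing the Bousfield--Kan tower. First I would invoke the standard fact that, for any cosimplicial object $X^\bullet$ in a stable $\infty$-category, there is a canonical fiber sequence
\[
\Omega^n N^n X^\bullet \to \Tot^n X^\bullet \to \Tot^{n-1}X^\bullet,
\]
where $N^n X^\bullet = \fib(X^n \to M^n X^\bullet)$ is the conormalized piece and $M^n X^\bullet$ is the $n$-th matching object. Specializing to $X^\bullet = A^{\otimes \bullet+1}$, I would then identify $M^n A^{\otimes \bullet+1}$ with the limit of a punctured $(n+1)$-cube whose vertex at a proper subset $S \subsetneq \{0, \ldots, n\}$ is the ordered tensor product placing $\unit$ in slots indexed by $S$ and $A$ in the remaining slots, with all edges induced by the unit map $\unit \to A$. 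The conormalized piece $N^n$ is therefore the total fiber of the associated full $(n+1)$-cube, and by exact monoidality together with stability this total fiber decomposes as an iterated fiber over the $n+1$ tensor slots, each slot contributing $I := \fib(\unit \to A)$. Hence $N^n \simeq I^{\otimes n+1}$ in the fixed linear order.

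Combining the two steps above yields fiber sequences $\Omega^n I^{\otimes n+1} \to \Tot^n \to \Tot^{n-1}$. Passing to fibers against $\unit$ and inducting on $n$ identifies the tower $\{\fib(\unit \to \Tot^n A^{\otimes \bullet+1})\}$ with $\{I^{\otimes n+1}\}$, the transition maps corresponding under this identification to tensoring with $I \to \unit$ (equivalently, as noted in the footnote, with $I \to A$) in one of the tensor slots. The main obstacle, and the only place where the monoidal (rather than symmetric monoidal) hypothesis requires care, is verifying compatibility of these identifications across all $n$ in the absence of symmetry. This is handled by fixing a single linear ordering on tensor factors once and for all and checking that the cosimplicial face/degeneracy structure of the Čech conerve together with the matching cubes respect this ordering. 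The check is a combinatorial bookkeeping exercise that succeeds because all relevant structure maps act either by inserting a $\unit$ in a prescribed position or by multiplying two adjacent factors, both of which preserve the chosen order.
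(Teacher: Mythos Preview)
Your proposal is correct and takes essentially the same approach as the paper: the paper's proof consists entirely of citing \cite[Proposition 2.14]{mathew2017nilpotence} together with the footnoted remark that one replaces sets by linearly ordered sets to handle the merely monoidal (rather than symmetric monoidal) case. Your write-up in fact supplies more detail than the paper does, spelling out the Bousfield--Kan/matching-object analysis that underlies Mathew's argument.
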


We apply the above lemma in case of an $\EE_1$-algebra $R \to S$, by viewing $S$ as an $\EE_1$-algebra in the category of $R$-bimodules, to get the lemma below:

\begin{lemma}\label{lemma:descenttower}
	Given an $\EE_1$-algebra map $R \to S$, we can identify the tower of $R$-bimodules $\fib (R \to \Tot^n(S^{\otimes_{R}{\bullet+1}}))$ with the tower $I^{\otimes_{R}n}$, where $I$ is the fiber of the map $R \to S$.
\end{lemma}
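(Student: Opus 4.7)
The plan is to deduce Lemma \ref{lemma:descenttower} as a direct application of the previous lemma, taking for $C$ the symmetric (exactly) monoidal stable $\infty$-category of $R$-bimodules, equipped with the relative tensor product $\otimes_R$ and unit object $R$. The key observation is that an $\EE_1$-algebra map $R \to S$ is the same data as an $\EE_1$-algebra object $S$ inside $C$, because an $\EE_1$-$R$-algebra and an $\EE_1$-algebra in $R$-bimodules refer to identical structures once one unpacks the definition of $\otimes_R$.

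First I would verify the hypotheses of the previous lemma for $C$: the $\infty$-category of $R$-bimodules is stable, the functor $\otimes_R$ is exact (it preserves finite colimits in each variable by the standard formula via the bar construction), and $R$ serves as the monoidal unit. Then the cosimplicial object $S^{\otimes_R \bullet + 1}$ in $C$ is precisely the cosimplicial object $A^{\otimes \bullet + 1}$ appearing in the previous lemma with $A = S$. Consequently the tower
\[
\cdots \to \fib\!\bigl(R \to \Tot^{n}(S^{\otimes_R \bullet+1})\bigr) \to \fib\!\bigl(R \to \Tot^{n-1}(S^{\otimes_R \bullet+1})\bigr) \to \cdots \to \fib\!\bigl(R \to \Tot^0(S^{\otimes_R \bullet+1})\bigr)
\]
is identified with the tower of iterated tensor powers of $I = \fib(R \to S)$ in $C$, with the transition maps induced by the map $I \to R$ tensored into one of the factors. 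Unwinding, this is exactly the tower $I^{\otimes_R n}$ claimed in the lemma.

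The only real point to be careful about is a bookkeeping one: the relative tensor product in the category of $R$-bimodules must be taken in the derived (i.e.\ $\infty$-categorical) sense, and the fiber $I$ must be computed in the category of $R$-bimodules rather than just as a module on one side or as a bare spectrum. But these coincide after forgetting structure because the forgetful functors from $R$-bimodules to $R$-modules or to spectra are exact, so forming $I$ and its iterated tensor powers commutes with whichever forgetful functor one chooses. I do not foresee a substantial obstacle; the work is essentially done by the previous lemma, and this lemma is a matter of identifying the right monoidal category in which to apply it.
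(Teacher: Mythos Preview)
Your proposal is correct and follows exactly the paper's approach: the paper's proof is the single sentence ``We apply the above lemma in case of an $\EE_1$-algebra $R \to S$, by viewing $S$ as an $\EE_1$-algebra in the category of $R$-bimodules.'' One small correction: the category of $R$-bimodules for an $\EE_1$-ring $R$ is only monoidal, not symmetric monoidal, but this is harmless since the previous lemma only requires an exactly monoidal stable $\infty$-category.
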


The following lemma gives a criterion for $I$-nilpotent completeness.
\begin{lemma}\label{lemma:nilpcompcrit}
	Suppose that $f:R\to R'$ is a $0$-connective (i.e., surjective on $\pi_0$)
	map of connective $\EE_1$-rings, and let $I \subset \pi_0R$ be the
	kernel of $\pi_0f$. If $M$ is a left $R$-module that is bounded below
	such that $I$ acts nilpotently on each $\pi_iM$, then $M$ is $f$-nilpotent complete.
\end{lemma}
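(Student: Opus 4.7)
I propose to show the augmentation $M \to \Tot(R'^{\otimes_R \bullet+1} \otimes_R M)$ is an equivalence via a three-step reduction ending in a splitting argument. The foundational observation is that the class $\mathcal C$ of $f$-nilpotent complete left $R$-modules is closed under cofiber extensions (because the augmentation is natural and $\Tot$ preserves fiber sequences) and under suspension (since $\Sigma$ is an auto-equivalence of $\Mod(R)$). Applied to the Postnikov tower $M \simeq \lim_n \tau_{\leq n} M$ of the bounded-below module $M$ --- and using that tensoring by the connective cosimplicial object $R'^{\otimes_R \bullet+1}$ preserves such bounded-below inverse limits in each fixed homotopy degree --- the problem reduces to showing each discrete $R$-module $\pi_i M$ lies in $\mathcal C$.

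For a discrete $\pi_0 R$-module $N = \pi_i M$ with $I^t N = 0$ for some $t$, I would use the classical finite filtration $N \supset IN \supset I^2 N \supset \cdots \supset I^t N = 0$, whose successive quotients $I^j N / I^{j+1} N$ are annihilated by $I$, hence are $\pi_0 R / I = \pi_0 R'$-modules, and thus $R'$-modules via restriction along the Postnikov truncation $R' \to \pi_0 R'$. Closure of $\mathcal C$ under extensions further reduces the claim to showing that every $R'$-module, viewed as an $R$-module via $f$, is $f$-nilpotent complete.

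For $N$ an $R'$-module, I would produce an extra degeneracy for the augmented cosimplicial $R$-module $N \to R'^{\otimes_R \bullet+1} \otimes_R N$ given by the maps $s^n\colon R'^{\otimes_R n+1} \otimes_R N \to R'^{\otimes_R n} \otimes_R N$ that multiply the rightmost $R'$-factor into $N$ using the $R'$-action, with $s^0\colon R' \otimes_R N \to N$ being the action map itself. The standard theorem that split augmented cosimplicial objects in a stable $\infty$-category have totalization equivalent to their augmentation then yields that $N$ is $f$-nilpotent complete.

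The main obstacle, as I see it, is making the splitting step rigorous at the $\infty$-categorical level: the proposed extra degeneracies must assemble into a coherent splitting of the full cosimplicial object, not merely pointwise maps in the homotopy category. I would handle this via the adjunction $R' \otimes_R (-) \dashv \mathrm{forget}$, which realizes the Amitsur complex as a monadic bar construction, so that $R'$-modules --- being algebras over the associated monad --- automatically admit the required splitting. Equivalently, one observes that the forgetful functor $\Mod(R') \to \Mod(R)$ factors through $\mathcal C$ by formal monadicity considerations.
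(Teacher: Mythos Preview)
Your proof is correct, and the reduction skeleton---closure of $f$-nilpotent complete modules under extensions, Postnikov-tower reduction to bounded modules, then the $I$-adic filtration to reduce to discrete modules annihilated by $I$---matches the paper's route exactly. The point of departure is the base case. The paper invokes its preceding \autoref{lemma:descenttower}, which identifies the tower $\fib(R \to \Tot^n(R'^{\otimes_R \bullet+1}))$ with $I^{\otimes_R n+1}$ (here $I = \fib(R \to R')$ as an $R$-bimodule); once $I$ acts by zero on a discrete $M$, each transition map in the tower $I^{\otimes_R n}\otimes_R M$ is null, so the inverse limit---which is the fiber of the completion map---vanishes. You instead observe that a discrete module annihilated by the ideal $I$ is a $\pi_0R'$-module, hence an $R'$-module via $R' \to \pi_0 R'$, and then use the extra degeneracy (equivalently, monadicity of $R'\otimes_R(-)$) to split the Amitsur cosimplicial object. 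Both are standard and short; your route has the mild advantage of being self-contained (no need for the tower identification lemma), while the paper's route keeps everything phrased in terms of the same tower used elsewhere in the section. Your handling of the $\infty$-categorical coherence for the splitting via the free--forgetful adjunction is the right way to do it.
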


\begin{proof}
	\autoref{lemma:descenttower} describes the tower $\fib (R \to \Tot^n(S^{\otimes_{R}{\bullet+1}}))$ as $I^{\otimes n+1}$, which is connective by the assumption that $f$ is $0$-connective. Since the $f$-nilpotent completion of an $R$-module $N$ is $\lim_n\Tot^n(S^{\otimes_{R}{\bullet+1}}\otimes_RN))$, we see that the map $N \to \Tot(S^{\otimes_R\bullet + 1}\otimes_RN)$ is $i$-connective if $N$ is, which in particular implies that the $f$-nilpotent completion functor preserves $i$-connectivity.
	It thus suffices to show that $\tau_{\leq n}M$ is $f$-nilpotent complete for any $n$. Because $f$-nilpotent complete objects are closed under extensions, we can assume that $M$ is a discrete $R$-module with $I$ acting by $0$.
	Now the map $I\otimes_RM \to M$ is null, so each map in the tower
	$I^{\otimes_Rn}\otimes_RM$ is zero. Thus  $\lim_nI^{\otimes
	n+1}\otimes_RM = 0$. By \autoref{lemma:descenttower},
$\lim_nI^{\otimes n+1}\otimes_RM$
	is the fiber of the map $M \to \Tot(S^{\otimes R^{\bullet + 1}}\otimes_RM)$. Thus 
	$M$ is $f$-nilpotent complete.
\end{proof}

The following is the main result of this section, which is a generalization of
a combination of 
\cite[Lemma 3.4.3 and Proposition
3.4.2]{landesmanL:the-stable-homology-of-non-splitting}.

\begin{proposition}\label{prop:rigiditymodules}
	Let $f:R\to S$ be a map of connective $\EE_1$-rings that is surjective on $\pi_0$.  Let $I_S$ be a two-sided ideal of $\pi_0S$, and let $I_R$ be the pullback of this to a two-sided ideal of $\pi_0R$. Let $M$ be a bounded below left $R$-module such that $I_R$ acts nilpotently on each $\pi_iM$. If $$((\pi_0R)/I_R\otimes_R(\pi_0R)/I_R) \to ((\pi_0S)/I_S\otimes_S(\pi_0S)/I_S)$$ is an isomorphism, then the map
	$$M \to S \otimes_R M$$ is an equivalence.
\end{proposition}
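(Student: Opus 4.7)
The plan is to apply \autoref{lemma:nilpcompcrit} to identify both $M$ and $M \otimes_R S$ with totalizations of cobar cosimplicial objects, and then to exhibit a levelwise equivalence of these two cosimplicial objects via the hypothesis.

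Set $A := \pi_0 R/I_R$. Since $\pi_0 f$ is surjective with $I_R$ the preimage of $I_S$, the induced map $\pi_0R/I_R \to \pi_0S/I_S$ is an isomorphism of rings, so I identify both with $A$ and note that $R \to A$ factors as $R \to S \to A$. The hypothesis that $I_R$ acts nilpotently on each $\pi_iM$ lets me apply \autoref{lemma:nilpcompcrit} to get $M \simeq \Tot(A^{\otimes_R \bullet + 1} \otimes_R M)$. I need the analogous statement for $M \otimes_R S$ with respect to $S \to A$, which reduces to checking that $I_S$ acts nilpotently on each $\pi_i(M \otimes_R S)$. This follows from a Postnikov-tower argument: tensoring the Postnikov tower of $M$ with $S$ produces a finite filtration on $\tau_{\leq i}(M \otimes_R S)$ whose associated graded pieces are controlled by $\pi_nM$ for $n\leq i$; each $\pi_nM$ is annihilated by some power of $I_R$, so the same is true for each piece's homotopy and hence, by a finite product of such powers, for $\pi_i(M \otimes_R S)$; in particular $I_S = I_R\pi_0S$ acts nilpotently. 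Thus \autoref{lemma:nilpcompcrit} gives $M \otimes_R S \simeq \Tot(A^{\otimes_S \bullet + 1} \otimes_S (M \otimes_R S))$.

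It now suffices to construct a levelwise equivalence $A^{\otimes_R n+1} \otimes_R M \simeq A^{\otimes_S n+1} \otimes_S (M \otimes_R S)$. By associativity and the identity $A^{\otimes_S n+1} \otimes_S S = A^{\otimes_S n+1}$, the right-hand side simplifies to $A^{\otimes_S n+1} \otimes_R M$, so it is enough to prove $A^{\otimes_R n+1} \simeq A^{\otimes_S n+1}$ for all $n \geq 1$, which I do by induction. Let $L := \fib(S \otimes_R A \to A)$, where the map is multiplication induced by $S \to A$. The identification $A \otimes_R A = A \otimes_S (S \otimes_R A)$ coming from the factorization $R \to S \to A$ converts the hypothesis into the statement $A \otimes_S L = 0$. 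For the inductive step,
\[
A^{\otimes_R n+1} = A^{\otimes_R n} \otimes_R A = A^{\otimes_R n} \otimes_S (S \otimes_R A),
\]
whose natural map to $A^{\otimes_R n} \otimes_S A$ has fiber $A^{\otimes_R n} \otimes_S L = A^{\otimes_R n-1} \otimes_R (A \otimes_S L) = 0$ for $n \geq 1$; combining with the inductive hypothesis $A^{\otimes_R n} \simeq A^{\otimes_S n}$ yields $A^{\otimes_R n+1} \simeq A^{\otimes_S n+1}$.

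Taking $\Tot$ of this levelwise equivalence of cosimplicial objects produces the desired equivalence $M \simeq M \otimes_R S$. The main technical obstacle is verifying that $I_S$ acts nilpotently on each $\pi_i(M \otimes_R S)$ via the Postnikov-tower argument; once this is in place, the remainder of the proof reduces to formal manipulations of relative tensor products together with the iterative application of the hypothesis on $A \otimes_R A$.
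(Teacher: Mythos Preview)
Your argument is correct and follows the same route as the paper: establish that $I_S$ acts nilpotently on each $\pi_i(M\otimes_R S)$, apply \autoref{lemma:nilpcompcrit} to write both sides as totalizations of the cobar constructions over $A=\pi_0R/I_R=\pi_0S/I_S$, and then prove the levelwise equivalence $A^{\otimes_R n}\simeq A^{\otimes_S n}$ by induction from the hypothesis. The only differences are cosmetic---the paper invokes the Tor spectral sequence rather than the Postnikov tower for the nilpotence step, and it packages the induction as ``$N\otimes_R A\simeq N\otimes_S A$ for every $A$-module $N$'' rather than via your fiber argument with $L=\fib(S\otimes_R A\to A)$ and the vanishing $A\otimes_S L=0$.
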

\begin{remark}
	\label{remark:}
	In the above statement,
the notation
$(\pi_0R)/I_R$ and 
$(\pi_0S)/I_S$ refers to the underived quotient of a ring by an ideal.
\end{remark}

\begin{proof}
	From the Tor spectral sequence $\mathrm{Tor}_{\pi_*R}^i(\pi_*M,\pi_*S) \implies \pi_*(S\otimes_RM)$
	we see that $S\otimes_RM$ also satisfies the condition that $I_S$ acts nilpotently on each homotopy group. It follows from \autoref{lemma:nilpcompcrit} that $M$ and $S\otimes_RM$ are $f$-nilpotent complete.
	
	Thus it is enough to show that the natural maps
	
	$$((\pi_0R)/I_{R})^{\otimes_R n}\otimes_R M \to ((\pi_0S)/I_{S})^{\otimes_S n}\otimes_R M$$
	are equivalences for each $n\geq1$ because the map $M\to S\otimes_RM$ is
	obtained as a totalization of these maps. It suffices to show that 
	\begin{equation}\label{equation:comparison}
		((\pi_0R)/I_{R})^{\otimes_R n} \to ((\pi_0S)/I_{S})^{\otimes_S n}
	\end{equation}
	are equivalences. For $n=1$, \eqref{equation:comparison} follows from the assumption that $R\to S$ is surjective on $\pi_0$. 

	We next induct on $n$. The base case $n = 1$ was done above. For the
	inductive step, we claim that for any right $\pi_0S/I_S$-module $N$, the natural map 
	$$N\otimes_R(\pi_0R/I_R) \to N\otimes_S(\pi_0S/I_S)$$ is an equivalence. Indeed, in the case $N=\pi_0S/I_S$, this follows from assumption, and a general module is built from this under colimits and desuspensions. Taking $N=(\pi_0S/I_{S})^{\otimes_S n-1}$, we see that \ref{equation:comparison} being an equivalence follows from induction on $n$, finishing the proof.
\end{proof}
\section{Homological stability for connected Hurwitz spaces}
\label{section:homological-stability}

The goal of this section is to prove homological stability integrally for
$\CHur^{c}$ for any finite rack $c$. The weaker stability results of
\autoref{section:stabquot} show homological stability after quotienting by many operators, and we prove our result by inductively reducing the number of operators needed in quotienting.

For any subrack $c'' \subset c$, we let $A_{c''} = C_*(\Hur^{c''})$ and let
$CA_{c''}
= C_*(\CHur^{c''})$. 
As in \autoref{notation:gradingcomponent}, we view these as
graded algebras using a single component $c' \subset c$.
A key observation we use to prove homological stability is the following lemma:

\begin{lemma}\label{lemma:inductquotient}
	Suppose that $X \in \Mod(\ZZ)^{\NN}$, and $v :X \to X$ is a map of
	degree $|v|$ in the sense that it sends the
	$j$th graded part to the $(j+|v|)$th graded part for some $|v| \in
	\mathbb N$. Suppose $X/v$ is $f$-bounded and $X[v^{-1}]=0$. Then $X$ is
	$\phi$-bounded for $\phi(i) = f(i+1)-|v|$.
\end{lemma}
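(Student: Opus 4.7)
The plan is to use the cofiber sequence $X \xrightarrow{v} X \to X/v$ to extract a long exact sequence of graded homotopy groups, and then combine the $f$-boundedness of $X/v$ (which will give injectivity of $v$ in a suitable range) with the hypothesis $X[v^{-1}]=0$ (which will give that every element is eventually killed by some power of $v$). Together these force vanishing in the desired range.

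First, I would write down the long exact sequence associated to the cofiber $X/v$ in each grading $j$:
\[
\cdots \to \pi_{i+1}(X/v)_j \to \pi_i X_{j-|v|} \xrightarrow{v} \pi_i X_j \to \pi_i(X/v)_j \to \cdots,
\]
using that $v$ raises grading by $|v|$ and that cofibers are computed gradingwise. Fix $i \geq 0$ and $\alpha \in \pi_i X_j$ with $j > \phi(i) = f(i+1)-|v|$; the goal is to prove $\alpha = 0$.

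Next, I would establish injectivity of iterated $v$ in the tail. For each $\ell \geq 0$, the relevant term of the LES is $\pi_{i+1}(X/v)_{j+(\ell+1)|v|}$. Since $|v|\geq 0$, we have $j+(\ell+1)|v| \geq j+|v| > f(i+1)$, so by $f$-boundedness this group vanishes. Hence $v : \pi_i X_{j+\ell|v|} \to \pi_i X_{j+(\ell+1)|v|}$ is injective for every $\ell \geq 0$, and composing yields that $v^k : \pi_i X_j \to \pi_i X_{j+k|v|}$ is injective for every $k \geq 0$.

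Finally, I would invoke the hypothesis $X[v^{-1}]=0$. Since $X[v^{-1}]$ in grading $j$ is computed as the sequential colimit $\colim_k \pi_i X_{j+k|v|}$ with transition maps $v$, this colimit vanishes, so there exists $k$ with $v^k\alpha = 0$ in $\pi_i X_{j+k|v|}$. Combined with the injectivity above this forces $\alpha = 0$, proving $\pi_i X_j = 0$ for $j > \phi(i)$. The only real obstacle is keeping the indexing straight (i.e.\ ensuring the shift by $|v|$ is accounted for correctly on both the source and target of $v$, and that the vanishing of $\pi_{i+1}(X/v)$ is used at the correct grading $j+|v|$, which is exactly what forces the expression $\phi(i) = f(i+1)-|v|$); everything else is a routine consequence of the long exact sequence and the definition of localization.
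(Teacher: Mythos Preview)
Your proof is correct and follows essentially the same approach as the paper: use the long exact sequence of the cofiber to get injectivity of $v$ (and hence of all $v^k$) on $\pi_iX_j$ once $j+|v|>f(i+1)$, then conclude from $X[v^{-1}]=0$ that this injective map into the colimit has trivial target. The paper's write-up is terser but the argument and the index bookkeeping are identical.
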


\begin{proof}
	We have an sequence of homotopy groups that is exact in the middle
	
	$$\pi_{i+1}(X/v)_{j+|v|}\to \pi_{i}X_j \xrightarrow{v}\pi_iX_{j+|v|}.$$
	Because $X/v$ is $f$-bounded, the multiplication 
	by $v$ is injective if $j+|v|>f(i+1)$. This implies that in such
	degrees, the map $\pi_iX_j \to\colim_n \pi_i X_{j+n|v|}= \pi_iX_j[v^{-1}]=0$ 
	is injective. Hence, $\pi_iX_j = 0$ when $j + |v| > f(i+1)$, so $X$ is $\phi$-bounded.
\end{proof}

The following lemma shows that quotienting by graded operators in degree $0$
doesn't affect whether homological stability holds.

\begin{lemma}\label{lemma:twogradings}
	Let $X \in \Mod(\ZZ)^{\NN\times \NN}$. Suppose that $v:X[0,1] \to X$ is a map, where $(Y[i,j])_{k+i,l+j} = Y_{k,l}$, interpreting $Y_{k,l}$ to be $0$ when either $k$ or $l$ is negative. Suppose that when viewed via the first grading, $X/v$ is $f$-bounded. Then $X$ is $f$-bounded with respect to the first grading.
\end{lemma}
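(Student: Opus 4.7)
The plan is to exploit the cofiber sequence $X[0,1] \xrightarrow{v} X \to X/v$ to propagate $f$-boundedness from $X/v$ to $X$. Since the shift $[0,1]$ only affects the second grading, the associated long exact sequence at each bigrading $(k,l)$ splits cleanly into pieces where the first grading $k$ is preserved, so $k$-boundedness statements should transfer directly; the induction will then run in the second grading $l$, using the convention that $X_{k,l} = 0$ for $l < 0$ to seed the base case.

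Concretely, I would fix $i$ and $k$ with $k > f(i)$ and show by induction on $l \geq 0$ that $\pi_i X_{k,l} = 0$. The long exact sequence in homotopy at bigrading $(k,l)$ reads
\begin{equation*}
\cdots \to \pi_i X_{k,l-1} \xrightarrow{v} \pi_i X_{k,l} \to \pi_i (X/v)_{k,l} \to \pi_{i-1} X_{k,l-1} \to \cdots.
\end{equation*}
For the base case $l=0$, the convention $X_{k,-1}=0$ forces the map $\pi_i X_{k,0} \to \pi_i (X/v)_{k,0}$ to be injective; since $k > f(i)$ and $X/v$ is $f$-bounded with respect to the first grading, the target is $0$, giving $\pi_i X_{k,0}=0$. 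For the inductive step, assuming $\pi_i X_{k,l-1}=0$, the long exact sequence again yields an injection $\pi_i X_{k,l} \hookrightarrow \pi_i (X/v)_{k,l}$, whose target vanishes by the same boundedness hypothesis, so $\pi_i X_{k,l}=0$. Since this holds for every $l$, the conclusion that $X$ is $f$-bounded with respect to the first grading follows.

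There is no substantive obstacle: the main things to verify are that the cofiber sequence does produce a long exact sequence on each bigraded piece of homotopy separately (which is immediate since $v$ preserves the first grading), and that the convention $X_{k,l}=0$ for $l<0$ is compatible with the shift $X[0,1]$ in the way stated. Both are built into the setup of the lemma, so the argument is really a one-line induction on $l$ extracting an injection from the long exact sequence.
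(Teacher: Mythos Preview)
Your proof is correct and is essentially the same argument as the paper's, just packaged slightly differently: the paper inducts on $n$ to show each $X/v^n$ is $f$-bounded (via the cofiber sequence $(X/v^{n-1})[0,1]\to X/v^n\to X/v$) and then observes that $X$ agrees with $X/v^n$ in second grading $\leq n-1$, whereas you induct directly on the second grading $l$ using the long exact sequence of $X[0,1]\to X\to X/v$. Both are the same one-line induction exploiting that $v$ raises the second grading and that the second grading is bounded below.
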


\begin{proof}
	It follows by induction on $n\geq1$ that $X/v^n$ is $f$-bounded 
	when viewed in the first grading, because of the cofiber sequence
	$$(X/v^{n-1})[0,1] \to X/v^n \to X/v.$$
	To see that $\pi_*X$ is $f$-bounded, it suffices to do this for each
	summand of $\pi_*X$ along the second grading (since $\pi_*X$ is a direct
	sum of these). But where the second grading is $\leq n-1$, $X$ agrees
	with $X/v^{n}$ since the source of the map $v^n$ is $X[0,n]$, so is concentrated in
	degrees $\geq n$ with respect to the second grading. Thus $X$ is $f$-bounded because all of the $X/v^{n}$ are $f$-bounded.
	\end{proof}

The following proposition is the key input we need to show why the stability
result in \autoref{theorem:quotstability} implies homological stability. We recall
that for a space $X$, $X_+$ denotes the pointed space obtained by adding a
disjoint base point to $X$. For a subrack $c'' \subset c$, the normalizer
$N_c(c'')$ is defined as $\{x \in c|x\triangleright y \in c'', \forall y \in
c''\}$.

\begin{proposition}[{\cite[Proposition 4.5.11]{landesmanL:the-stable-homology-of-non-splitting}}]\label{proposition:tensorproducthomotopy}
	Let $c$ be a rack, $c'' \subset c$ be a subrack with normalizer $N_c(c'')$,  
	and $X_+ := \pi_0\Hur^{c''}[\alpha_{c''}^{-1}]_+$, viewed as a $\Hur^{c''}_+$-bimodule. 
	Then the map
	\begin{align}
		\label{equation:c-to-prime-equivalence}
		X_+\otimes_{\Hur^{c}_+}X_+ \to X_+\otimes_{\Hur^{N_c(c'')}_+}X_+ 
	\end{align}
	is a homology equivalence.
\end{proposition}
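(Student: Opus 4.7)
My plan is as follows. Both sides of \eqref{equation:c-to-prime-equivalence} admit bar-construction descriptions: at simplicial level $n$ the bar complex for $X_+\otimes_{\Hur^c_+}X_+$ is $X_+\wedge (\Hur^c_+)^{\wedge n}\wedge X_+$, and analogously with $N_c(c')$ in place of $c$. The map in \eqref{equation:c-to-prime-equivalence} is induced levelwise by the inclusion of $\EE_1$-algebras $\Hur^{N_c(c')}_+\hookrightarrow \Hur^c_+$ coming from the inclusion of racks $N_c(c')\subset c$.

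To show the map is a homology equivalence, I would filter the target bar complex by the number of middle tensor factors lying in the ``bad'' complementary set $c\setminus N_c(c')$. The source then appears as the bottom filtration layer, where every middle factor lies in $N_c(c')$, so it suffices to prove that each strictly positive filtration layer becomes acyclic after smashing with $X_+$ on either side.

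The key input for producing nullhomotopies on the higher filtration layers is the invertibility of $\alpha_{c'}$ in $X_+$, together with the defining property of $N_c(c')$: for any $y\in c\setminus N_c(c')$, there is some $y'\in c'$ with $y\triangleright y'\notin c'$. Morally, when a ``bad'' $y$ appears in the middle of the bar complex, the braid action from \autoref{definition:rack} lets us slide inverted copies of $\alpha_{c'}$ past $y$ from either side; the incompatibility with $c'$ produced by this sliding, combined with the fact that elements of $c\setminus c'$ act trivially on $X_+$ under its localized $\Hur^{c'}_+$-bimodule structure, forces the corresponding simplex to be homotopic to zero after passing to the associated graded.

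The main obstacle will be assembling these simplex-level contracting homotopies coherently into a nullhomotopy on each filtration quotient as a simplicial spectrum, and then verifying that the induced filtration spectral sequence degenerates to yield an isomorphism on $H_*$. This is the technical heart of the proof and mirrors in detail the argument carried out to prove \cite[Proposition 4.5.11]{landesmanL:the-stable-homology-of-non-splitting}, to which the present statement reduces verbatim.
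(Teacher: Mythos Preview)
The paper supplies no proof of this proposition: it is quoted verbatim from \cite[Proposition 4.5.11]{landesmanL:the-stable-homology-of-non-splitting}, and your final sentence, which simply defers to that reference, is in fact all the paper does. So as a ``proof'' your proposal is correct in its conclusion, and there is nothing further to compare.

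That said, the sketch you offer of how the argument in the cited paper ought to go has a directional error worth flagging. The map \eqref{equation:c-to-prime-equivalence} is \emph{not} induced by the inclusion $\Hur^{N_c(c')}_+\hookrightarrow\Hur^c_+$; an inclusion of $\EE_1$-algebras $R\hookrightarrow S$ yields a map $M\otimes_R M\to M\otimes_S M$, which is the wrong direction. Rather, $X_+$ is made a $\Hur^c_+$-bimodule via the \emph{collapse} map $\Hur^c_+\to\Hur^{c'}_+$ that sends any component indexed by a tuple containing an element outside $c'$ to the basepoint, and the comparison map comes from the factorisation $\Hur^c_+\to\Hur^{N_c(c')}_+\to\Hur^{c'}_+$ through the analogous collapse. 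Consequently it is the \emph{source} bar complex (middle factors in $\Hur^c_+$) that carries the ``bad'' elements of $c\setminus N_c(c')$, and the filtration you describe should be placed there; the target bar complex, with middle factors in $\Hur^{N_c(c')}_+$, has no bad elements at all. With this correction your outline---filter the source by the number of bad middle factors, identify the bottom layer with the target, and use invertibility of $\alpha_{c'}$ together with the defining property of the normalizer to kill the higher layers---matches the shape of the argument in the cited reference.
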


In what follows, it will be helpful to introduce a filtration on $A_c$:

\begin{construction}\label{construction:hurfil}
	Given a rack $c$, we define a decreasing filtration on $\Hur^c$ as a $\Hur^c$-bimodule, by setting $F_i\Hur^{c}$ to be the union of
	components of $\Hur^{c}$ such that if $c''$ is the smallest subrack of
	$c$ such that the component comes from $\Hur^{c''}$, then $|c''|\geq i$.
	
	We use $F_*A_c$ to denote the filtration on $A_c$ induced by taking chains.
\end{construction}
The following lemma is immediate from construction:
\begin{lemma}\label{lemma:hurassgrdd}
	There is a natural isomorphism of $A_c$-bimodules
		$$F_iA_c/F_{i+1}A_c \cong \oplus_{c'' \subset
		c,|c''|=i}CA_{c''},$$
		where each $CA_{c''}$ as in \autoref{notation:hurwitzchains}
		is given the structure of an $A_{c}$-bimodule by having elements
		of $c$ not in $c''$ act by $0$.
\end{lemma}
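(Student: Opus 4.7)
The plan is to unpack the definitions of the filtration together with the decomposition of $\Hur^c$ into connected components labeled by subracks, and then check the bimodule structure is as claimed.

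First I would verify the claim that components of $\Hur^c$ are naturally labeled by subracks of $c$: given a point of $\Hur^c$ represented by a tuple $(g_1,\ldots,g_n) \in c^n$ modulo the braid group action, the set of elements of $c$ appearing in any tuple in its $B_n$-orbit is a well-defined subrack $c'$ of $c$ (closure under the rack operation follows from the relations defining the $B_n$-action in \autoref{definition:rack}). By construction $c'$ is the smallest subrack such that the component lies in the image of $\Hur^{c'} \hookrightarrow \Hur^c$, and furthermore the component lies in $\CHur^{c'}$ (since every element of $c'$ appears in the tuple, so the reduced structure group of $c'$ is generated by the action of the $g_i$). Conversely, any component of $\CHur^{c'}$ pushes forward to a component of $\Hur^c$ labeled exactly by $c'$. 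This gives a decomposition
\[ \Hur^c \;\cong\; \coprod_{c' \subset c} \CHur^{c'} \]
as $\NN$-graded spaces, where the coproduct runs over all subracks $c' \subset c$.

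Next, I would translate the definition of $F_i\Hur^c$ into this decomposition: it is exactly the sub-$\Hur^c$-bimodule whose components correspond to subracks $c'$ of size $\geq i$. Hence after taking chains,
\[ F_i A_c / F_{i+1} A_c \;\cong\; \bigoplus_{c' \subset c,\; |c'| = i} C_*\CHur^{c'} \;=\; \bigoplus_{c' \subset c,\; |c'|=i} CA_{c'} \]
as graded $\ZZ$-modules.

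The main content is then the bimodule structure. The left (resp.\ right) $A_c$-module structure on $F_i A_c$ is induced, through the chains functor, by the bimodule action $\Hur^c \times \Hur^c \to \Hur^c$ which on tuples sends $(g_1,\ldots,g_n)$ and $(h_1,\ldots,h_m)$ to their concatenation. I would observe that if $h_1,\ldots,h_m \in c$ and the component of $(g_1,\ldots,g_n)$ is labeled by the subrack $c'$ of size $i$, then the component of the concatenated tuple is labeled by the subrack generated (inside $c$) by $c' \cup \{h_1,\ldots,h_m\}$. If some $h_j \notin c'$ this subrack strictly contains $c'$, so has size $\geq i+1$, and the resulting component lies in $F_{i+1}A_c$; thus it vanishes in the associated graded. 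If on the other hand all $h_j$ lie in $c'$, then the action is precisely the action of $A_{c'}$ on $CA_{c'}$ coming from the $\Hur^{c'}$-bimodule structure on $\CHur^{c'}$. This exactly matches the prescribed $A_c$-bimodule structure on $\bigoplus_{|c'|=i} CA_{c'}$, where elements of $c \setminus c'$ act by zero. Naturality is automatic since every step is functorial in the rack $c$.

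The proof is essentially a bookkeeping exercise; the only nontrivial point, and therefore the main thing to write carefully, is the verification that the smallest subrack supporting a given component is well-defined, which follows because the $B_n$-action preserves the subrack generated by the entries of a tuple.
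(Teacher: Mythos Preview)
Your proof is correct and matches the paper's approach: the paper merely asserts the lemma is immediate from the construction of the filtration, and your argument—decomposing $\Hur^c$ by the subrack generated by a tuple and tracking how concatenation with elements outside $c'$ pushes a component into $F_{i+1}$—is precisely the bookkeeping that makes it immediate.
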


Note that $F_0A = A$, and that $F_iA$ for $i \geq 0$ are, in particular, graded $A$-modules.

We are now ready to prove the main result of this section, which is a way of
expressing the sense in which the homologies of Hurwitz spaces stabilize.

\begin{theorem}
	\label{theorem:spectra-homology-stabilization}
	Let $c$ be a rack. Let $c' = c'_1 \cup \cdots \cup c'_r \subset c$ be a
	disjoint union of $r$ components of $c$.
	Let $s_i \in c'_i$. Then
	there are functions 
	$\mu,b: \mathbb N^2 \to \mathbb Z$ so that
	$CA_c/(\alpha_{s_1}^{\ord(s_1)}, \ldots, \alpha_{s_r}^{\ord(s_r)})$ is $f_{\mu(\max_{1 \leq i\leq r} \ord(s_i),|c'|),b(\max_{1 \leq i\leq r} \ord(s_i),|c'|)}$ bounded.
\end{theorem}

\begin{remark}
	It seems likely the dependence of $\mu$ and $b$ on $\ord(s)$ in the proof of 
	\autoref{theorem:spectra-homology-stabilization}
	can be removed (upon slight modification of the statement). One approach to doing this is to replace
	$A/(\alpha_x^{\ord(x)},x \in S)$ in the proof with an appropriate module
	`$A/(\alpha_x,x \in S)$'. Constructing this module would involve using
	certain invertible $A$-bimodules such that each $\alpha_x$ becomes a
	bimodule map.
	We believe it would be interesting to work this out precisely.
\end{remark}

\begin{proof}
	To abbreviate notation for the course of this proof, we use the notation $\ord(s) :=
	\max_{1 \leq i \leq r} \ord(s_i)$.
	We will show by descending induction on $|S|$ the following claim:
	
	\begin{enumerate}
		\item[$\star$] For any subset $S
		\subset c$ containing some element of each component of $c$,
		$CA_c/(\alpha_x^{\ord(x)},x \in S)$ is 
		$f_{\mu(\ord(s),|c'|),b(\ord(s),|c'|)}$ bounded.
	\end{enumerate} 
	
	We next explain why the above claim implies the result.
	If this claim is proven, we will learn that for some subset $S' \subset
	c$ with $S' \cap c' = \{s_1, \ldots, s_r\}$,  $CA_c/(\alpha_x^{\ord(x)},x \in S')$ is 
$f_{\mu(\ord(s),|c'|),b(\ord(s),|c'|)}$
bounded.
Using notation that generalizes \autoref{definition:quotient}, by
$(-)/(\alpha_x^{\ord(x)},x \in S')$ we mean the iterated quotient by each
$\alpha_x^{\ord(x)}$ with respect to an implicit ordering for which all elements
in $c'$ are first in the ordering. Nothing we prove will depend on this choice of ordering, so we do not indicate it in our notation.
	
	Define a bigrading on $CA_c$ so that
a point of Hurwitz
		space lies in
		bigrading $(g_1, g_2)$ if it has $g_1$ labels in $c'$ and $g_2$
	labels in $c - c'$.)
	Then by iteratively applying \autoref{lemma:twogradings}
	to this bigrading, we learn that 
since $CA_c/(\alpha_x^{\ord(x)},x \in S')$ is 
$f_{\mu(\ord(s),|c'|),b(\ord(s),|c'|)}$
bounded,
$CA_c/(\alpha_{s_1}^{\on{ord}(s_1)}, \ldots, \alpha_{s_1}^{\on{ord}(s_1)})$ is
	also $f_{\mu(\ord(s),|c'|),b(\ord(s),|c'|)}$
	bounded. 
	Thus, in order to conclude the proof, it remains to prove the claim $\star$.
	
	Recall from \autoref{construction:hurfil} and \autoref{lemma:hurassgrdd} that
	$A_c$ has a finite decreasing filtration $F_iA_c$ by graded rings such
	that the associated graded is $$F_iA_c/F_{i+1}A_c \cong \oplus_{c'' \subset
		c,|c''|=i}CA_{c''}$$ 
		where each term $CA_{c''}$ is an $A_c$-bimodule by having each
		element not in $c''$ act by $0$.

		First, we handle the base case of our induction where $|S| = |c|$.
	Recall \autoref{theorem:quotstability}, which shows
	there are functions $\mu^0, b^0: \mathbb N^2 \to \mathbb Z$ so that
	$A_c/(\alpha_x^{\ord(x)},x \in c)$, when graded by the component $c'
	\subset c$, with $s \in c'$, is $f_{\mu^0(|c'|,\ord(s)), b^0(|c'|,
	\ord(s))}$ bounded.

	Next, define 
$\mu^1(u,t) := \max(t, \max_{u'\leq u, t'\leq t} \mu^0(u',t')), 
b^1(u,t) := \max_{u'\leq u, t' \leq t} b^0(u',t') + ut + \mu^1(u,t)$.

	We will show that $CA_c/(\alpha_x^{\ord(x)},x \in c)$ is also
	$f_{\mu^1(|c'|,\ord(s)),b^1(|c'|,\ord(s))}$-bounded, which is the
	base case of our claim $\star$. This will be proven by an induction on the size of $c$. 
	
	The associated graded piece $(F_iA_c/F_{i+1}A_c)/(\alpha_x^{\ord(x)},x \in c)$ is given by a direct sum over subsets $c'' \subset
	c$ with $|c''|=i$ of exterior algebras over
	$CA_{c''}/(\alpha_x^{\ord_c(x)},x \in c'')$ on classes in bidegree
	$(\ord(s),1)$. Since the elements outside of $c''$ act by $0$ on this,
	and for $x \in c''$, the order of $x$ in $c''$ divides its order in $c$,
	it follows from the definition of $b^1$ that
	$CA_{c''}/(\alpha_x^{\ord_{c''}(x)},x
	\in c)$ is 
	$f_{\mu^1(|c'|,\ord(s)),b^1(|c'|,\ord(s))-\mu^1(|c'|, \ord(s))}$ bounded.
	Now, since the cofiber $Q$ of
	\begin{align}
		\label{equation:connected-to-all-cofiber}
		CA_c/(\alpha_x^{\ord(x)},x \in c)\to A_c/(\alpha_x^{\ord(x)},x \in c)
	\end{align}
	has a finite filtration with associated graded pieces which are sums of
$CA_{c''}/(\alpha_x^{\ord_{c''}(x)},x
\in c)$ for $c'' \subset c$,
we find $Q$ is also
$f_{\mu^1(|c'|,\ord(s)),b^1(|c'|,\ord(s))-\mu^1(|c'|, \ord(s))}$ bounded.
This that the fiber of \eqref{equation:connected-to-all-cofiber}, $\Sigma^{-1} Q$,
is $f_{\mu^1(|c'|,\ord(s)),b^1(|c'|,\ord(s))}$ bounded (since if we shift the
	line of slope $\mu$ and intercept $b-\mu$ to the left by $1$, we obtain
the line of slope $\mu$ and intercept $b$).
As 
$A_c/(\alpha_x^{\ord(x)},x \in c)$ is also 
$f_{\mu^1(|c'|,\ord(s)),b^1(|c'|,\ord(s))}$ bounded,
we obtain that 
$CA_c/(\alpha_x^{\ord(x)},x \in c)$ is 
$f_{\mu^1(|c'|,\ord(s)),b^1(|c'|,\ord(s))}$ bounded as well.

	We next wish to tackle the inductive step of the claim $\star$.
	Having established the base case that $|S| = |c|$, we next suppose that
	$CA_{c}/(\alpha^{\ord(x)}_x, x \in S')$ is 
	\begin{align*}
	f_{\mu^1(|c'|,\ord(s)),b^1(|c'|,\ord(s))+ |(c - S')\cap c'| \cdot
\mu^1(|c'|,\ord(s))}
	\end{align*}
bounded	for all $S'$ with $|S'| > |S|$ and verify that 
$CA_{c}/(\alpha^{\ord(x)}_x, x \in S)$ is 
\begin{align*}
	f_{\mu^1(|c'|,\ord(s)),b^1(|c'|,\ord(s))+|(c - S)\cap c'| \cdot
\mu^1(|c'|,\ord(s))}
\end{align*}
 bounded.
	By 
\autoref{lemma:inductquotient}
	(which we use to remove elements in $c'$ from the quotient), 
	and 
\autoref{lemma:twogradings}
	(which we use to remove elements in $c - c'$ from the quotient), 
	it
	suffices to show 
	$CA_{c}/(\alpha^{\ord(x)}_x, x \in S)[\alpha_y^{-1}] = 0$ for each $y
	\in c- S$.
\begin{align*}
	\mu(|c'|, \ord(s)) &:= \mu^1(|c'|,\ord(s)) \\
	b(|c'|,\ord(s) &:=
	b^1(|c'|,\ord(s))+|c'| \cdot \mu^1(|c'|,\ord(s)).
\end{align*}

%
As mentioned above, in the remaining part of the inductive step, it is enough
	to show that $CA_c/(\alpha_x^{\ord(x)},x \in S)[\alpha_y^{-1}]=0$, where
	$y \in c-S$.
	Note that here and in what follows, we freely use that inverting $\alpha_y$ commutes with tensoring with quotients \cite[Lemma 3.4.4]{landesmanL:the-stable-homology-of-non-splitting}; 
	here, \cite[Lemma 3.4.4]{landesmanL:the-stable-homology-of-non-splitting}
	applies because $\alpha_x^{\ord(x)}$ is $\EE_2$-central (\autoref{lemma:centrality}), and inverting a central element is base changing along a homological epimorphism (by \cite[Remark 3.3.2]{landesmanL:the-stable-homology-of-non-splitting}, the localized ring, which is always homological epimorphism by \cite[Example 3.3.1]{landesmanL:the-stable-homology-of-non-splitting}, is computed as the colimit along multiplication by $r$).
		
	By the inductive hypothesis, 
	$CA_c/(\alpha_x^{\ord(x)},x \in
	S)[\alpha_y^{-1}]/(\alpha_z^{\ord(z)})=0$ for each $z \in c-S-\{y\}$. 
	Thus by applying
	\autoref{lemma:inductquotient} and iteratively applying \cite[Lemma
	3.3.4]{landesmanL:the-stable-homology-of-non-splitting}, we learn that
	it is enough to show that $CA_c/(\alpha_x^{\ord(x)},x \in
	S)[\alpha_x^{-1}, x \in c-S]=0$. 

	First, suppose $c-S$ is not a subrack of $c$, then there is $x,y
	\in c-S$ with $x\triangleright y \in S$. But then since $\alpha_x
	\alpha_y =
	(\alpha_{x\triangleright y}) \alpha_x$ in $\pi_0\Hur^c$,
	$\alpha_{x\triangleright y}$ acts
	nilpotently and invertibly on $CA_c/(\alpha_x^{\ord(x)},x \in S)[\alpha_x^{-1}, x \in c-S]$, so 
	$CA_c/(\alpha_x^{\ord(x)},x \in S)[\alpha_x^{-1}, x \in c-S]= 0$.
	Thus we may assume that $c-S$ is a nonempty subrack of $c$.
	
	Recall that we write $N_c(c-S)$ for the normalizer of $c-S$ in $c$.
	Assuming $c - S \subset c$ is a nonempty subrack,
	we claim that the map 
	\begin{align}
		\label{equation:quotient-and-invert}
	f_{c,c-S}:A_c/(\alpha_x^{\ord(x)},x \in
	S)[\alpha_x^{-1}, x \in c-S] \to A_{N_c(c-S)}/(\alpha_x^{\ord(x)},x \in
	S)[\alpha_x^{-1}, x \in c-S]
	\end{align}
	is an equivalence.
	To see \eqref{equation:quotient-and-invert} is an equivalence, we will
	first show that the kernel of the map $\pi_0A_c[\alpha_x^{-1}, x \in c-S] \to
	\pi_0A_{c-S}[\alpha_x^{-1}, x \in c-S]$, the right ideal generated by $\alpha_x, x \in S$,
	acts nilpotently on the source and target. First,
 by \cite[Lemma 3.5.1, Lemma 3.5.2]{landesmanL:the-stable-homology-of-non-splitting}, each element $\alpha_x$ for $x \in S$ acts nilpotently on the source and target.
 
 An arbitrary element $w$ in the kernel is of the form $\sum_{x \in S}\alpha_x y_x$ for some elements $y_x$. Using the pigeonhole principle and the fact that $\alpha_x y = \phi_x(y)\alpha_x$ where $\phi_x$ is the automorphism that $x$ induces on $c$, we see that for any $i>0$, there is an $N$ such that $w^N$ is in the right ideal generated by $\alpha_x^i$ for $x \in S$. It follows that $w$ acts nilpotently, since each of the $\alpha_x$ does.

	We will apply \autoref{prop:rigiditymodules} to the map of rings
	$A_c[\alpha_x^{-1}, x \in c-S] \to
	A_{N_c(c-S)}[\alpha_x^{-1}, x \in c-S]$, the ideal $I$ generated by $\alpha_x$ for $x \in S$, and $M=A_c/(\alpha_x^{\ord(x)},x \in
	S)[\alpha_x^{-1}, x \in c-S]$. Note that the map in the Proposition is then exactly the map of interest \eqref{equation:quotient-and-invert}. The hypothesis of $I$ acting nilpotently was verified in the previous paragraph.
	Thus
	in order to show
	\eqref{equation:quotient-and-invert} is an equivalence, it is enough to see that the map 
	\begin{align*}
	&\pi_0A_{c-S}[\alpha_x^{-1}, x \in c-S] \otimes_{A_c[\alpha_x^{-1}, x \in
	c-S]}\pi_0A_{c-S}[\alpha_x^{-1}, x \in c-S] \\
	&\to \pi_0A_{c-S}[\alpha_x^{-1}, x \in c-S]
	\otimes_{A_{N_c(c-S)}[\alpha_x^{-1}, x \in
	c-S]}\pi_0A_{c-S}[\alpha_x^{-1}, x \in c-S]
	\end{align*}
is an equivalence. But this follows from applying reduced chains to the result of 
	 \autoref{proposition:tensorproducthomotopy} above.
	
	The map $f_{c,c-S}$ is compatible with the filtration from \autoref{construction:hurfil}. The $i$th associated graded piece of the source and target are
	\begin{equation}
	\begin{aligned}
		\label{equation:filtration-quotient-and-invert}
	&\oplus_{c'' \subset c,|c''|=i}CA_{c''}/(\alpha_x^{\ord(x)},x \in
	S)[\alpha_x^{-1}, x \in c-S] \\
	&\to \oplus_{c'' \subset
	N_c(c-S),|c''|=i}CA_{c''}/(\alpha_x^{\ord(x)},x \in S)[\alpha_x^{-1}, x
	\in c-S].
	\end{aligned}
	\end{equation}

	Note that all terms of the above sum where $c''$ doesn't contain $c-S$
	are zero, because then some $\alpha_x$ for $x \in c-S$ both acts
	invertibly and by zero. In degree $|c-S|$, both sides agree then, since
	they are both $CA_{c-S}/(\alpha_x^{\ord(x)},x \in S)[\alpha_x^{-1}, x
	\in c-S]$.
	By induction on $|c|$, we may assume that
	$CA_{c''}/(\alpha_x^{\ord(x)},x \in S)[\alpha_x^{-1}, x \in c-S]$
	vanishes for all $c''$ strictly smaller than $c$ but strictly containing
	$c-S$. It follows that the
	map $f_{c,c-S}$ is an isomorphism on all associated graded pieces of
	filtration from \autoref{construction:hurfil} below degree $|c|$. Thus
	since $f_{c,c-S}$ is an isomorphism and the filtration from \autoref{construction:hurfil}
	is finite, it follows that $f_{c,c-S}$ is also an isomorphism in associated graded degree $|c|$.
	
	The source of \eqref{equation:filtration-quotient-and-invert} in
	associated graded degree $|c|$ is $CA_c/(\alpha_x^{\ord(x)},x \in
	S)[\alpha_x^{-1}, x \in c-S]$. To complete the inductive step, we now wish to show
	the source of \eqref{equation:filtration-quotient-and-invert} vanishes.
	Since \eqref{equation:quotient-and-invert} is an equivalence,
	it suffices to show the target of
	\eqref{equation:filtration-quotient-and-invert} is $0$ in associated graded degree $i=|c|$. By our inductive hypothesis, the target of this map vanishes in degrees smaller than $i$. That is, it
	suffices to show $N_c(c-S) \neq c$, because then $N_c(c-S)$ has no subracks of size $|c|$.

	Note that $c-S$ is not a union of components of $c$,
	because $S$ contains some element from each component of $c$ by
	assumption.
	Since $c-S$ is not a union of components of $c$, there is some $x
	\in c-S$ and $y \in c$ so that $y \triangleright x \notin c-S$. Then $y
	\triangleright x$ lies in the same component as $x$. Hence, $N_c(c-S) \neq
	c$,
	completing the proof.
\end{proof}

\subsubsection{Proof of \autoref{theorem:some-large-homology-stabilizes}}
\label{subsubsection:some-large-homology-stabilizes-proof}

Let us now explain why \autoref{theorem:spectra-homology-stabilization} implies
\autoref{theorem:some-large-homology-stabilizes}.
We recall that our goal for the readers convenience.
We fix a rack $c$ with connected components
$c_1, \ldots, c_\upsilon$.
Fix one connected component $c_\lambda$.
We aim to show there are constants $I$ and $J$, depending only on
$|c_\lambda|$ and the order of any element of $c_\lambda$
so that for any
$i$ and $\lambda$, with $i \geq 0$, 
$1 \leq \lambda \leq \upsilon$, 
and $n_\lambda > Ii + J$,
the maps $[g]$ for $g \in c_\lambda$
all induce isomorphisms
$H_i(\cphurc {n_1, \ldots, n_\upsilon} c, \mathbb Z) \to H_i(\cphurc
{n_1,\ldots, n_{\lambda-1}, n_\lambda + 1, n_{\lambda+1}, \ldots, n_\upsilon} c, \mathbb Z)$.

After reordering, we may as well take $\lambda = 1$.
We then apply \autoref{theorem:spectra-homology-stabilization} where we take
$c'$ there to be the single component $c_1$ and choose a fixed element $s_1 \in c_1$.
It then follows from \autoref{theorem:spectra-homology-stabilization}
that there are constants $I$ and $J$ as desired for which $\alpha_{s_1}^{\ord(s_1)}$
induces isomorphisms
$H_i(\cphurc {n_1, \ldots, n_\upsilon} c, \mathbb Z) \to H_i(\cphurc
{n_1,\ldots, n_{\lambda-1}, n_\lambda + \ord(s_1), n_{\lambda+1}, \ldots, n_\upsilon} c, \mathbb Z)$.
Now, if a composite of injective maps is an isomorphism, then each of those
maps is an isomorphism. Hence, writing $\alpha_{s_1}^{\ord(s_1)}$ as a composite
of $\ord(s_i)$ iterates of $\alpha_{s_1}$ we obtain the desired statement.
\qed

\section{The dominant stable homology}
\label{section:dominant-stable-homology}

In this section we compute the dominant part of the stable homology of
$\CHur^c$, i.e., the homology obtained by stabilizing with respect to all of the elements of $c$.
We first build the comparison map that we wish to show is a stable isomorphism.
The following lemma shows that, after group completion, there is no difference between $\Hur^c$ and $\CHur^c$.

\begin{lemma}\label{lemma:hurtochur}
	The natural map $B\CHur^c \to B\Hur^c$ is an equivalence. In particular, the group completions of $\CHur^c$ and $\Hur^c$ agree.
\end{lemma}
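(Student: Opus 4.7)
The plan is to establish the $B$-equivalence by reducing to an equivalence of group completions. First, I would observe that $\CHur^c \subseteq \Hur^c$ is a two-sided ideal for the $\EE_1$-structure: if $(g_1,\ldots,g_n)\in\CHur^c$ is a tuple generating $G^0_c$ and $(h_1,\ldots,h_m)\in\Hur^c$ is arbitrary, then either concatenation still generates $G^0_c$ and hence lies in $\CHur^c$. In particular, both $\Hur^c\cdot\CHur^c$ and $\CHur^c\cdot\Hur^c$ are contained in $\CHur^c$.

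Next, I would choose an element $y\in\CHur^c$ whose multigrading $\vec n_0\in\NN^{c/c}$ has a strictly positive entry in each component of $c/c$. Such a $y$ exists because the components of $c$ jointly generate $G^0_c$, so one can form a generating tuple using at least one element from every component. By \autoref{lemma:centrality}, the class $\alpha_y^{\ord(y)}$ is $\EE_2$-central in $\Hur^c$, so we can form the localizations $\Hur^c[\alpha_y^{-1}]$ and $\CHur^c[\alpha_y^{-1}]$ in the framework used in earlier sections.

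Because of the ideal property, multiplication by $\alpha_y$ factors through $\CHur^c\hookrightarrow\Hur^c$. Consequently, the inclusion induces an equivalence on telescopic localizations $\CHur^c[\alpha_y^{-1}]\xrightarrow{\sim}\Hur^c[\alpha_y^{-1}]$, since in each telescope a single multiplication by $\alpha_y$ lands in $\CHur^c$. Since $\vec n_0$ is positive in every component of the grading, $\alpha_y$ is cofinal in $\pi_0\Hur^c$, so $\pi_0(\Hur^c[\alpha_y^{-1}])$ is a group. By the group completion theorem for $\EE_1$-algebras with a central cofinal element, these localizations compute the group completions, giving
\begin{equation*}
\Omega B\Hur^c \;\simeq\; \Hur^c[\alpha_y^{-1}] \;\simeq\; \CHur^c[\alpha_y^{-1}] \;\simeq\; \Omega B\CHur^c.
\end{equation*}
Since $B\CHur^c$ and $B\Hur^c$ are both path-connected (as classifying spaces of unital monoids) and the equivalence is induced by the underlying map of monoids, it lifts to the desired equivalence $B\CHur^c\simeq B\Hur^c$.

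The main obstacle will be carefully verifying the cofinality hypothesis for the group completion theorem in the multigraded setting, so that the localization produces an honest weak equivalence rather than merely a homology equivalence. A related subtlety is ensuring that the passage from the $\EE_2$-centrality guaranteed by \autoref{lemma:centrality} to the form of the group completion theorem used is compatible with the localization manipulations, but this is a direct consequence of the machinery established in \autoref{section:stabquot} for inverting central elements.
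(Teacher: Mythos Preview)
Your proposal is correct and follows essentially the same approach as the paper: both arguments observe that multiplication by a central element lying in $\CHur^c$ (the paper uses $\prod_{g\in c}[g]^{\ord(g)}$, you use an equivalent $\alpha_y^{\ord(y)}$) factors $\Hur^c \to \CHur^c$, so the telescopic localizations agree, and then invoke the group completion theorem. Your concern about upgrading from a homology equivalence to a weak equivalence is handled exactly as the paper does it---$\Omega B\CHur^c \to \Omega B\Hur^c$ is a homology equivalence between loop spaces, hence a weak equivalence, and since both $B$'s are path-connected this deloops.
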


\begin{proof}
	Note that the map $\pi_0\CHur^c \to \pi_0\Hur^c$ becomes an equivalence
	on group completions, since group completion inverts $\prod_{g \in
	c}[g]^{\ord(g)}$, and the map
	$\times \prod_{g \in c}[g]^{\ord(g)} : \Hur^c \to \Hur^c$
factors through
$\CHur^c$. By the group completion theorem, the result then follows, since it implies that the map $\Omega B\Hur^{c} \to \Omega B\CHur^{c}$ is a homology equivalence, and hence an equivalence since they are loop spaces.
\end{proof}

For understanding the following definition, we remind the reader about
\autoref{definition:abelianizationrack} regarding the orbits of a rack acting on
itself.

\begin{definition}
	Given a rack $c$, we define $D^c$ to be the pullback
	
	$$B\pi_0\Hur^{c}\times_{B\pi_0\Hur^{c/c}}B\Hur^{c/c}$$
	
	There is a comparison map of $\EE_1$-algebras
	\begin{align}
		\label{equation:comparison-map}
	v_c:\CHur^c \to \Omega D^c = \Omega B\pi_0\Hur^{c}\times_{\Omega
	B\pi_0\Hur^{c/c}}\Omega B\Hur^{c/c}.
	\end{align}
\end{definition}

\begin{remark}
	Note that $\Hur^{c/c}$ is the multicolored configuration space on the components of the rack $c$. 
The comparison map above essentially replaces each component of
	Hurwitz space with the corresponding component of multicolored
	configuration space.

	Additionally,
$\Hur^{c/c}$
	is a free $\EE_2$-algebra on the set $c/c$, and its homology is completely calculated (see \cite[Section 16]{galatius2018cellular} and \cite[Section 5.1]{lawson2020n} for modern references).
	However, for the purposes of this paper, we will not actually have to
	use that this homology is known.
	\end{remark}

We will need the following lemma to know that certain groups showing up will not
contribute to homology with $\ZZ[\frac 1 {|G^0_c|}]$-coefficients.

\begin{lemma}
	\label{lemma:components-factors}
	Let $c$ be a finite rack and let $G^0_c$ denote the reduced structure group
	of $c$ as defined in \autoref{definition:abelianizationrack}.
	Let $U(c)$ denote the
	group completion of the monoid $\pi_0(\on{CHur}^c)$.
	(This is also known as the structure group of $c$.)
	There is a map $U(c) \to U(c/c)\cong \mathbb Z^{c/c}$.
	Then
	$\ker(U(c) \to \mathbb Z^{c/c})$ is finite, and any prime dividing its order also divides $|G^0_c|$.
	\end{lemma}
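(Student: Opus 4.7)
The plan is to exhibit $U(c)$ as a central extension of the finite group $G^0_c$ and then apply the Lyndon--Hochschild--Serre five-term exact sequence. First I would construct the natural surjection $\rho : U(c) \to G^0_c$ sending each $x \in c$ to the automorphism $y \mapsto x \triangleright y$; this is surjective by the definition of $G^0_c$. The key claim is that $\ker \rho = Z(U(c))$: the rack relation $xy = (x \triangleright y)x$ in $U(c)$ shows that the action through $\rho$ coincides with the restriction to $c \subset U(c)$ of the conjugation action of $U(c)$ on itself, so an element lies in $\ker \rho$ iff it centralizes every element of $c$; since $c$ generates $U(c)$ this is equivalent to lying in $Z(U(c))$. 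Thus one obtains a central extension
\[
1 \to N \to U(c) \to G^0_c \to 1, \qquad N := Z(U(c)),
\]
in which $N$ has finite index $|G^0_c|$ in $U(c)$.

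Next I would identify $U(c) \to \mathbb Z^{c/c}$ with the abelianization of $U(c)$. Abelianizing the relation $xy = (x \triangleright y)x$ yields $y = x \triangleright y$ for all $x,y \in c$, so elements of $c$ in the same $G^0_c$-orbit are identified in $U(c)^{\mathrm{ab}}$; hence $U(c)^{\mathrm{ab}} = U(c/c) = \mathbb Z^{c/c}$, and $K := \ker(U(c) \to \mathbb Z^{c/c}) = [U(c), U(c)]$. Plugging the central extension into the five-term exact sequence (using that $N$ is central, so the $G^0_c$-action on $N$ is trivial) gives
\[
H_2(G^0_c; \mathbb Z) \to N \to \mathbb Z^{c/c} \to (G^0_c)^{\mathrm{ab}} \to 0.
\]

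Finally I would combine these. Since $U(c) \to G^0_c$ is surjective, it sends $K$ onto $[G^0_c, G^0_c]$, producing a short exact sequence
\[
1 \to N \cap K \to K \to [G^0_c, G^0_c] \to 1.
\]
The term $N \cap K = \ker(N \to \mathbb Z^{c/c})$ is the image of the Schur multiplier $H_2(G^0_c;\mathbb Z)$, hence finite with all prime divisors dividing $|G^0_c|$ by the standard fact that $H_n(G;\mathbb Z)$ is annihilated by $|G|$ for $n \geq 1$; and $[G^0_c, G^0_c]$ is a subgroup of $G^0_c$ of order dividing $|G^0_c|$. Hence $K$ is finite with all prime divisors of $|K|$ dividing $|G^0_c|$. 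The hardest step will be the first one: verifying for the paper's $U(c)$ (defined as the group completion of the monoid $\pi_0(\mathrm{CHur}^c)$) that it is generated as a group by the images of $c$ and that the rack action on $c$ extends to conjugation inside $U(c)$, so that the surjection $\rho$ exists and its kernel really is the center.
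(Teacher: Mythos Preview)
Your argument is correct and shares its starting point with the paper: both identify $U(c)^{\mathrm{ab}}\cong\mathbb Z^{c/c}$ so that $K=[U(c),U(c)]$, and both use the central extension $1\to N\to U(c)\to G^0_c\to 1$ with $N=\ker\rho=Z(U(c))$. (One small caveat: the paper's monoid relation is $[y][x]=[x][x\triangleright y]$, i.e.\ $[x]^{-1}[y][x]=[x\triangleright y]$, which is the opposite convention from your $xy=(x\triangleright y)x$; this only affects whether $\rho([x])$ is $y\mapsto x\triangleright y$ or its inverse, and does not change the argument.)

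Where the two approaches diverge is in extracting the conclusion from this central extension. The paper first proves finiteness of $K$ by a separate rank argument (showing $N$ and $U(c)^{\mathrm{ab}}$ have the same $\mathbb Z$-rank via an explicit finite-index subgroup of $N$), and then handles the prime divisors one prime at a time: for $\ell\nmid|G^0_c|$ it splits off the $\ell$-primary part $S$ of $N$ using $H^2(G^0_c,S)=0$, writes $U(c)\cong S\times H$, and deduces that $S$ injects into $U(c)^{\mathrm{ab}}$ so that $K$ has no $\ell$-torsion. Your route via the Lyndon--Hochschild--Serre five-term sequence is more economical: the exact sequence $H_2(G^0_c)\to N\to\mathbb Z^{c/c}$ identifies $N\cap K$ as a quotient of the Schur multiplier, and the short exact sequence $1\to N\cap K\to K\to[G^0_c,G^0_c]\to 1$ then gives finiteness and the prime-divisor constraint simultaneously. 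The paper's cohomological splitting argument and your homological transgression argument are in some sense dual, but yours avoids the separate finiteness step and the prime-by-prime analysis.
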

\begin{remark}
	\label{remark:}
	In the case that $c$ is a union of conjugacy classes in a finite group,
	\autoref{lemma:components-factors} follows from \cite[Theorem
	2.5]{wood:an-algebraic-lifting-invariant}. It seems likely that this
	should generalize to the setting of racks without much difficulty. 
	In the case that $c$ is a union of conjugacy classes, this is related to
	understanding the constant in Malle's conjecture, and so in the general
	case seems likely to be related to understanding what the
	constant term in a generalized version of Malle's conjecture to racks
	should be.
	We believe it would be interesting to work out this generalization. 
\end{remark}
The following slick proof of \autoref{lemma:components-factors} was suggested to us by Pavel Etingof.
\begin{proof}
	The group $U(c)$ is presented by the generators $[x]$ for $x \in c$ and the
	relations $[x]^{-1}[y][x] = [x\triangleright y]$ for all $x, y \in c$. Indeed, this is because 
	$\pi_0( \on{Hur}^c)$ is the monoid with the same generators and relations $[y][x] = [x][x\triangleright y]$.
	This identification of the generators and relations of the group completion of
	the Hurwitz space monoid is also spelled out in \cite[Proposition
	4.17]{shusterman:the-tamely-ramified-geometric}.
	Let $K$ denote the commutator $[U(c),U(c)]$ and let $Z := \ker (U(c) \to G^0_c)$.
	We can identify $U(c)^{\ab} \simeq \mathbb Z^{c/c}$ since it is generated by the classes
	$[x]$ for $x \in c$ with the relations that two such classes are identified if
	they lie in the same orbit.
	We have two short exact sequences
	\begin{equation}
		\label{equation:}
		\begin{tikzcd}
			0 \ar {r} & Z \ar {r} & U(c) \ar {r} & G^0_c \ar {r} & 0 \\
			0 \ar {r} & K \ar {r} & U(c) \ar {r} & U(c)^{\ab} \ar {r} & 0 
	\end{tikzcd}\end{equation}

	We first claim that $K$ is finite.
	To see this, first note that $Z$ is a finitely generated abelian group,
	since it is an abelian finite index 
	subgroup of the finitely generated (nonabelian) group $U(c)$.
	To show $K$ is finite, we only need show that $Z$ has the same $\mathbb
	Z$-rank as $U(c)^{\ab}$, or equivalently that the map $Z \to U(c)^{\ab}$
	has finite kernel and finite cokernel.
	There is a finite index subgroup $Z' \subset Z$ generated by elements of the
	form $[x]^{\on{ord}(x)}$ for all $x \in c$.
	Thus it suffices to show $Z' \to U(c)^{\ab}$ has finite kernel and
	cokernel.
	The map is in fact injective and the image is, by construction,
	the finite index subgroup of
	$U(c)^{\ab}$ generated by images of $[x]^{\on{ord}(x)}$ for $x \in c$.
	Hence $K$ is finite.

	We wish to show any prime $\ell \nmid |G^0_c|$ also does not divide
	$|K|$.
	Let $S \subset Z$ denote the $\ell$ power torsion of the finitely generated
	abelian group $Z$ and $Q := Z/S$.
	Since $U(c)$ is a central extension, it corresponds to a class $\nu \in H^2(G^0_c,
	Z) \simeq H^2(G^0_c, S) \oplus H^2(G^0_c, Q)$
	Since $G^0_c$ and $S$ have coprime orders, 
	$H^2(G^0_c, S) = 0$ so we can view $\nu$ as lying in $H^2(G^0_c, Q)$. This means that the extension $U(c)$ is
	of the form $S \times H$ where $H$ is a central extension of $G^0_c$ by $Q$.
	This means that $U(c)^{\ab} \simeq S^{\ab} \times H^{\ab}$ and so the map $S
	\to U(c)^{\ab}$ is injective.
	Now, if $g \in K$ has order $\ell$, we obtain that $g$ maps to $0$ in $G^0_c$
	because $\ell \nmid |G^0_c|$, and therefore $g \in S$. But then $S$ maps injectively
	into $U(c)^{\ab}$ while $K$ is the kernel of $U(c) \to U(c)^{\ab}$ we obtain
	that $g = \id$, implying there are no elements of order exactly $\ell$ in $K$.
	\end{proof}

The following is a generalization of \cite[Proposition
4.5.1]{landesmanL:the-stable-homology-of-non-splitting} to arbitrary finite
racks. See \cite[Remark 4.5.2]{landesmanL:the-stable-homology-of-non-splitting}
for a discussion of the history of this result.

\begin{theorem}\label{theorem:dominantstablehomology}
	Let $c$ be a finite rack. Then the map
	\begin{align}
		\label{equation:desired-stable-homology}
		H_*(\CHur^c)[\frac 1 {|G^0_c|},\alpha_c^{-1}] \to H_*(\Omega D^c)[\frac 1 {|G^0_c|},\alpha_c^{-1}]	
	\end{align}
	induced by $v_c$ from \eqref{equation:comparison-map} is an equivalence.
\end{theorem}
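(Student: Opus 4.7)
The plan is to adapt the proof of \cite[Proposition 4.5.1]{landesmanL:the-stable-homology-of-non-splitting}, which establishes the analogous statement in the non-splitting setting, replacing its non-splitting homological stability input with the integral stability result of \autoref{theorem:spectra-homology-stabilization}.

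First I would reduce, via the group completion theorem, to a statement about the map $\Omega B\Hur^c \to \Omega D^c$. By \autoref{theorem:spectra-homology-stabilization}, the $\EE_1$-algebra $\CHur^c$ satisfies homological stability after inverting each $\alpha_s$, so inverting all of $\alpha_c$ identifies $H_*(\CHur^c)[\alpha_c^{-1}]$ with the homology of the appropriate components of $\Omega B \CHur^c$; by \autoref{lemma:hurtochur}, this agrees with $\Omega B \Hur^c$. Since $v_c$ factors through the group completion, it suffices to show that the induced map $\Omega B \Hur^c \to \Omega D^c$ is an isomorphism on $\ZZ[\tfrac{1}{|G^0_c|}]$-homology on each connected component.

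Next I would exploit the natural projections from both $\Omega B \Hur^c$ and $\Omega D^c$ down to $\Omega B \Hur^{c/c}$, induced by the rack quotient $c \to c/c$. The map $\Omega D^c \to \Omega B \Hur^{c/c}$ is a covering map whose fiber over each base component is a torsor for $\ker(U(c) \to \ZZ^{c/c})$, so every component of $\Omega D^c$ is identified with the corresponding component of $\Omega B \Hur^{c/c}$. By \autoref{lemma:components-factors}, this kernel is finite with order supported on primes dividing $|G^0_c|$, so the $\pi_0$-matching between the two sides imposes no obstruction after inverting $|G^0_c|$. Therefore, the theorem reduces to showing that each component of $\Omega B \Hur^c$ maps by a $\ZZ[\tfrac{1}{|G^0_c|}]$-homology equivalence to the corresponding component of $\Omega B \Hur^{c/c}$.

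This last component-wise homology equivalence is the heart of the argument. I would establish it by applying \autoref{proposition:tensorproducthomotopy} together with a filtration by subracks in the spirit of \autoref{construction:hurfil} and the inductive descent from the proof of \autoref{theorem:spectra-homology-stabilization}, comparing the stable homology of $\CHur^c$ with that of the multicolored configuration space $\CHur^{c/c}$ (note that $c/c$ is commutative, so the latter is manifestly a free $\EE_2$-algebra on $c/c$). The main obstacle will be organizing the induction so that all error terms arising from the kernel of $U(c) \to \ZZ^{c/c}$ consist only of $|G^0_c|$-torsion and hence vanish after inverting $|G^0_c|$. As the paper's outline indicates, this deduction parallels that of \cite[Theorem 1.4.7]{landesmanL:the-stable-homology-of-non-splitting} from the main theorems of \cite{EllenbergVW:cohenLenstra} and \cite{landesmanL:the-stable-homology-of-non-splitting}, with \autoref{theorem:spectra-homology-stabilization} now playing the role of the key homological stability input.
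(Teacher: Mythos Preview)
Your overall shape — reduce via \autoref{lemma:hurtochur} and the group completion theorem to the map $\Omega B\Hur^c \to \Omega D^c$, then factor through $\Omega B\Hur^{c/c}$ and control the discrepancy with \autoref{lemma:components-factors} — matches the paper. Two points, one minor and one substantive.

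The minor point: invoking \autoref{theorem:spectra-homology-stabilization} in the first reduction is unnecessary. The group completion theorem already gives $H_*(\CHur^c)[\alpha_c^{-1}] \cong H_*(\Omega B\CHur^c)$ directly; homological stability is not an input to \autoref{theorem:dominantstablehomology} at all (it enters only later, when one deduces \autoref{theorem:all-large-stable-homology}).

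The substantive gap is your final paragraph. You propose to establish the component-wise homology equivalence $\Omega B\Hur^c \to \Omega B\Hur^{c/c}$ using \autoref{proposition:tensorproducthomotopy} and the subrack filtration of \autoref{construction:hurfil}. But those tools compare $\Hur^c$ with $\Hur^{N_c(c')}$ for \emph{subracks} $c' \subset c$, and are engineered to show vanishing of iterated quotients in the stability argument of \autoref{section:homological-stability}; they say nothing about the \emph{quotient rack} $c/c$, and there is no evident inductive descent that lands on $\Hur^{c/c}$. Your sketch does not contain an actual mechanism for this comparison.

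The paper's route is different and rests on two external inputs you did not identify. First, it passes from loop spaces to classifying spaces: $B\Hur^{c/c}$, and hence $B\Hur^c$ and $D^c$, are weakly simple by \cite[Proposition~5.2]{fennRS:the-rack-space}, and $\pi_1 B\Hur^c \to \pi_1 D^c$ is an isomorphism by construction of $D^c$; then \cite[Lemma~4.5.4]{landesmanL:the-stable-homology-of-non-splitting} says a $\ZZ[\tfrac{1}{|G^0_c|}]$-homology equivalence $B\Hur^c \to D^c$ loops to one on $\Omega$. Working directly at the loop-space level, as you propose, you would need a substitute for this step, since a homology equivalence of non-simply-connected spaces need not loop to one. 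Second, after the Serre spectral sequence for $BK \to D^c \to B\Hur^{c/c}$ (which you do have, via \autoref{lemma:components-factors}), the remaining assertion that $B\Hur^c \to B\Hur^{c/c}$ is a $\ZZ[\tfrac{1}{|G^0_c|}]$-homology equivalence is simply quoted from \cite[Lemma~4.5.6]{landesmanL:the-stable-homology-of-non-splitting}. That lemma, proved at the level of rack classifying spaces, is the actual engine computing the stable value; your proposal has no replacement for it.
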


\begin{proof}
	
	By \autoref{lemma:hurtochur} and the group completion theorem, the map
	\eqref{equation:desired-stable-homology} is the map induced by
	$\Omega B \Hur^c \to \Omega D^c$
	on $\ZZ[\frac 1 {|G^0_c|}]$-homology.
	
		We next set the stage to reduce to showing 
	$B \Hur^c \to D^c$ is a
	rational homology equivalence.
	We say a space $X$ is {\em weakly simple} if $\pi_1 X$ acts trivially on
	$\pi_i X$ for $i > 1$.
	As a first step, we claim
	$\Omega D^c$ is weakly simple. Indeed, 
	the action of $\pi_1(\Omega D^c)$ on $\pi_i(\Omega D^c)$ for $i > 1$
	factors through the action of $\pi_1B\Hur^{c/c}$ acting on $\pi_i
	(B\Hur^{c/c})$ for $i \geq 1$. Note that $B\Hur^{c/c}$ is the classifying space of a rack.
	The claim then follows from \cite[Proposition
	5.2]{fennRS:the-rack-space},
	where it is shown
	that for any rack $X$, $BX$ is weakly simple (so $B\Hur^c$ is also weakly simple).

We may now reduce to showing
$B \Hur^c \to D^c$ is a
	rational homology equivalence.
	Note that
	the map $\pi_1 B \Hur^c \to \pi_1 D^c$ is an equivalence by construction of $D^c$. 
	Applying \cite[Lemma 4.5.4]{landesmanL:the-stable-homology-of-non-splitting} to the map
	$B \Hur^c \to D^c$, we obtain that
	$B \Hur^c \to D^c$
	being a 
	a $\ZZ[\frac 1 {|G^0_c|}]$-homology equivalence implies
	$\Omega B\Hur^{c} \to \Omega D^c$
	is a $\ZZ[\frac 1 {|G^0_c|}]$-homology equivalence.
	
	We next claim that $D^c \to B\Hur^{c/c}$ is an equivalence on $\ZZ[\frac 1 {|G^0_c|}]$-homology.
	This can be seen from the Serre spectral sequence of the fiber sequence
	$$BK \to D^c \to B\Hur^{c/c},$$
	where $K$ is the kernel of the map $U(c) \to U(c/c)$. Indeed, by
	\autoref{lemma:components-factors}, $K$ is finite of order coprime to
	$|G^0_c|$, so $BK \to *$ is a $\ZZ[\frac 1 {|G^0_c|}]$-homology equivalence.

	Recall we are trying to show $B\Hur^c \to D^c$ is a 
	$\ZZ[\frac 1 {|G^0_c|}]$-homology equivalence
	and since we have shown
	$D^c \to B\Hur^{c/c}$ is an equivalence on $\ZZ[\frac 1 {|G^0_c|}]$-homology.
	Thus it suffices to show that
	\begin{equation}\label{eqn:compare3}
		B\Hur^c \to   B\Hur^{c/c}
	\end{equation} is a $\ZZ[\frac 1 {|G^0_c|}]$-homology equivalence. This is proven in \cite[Lemma 4.5.6]{landesmanL:the-stable-homology-of-non-splitting}.
\end{proof}

\section{The asymptotic Picard Rank Conjecture}
\label{section:picard}

\subsection{Stating the Picard rank conjecture}
Recall that the Picard rank conjecture predicts the rational Picard group of
certain Hurwitz spaces over $\mathbb{C}$ is trivial. In this section, we prove the Picard rank conjecture for
covers of sufficiently large genus. 
We thus consider this an asymptotic proof of the Picard rank conjecture.

Recall that we introduced the notation
$\churz {G} n c$ in \autoref{notation:hurwitz-picard}
to mean the Hurwitz space over $\mathbb{C}$ parameterizing geometrically
connected $G$ covers of a genus $0$ curve with
inertia in $c$ and a degree $n$ branch locus.
We also recall that for $G$ a group and $c$ a conjugacy class, $H_2(G,c)$,
as defined in \cite[Definition p. 3]{wood:an-algebraic-lifting-invariant},
is defined as the quotient of the group cohomology $H_2(G;\mathbb Z)$
by the image of all maps $H_2(\mathbb Z^2, \mathbb Z) \to H_2(G;\mathbb Z)$
induced by the maps $\mathbb Z^2 \to G, (i,j) \mapsto x^iy^j$ for all pairs of
commuting $x,y \in c$.
Here is our main result toward the Picard rank conjecture.

\begin{theorem}
       \label{theorem:stable-picard}
       Let $G$ be a finite group and $c \subset G$ a conjugacy class generating $G$.
       For $n$ large enough depending on $c$ and any component $Z \subset \churz G n c$, we have
$\pic(Z) \otimes \mathbb Z[\frac{1}{2|G|}] \simeq \left((\mathbb Z/(2n-2) \mathbb Z)
\otimes \mathbb Z[\frac{1}{2|G|}] \right)$.

       Let $\ord_{G^{\ab}}(c)$ denote the order of the image of any element of
       $c$ in $G^{\ab}$.
If $n$ is divisible by
       $\ord_{G^{\ab}}(c)$, 
$\pic(\churz G n c) \otimes \mathbb Z[\frac{1}{2|G|}] \simeq \left((\mathbb Z/(2n-2) \mathbb Z)
\otimes \mathbb Z[\frac{1}{2|G|}] \right)^{|H_2(G,c)|}$.
If $n$ is sufficiently large and not a multiple of 
$\ord_{G^{\ab}}(c)$,
$\churz G n c$ is empty.
\end{theorem}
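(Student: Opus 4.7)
My plan is to pass to the smooth proper Abramovich--Corti--Vistoli compactification $\overline Z$ of each geometric component $Z$ of $\churz G n c$ (see \autoref{notation:hurwitz-picard} and \autoref{remark:compactification}), and then exploit the standard right-exact sequence of Picard groups
\[
\bigoplus_{D \subset \overline Z \setminus Z} \mathbb Z \cdot [D] \xrightarrow{\psi} \pic(\overline Z) \to \pic(Z) \to 0
\]
indexed by irreducible boundary divisors. Since $\overline Z$ is a smooth proper Deligne--Mumford stack over $\mathbb C$, the exponential sequence together with Hodge theory reduces the computation of $\pic(\overline Z) \otimes \mathbb Z[\tfrac{1}{2|G|}]$ to computing $H^1$ and $H^2$ of $\overline Z$ with the same coefficients, provided $H^1$ vanishes rationally (which forces $\pic^0(\overline Z) = 0$, so $\pic(\overline Z) \otimes \mathbb Z[\tfrac{1}{2|G|}]$ injects modulo torsion into $H^2(\overline Z, \mathbb Z[\tfrac{1}{2|G|}])$).

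To compute these cohomology groups I will use \autoref{theorem:all-large-stable-homology}: the homology of the pointed Hurwitz space $\cphurc n c$ with $\mathbb Z[\tfrac{1}{|G^0_c|}] = \mathbb Z[\tfrac{1}{|G/Z(G)|}]$-coefficients agrees with that of $\conf_n$ in degrees growing linearly in $n$, and the low-degree cohomology of $\conf_n$ is classical (Arnol'd). I then descend through the quotient by $G$ (pointed to unpointed covers), the transition from $\mathbb A^1$ to $\mathbb P^1$, and the quotient by the $\pgl_2$-action; each of these becomes transparent after inverting $|G|$. The $\pgl_2$-quotient in particular absorbs the generator of $H^1(\conf_n)$, yielding $H^1(Z, \mathbb Q) = 0$, and hence $H^1(\overline Z, \mathbb Q) = 0$ via the Gysin sequence associated to the normal crossings compactification supplied by \cite[Corollary B.1.4]{ellenbergL:homological-stability-for-generalized-hurwitz-spaces}. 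A parallel analysis pins down $H^2(\overline Z, \mathbb Z[\tfrac{1}{2|G|}])$ as a finite-rank lattice, whose torsion will ultimately govern $\pic(Z) \otimes \mathbb Z[\tfrac{1}{2|G|}]$.

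The remaining step is to identify the quotient $\pic(\overline Z)/\on{im}(\psi)$ explicitly. After inverting $2|G|$, I expect the only surviving relation to be a single cyclic one of order $2n-2$, witnessed by the Hodge line bundle on $\overline Z$ (whose generic fiber over a cover $C \to \mathbb P^1$ is $\det H^0(C, \omega_C)$); the factor $2n-2 = 2g-2$ appears via Riemann--Hurwitz relating the genus of $C$ to the number of branch points $n$. This yields the first claim of the theorem. For the second and third claims, the count of geometrically irreducible components of $\churz G n c$ is supplied by Wood's algebraic lifting invariant \cite{wood:an-algebraic-lifting-invariant}, refining Ellenberg--Venkatesh--Westerland: for $n$ large, the components are in bijection with lifting classes in $H_2(G, c)$ when $n \equiv 0 \pmod{\ord_{G^{\ab}}(c)}$ (this congruence is exactly the condition that $g_1 \cdots g_n$ can be trivial in $G^{\ab}$, as required for an unramified section at $\infty$), and the space is empty otherwise.

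The main obstacle I anticipate is the precise identification of the single surviving cyclic relation of order $2n-2$ in $\pic(\overline Z)$: this requires an intersection-theoretic analysis of the ACV boundary strata on the stack $\overline Z$, together with a careful comparison to the Hodge bundle, and is the step where torsion information (not merely rational rank) becomes decisive. Secondary technicalities include tracking the $BZ(G)$-gerbe structure of the generic point (trivial after inverting $|G|$) and controlling the range within which \autoref{theorem:all-large-stable-homology} applies simultaneously for $i = 1$ and $i = 2$ (which is why the theorem asserts $n$ only needs to be \emph{large enough} depending on $c$).
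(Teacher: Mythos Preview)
Your setup matches the paper's through the use of the ACV compactification $\overline Z$, the exponential sequence, and the vanishing of $H^1(\overline Z;\mathbb Q)$ (which the paper also establishes via \autoref{lemma:vanish-h1} and \autoref{lemma:vanishing-open}). The component count via Wood's lifting invariant is likewise exactly what the paper does (\autoref{lemma:stable-components}). But the paper diverges sharply at the key step, and your proposed endgame has a genuine gap.

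You plan to compute $\pic(\overline Z)\otimes R$ and then quotient by the span of the boundary divisors. The paper avoids this entirely. Instead it shows directly that the cycle class map
\[
\varepsilon:\pic(Z)\otimes R\longrightarrow H^2(Z;R)
\]
is an \emph{isomorphism}, via a diagram chase (see \eqref{equation:cycle-class-map}) comparing the excision sequence for Picard groups with the Gysin sequence in cohomology. Injectivity of $\varepsilon$ follows from injectivity of $\gamma:\pic(\overline Z)\otimes R\hookrightarrow H^2(\overline Z;R)$ (\autoref{lemma:compactified-injection}); surjectivity uses that $H^2(Z;R)$ is torsion, so that the map $\xi$ to $H^1(\partial;R)$ vanishes, together with the fact that torsion in $H^2(\overline Z;\mathbb Z)$ lies in the image of $\gamma$ (\autoref{proposition:injection}). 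Nowhere does one need to enumerate boundary divisors or compute their classes in $\pic(\overline Z)$.

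The computation of $H^2(Z;R)$ itself (\autoref{lemma:vanishing-open}) is also different from what you sketch. Rather than going through the Hodge bundle, the paper stratifies $\churp G n c$ as $\chura G n c{\id}\cup\chura G{n-1}c{c^{-1}}$, applies \autoref{theorem:all-large-stable-homology} to each stratum, and uses the five lemma on the resulting map of Gysin sequences to match $H^2(\widetilde Z;R)$ with $H^2(\on{Conf}_{\mathbb P^1,n};R)\simeq(\mathbb Z/(2n-2))\otimes R$, the latter being Schiessl's computation. The $\pgl_2$-quotient is handled by the Serre spectral sequence, using that $H^i(B\pgl_2;R)=0$ for $i=1,2,3$ once $2$ is inverted.

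Your proposed source for the factor $2n-2$ is not correct: for $G=S_d$ with transpositions, Riemann--Hurwitz gives $2g-2=n-2d$, not $2n-2$. The $2n-2$ is a feature of the spherical braid group (equivalently, of $H^2(\on{Conf}_{\mathbb P^1,n})$), not of the genus of the covering curve. Your self-identified ``main obstacle'' --- the intersection-theoretic analysis of the ACV boundary --- is therefore real, and the paper's insight is precisely that it can be bypassed by working with $H^2$ of the open stack $Z$ directly.
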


We prove this later in 
\autoref{subsubsection:proof-stable-picard}.
The reader may wish to refer to \autoref{subsubsection:picard-rank-idea}
for a description of the idea of the proof.


\subsection{General lemmas on cohomology of stacks}

Before getting to the Picard rank conjecture, we record some technical lemmas
concerning Picard groups of Deligne-Mumford stacks.

\begin{lemma}
	\label{lemma:vanish-h1}
	Suppose $X$ is a smooth proper Deligne-Mumford stack over $\mathbb{C}$ and $U \subset X$ is an
	open substack, with complement of codimension at least $1$.
	Let $R$ be a localization of the integers.
	If $H^1(U; R) = 0$ then $H^1(X; R) = 0$.
\end{lemma}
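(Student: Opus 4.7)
The plan is to deduce the result from the excision (local cohomology) long exact sequence
\[
H^1_Z(X; R) \longrightarrow H^1(X; R) \longrightarrow H^1(U; R) \longrightarrow H^2_Z(X; R),
\]
where $Z := X \setminus U$. Once we establish $H^1_Z(X; R) = 0$, the restriction $H^1(X;R) \to H^1(U;R)$ is injective, and the hypothesis $H^1(U;R) = 0$ immediately forces $H^1(X;R) = 0$. All the real content therefore lies in the vanishing of $H^1_Z(X;R)$.

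The tool for this vanishing is cohomological semi-purity: for a closed substack $Z$ of (complex) codimension $\geq c$ inside a smooth Deligne--Mumford stack $X$ over $\mathbb{C}$, one has $H^i_Z(X;R) = 0$ for all $i < 2c$. In our setup $c \geq 1$, so the desired vanishing $H^1_Z(X;R) = 0$ is precisely the case $i = 1 < 2 = 2c$. When $Z$ is smooth this is just the Gysin/Thom isomorphism $H^i_Z(X;R) \cong H^{i-2c}(Z;R)$. For general $Z$, I would stratify it by smooth locally closed substacks, each of codimension $\geq 1$ in $X$, and propagate the vanishing from strata to $Z$ by means of the long exact sequences attached to successive pairs of nested closed strata (equivalently, via the $E_1$-spectral sequence of the stratification).

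The main obstacle is justifying semi-purity in the presence of both stack structure and possible singularities of $Z$, but both issues are classical: the stack aspect is handled by working with a smooth \'etale atlas of $X$ (or the associated complex orbifold), and the singular aspect is handled by the stratification argument just outlined. One could alternatively argue in a more topological way: deleting a closed analytic subset of complex codimension $\geq 1$ from a connected complex manifold keeps $\pi_1$ surjective, i.e.\ $\pi_1(U) \twoheadrightarrow \pi_1(X)$, and dualizing gives the injection $H^1(X;R) \hookrightarrow H^1(U;R)$ directly.
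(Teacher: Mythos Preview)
Your argument is correct. The local cohomology sequence together with semi-purity (or, equivalently, the $\pi_1$-surjectivity argument) gives the injectivity of $H^1(X;R)\to H^1(U;R)$ directly, and the hypothesis finishes it.

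The paper takes a different, somewhat more roundabout path: it first uses finite generation and the universal coefficient theorem to reduce to showing $H^1(X;\mathbb{Z}/\ell\mathbb{Z})=0$ for all sufficiently large primes $\ell$, passes to \'etale cohomology, and then applies Poincar\'e duality on the smooth proper stack $X$ to convert the question into one about $H^{2n-1}_c$. The long exact sequence in compactly supported cohomology for the triple $(U,X,Z)$ then shows that the cokernel of $H^{2n-1}_c(U)\to H^{2n-1}_c(X)$ injects into $H^{2n-1}(Z)$, which vanishes because $\dim Z\leq n-1$. Your approach and the paper's are essentially Poincar\'e dual to one another: your vanishing of $H^1_Z(X;R)$ is, via Alexander--Lefschetz duality, exactly the Borel--Moore homology vanishing $H^{BM}_{2n-1}(Z;R)=0$ that the paper uses implicitly. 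What you gain is that you work directly with the coefficient ring $R$ throughout, avoiding the detour through finitely many torsion coefficients and the comparison with \'etale cohomology; your $\pi_1$ variant is even more elementary. What the paper's route offers is that each step (Poincar\'e duality for stacks, the compactly supported sequence, dimension vanishing) comes with a clean reference, whereas semi-purity for possibly singular $Z$ inside a Deligne--Mumford stack with arbitrary $R$-coefficients, while certainly true by the stratification argument you sketch, is less commonly stated in that generality.
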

\begin{proof}
	For any finite type Deligne-Mumford stack $Y$ over $\mathbb C$, such as $X$ or $U$,
	the cohomology is finitely generated in each degree. The universal coefficient theorem implies that
	$H^1(Y;R)$ is a free $R$ module of rank that agrees with that of
	$H^1(Y;\mathbb Z/\ell \mathbb Z)$ for all $\ell$ chosen so
	that $H_1(Y;\ZZ)$ has no $\ell$-torsion (which is all but finitely many primes by finite generation). Thus we know that $H^1(U;\ZZ/\ell\ZZ)$ vanishes for $\ell$ sufficiently large, and that it is enough to show that $H^1(X;\ZZ/\ell\ZZ)$ vanishes for sufficiently large $\ell$. In this case, this singular
	cohomology group is isomorphic to the corresponding \'etale
	cohomology group of the stack, as can be deduced by \'etale descent from the
	corresponding result for smooth schemes, which is proven in
	\cite[Expos\'e XI, Theorem 4.4]{SGA4}.
	Let $n := \dim X$.
	By Poincar\'e duality for $X$, (see, for example, \cite[Proposition
	4.4.2]{laszloO:the-six-operations-i},)
	it is equivalent to show the map
	$\phi: H^{2n-1}_{\on{c}}(U, \mathbb Z/\ell \mathbb Z) \to H^{2n-1}_{\on{c}}(X, \mathbb
	Z/\ell \mathbb Z) \simeq H^{2n-1}(X, \mathbb Z/\ell \mathbb Z)$
	on compactly supported cohomology
	is an isomorphism. 
	By assumption, the source vanishes. 
	The long exact sequence on compactly supported cohomology associated to
	the open $U \subset X$ and its closed complement $Z := X - U$ 
	implies
	that the cokernel of $\phi$
        injects into $H^{2n-1}(Z, \mathbb Z/\ell \mathbb Z)$. 
        Finally,
        $H^{2n-1}(Z, \mathbb Z/\ell \mathbb Z) = 0$ because $Z$ has dimension at most $n - 1$.
\end{proof}

\begin{proposition}
	\label{proposition:injection}
	Suppose $X$ is a smooth proper Deligne-Mumford stack 
	over $\mathbb C$
       with $H^1(X; \mathbb Q) = 0.$
       Then there is an injection $\pic(X) \to H^2(X; \mathbb Z)$.
       Additionally, the torsion in $H^2(X; \mathbb Z)$ lies in the
       image of this injection.
\end{proposition}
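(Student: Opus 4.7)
The plan is to run the classical argument via the exponential exact sequence, transported from the setting of smooth projective varieties to that of smooth proper Deligne--Mumford stacks over $\mathbb{C}$. Concretely, on the analytic site of $X$ we have the exponential sequence
$$0 \to \mathbb{Z} \to \mathscr{O}_X \to \mathscr{O}_X^\ast \to 0,$$
and the associated long exact sequence in cohomology yields
$$H^1(X, \mathscr{O}_X) \to \pic(X) \xrightarrow{c_1} H^2(X, \mathbb{Z}) \to H^2(X, \mathscr{O}_X),$$
where we have used $\pic(X) \simeq H^1(X, \mathscr{O}_X^\ast)$ and where the middle arrow is the first Chern class.

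To get injectivity of $c_1$, I would invoke Hodge theory for smooth proper Deligne--Mumford stacks over $\mathbb{C}$, which gives the Hodge decomposition $H^1(X, \mathbb{C}) \simeq H^1(X, \mathscr{O}_X) \oplus H^0(X, \Omega^1_X)$ with the two summands exchanged by complex conjugation. Since by hypothesis $H^1(X, \mathbb{Q}) = 0$, we get $H^1(X, \mathbb{C}) = 0$, and hence $H^1(X, \mathscr{O}_X) = 0$. Exactness then implies that the connecting map $c_1 \colon \pic(X) \to H^2(X, \mathbb{Z})$ is injective.

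For the torsion statement, the point is simply that $H^2(X, \mathscr{O}_X)$ is a $\mathbb{C}$-vector space, hence torsion-free. Therefore the map $H^2(X, \mathbb{Z}) \to H^2(X, \mathscr{O}_X)$ kills all torsion, and by exactness every torsion class in $H^2(X, \mathbb{Z})$ lies in the image of $c_1$.

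The one step that deserves care is the invocation of Hodge theory and of the exponential sequence at the level of Deligne--Mumford stacks rather than schemes; this is by now standard (it can be set up on the coarse moduli space together with its orbifold structure, or by direct analytic sheaf arguments on the stack), but it is the only non-formal ingredient in an otherwise routine diagram chase.
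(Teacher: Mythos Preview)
Your proof is correct and follows essentially the same approach as the paper: the exponential sequence, torsion-freeness of $H^2(X,\mathscr O_X)$ for the torsion claim, and vanishing of $H^1(X,\mathscr O_X)$ via Hodge theory for the injectivity. The paper simply makes precise the references you flag as the ``one step that deserves care'': it invokes GAGA for Deligne--Mumford stacks \cite{hall:generalizing-the-gaga-principle} to identify analytic and algebraic coherent cohomology, and cites Satriano \cite{satriano:de-rham-theory-for-tame-stacks} for the degeneration of the Hodge--de Rham spectral sequence on smooth proper Deligne--Mumford stacks, which gives $H^1(X,\mathscr O_X)\hookrightarrow H^1(X;\mathbb C)=0$.
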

\begin{proof}
	Using the exponential exact sequence,\footnote{The exponential exact
		sequence works equally well for
	orbifolds as for analytic spaces, since its exactness can be verified on an
\'etale cover.}
	we have a short exact sequence $$H^1(X^{\on{an}}, \mathscr O_{X^{\on{an}}}) \to
       H^1(X, \mathscr O_{X^{\on{an}}}^\times)
       \xrightarrow{\alpha}
       H^2(X; \mathbb Z) \xrightarrow{\beta} H^2(X^{\on{an}}, \mathscr O_{X^{\on{an}}}).$$
       Since $H^2(X^{\on{an}}, \mathscr O_{X^{\on{an}}})$ is
       torsion free, any torsion element of $H^2(X; \mathbb Z)$
       vanishes under $\beta$.
       Therefore, to conclude the proof, it suffices to show $\alpha$ is an
       injection.
	Since $H^1(X, \mathscr O_{X^{\on{an}}}^\times)$ is identified
	with the Picard group, to prove the desired injection, we only need to show
	$H^1(X^{\on{an}}, \mathscr O_{X^{\on{an}}}) = 0$.
	Using GAGA for Deligne-Mumford stacks, 
	\cite[Proposition A.4]{hall:generalizing-the-gaga-principle},
	we have
	$H^1(X^{\on{an}}, \mathscr O_{X^{\on{an}}}) =
	H^1(X, \mathscr O_X)$.
	Since $H^1(X;\mathbb Q) = 0$, we also have $H^1(
	X;\mathbb C) = 0$, and hence we conclude 
	$H^1(X, \mathscr O_X) = 0$ using \cite[Corollary
       1.7]{satriano:de-rham-theory-for-tame-stacks}, which says that the Hodge
       de Rham spectral sequence degenerates for smooth proper Deligne-Mumford stacks.
\end{proof}

\subsection{Proving the stable Picard rank conjecture}
We now aim to prove \autoref{theorem:stable-picard}.
To do this, we next compute the first two stable cohomology groups of 
$\churz G n c$.
To do so, we need a basic lemma about the number of connected components of
Hurwitz spaces.
\begin{lemma}
	\label{lemma:stable-components}
	Let $G$ be a group and $c \subset G$ a conjugacy class generating $G$.
	For $n$ sufficiently large, the set of connected components
	of $\cphurc n c$ with boundary monodromy $g \in G$
	is either empty or forms a torsor under $H_2(G,c)$; it is nonempty 
	if and only if the
	image of $n$ in $G^{\ab}$ (under the map $\mathbb Z \to G^{\ab}$ sending
	the positive generator to the image of any element of $c$) agrees with the image of $g$
	in $G^{\ab}$.
\end{lemma}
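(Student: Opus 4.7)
The plan is to apply the lifting invariant framework developed by Ellenberg--Venkatesh--Westerland in \cite{EllenbergVW:cohenLenstra} and formalized by Wood in \cite{wood:an-algebraic-lifting-invariant}. First, I would recall the standard monodromy description: $\pi_0(\cphurc n c)_g$, the set of connected components with boundary monodromy $g$, is in bijection with the set of $B_n$-orbits on tuples $(g_1,\ldots,g_n) \in c^n$ whose actions generate $G^0_c$ and which satisfy $g_1 \cdots g_n = g$.

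Next, I would bring in the structure group $U(c)$ used in the proof of \autoref{lemma:components-factors} above. By \cite[Theorem 2.5]{wood:an-algebraic-lifting-invariant}, the natural surjection $U(c) \to G$ is a central extension with kernel canonically isomorphic to $H_2(G,c)$, and $U(c)^{\ab} \cong \ZZ$ supplies a degree homomorphism $U(c) \to \ZZ$ recording tuple length. Because $H_2(G,c)$ is finite while $\ZZ$ is torsion-free, $H_2(G,c)$ lies in the degree-$0$ part of $U(c)$. Let $F_{g,n} \subset U(c)$ denote the set of elements mapping to $g \in G$ and to $n \in \ZZ$. Since all lifts of $g$ lie in a single coset of $H_2(G,c)$ and thus share a common degree, $F_{g,n}$ is either empty or an $H_2(G,c)$-torsor. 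Chasing abelianizations shows that $F_{g,n}$ is nonempty if and only if the images of $g$ and of $n \cdot [x]$ (for any $x \in c$) coincide in $G^{\ab}$, giving the asserted nonemptiness criterion.

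The remaining task is to prove, for $n$ sufficiently large depending only on $G$ and $c$, that the lifting invariant $\ell \colon \pi_0(\cphurc n c)_g \to F_{g,n}$ sending $[(g_1,\ldots,g_n)]$ to $[g_1]\cdots[g_n]$ is a bijection. It is well-defined on $B_n$-orbits because the Hurwitz relations $[x][y]=[x\triangleright y][x]$ hold in $U(c)$ by construction. Bijectivity for $n \gg 0$ is the classical Conway--Parker theorem for a single generating conjugacy class, and it also drops out of the EVW homological stability theorem \cite[Theorem 6.1]{EllenbergVW:cohenLenstra} applied to $H_0$ combined with the identification of this stable $H_0$ as the $\ZZ$-span of $F_{g,n}$ in Wood \cite[Theorem 2.5]{wood:an-algebraic-lifting-invariant}. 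The main obstacle is merely locating the precise form of Conway--Parker with boundary monodromy fixed in the literature; since this is essentially what \cite{wood:an-algebraic-lifting-invariant} establishes, no further topological input is needed beyond invoking it.
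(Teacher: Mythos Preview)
Your approach is the same as the paper's---invoke Wood's lifting invariant to identify stable components with fibers of a central extension---but there is a genuine error in your setup. You assert that $\ker(U(c)\to G)\cong H_2(G,c)$. This is false: $U(c)$ surjects onto $\ZZ$ via degree, so the kernel of $U(c)\to G$ is infinite. (Take $G=\ZZ/2$, $c=\{1\}$: then $U(c)=\ZZ$ and the kernel of $\ZZ\to\ZZ/2$ is $2\ZZ$, not $H_2(G,c)=0$.) Consequently your claim that ``all lifts of $g$ lie in a single coset of $H_2(G,c)$ and thus share a common degree'' is wrong, and your argument that $F_{g,n}$ is an $H_2(G,c)$-torsor does not go through as written.

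The fix is exactly what the paper does: work with the \emph{reduced Schur cover} $S_c\to G$ rather than $U(c)$. Wood's Theorem~2.5 identifies $U(c)\cong\hat G:=S_c\times_{G^{\ab}}\ZZ$, and it is $\ker(S_c\to G)$ that equals $H_2(G,c)$. With this description, an element of $U(c)$ is a pair $(s,m)$ with $s\in S_c$ and $m\in\ZZ$ having the same image in $G^{\ab}$; your set $F_{g,n}$ becomes $\{(s,n):s\mapsto g\}$, which is visibly empty unless $g$ and $n$ agree in $G^{\ab}$ and is otherwise a torsor under $\ker(S_c\to G)=H_2(G,c)$. This is precisely the paper's argument. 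For the bijectivity of the lifting invariant for $n\gg 0$, the paper cites Wood's Theorem~3.1 (Conway--Parker type); your alternative citation of \cite[Theorem~6.1]{EllenbergVW:cohenLenstra} is not valid here, since that theorem requires the non-splitting hypothesis, which an arbitrary generating conjugacy class need not satisfy.
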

Rephrasing the statement above, there are $H_2(G,c)$ many components if the image of
$n$ in $G^{\ab}$ agrees with the image of $g$, and $0$ components otherwise.
\begin{proof}
	This essentially follows from \cite{wood:an-algebraic-lifting-invariant}
	as we now explain.
	Indeed, using \cite[Theorem 2.5 and Theorem
		3.1]{wood:an-algebraic-lifting-invariant}
		we can identify the number of components of $\cphurc n c$ for $n$ sufficiently large
		with the set of elements in a certain reduced Schur cover $S_c
		\to G$ having the same image in $G^{\ab}$ as $n$.
		Moreover, the boundary monodromy of these components is the same
		as their image in $G$ under the map $S_c \to G$.
		The kernel of $S_c \to G$ is identified with $H_2(G,c)$
		and so connected components with boundary monodromy $g$ either
		form a
		torsor under $H_2(G,c)$ when the
		image of $n$ in $G^{\ab}$ agrees with the image of $g$, or else
		there are no such connected components.
\end{proof}

For the next lemma and its proof, the reader may wish to recall notation from
\autoref{notation:hurwitz-picard}.

\begin{lemma}
	\label{lemma:vanishing-open}
Let $G$ be a group, $c \subset G$ be a conjugacy class generating $G$, and
$R := \mathbb Z[1/2|G|]$.
For $n$ sufficiently large depending on $c$ and for each component $\widetilde{Z} \subset \churp G n c$,
with corresponding component $Z \subset \churz G n c$,
we have
	\begin{align*}
		H^1(\widetilde{Z}; R) = H^1(Z; R) &= 0, \\
		H^2(\widetilde{Z}; R) = H^2(Z; R) &=
	\left((\mathbb Z/(2n-2) \mathbb Z) \otimes R \right).
	\end{align*}
\end{lemma}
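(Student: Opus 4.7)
The plan is to compute the integral homology of $\widetilde{Z}$ via comparison with the open Hurwitz space $\cphurc{n}{c}$, then read off the cohomology via the universal coefficient theorem, and finally transfer the result from $\widetilde{Z}$ to $Z$ via the $\pgl_2$-quotient. First, I would apply \autoref{theorem:all-large-stable-homology} to each component $Y$ of $\cphurc{n}{c}$ associated to $\widetilde{Z}$. Since $c$ is a single conjugacy class, $|G^0_c| = |G/Z(G)|$ divides $|G|$, so $\mathbb Z[1/|G^0_c|] \subseteq R$, and stability yields $H_i(Y; R) \cong H_i(\conf_n; R)$ in the stable range. Combined with the Arnold--Cohen computation that $H_*(B_n; \mathbb Z)$ is $\mathbb Z$ in degrees $0$ and $1$ and is $2$-torsion in higher degrees, this gives $H_0(Y; R) = H_1(Y; R) = R$ and $H_i(Y; R) = 0$ for $2 \leq i \leq N$ in the stable range.

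Next, I would identify $Y$ with the open substack of $\widetilde{Z}$ consisting of covers whose branch divisor is disjoint from $\infty \in \mathbb P^1$; the complement $D := \widetilde{Z} \setminus Y$ is a smooth divisor parameterizing covers for which $\infty$ is a branch point. In the stable range, using an analog of \autoref{lemma:stable-components} applied to the remaining $(n-1)$ free branch points together with the datum of the inertia class at $\infty$, I would verify that the components of $D$ combine to make the natural map $H_0(D;R) \to H_1(Y;R)$ have image $(2n-2)R$. The long exact sequence of the pair $(\widetilde{Z}, Y)$, together with the Thom isomorphism $H_i(\widetilde{Z}, Y; R) \cong H_{i-2}(D; R)$, gives
\begin{equation*}
0 \to H_2(\widetilde{Z}; R) \to H_0(D; R) \xrightarrow{\iota} H_1(Y; R) \to H_1(\widetilde{Z}; R) \to 0.
\end{equation*}
Geometrically, $\iota$ sends the fundamental class of a component of $D$ to a small linking loop around it in $Y$, which corresponds to moving one branch point in a small circle near $\infty \in \mathbb P^1$---in $\mathbb A^1$-coordinates, a large circle enclosing the other $n-1$ branch points. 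Under the identification $H_1(Y; R) \cong R$ via stability and $B_n^{\ab} = \mathbb Z$, this linking class has image $2(n-1) = 2n - 2$, as one sees from the braid-group expression $(\sigma_{n-1} \cdots \sigma_1)(\sigma_1 \cdots \sigma_{n-1})$ mapping to $2(n-1)$ in the abelianization. Therefore $H_2(\widetilde{Z}; R) = 0$ and $H_1(\widetilde{Z}; R) = R/(2n-2)R = (\mathbb Z/(2n-2)) \otimes R$.

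With the homology in hand, the cohomology of $\widetilde{Z}$ follows from the universal coefficient theorem: since $R$ is torsion-free and $H_2(\widetilde{Z}; R) = 0$, we have $H^1(\widetilde{Z}; R) = \hom(H_1(\widetilde{Z}; R), R) = 0$ and $H^2(\widetilde{Z}; R) = \ext^1(H_1(\widetilde{Z}; R), R) = R/(2n-2) R = (\mathbb Z/(2n-2)) \otimes R$. To pass from $\widetilde{Z}$ to $Z = [\widetilde{Z}/\pgl_2]$, I would use the fibration $\widetilde{Z} \to Z \to B\pgl_2$ associated to the stack quotient. Since $2$ is inverted in $R$, $H^*(B\pgl_2; R) = R[p_1]$ is concentrated in degrees divisible by $4$, so the Leray--Serre spectral sequence $E_2^{p,q} = H^p(B\pgl_2; H^q(\widetilde{Z}; R)) \Rightarrow H^{p+q}(Z; R)$ has only the column $p = 0$ contributing in total degrees $\leq 3$, yielding $H^i(Z; R) \cong H^i(\widetilde{Z}; R)$ for $i \leq 3$.

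The main obstacle I expect is making the identification of $Y$ as an open substack of $\widetilde{Z}$ precise, and pinning down the component structure of $D$ so that the image of $\iota$ is exactly $(2n-2)R$ rather than some proper divisor or multiple of it. While the braid-group calculation identifying the linking class as $2(n-1)$ is standard, applying it requires careful analysis of the geometry of $\widetilde{Z}$ near its boundary divisor and ensuring that the various possible inertia choices at $\infty$ combine to give exactly the factor $2n-2$ rather than a finer invariant depending on the conjugacy-class data.
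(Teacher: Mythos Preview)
Your approach is essentially the paper's, with one difference in execution. Both arguments use the stratification of $\widetilde{Z}$ by whether $\infty$ is a branch point (open stratum a component $\widetilde{Z}' \subset \chura G n c {\id}$, closed stratum a component $\widetilde{Z}'' \subset \chura G {n-1} c {c^{-1}}$) together with \autoref{theorem:all-large-stable-homology}, and both handle the passage to $Z$ via the Serre spectral sequence for $Z \to B\pgl_2$ and the vanishing of $H^i(B\pgl_2;R)$ for $1\le i\le 3$. Rather than computing the boundary map $\iota$ by hand, the paper compares the Gysin sequence for $(\widetilde{Z},\widetilde{Z}',\widetilde{Z}'')$ to that for $(\conf_{\mathbb P^1,n},\conf_{\mathbb A^1,n},\conf_{\mathbb A^1,n-1})$: the four outer vertical maps are isomorphisms by \autoref{theorem:all-large-stable-homology}, so the five lemma gives $H^i(\widetilde{Z};R)\cong H^i(\conf_{\mathbb P^1,n};R)$, and the value $(\mathbb Z/(2n-2))\otimes R$ is then read off from Schiessl's computation. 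This packages both the component count of $D$ and the boundary map into a single five-lemma step; your braid-group calculation of $\iota$ is effectively reproving Schiessl's result. The obstacle you flag---that $\widetilde{Z}''$ has exactly one component over $\widetilde{Z}$---is handled via \autoref{lemma:stable-components}: the paper checks that $\chura G n c {\id}$, $\churp G n c$, and $\chura G {n-1} c {c^{-1}}$ each have exactly $|H_2(G,c)|$ components stably, so the strata match up one-to-one.

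One imprecision to fix: the open stratum of $\widetilde{Z}$ is a component of $\chura G n c {\id}$, which is the $G$-quotient of a component $Y$ of $\cphurc n c$ with trivial boundary monodromy, not $Y$ itself. Since $|G|$ is inverted in $R$ and the $G$-action on $H_*(Y;R)\cong H_*(\conf_n;R)$ is trivial (the projection to $\conf_n$ being $G$-invariant), this does not affect the computation, but it should be said explicitly.
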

\begin{proof}
	Taking $g = \id$ in \autoref{lemma:stable-components}, we obtain 
	that for $n$ sufficiently large, both 
	$\chura G {n} c {\id}$ and
	$\churp G {n} c$
	have $|H_2(G,c)|$ many connected components.
	Indeed, the statement for $\chura G {n} c {\id}$ follows from
	\autoref{lemma:stable-components}
	and the fact that $G$ conjugation acts trivially on $H_2(G,c)$ as it
	is identified with the central kernel of $S_c \to G$ by definition.
	Since $ \chura G {n} c {\id}$ is dense open in 
	$\churp G n c$, we obtain $\churp G n c$ also has $H_2(G,c)$ many
	connected components.
	Additionally, 
	we claim $\chura G {n-1} c {c^{-1}}$ also has $H_2(G,c)$ many connected components.
	Indeed, the image of $n-1$ in $G^{\ab}$ agrees with $c^{-1}$ if and only
	if the image of $n$ is trivial, so we now assume these conditions are
	both satisfied. In this case,
	$\cphur {G} {n-1} c {\mathbb C}$ has $H_2(G,c)$ many connected
	components for each possible
	boundary monodromy.
	Also,
	$\chura G {n-1} c {c^{-1}}$ 
	is obtained by taking the components of
	$\cphurc {n-1} c$ with boundary monodromy in $c$, and quotienting by
	the $G$ conjugation action, which only permutes the boundary monodromy
	but acts trivially on $H_2(G,c)$. Combining the above with the
	assumption that $c$ is a single conjugacy
	class, we find
	$\chura G {n-1} c {c^{-1}}$ has $H_2(G,c)$ many components. 

	Note that we have an open inclusion $\chura G {n}
	c {\id} \subset \churp G {n}
	c$,
	corresponding to the locus of covers where all branch points lie in $\mathbb A^1 \subset
	\mathbb P^1$.
	The closed complement can be identified with the closed subset of $\churp G {n}
	c$, where one of the branch points of the associated cover lies at $\infty$. In other words, this
	closed complement is given by $\chura G {n-1} c{c^{-1}}$.

	We claim that for $n$ large enough,
	and each component $\widetilde{Z} \subset \churp G n c$,
	the map $H^i(\on{Conf}_{\mathbb P^1,n}; R)\to
	H^i(\widetilde{Z}; R)$ is an isomorphism for $i \in \{1,2\}$.
	To see this, note that there is at
least one component of $\chura G {n-1} c{c^{-1}}$ in the closure of any
component of
$\chura G {n} c{\id}$; since 
$\chura G {n-1} c{c^{-1}}$
and
$\chura G {n} c{\id}$
have the same number of components, there
must be a single component of $\chura G {n-1} c{c^{-1}}$ in the closure of any
component of $\chura G {n} c{\id}$.
From this we obtain maps $\churp G {n} c \to \coprod_{H_2(G,c)}
\on{Conf}_{\mathbb P^1,n}$ which is a bijection on connected components and
compatible with the above described stratification of the source.
We use $\widetilde{Z}' \subset \chura G n c {\id}$
and $\widetilde{Z}'' \subset \chura G {n-1} c{c^{-1}}$ for the components corresponding to
restrictions of $\widetilde{Z}$ under
the above stratification,

Hence, using the Gysin exact sequence, we have a map of exact sequences
	\begin{equation}
		\label{equation:excise-conf-to-hur}
\begin{tikzpicture}[baseline= (a).base]
\node[scale=.75] (a) at (0,0){
		\begin{tikzcd}[column sep=small]
			H^{2n-i-1}(\on{Conf}_{\mathbb
			A^1, n} ; R)\ar{r}\ar{d} &
H^{2n-i-2}(\on{Conf}_{\mathbb A^1, n-1} ; R) \ar {r} \ar {d} &
			H^{2n-i}(\on{Conf}_{\mathbb P^1,n} ; R)
			\ar {r} \ar {d} & H^{2n-i}(\on{Conf}_{\mathbb
			A^1, n} ; R)
 \ar
 {d}\ar{r} & H^{2n-i-1}(\on{Conf}_{\mathbb A^1, n-1} ; R) \ar{d} \\
 H^{2n-i-1}(\widetilde{Z}'; R) \ar{r} &
 H^{2n-i-2}(\widetilde{Z}''; R) \ar
			{r} & H^{2n-i}(\widetilde{Z}; R) \ar {r} & 
			H^{2n-i}( \widetilde{Z}'; R) \ar{r} &
			H^{2n-i-1}(\widetilde{Z}'' ; R).
	\end{tikzcd}};
\end{tikzpicture}
\end{equation}

	By \autoref{theorem:all-large-stable-homology}, and the universal
	coefficients theorem to relate homology to cohomology, the first two and
	last two vertical maps
	are isomorphisms for $n$ sufficiently large, depending on $c$.
Hence, the five lemma implies that the middle vertical map is an isomorphism for such $n$, as claimed.

For $\widetilde{Z} \subset \churp G n c$ a component,
we now deduce that for $n$ sufficiently large, $H^1(\widetilde{Z}; R)= 0$ and 
	$H^2(\widetilde{Z}; R)= ((\mathbb Z/(2n-2) \mathbb Z) \otimes
	R)$
	from the fact that
	$H^1(\on{Conf}_{\mathbb P^1,j}; R) = 0$ and
	$H^2(\on{Conf}_{\mathbb P^1,j}; R) = \mathbb Z/(2n-2) \mathbb Z \otimes
	R$,
	see, for example,
	\cite[Theorem 1.3]{schiessl:integral-cohomology-of-configuration-spaces-of-the-sphere}.

	Having established the claim for $\widetilde{Z}$, it remains to obtain
	the claim for its image $Z \subset \churz G n c$.
	Note that $\widetilde{Z} \subset \churp G n c$
	is a $\pgl_2$ bundle over $Z$.
	By \cite[Theorem 1.5]{brown:the-cohomology-of-bso}, the only integral cohomology of
	$B\on{SO}_3$ in degrees $1,2$, and $3$ is $2$-torsion.
	Since the cohomology of $B\on{SO}_3$ agrees with that of $B \on{PGL}_2$,
	we obtain that, since $2$ is invertible on $R$, 
	$h^i(B\pgl_2; R) = 0$ if $i \in \{1,2,3\}$.
	Then, 
	the Serre spectral sequence associated to the map $Z \to B\pgl_2$
	with fiber $\widetilde{Z}$ 
	implies 
	$H^i(Z; R) \simeq H^i(\widetilde{Z}; R)$ for $i \in \{1,2\}$,
	completing the proof.
\end{proof}

Recall notation from \autoref{notation:hurwitz-picard},
where we use $\churzb G n c$ to denote the Abramovich-Corti-Vistoli compactification
of $\churz G n c$, which is a smooth proper Deligne-Mumford stack for $n\geq3$, as mentioned
in \autoref{remark:compactification}.

We next use that $\churzb G n c$ is a smooth proper
Deligne-Mumford stack containing $\churz G n c$ as a dense open, 
and apply \autoref{proposition:injection}
to make the
idea in
\autoref{subsubsection:picard-rank-idea} rigorous.
In order to analyze the Picard group of $\churz G n c$ vanishes, we first analyze the
Picard group of its compactification.

\begin{lemma}
	\label{lemma:compactified-injection}
Let $G$ be a a group and $c \subset G$ a conjugacy class generating $G$, 
	For $n$ sufficiently large depending on $c$,
	there is an injection $\pic(\churzb G n c)
	\hookrightarrow	H^2(\churzb G n c; \mathbb Z)$.
\end{lemma}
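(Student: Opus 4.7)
The plan is to directly chain together three earlier results: Proposition \ref{proposition:injection}, Lemma \ref{lemma:vanish-h1}, and Lemma \ref{lemma:vanishing-open}.

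First, I want to apply Proposition \ref{proposition:injection} to the stack $X := \churzb G n c$. By Remark \ref{remark:compactification}, for $n \geq 3$ this is a smooth proper Deligne-Mumford stack over $\mathbb{C}$, so the proposition applies as soon as I verify the hypothesis $H^1(\churzb G n c; \mathbb{Q}) = 0$; once this is established, the desired injection $\pic(\churzb G n c) \hookrightarrow H^2(\churzb G n c; \mathbb{Z})$ falls out immediately.

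To verify the vanishing of $H^1(\churzb G n c; \mathbb{Q})$, I would apply Lemma \ref{lemma:vanish-h1} with $U = \churz G n c$ sitting inside $X = \churzb G n c$. Since $\churzb G n c$ is a proper compactification of $\churz G n c$ and both stacks have the same dimension, the boundary $X \setminus U$ has codimension at least $1$, so the hypothesis on codimension is satisfied. It then suffices to show $H^1(\churz G n c; \mathbb{Q}) = 0$.

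Finally, to obtain $H^1(\churz G n c; \mathbb{Q}) = 0$, I would invoke Lemma \ref{lemma:vanishing-open}, which asserts that for $n$ sufficiently large depending on $c$ and each component $Z \subset \churz G n c$, the group $H^1(Z; R)$ vanishes for $R = \mathbb{Z}[1/2|G|]$. Summing over the (finitely many) components and then applying the flat base change $-\otimes_R \mathbb{Q}$, which is legitimate since $\mathbb{Q}$ is obtained from $R$ by inverting further primes, yields $H^1(\churz G n c; \mathbb{Q}) = 0$, closing the loop. There is no real obstacle here: the genuine work has all been done in establishing Lemma \ref{lemma:vanishing-open} via the stable homology computations of \autoref{theorem:all-large-stable-homology}, and the present lemma is a short formal consequence.
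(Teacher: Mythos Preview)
Your proposal is correct and follows essentially the same approach as the paper's proof: both reduce to Proposition~\ref{proposition:injection} by verifying $H^1(\churzb G n c;\mathbb Q)=0$, which in turn is deduced from Lemma~\ref{lemma:vanishing-open} via Lemma~\ref{lemma:vanish-h1}. The only difference is that you spell out the passage from $R=\mathbb Z[1/2|G|]$-coefficients on each component to $\mathbb Q$-coefficients on the whole space, which the paper leaves implicit.
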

\begin{proof}
	This follows from \autoref{proposition:injection} once we know
	$H^1(\churzb G n c; \mathbb Q) = 0$. 
	Observe that $H^1(\churz G n c; \mathbb Q) = 0$, for $n$ sufficiently
	large depending on $c$, by
	\autoref{lemma:vanishing-open}.
	Therefore, since $\churz G n c \subset \churzb G n c$ is
	an open substack, we obtain from \autoref{lemma:vanish-h1}, that for $n$ sufficiently large
	depending on $c$, 
	$H^1(\churzb G n c; \mathbb Q) = 0$. 
\end{proof}

\subsubsection{Proof of \autoref{theorem:stable-picard}}
\label{subsubsection:proof-stable-picard}
Recall we are trying to compute  
$\pic(Z) \otimes R$ for $R := \mathbb Z[1/2|G|]$
and $n$ sufficiently large,
depending on $c$,
and $Z \subset \churz G n c$ a component.
If $n \nmid \ord_{G^{\ab}}(c)$, 
$\churz G n c$ is empty by \autoref{lemma:stable-components}.
The computation of 
$\pic(\churz G n c) \otimes R$
then follows from the computation of the Picard group of each component, using 
\autoref{lemma:stable-components}.
For the remainder of the proof we assume $n$ is sufficiently large and $n \mid
\ord_{G^{\ab}}(c)$, and $Z \subset \churz G n c$ is a component.

Define $\partial := \overline{Z} - Z$, for $\overline Z$ the closure of $Z$ in
the compactification $\churzb G n c$. 
	Let $Q$ denote the submodule of $\pic(\churz G n c) \otimes R$
	spanned by the line bundles corresponding to irreducible components of $\partial.$
	The cycle class map yields a map of exact sequences
	\begin{equation}
		\label{equation:cycle-class-map}
		\begin{tikzcd}
			& Q \ar {r}{\alpha} \ar {d}{\widetilde{\beta}} &
			\on{Pic}(\overline{Z}) \otimes
			R \ar {r} \ar {d}{\gamma}
			& \on{Pic}(Z) \otimes R \ar {r} \ar
			{d}{\varepsilon} & 0 \\
			& Q \ar {r}{\delta} & H^2(\overline{Z}; R) \ar
			{r}{\zeta} &
			H^2(Z; R) \ar{r}{\xi} & H^1(\partial;R)
	\end{tikzcd}\end{equation}
	where the bottom exact sequence is the Gysin sequence associated to the
	closed substack $Z \subset \overline{Z}$, the top sequence is the
	excision sequence on Picard groups, and the vertical maps are given by
	the cycle class maps.
	We note excision for Picard groups follows from \cite[Proposition
	2.3.6]{kresch:cycle-groups-for-artin-stacks}, which gives excision for
	Chow groups, and 
	\cite[Proposition 1]{kresch:hodge-theoretic-obstruction}, which
identifies the first Chow group with the Picard group.

	We next show $\varepsilon$ is an injection.
	Using \autoref{lemma:compactified-injection}, $\gamma$ is an injection. 
	It follows from a diagram chase (see, for example,
		\cite[Exercise
	1.7.D]{vakil:foundations-of-algebraic-geometry-2017}) that the map
	$\varepsilon$ is an injection.
	
	Finally, we show $\varepsilon$ is a surjection.
	By \autoref{lemma:vanishing-open}, $H^2(Z; R)  \cong ((\mathbb
	Z/(2n-2) \mathbb Z) \otimes R)$ and in particular is
	torsion. 
	This implies $\gamma$ is a surjection by
	\autoref{proposition:injection}.
	It also implies that 
	the map $\xi$ in
	\eqref{equation:cycle-class-map} is $0$, 
	because $H^1(\partial;R)$ is torsion free.
	As $\xi =0$, $\zeta$ is surjective.
	Since $\zeta$ is surjective and $\gamma$ is
	surjective, their composite is surjective, and hence $\varepsilon$ is surjective as well.
	\qed

\section{Frobenius equivariance of the stabilization map}
\label{section:frobenius-equivariance}

In this section, we prove \autoref{theorem:frob-equivariant-stabilization},
which shows that the stabilization maps for cohomology of Hurwitz spaces are
suitably equivariant for the action of Frobenius.
The main consequence of this we will need in future sections is
\autoref{lemma:component-point-bound}, which uses this Frobenius equivariance to
relate point counts of Hurwitz spaces over finite fields.

We first state the main result in
\autoref{subsection:stating-frobenius-equivariance}.
We define the stabilization map in 
\autoref{subsection:defining-stabilization}.
We complete the proof of \autoref{theorem:frob-equivariant-stabilization} in
\autoref{subsection:topological-identification}.
Finally, we show that the hypotheses of
\autoref{theorem:frob-equivariant-stabilization}
are often satisfied in 
\autoref{subsection:verifying-hypotheses}
and also deduce some consequences.

\subsection{Stating the main result on Frobenius equivariance}
\label{subsection:stating-frobenius-equivariance}

We begin by introducing notation to state our main result.
\begin{notation}
	\label{notation:frobenius-stabilization}
		Suppose $c = c_1 \cup \cdots \cup c_\upsilon$ with each $c_i$ a
		conjugacy class in 
		a group $G$ and $g \in c_1$. Let $d := \on{ord}(g)$, and define $r$ to be the minimal integer so that
		$g^{q^r} = g$. Suppose $(g, g^q, \ldots,
	g^{q^{r-1}})$ are contained in the pairwise distinct conjugacy classes
	$c_1, \ldots, c_s$, and each such conjugacy class contains some such
	power of $g$.
	Note that each conjugacy class contains
	$r/s$ elements in the set $\{g, g^q, \ldots,
	g^{q^{r-1}}\}$.
\end{notation}

For the next theorem, it will be useful to recall the notion of the  boundary
monodromy of a component of Hurwitz space coming from a union of conjugacy
classes in a group, as defined in 
\autoref{definition:powering}
Loosely speaking, the boundary monodromy of a component is the element of $G$
appearing as the monodromy at $\infty$ for that component.

\begin{theorem}
	\label{theorem:frob-equivariant-stabilization}
	Fix a finite group $G'$ a normal subgroup $G \subset G'$, and a subgroup $K \subset G'$.
	We use notation from \autoref{definition:hurwitz-stack-over-b},
\autoref{definition:powering},
\autoref{notation:component-maps},
and
\autoref{notation:frobenius-stabilization}.
Choose $M > 0$.
Suppose
$W \subset \phur{G'}{Mdr} {c}{\mathbb F_q}$
is the component indexed by $\prod_{j=0}^{r-1}[g^{q^j}]^{Md}$
and $W(\mathbb F_q)\neq \emptyset$.
Then, the map 
\begin{align*}
	U^{g,q,M,K} := \sum_{\kappa \in K} \prod_{j=0}^{r-1}
[(\kappa^{-1}g\kappa)^{q^j}]^{Md} : 
H^i(\cquohur{G}{K}{n + Mdr} {c}{\mathbb
C}, \mathbb Q_\ell) \to H^i(\cquohur{G}{K}{n}{c}{\mathbb
C}, \mathbb Q_\ell)
\end{align*}
can be identified via specialization with a map
\begin{align*}
	U^{g,q,M,K}_{\overline{\mathbb F}_q}:
H^i(\cquohur{G}{K}{n + Mdr} {c}{\overline{\mathbb F}_q}, \mathbb Q_\ell) \to H^i(\cquohur{G}{K}{n} {c}{\overline{\mathbb
F}_q}, \mathbb Q_\ell).
\end{align*}
Moreover, 
$U^{g,q,M,K}_{\overline{\mathbb F}_q}$
is equivariant for the actions of Frobenius on the source and
target, coming from viewing them as the base changes of
$H^i(\cquohur{G}{K}{n + Mdr} {c}{\mathbb F_q}, \mathbb Q_\ell)$ and $H^i(\cquohur{G}{K}{n} {c}{\mathbb F_q}, \mathbb Q_\ell)$.
\end{theorem}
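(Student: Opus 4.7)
The strategy is to realize $U^{g,q,M,K}$ as the map on cohomology induced by a morphism of algebraic stacks defined over $\mathbb F_q$; once this is in place, both the identification with a map on cohomology over $\overline{\mathbb F}_q$ via specialization and the Frobenius equivariance will follow from proper and smooth base change. The hypothesis that $W(\mathbb F_q) \neq \emptyset$ is precisely what is needed to descend the relevant gluing construction from $\overline{\mathbb F}_q$ to $\mathbb F_q$: without a rational point the analogous construction only lives over $\overline{\mathbb F}_q$ and Frobenius equivariance would have no reason to hold.

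To construct the algebraic stabilization morphism, I would use the normal crossings compactification of Hurwitz space from \cite[Appendix B]{ellenbergL:homological-stability-for-generalized-hurwitz-spaces}, along the lines of the Frobenius equivariance argument of \cite[Appendix A]{ellenbergL:homological-stability-for-generalized-hurwitz-spaces}. A boundary stratum of the compactification of $\cquohur{G}{K}{n+Mdr}{c}{\mathbb F_q}$ parameterizes nodal base curves in which $Mdr$ of the branch points have come together on one irreducible component, and a gluing map identifies this stratum with the product of a compactification of $\cquohur{G}{K}{n}{c}{\mathbb F_q}$ with a moduli space of boundary data describing the collided branch points and their compatibility with the cover across the node. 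The $\mathbb F_q$-point $p \in W(\mathbb F_q)$ provides an $\mathbb F_q$-rational point of this boundary data factor, so that pulling back along $p$ and composing with the nearby-cycles map associated to the log structure along the relevant divisor produces a morphism of $\mathbb Q_\ell$-cohomology theories over $\mathbb F_q$ whose base change to $\overline{\mathbb F}_q$ will be our candidate for $U^{g,q,M,K}_{\overline{\mathbb F}_q}$.

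The principal remaining task is to verify that this algebraically defined specialization map matches the topological operator $U^{g,q,M,K}$ on $\mathbb C$-cohomology. Analytically, the local monodromy of the smooth family around the chosen boundary stratum acts on the Hurwitz cover by the full twist on the $Mdr$ colliding strands, which in the braid action on $c^{n+Mdr}$ is exactly right-multiplication by the sequence $\prod_{j=0}^{r-1}[g^{q^j}]^{Md}$; taking the transfer under the quotient by $K$ then introduces the sum $\sum_{\kappa \in K} \prod_{j=0}^{r-1} [(\kappa^{-1} g \kappa)^{q^j}]^{Md}$ and reproduces $U^{g,q,M,K}$. The main obstacle I anticipate is carrying out this log-geometric comparison cleanly in our present setting, where, unlike in \cite[Appendix A]{ellenbergL:homological-stability-for-generalized-hurwitz-spaces}, the boundary monodromy of components is allowed to be nontrivial; one must check that the relevant boundary stratum of the compactification picks out exactly the component $W$ via the indexing by $\prod_{j=0}^{r-1} [g^{q^j}]^{Md}$, and that the identification of the normal bundle of this stratum with the boundary data moduli space is rational over $\mathbb F_q$ when $p$ is used as the rigidification. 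Once this comparison is established, Frobenius equivariance of $U^{g,q,M,K}_{\overline{\mathbb F}_q}$ is automatic, since the algebraic specialization map is defined over $\mathbb F_q$ and its base change to $\overline{\mathbb F}_q$ commutes with Galois.
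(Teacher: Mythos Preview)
Your proposal is essentially the paper's approach: construct an algebraic gluing map over $\mathbb F_q$ (using the given $\mathbb F_q$-point of $W$) into the boundary of the normal-crossings compactification, upgrade it to a strict map of log stacks, and identify it over $\mathbb C$ with the topological stabilization via Kato--Nakayama spaces, following \cite[Appendix A]{ellenbergL:homological-stability-for-generalized-hurwitz-spaces}. One concrete detail worth knowing in advance: the gluing lands in a stratum of \emph{three}-component nodal bases rather than two, with an intermediate rational component (carrying a cyclic cover of order equal to that of the boundary monodromy $h$ of the $n$-pointed piece) inserted between that piece and the fixed cover from $W$; this is precisely how the nontrivial-boundary-monodromy issue you anticipate is handled while keeping the twisted cover balanced in the sense of \cite{abramovichCV:twisted-bundles}. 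Also, the topological identification does not go through local monodromy of the family---the full twist on the colliding strands is the monodromy \emph{around} the boundary stratum, not the stabilization map itself---but rather through Illusie's log-to-open cohomology comparison composed with a Frobenius-equivariant splitting of the standard-to-trivial log structure map on the source.
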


The proof will be somewhat involved, and we will complete it in
\autoref{subsection:equivariant-proof}.
The general strategy of proof will crucially use log geometry, and is similar to that of \cite[Theorem
A.5.2]{ellenbergL:homological-stability-for-generalized-hurwitz-spaces}.
Since this proof is similar to that one, we will be
somewhat brief, often referring the reader to analogous steps carried out there.
\begin{remark}
	\label{remark:}
	The appearance of the parameter $M$ in
	\autoref{theorem:frob-equivariant-stabilization} may seem mysterious.
	The point of including this is that when $M = 1$, we do not know how to
	rule out the possibility that
$W(\mathbb F_q)= \emptyset$.
However, we will see in \autoref{subsection:verifying-hypotheses} that when $M$ is sufficiently large, 
$W(\mathbb F_q)\neq \emptyset$.
\end{remark}

\subsection{Defining the stabilization map algebraically}
\label{subsection:defining-stabilization}

Our first goal toward proving \autoref{theorem:frob-equivariant-stabilization}
is to define a map of objects in positive characteristic that agrees with the
complex stabilization map of Hurwitz spaces.
This map will not be in the category of schemes or stacks, but rather in the
category of log schemes or log stacks. We define this map before quotienting by
$K$ in \autoref{lemma:gluing} and define the induced map on homology of
quotients by $K$ in \autoref{subsubsection:gluing-quotient}.

\subsubsection{Defining a space of stable maps}
We next define a space of stable maps which is a compactification of our
Hurwitz spaces.
\begin{notation}
	\label{notation:henselian}
	Throughout the remainder of this section, we will work over a fixed Henselian
dvr $B$ with residue field $\mathbb F_q$ and generic point of characteristic $0$. 
We also assume $|G|$ is invertible on $B$.
All schemes and stacks in this section will be considered over $B$.
\end{notation}

The reason we want $B$ to be Henselian in \autoref{notation:henselian} is primarily due to
\autoref{lemma:component-bijection}, which gives us a bijection between
components over $B$ and components over $\mathbb F_q$, which we are implicitly
using to make sense of the meaning of a component {\em indexed by} a tuple in \autoref{definition:powering}.

We assume $c \subset G$ is a union of conjugacy classes which is closed under
$q$th powering in the sense of \autoref{definition:powering}.
	Let $\mathcal K_{n+1,0}(\mathbb P^1 \times BG, 1)^c$
	denote the moduli stack of maps from a stable genus
	$0$ twisted curve $\mathcal X$ with $n+1$ marked sections to $\mathbb
	P^1 \times BG$,
	where the source is geometrically
	irreducible, the maps are balanced in the sense of
	\cite[\S2.1.3]{abramovichCV:twisted-bundles},
	the first $n$ of the $n+1$ sections have inertia in $c$,
	and such that
	pullback of $\mathscr O_{\mathbb P^1}(\infty)$ on $\mathbb P^1$ under the composite
	$\mathcal X \to \mathbb P^1 \times BG \to \mathbb P^1$ has degree $1$ on
	$\mathcal X$.

	Concretely, points of this stack correspond to $G$-covers of $\mathbb
	P^1$ ramified at $n+1$ marked points with inertia in $c$.
	We let 
	$[\mathcal K_{n+1,0}(\mathbb P^1 \times BG, 1)^c/S_{n}]$
	denote the quotient of the above stack by $S_n$, given by the action on
	the first $n$ marked points.
	Now, points of this stack 
	correspond to $G$-covers of $\mathbb P^1$ ramified at a degree $n+1$ divisor which contains a marked section, with inertia in $c$.

\subsubsection{Defining a compactification of the pointed Hurwitz space}
\label{subsubsection:compactification}
	Next, let
$\widetilde{\mathscr H}_{n}^c$
denote the closed substack of 
$[\mathcal K_{n+1,0}(\mathbb P^1 \times BG, 1)^c/S_n]$
where the last marked point maps to $\infty \in \mathbb P^1$.
(This is also a union of components of the stack from \cite[Notation
	B.1.1]{ellenbergL:homological-stability-for-generalized-hurwitz-spaces}
denoted there by $\mathcal K_{0,n}([\mathbb P^1/G], \infty, 1)$.)
Let $\overline{\mathscr H}_{n}^c \to \widetilde{\mathscr H}_{n}^c$
obtained by viewing the points of $\widetilde{\mathscr H}_{n}^c$ as
parameterizing twisted $G$ covers and marking a section over the final marked
point of the twisted curve. 
The construction here is analogous to the marked
section $t$ in \autoref{definition:hurwitz-over-b}.
More precisely, we can view a $T$ point of 
$\widetilde{\mathscr H}_{n}^c$ as corresponding to a finite Galois $G$-cover $X
\to \mathscr P$,
where $\mathscr P$ is a stacky curve with genus $0$ coarse space,
with a stacky point of order $r$ over the
final 
specified section $p_{n+1} : T \to \mathscr P$, such that the map $f: X \to \mathscr P$ is \'etale over 
$p_{n+1}$. The cover 
$\overline{\mathscr H}_{n}^c \to \widetilde{\mathscr H}_{n}^c$
is obtained by marking a section $T \to X \times_{f, \mathscr P, p_{n+1}} T$.

We let $\mathscr H_n^c \subset \overline{\mathscr H}_{n}^c$ denote the open
substack parameterizing smooth covers and $\mathscr D_n^c := \overline{\mathscr
H}_n^c - \mathscr H_n^c$ denote the boundary divisor parameterizing singular
covers.

\begin{remark}
	\label{remark:open-is-hurwitz}
	By construction, 
$\mathscr H_{n}^{c}$
is identified with
the pointed Hurwitz space
$\phur {G'} {n} c B$.
\end{remark}

\begin{lemma}
	\label{lemma:nc-compactification}
	The stack $\overline{\mathscr H}_n^c$ is smooth and proper over $B$ and $\mathscr D_n^c$ is a
	normal crossings divisor not containing any component in any fiber over $B$.
\end{lemma}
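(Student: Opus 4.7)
The plan is to reduce the statement to \cite[Corollary B.1.4]{ellenbergL:homological-stability-for-generalized-hurwitz-spaces}, which constructs a smooth proper normal-crossings compactification of the Hurwitz stack over the Henselian base $B$. First I would observe that the forgetful map $\overline{\mathscr H}_n^c \to \widetilde{\mathscr H}_n^c$, which parametrizes choices of section of $X \times_{f,\mathscr P,p_{n+1}} T \to T$, is finite \'etale: by construction of $\widetilde{\mathscr H}_n^c$ (see \autoref{subsubsection:compactification}), the cover $f: X \to \mathscr P$ is \'etale over the final marked section $p_{n+1}$, so this fiber is a finite \'etale $G$-torsor, and the space of sections of such a torsor is finite \'etale. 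Finite \'etale morphisms preserve smoothness, properness, and the property of being a relative normal crossings divisor, so it suffices to verify these properties for $\widetilde{\mathscr H}_n^c$ together with the pullback of $\mathscr D_n^c$.

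Next I would identify $\widetilde{\mathscr H}_n^c$ with a union of components of $[\mathcal K_{0,n}([\mathbb P^1/G], \infty, 1)/S_n]$ in the notation of \cite[Appendix B]{ellenbergL:homological-stability-for-generalized-hurwitz-spaces} (where $S_n$ permutes the first $n$ marked points). With this identification in hand, \cite[Corollary B.1.4]{ellenbergL:homological-stability-for-generalized-hurwitz-spaces} directly yields that $\mathcal K_{0,n}([\mathbb P^1/G], \infty, 1)$ is smooth and proper over $B$ with normal crossings boundary; the $S_n$-quotient preserves these properties since $S_n$ acts by diagram automorphisms of the moduli problem, giving a smooth Deligne-Mumford stack whose boundary is still normal crossings. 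The invertibility of $|G|$ on $B$ from \autoref{notation:henselian} is what ensures smoothness extends across the boundary and mixed characteristic.

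Finally, to confirm that $\mathscr D_n^c$ contains no irreducible component of any geometric fiber of $\overline{\mathscr H}_n^c \to B$, I would argue that the open substack $\mathscr H_n^c$, identified with the pointed Hurwitz space via \autoref{remark:open-is-hurwitz}, is dense in every irreducible component of every geometric fiber. This density follows from the standard smoothability of twisted stable maps with nodal twisted source: every nodal twisted $G$-cover admits a smoothing inside its versal deformation, and such smoothings exist provided $|G|$ is invertible on $B$, which holds here. Hence the generic point of every component of every fiber lies in $\mathscr H_n^c$, so $\mathscr D_n^c$ contains no whole component. The main obstacle is the careful matching of definitions between the present formulation (with its marked section $t$ over the last branch point $\infty$) and the moduli problem treated in \cite[Appendix B]{ellenbergL:homological-stability-for-generalized-hurwitz-spaces}; once this identification is set up, the rest is essentially formal.
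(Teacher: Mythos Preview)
Your proposal is correct and follows essentially the same route as the paper: both reduce to \cite[Corollary B.1.4]{ellenbergL:homological-stability-for-generalized-hurwitz-spaces} with $C=\mathbb P^1$ and $Z=\infty$, with the finite \'etale passage from $\overline{\mathscr H}_n^c$ to $\widetilde{\mathscr H}_n^c$ being implicit in the paper's one-line citation. The only slight difference is in the final claim that $\mathscr D_n^c$ contains no component of any fiber: the paper simply reapplies \cite[Corollary B.1.4]{ellenbergL:homological-stability-for-generalized-hurwitz-spaces} after base change to each point of $B$ (so the boundary is still a genuine divisor in the fiber), whereas you argue this via smoothability of nodal twisted covers---your argument is the content underlying that corollary, so the two are equivalent, but the paper's citation is the quicker way to package it.
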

\begin{proof}
	This is a special case of 
	\cite[Corollary
	B.1.4]{ellenbergL:homological-stability-for-generalized-hurwitz-spaces}.
where we take $C = \mathbb P^1, Z = \infty \subset \mathbb P^1$. 
The final statement that it does not contain any component in any fiber over $B$
follows from applying 
\cite[Corollary
B.1.4]{ellenbergL:homological-stability-for-generalized-hurwitz-spaces}
again to the base change along any point of $B$.
\end{proof}

\begin{example}
	\label{example:configuration}
	Above, we also allow the possibility that $c= G = \id$, in which case 
	$\mathscr H_{n}^{c} \simeq \conf_{n}$ and  
$\overline{\mathscr H}_{n}^{c}$ is a compactification of
configuration space parameterizing configurations of points on nodal genus $0$ curves.
\end{example}

\subsubsection{Defining the gluing map in algebraic geometry}

\begin{figure}
	\centering
	\includegraphics[scale=.5]{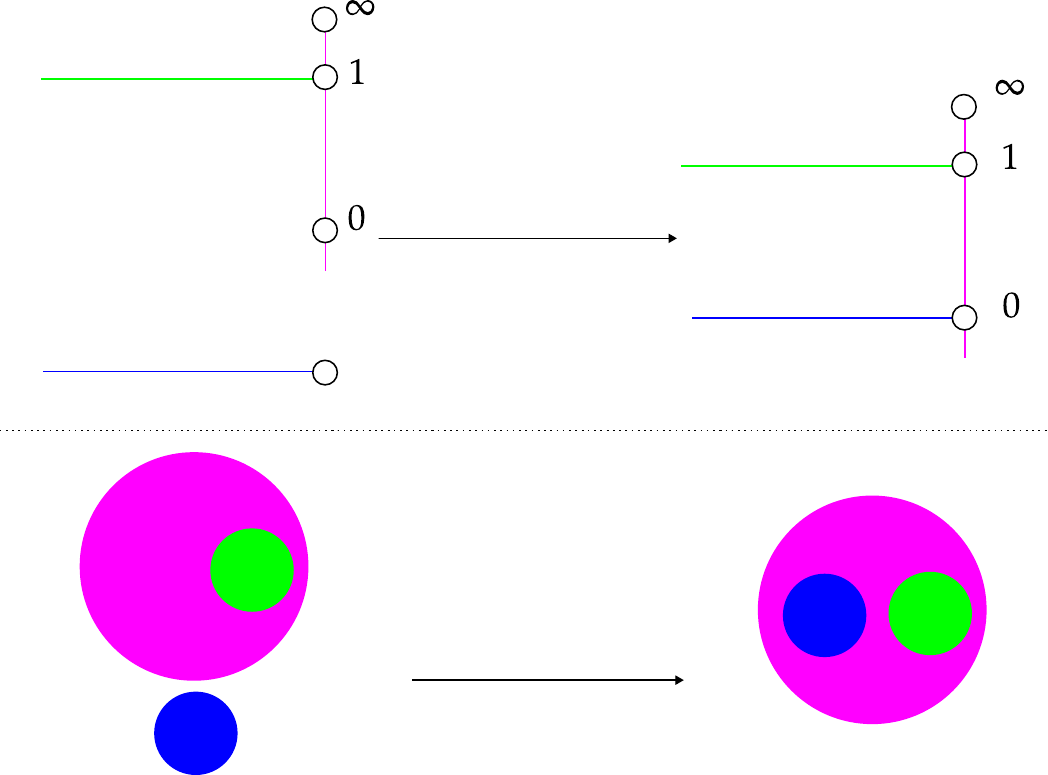}
	\caption{The top half of the diagram pictures the algebraic gluing map on the base
		of the cover, corresponding to the stabilization map.
		The map takes in 
		a point of $\mathscr H^c_n$, corresponding to 
		the blue line, and glues it to a fixed cover, corresponding to
		the green and pink lines,
		to obtain a point of $\overline{\mathscr H}^c_{n+Mdr}$.
		Here, there are $n$ additional branch points on the blue line, 
		$Mdr$ additional marked point on the green line, and no
		additional marked points on the pink line.
		This is meant to be an algebraic incarnation of the topological
		structure pictured coming from the little discs operad in the
		bottom half of the diagram. The algebraic map replaces each disc with a
		copy of $\mathbb P^1$, with the white circles on the algebraic
		picture corresponding to the
		boundary of the disc.
}\label{figure:algebraic-gluing}
\end{figure}

We next define a gluing map in \eqref{equation:scheme-gluing}, depending on a point $x$, which we now introduce
notation to describe.
The algebraic gluing map on the base of the cover is pictured in
\autoref{figure:algebraic-gluing}.
A topological incarnation is also pictured below in
\autoref{figure:topological-gluing}.
We will later see this map is compatible with the topological gluing map
$U^{g,q,M,K}$ from \autoref{theorem:frob-equivariant-stabilization}.
\begin{notation}
	\label{notation:section-lift}
	Fix a point $x \in W(\mathbb F_q)$, for $W \subset \phur{G'}{Mdr}{c}{\mathbb F_q}$
the component indexed by $\prod_{j=0}^{r-1}[g^{q^j}]^{Md}$ in the sense of
\autoref{definition:powering}.
First, we may lift $x$ to a point modulo any power of the maximal ideal
of the dvr corresponding to $B$ using smoothness of $W \subset \phur{G'}{Mdr}{c}{B}$
over $B$. Then, this compatible system of infinitesimal lifts algebraizes to a
$B$-point $x_B: B \to \phur{G'}{Mdr}{c}{\mathbb F_q}$
using
\cite[Corollary
8.4.6]{FantechiGIK:fundamentalAlgebraicGeometry}.
The base change of $x_B$ along the special fiber of $\spec \mathbb F_q \to B$
agrees with $x$ by construction.
We also use $x_{\mathbb C}: \spec \mathbb C \to \phur{G'}{Mdr}{c}{\mathbb C}$ to denote the base change of $x_B: B \to
\phur{G'}{Mdr}{c}{B}$
along $\spec \mathbb C \to B$.
\end{notation}

By definition of
the component $W$, the boundary monodromy of $x$ is trivial, so the cover
corresponding to $x_B$ is
unramified over $\infty$.
We view $x$ as corresponding to a cover $[x_B] : C \to \mathbb P^1$ together with a
marked section $p \in C$ mapping to $\infty$ in $\mathbb P^1$. 

We will next define a map 
\begin{align}
	\label{equation:scheme-gluing}
\Gamma_x : \mathscr H_{n}^{c} \to \overline{\mathscr H}^c_{n+Mdr}
\end{align}
over $B$, using our chosen point $x$ as above.

We will define this map precisely via a functorial construction on $T$ points.
The reader may wish to refer to \autoref{figure:algebraic-gluing} which pictures this gluing map.
It suffices to carry out the construction for connected $T$.
A $T$ point of $\mathscr H_{n}^{c}$ corresponds to a family of covers
$\psi : X \to \mathscr P_T$, where $\mathscr P$ is a root stack of
order $w$
of
$\mathbb P^1_B$ along $\infty$, with $w$ to be defined below, $\widetilde{\infty}_T$ is the base change of the natural section
$\widetilde{\infty} : B \to \mathscr P$ over $\infty$,
and additionally we have a specified section $t: T \to X \times_{\psi,
\mathscr P_T, \infty} T$.

Using this data and the point $x$, we construct a family of curves
over $T$ corresponding to a $T$ point of $\overline{\mathscr H}_{n+Mdr}$.
Since we are assuming $S$ is connected, and we have a marked section $t$,
we may use this section to identify the monodromy over $\infty$
with a fixed element $h \in G$.
The root stack $\mathscr P$ has order $w := \on{ord}(h)$ along $\infty$.
Moreover, the map $\psi$ induces a map $\mathscr P_T \to BG$.
Since the residual gerbe of $\mathscr P_T$ over $\infty$ is $B \mu_{w,T}$,
we obtain a composite map $B \mu_{w,T} \to \mathscr P_T \to BG$.
This map induces a map $\mu_{w,T} \to G$ on inertia stacks, which has trivial
kernel and image $(\mathbb Z/w \mathbb Z)_T$, as can be verified on an \'etale cover.
Hence we obtain an isomorphism
$\mu_{w,T} \to (\mathbb Z/w \mathbb Z)_T$.

Now, there is a cover $f': \mathbb P^1_T \to \mathbb P^1_T$ which is ramified over
$\infty$ to order $w$.
Let $\mathscr P_{0, \infty}$ denote the root stack of order $w$ of $\mathbb P^1_T$ over the
sections $0$ and $\infty$ of $\mathbb P^1$. Then, $f'$ factors through $f'':
\mathbb P^1_T \to \mathscr P_{0, \infty}$. 
Consider the finite \'etale cover 
$f: \cup_{i = 1}^{|G|/w} \mathbb P^1_T \to \mathscr P_{0, \infty}$ given
by taking a disjoint union of $\frac{|G|}{w}$ many copies of $f''$.
We view this as a $G$ cover by viewing $f$ as a $\mathbb Z/w \mathbb
Z$ cover with base point over $\infty$,
and using the inclusion $\mathbb Z/w \mathbb Z \to G$ sending $1$ to $h$.

Additionally, choose a fixed marked section of one of the source copies of
$\mathbb P^1$ over the point $1 \in \mathscr P_{0,\infty}$ and glue this to the
marked point $p \in C$ coming from our point $x_B$ from
\autoref{notation:section-lift}.
Since we have an isomorphism $\mu_{w,T} \simeq (\mathbb Z/w \mathbb Z)_T$.
the fiber of $f$ over the section $1 : T \to \mathscr P_{0,\infty}$
is a disjoint union of $w$ sections.
Hence, we may glue all other points of the fiber of $f$ over $1$ with all other points of the
fiber of the cover $[x_B]$ over $\infty$, compatibly with the $G$ actions on both.
Choose a marked point $\alpha$ in the fiber of $f$ over $\infty$ and a
marked point $\beta$ in the fiber of $f$ over $0$.
Note that $f$ has inertia generated by $h^{-1}$ over $0$
as it is inverse to the inertia over $\infty$.
Glue $\beta$ to the marked section $t$. Then, glue
all other points of the fiber of $f$ over $0$ to the fiber of $\psi$ over
$\infty$ compatibly with the $G$ actions on both.
Altogether, this yields
a cover of curves where the base curve has coarse space with three rational components.
The cover is ramified over a scheme over a scheme contained in the smooth locus
of the target which has degree $n + Mdr$ and it has a marked section 
$\alpha$ over $\infty$. This data yields a $T$ point of 
$\overline{\mathscr H}_{n+Mdr}$.

\begin{remark}
	\label{remark:}
	Above, it is important to assume the monodromy associated to $\psi$ over
	$\infty$ is inverse to the monodromy $0$ at the point $\alpha$ in order to
obtain that the glued cover is balanced in the sense of
\cite[\S2.1.3]{abramovichCV:twisted-bundles}. This balanced condition is crucial
for proving that $\mathscr H^c_n \subset \overline{\mathscr H}^c_{n}$ meets each
irreducible component in each fiber in \autoref{lemma:nc-compactification}.
\end{remark}

\subsubsection{Defining logarithmic structures}
We next upgrade the above gluing map \eqref{equation:scheme-gluing} to a map in logarithmic geometry.
Recall that a Deligne-Faltings log stack (or scheme) can be described as 
a Deligne-Mumford stack (or scheme), together with $v$ line bundles $L_1, \ldots, L_v$ on
$X$ and $v$ sections $\sigma_i : \mathscr O_X \to L_v$ for some $v \geq 0$.
For further background on log stacks (or schemes) pertinent to this context, we suggest the
reader consult
\cite[\S\,A.2.3]{ellenbergL:homological-stability-for-generalized-hurwitz-spaces}.

In general, if $X$ is a Deligne-Mumford stack and $D \subset X$ is a divisor,
the {\em log structure defined by $D$} on $X$ corresponds to the log stack with underlying stack $X$, $v = 1$, line bundle
$\mathscr O_X(D)$, and the tautological section $\sigma_1: \mathscr O_X \to \mathscr O_X(D)$
corresponding to the divisor $D$.
We define the log scheme
$(\overline{\mathscr H}_{n}^{c})^{\on{log}}$ to be the log structure
on
$\overline{\mathscr H}_{n}^{c}$ defined by the divisor
$\mathscr D_{n}^{c}$.

Next, for any scheme or Deligne-Mumford stack $X$, we define the log stack
$X^{\on{std}}$
to denote the log stack with underlying scheme
$X$
and with the {\em standard log structure}, i.e., the single line bundle $\mathscr O_X$, together with the $0$ section.

We now upgrade our gluing map $\Gamma_x$ above to a map of log stacks.

\begin{lemma}
	\label{lemma:gluing}
	The gluing map $\Gamma_x$ from \eqref{equation:scheme-gluing} induces a
	strict map of log stacks
	\begin{align}
		\label{equation:log-gluing-map}
		\alpha_x: 
		\left(\mathscr H_{n}^{c}\right)^{\on{std}} \to
\left( \overline{\mathscr H}_{n+Mdr}^{c} \right)^{\on{log}}.
	\end{align}
\end{lemma}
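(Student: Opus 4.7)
The plan is to verify that $\alpha_x$ satisfies the two defining properties of a strict map of Deligne--Faltings log stacks: namely, that $\Gamma_x^*$ applied to the tautological pair $(\mathscr O_{\overline{\mathscr H}^c_{n+Mdr}}(\mathscr D^c_{n+Mdr}), \sigma)$ on the target is canonically isomorphic to the pair $(\mathscr O_{\mathscr H^c_n}, 0)$ defining the standard log structure on the source.

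First, I would check that the image of $\Gamma_x$ is contained set-theoretically in $\mathscr D^c_{n+Mdr}$. By the construction of $\Gamma_x$ in \autoref{subsection:defining-stabilization}, every $T$-point of $\mathscr H^c_n$ is sent to a cover whose base curve is a chain of three rational components joined at two nodes, hence lies in the boundary. Consequently the tautological section $\sigma$, which vanishes along $\mathscr D^c_{n+Mdr}$, pulls back to the zero section on $\mathscr H^c_n$, matching the section data of the standard log structure.

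The more substantial step is to produce a canonical trivialization of $\Gamma_x^* \mathscr O(\mathscr D^c_{n+Mdr})$. Working \'etale-locally near the image, the boundary decomposes into two smooth components, one for each of the two nodes of the glued curve, and $\mathscr O(\mathscr D^c_{n+Mdr})$ correspondingly splits as a tensor product of two conormal line bundles. The conormal bundle at the node between the fixed cover $[x_B]$ and the inserted intermediate $\mathbb P^1$ is rigid: it is pulled back from $B$ along $x_B$ from \autoref{notation:section-lift}, hence is canonically trivial after base change to $\mathscr H^c_n$. The conormal bundle at the remaining node, between the intermediate $\mathbb P^1$ and the moving cover parametrized by $\mathscr H^c_n$, is rigidified using the marked section $t$ on $X \times_{\psi, \mathscr P_T, \infty} T$ together with the identification $\mu_{w,T} \simeq (\mathbb Z/w\mathbb Z)_T$ induced by $h$; unwinding the root-stack construction then identifies this conormal line bundle canonically with $\mathscr O_{\mathscr H^c_n}$. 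The tensor product of these two trivializations supplies the required isomorphism of log structures.

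The main obstacle is carrying out this second step rigorously. At a node of a balanced twisted stable map, the conormal bundle is the tensor product of the cotangent directions of the two components modulo the diagonal action of the inertia, and one must verify that the balanced condition of \cite[\S2.1.3]{abramovichCV:twisted-bundles}, together with the compatible choices of monodromy on either side of each node, causes the inertia contributions to cancel so that the conormal bundle descends to the trivial line bundle on the coarse moduli. This calculation is a direct analog of the corresponding verification in \cite[Theorem A.5.2]{ellenbergL:homological-stability-for-generalized-hurwitz-spaces}, with additional bookkeeping to accommodate the union of conjugacy classes $c$ and the presence of the intermediate rational component introduced to balance the monodromy $h$ at $\infty$.
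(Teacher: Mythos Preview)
Your proposal is correct and follows essentially the same approach as the paper: verify that the section pulls back to zero (easy, since the image lands in the boundary) and that the boundary line bundle pulls back to the trivial bundle (harder, done by analyzing the normal bundle at each of the two nodes). The paper's proof is even terser than yours---it simply asserts the line-bundle triviality is analogous to \cite[Lemma A.2.5]{ellenbergL:homological-stability-for-generalized-hurwitz-spaces} with the key input \cite[p.~346, line 2]{arbarelloCG:geomtry-of-algebraic-curves-ii} (which identifies the normal bundle of a boundary divisor with the tensor product of tangent lines at the glued points) and omits further details; your node-by-node breakdown is a reasonable unpacking of that reference, though the invocation of the section $t$ and the $\mu_w$-identification is not really needed, since the triviality follows already from the fact that both sides of each node are fixed points on fixed (or constant-in-moduli) $\mathbb P^1$'s.
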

\begin{proof}
	We have already exhibited the underlying map of schemes $\Gamma_x$.
	We use $\mathscr O_{\overline{\mathscr
	H}_{n+Mdr}^{c}}(\mathscr D^c_n)$ 
	to denote the line bundle defining the log structure on the target and
	$s: \mathscr O_{\overline{\mathscr
	H}_{n+Mdr}^{c}} \to
	\mathscr O_{\overline{\mathscr
	H}_{n+Mdr}^{c}}(\mathscr D^c_n)$ to denote the section associated to the
	divisor $\mathscr D^c_n$.
	By definition of a strict morphism, it suffices to show $\mathscr O_{\overline{\mathscr
	H}_{n+Mdr}^{c}}(\mathscr D^c_n)$ 
	pulls back under $\Gamma_x$ to the trivial line bundle and $s$ pulls back
	to the $0$ section. 
	The latter is clear because the source maps into the locus of reducible
	genus $0$ curves on the target, and hence maps into the locus where $s$
	vanishes.
	It is trickier to show the pullback of $\mathscr O_{\overline{\mathscr
	H}_{n+Mdr}^{c}}(\mathscr D^c_n)$ 
 is trivial, but
	the argument for this is analogous to that demonstrating \cite[Lemma
	A.2.5]{ellenbergL:homological-stability-for-generalized-hurwitz-spaces},
	with the key input being \cite[p. 346, line
	2]{arbarelloCG:geomtry-of-algebraic-curves-ii}.
	We omit further details.
\end{proof}

\subsubsection{Defining a gluing map on log quotients by $K$}
\label{subsubsection:gluing-quotient}

For any scheme $T$ mapping to $B$, the map $\alpha_x$ from \eqref{equation:log-gluing-map},
	induces a map on cohomology
$H^i\left(\left( \overline{\mathscr H}_{n+Mdr, T}^{c}\right)^{\on{log}}, \mathbb
Q_\ell\right)
\to
H^i\left( \left(\mathscr H^{c}_{n, T}\right)^{\std}, \mathbb Q_\ell\right)$.
By transfer along the quotient by $K$, we also obtain a map on cohomology
	\begin{align}
		\label{equation:log-gluing-map-quotient}
		\alpha_x^* : 
H^i\left(\left( [\overline{\mathscr H}_{n+Mdr, T}^{c}/K] \right)^{\on{log}},
\mathbb Q_\ell \right)
\to
H^i \left( \left([\mathscr H^{c}_{n, T}/K] \right)^{\std}, \mathbb Q_\ell \right),
	\end{align}
	where the log structure on the target is the standard log structure and
	the log structure on the source is the log structure defined by the
	boundary divisor $\mathscr D^c_{n+Mdr}$.

\subsection{Identifying the algebraic stabilization map with the topological
stabilization map}
\label{subsection:topological-identification}

We next aim to compare the map on log stacks we have constructed above to the
usual gluing map over the complex numbers coming from the $\mathbb E_2$ algebra
structure on configuration space.
To set up this comparison, we introduce a few names for relevant maps on
cohomology.
By \cite[Corollary 7.5]{illusie:an-overview},
whose hypotheses are satisfied by \cite[7.3(b)]{illusie:an-overview} and
the normal crossings compactification in \autoref{lemma:nc-compactification},
upon base changing to any spectrum of a field $T \to B$,
there is
an identification
\begin{align*}
	\delta: H^i( [{\mathscr H}_{n+Mdr,T}^{c}/K] , \mathbb Q_\ell)
	\simeq H^i(\left( [\overline{\mathscr H}_{n+Mdr,T}^{c}/K]
	\right)^{\on{log}}, \mathbb Q_\ell).
\end{align*}
Recall that we use $\alpha_x^*$ for the gluing map as defined in
\eqref{equation:log-gluing-map-quotient}.
Upon base changing this to any spectrum of a field $T \to B$,
we obtain the composite map $\alpha_x^* \circ \delta$
\begin{equation}
	\label{equation:cohomology-composite-to-standard}
\begin{aligned}
	H^i( [{\mathscr H}_{n+Mdr, T}^{c}/K] , \mathbb Q_\ell) &\xrightarrow{\delta} 
H^i(\left( [\overline{\mathscr H}_{n+Mdr, T}^{c}/K] \right)^{\on{log}}, \mathbb Q_\ell)
\\
&\xrightarrow{\alpha^*_x} 
H^i( \left([\mathscr H^{c}_{n, T}/K] \right)^{\std}, \mathbb Q_\ell).
\end{aligned}
\end{equation}

To state the next result, we use $\Sigma_{g,p}^b$ to denote a genus $g$ Riemann surface with $p$ punctures
and $b$ boundary components.
We now define a gluing map. It may be helpful to refer to
\autoref{figure:topological-gluing} for a pictorial description of this gluing
map. \autoref{figure:algebraic-gluing} may also be helpful.
\begin{construction}
	\label{construction:gluing}
	The gluing map takes in the
	data:
	\begin{enumerate}
		\item a direction $\tau$ on the unit circle,
		\item a (ramified) $G$-cover $X_1$ of $\Sigma_{0,0}^1$ with a
			trivialization along the boundary, corresponding to a point
			of $\mathscr H_{n}^{c}$ with monodromy $h$ along the
			boundary,
		\item a (ramified) $G$-cover $X_2$ of $\Sigma_{0,0}^1$ with a
			trivialization along the boundary, corresponding to
			the point $x_{\mathbb C}$ as defined in
			\autoref{notation:section-lift},
			which has trivial monodromy over $\infty$,
		\item an unramified $G$ cover $X_3$ of $\Sigma_{0,0}^3$ with a
			trivialization along the first boundary, that has
			monodromy $h$ over the first boundary of $\Sigma_{0,0}^3$, trivial
			monodromy over the second boundary of $\Sigma_{0,0}^3$, and monodromy
			$h^{-1}$
			over the third boundary component of $\Sigma_{0,0}^3$,
		\item a fixed identification of the boundary of $X_2$ with
			the second boundary of $X_3$, compatible with the $G$
			actions on both,
		\item a specified identification of one of the
			boundary components of $X_1$ with $S^1$,
		\item a specified identification of one of the boundary
			components of $X_3$ over the third boundary of
			$\Sigma_{0,0}^3$ with $S^1$.
	\end{enumerate}
	The gluing map then glues $X_2$ with $X_3$ as specified in $(5)$, 
	glues the two copies of $S^1$ in (6) and (7) via a
	rotation by $\tau$ from (1) compatibly with the projections to $\Sigma_{0,0}^1$
	and $\Sigma_{0,0}^3$, and glues the remaining boundary components of
	$X_1$ with components of $X_3$ over the third boundary component of
	$\Sigma_{0,0}^3$ in a $G$-equivariant fashion.
\end{construction}

\begin{figure}
	\centering
	\includegraphics[scale=.5]{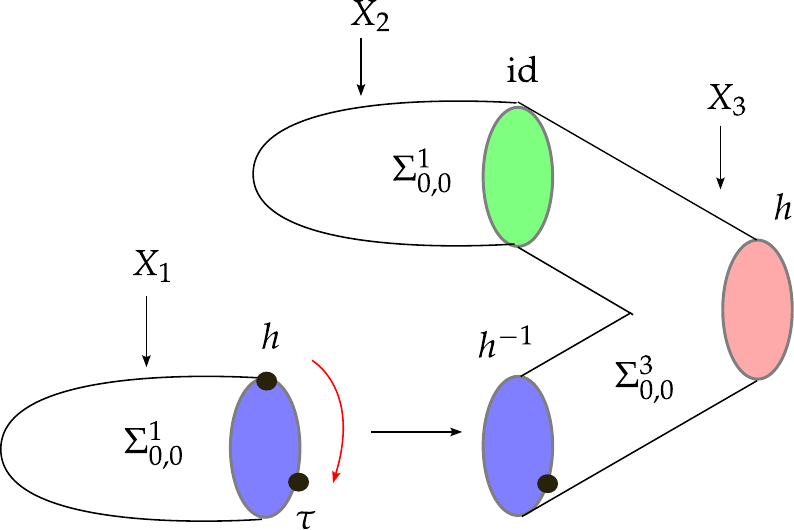}
	\caption{A figure depicting the gluing construction described in 
		\autoref{construction:gluing},
		inducing the
		Frobenius equivariant stabilization map.
}\label{figure:topological-gluing}
\end{figure}

The proof of the next lemma is completely analogous to that of
\cite[Lemma A.4.3]{ellenbergL:homological-stability-for-generalized-hurwitz-spaces},
so we omit it.
The key point is to use the identification of the log schemes we have defined
with their corresponding Kato-Nakayama spaces.

\begin{lemma}
	\label{lemma:}
	Taking $T = \spec \mathbb C$ in
	\eqref{equation:cohomology-composite-to-standard}, the composite
	$\alpha_x^* \circ \delta$ there can be identified with the map induced
	after taking cohomology and applying transfer to the quotient by $K$ of
	the gluing map from \autoref{construction:gluing}.
\end{lemma}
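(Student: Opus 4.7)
The proof will closely parallel \cite[Lemma A.4.3]{ellenbergL:homological-stability-for-generalized-hurwitz-spaces}, with the Kato--Nakayama construction serving as the bridge between log geometry and topology. The plan is to pass both log stacks through the Kato--Nakayama functor, identify the resulting topological stacks with those appearing in \autoref{construction:gluing}, and track the map $\alpha_x$ through these identifications.

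First, I would recall that for a fine saturated log stack $X^{\mathrm{log}}$ over $\mathbb{C}$, there is a Kato--Nakayama topological stack $(X^{\mathrm{log}})^{\mathrm{KN}}$ equipped with a proper map to $X^{\mathrm{an}}$, and a canonical comparison isomorphism between $H^*(X^{\mathrm{log}}, \mathbb{Q}_\ell)$ and the singular cohomology of $(X^{\mathrm{log}})^{\mathrm{KN}}$. The two special cases needed are: for the standard log structure, $(X^{\mathrm{std}})^{\mathrm{KN}} \simeq X^{\mathrm{an}} \times S^1$, with the circle parametrizing a ``direction at the origin''; and for the log structure defined by a simple normal crossings divisor $D \subset X$, the KN space is homotopy equivalent to the oriented real blowup of $X$ along $D$, which contains $X - D$ as a deformation retract. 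Applied here, the target $\left(\left[\overline{\mathscr{H}}^c_{n+Mdr}/K\right]\right)^{\mathrm{log}}$ has KN space homotopy equivalent to the real blowup of $\left[\overline{\mathscr{H}}^c_{n+Mdr}/K\right]$ along its boundary, and the comparison isomorphism $\delta$ corresponds to this deformation retraction. The source $\left(\left[\mathscr{H}^c_n/K\right]\right)^{\mathrm{std}}$ has KN space $\left[\mathscr{H}^c_n/K\right] \times S^1$.

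Next, I would analyze what the strict log map $\alpha_x$ does on KN spaces. Since $\alpha_x$ lands inside the boundary locus $\mathscr{D}^c_{n+Mdr}$ and is strict, its KN realization lifts canonically to the oriented real blowup. Geometrically, the boundary stratum hit by $\alpha_x$ parametrizes the nodal covers of three-component genus zero curves depicted in \autoref{figure:algebraic-gluing}. A tubular neighborhood of this stratum inside the KN space is a torus bundle whose fiber coordinates record the angular gluing data at the two nodes; the point $x$ fixes the gluing between the middle and right components (the green and pink $\mathbb{P}^1$'s), so the one remaining circle factor matches precisely the rotation parameter $\tau$ of \autoref{construction:gluing}(1). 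The gluing of the $G$-covers $X_1,X_2,X_3$ described in items (2)--(7) of the construction then matches the $G$-cover produced by $\Gamma_x$ by design of the latter, including the bookkeeping of boundary monodromies $h$ and $h^{-1}$ that was built into the construction to ensure the cover is balanced.

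Finally, transfer along the finite quotient by $K$ is compatible with the Kato--Nakayama functor, so applying transfer on both the algebraic and topological sides yields the identification claimed in the lemma. The main obstacle, and the only place where genuine work is needed beyond citing \cite[Lemma A.4.3]{ellenbergL:homological-stability-for-generalized-hurwitz-spaces}, is the second step: matching the local structure of the KN tubular neighborhood of the deep (codimension-two) boundary stratum hit by $\alpha_x$ with the little-discs-style gluing of \autoref{construction:gluing}, and verifying that the nontrivial boundary monodromy present in the current setting --- which was absent in \emph{loc.\ cit.} --- does not disrupt the identification of the circle coordinate with the parameter $\tau$.
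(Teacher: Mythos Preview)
Your proposal is correct and follows essentially the same approach as the paper: the paper's proof consists entirely of the remark that the argument is ``completely analogous to that of \cite[Lemma A.4.3]{ellenbergL:homological-stability-for-generalized-hurwitz-spaces}'' with the key point being ``the identification of the log schemes we have defined with their corresponding Kato--Nakayama spaces,'' and you have spelled out exactly this strategy in more detail than the paper itself provides.
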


Next, we consider the stabilization operator
$\prod_{j=0}^{r-1}
[(g)^{q^j}]^{Md} : \cphur {G'} {n} {c}{\mathbb C} \to
\cphur {G'} {n+Mdr}{c}{\mathbb C}$.
This induces a corresponding map on cohomology, and applying transfer to the
quotient by $K$, we obtain a map on cohomology
\begin{align*}
	U^{g,q,M,K} := \sum_{\kappa \in K} \prod_{j=0}^{r-1}
[(\kappa^{-1}g \kappa)^{q^j}]^{Md} : 
H^i(\cquohur{G}{K}{n+Mdr} {c}{\mathbb C}, \mathbb Q_\ell)
\to H^i(\cquohur{G}{K}{n} {c}{\mathbb C}).
\end{align*}
Now, the space of stable maps
$\overline{\mathscr H}^c_{n}$
constructed in 
\autoref{subsubsection:compactification}
maps to a compactification of configuration space
(corresponding to the version of 
$\overline{\mathscr H}^c_{n}$ associated to the identity group in place of $G$,
see \autoref{example:configuration})
with complement a 
normal crossings divisor, again using
\autoref{lemma:nc-compactification}.
By
\cite[Proposition 7.7]{EllenbergVW:cohenLenstra},
(applied in the case that the map $\pi$ there is the trivial cover,)
the specialization map
induces vertical isomorphisms in \eqref{equation:transfer-u} below. We then
define the map $U^{g,q,M,K}_{\overline{\mathbb F}_q}$ to be the unique map making the
diagram below commute:
\begin{equation}
	\label{equation:transfer-u}
	\begin{tikzcd} 
	H^i(\cquohur{G}{K}{n} {c}{\mathbb C}; \mathbb Q_\ell) \ar
	{d} & H^i(\cquohur{G}{K}{n+Mdr} {c}{\mathbb C}; \mathbb Q_\ell)  \ar {d} \ar{l}{U^{g,q,M,K}} \\
	H^i(\cquohur{G}{K}{n} {c}{\overline{\mathbb F}_q};
	\mathbb Q_\ell)  & H^i(\cquohur{G}{K}{n+Mdr} {c}{\overline{\mathbb F}_q};
	\mathbb Q_\ell) \ar{l}{U^{g,q,M,K}_{\overline{\mathbb F}_q}}.
\end{tikzcd}\end{equation}

For $T$ a scheme over $B$,
upon endowing 
$\cquohur{G}{K}{n} {c}{T}$
with the trivial log structure, corresponding to no line bundles,
there is a map of log stacks 
$\left(\cquohur{G}{K}{n} {c}{T} \right)^{\std} \to \cquohur{G}{K}{n} {c}{T}$, which induces a map on
cohomology
\begin{align*}
	H^i(\cquohur{G}{K}{n} {c}{T}, \mathbb Q_\ell) \xrightarrow{\gamma} H^i(\left(\cquohur{G}{K}{n} {c}{T} \right)^{\std}, \mathbb
	Q_\ell).
\end{align*}

\begin{proposition}
	\label{proposition:splitting}
	Suppose $B$ is the spectrum of a Henselian dvr  with residue field
	$\mathbb F_q$ and generic characteristic $0$. 
	Suppose $W \subset \phur{G'}{Mdr} {c}{\mathbb F_q}$
is the component indexed by  $\prod_{j=0}^{r-1}
[g^{q^j}]^{Md}$,
as in 
\autoref{theorem:frob-equivariant-stabilization}, with
$x \in W(\mathbb F_q)$.
	If $T$ is either $\spec \overline{\mathbb F}_q$ or $\spec \mathbb C$,
	there is a canonical splitting
	\begin{align*}
		H^i(\left(\cquohur{G}{K}{n} {c}{T} \right)^{\std}, \mathbb Q_\ell) \xrightarrow{\varepsilon} H^i( \cquohur{G}{K}{n} {c}{T}, \mathbb
	Q_\ell).
	\end{align*}
	of $\gamma$, i.e., $\varepsilon \circ \gamma = \id$.
	In the case $T=\spec \overline{\mathbb F}_q$, this splitting is
	Frobenius equivariant.

	Moreover, the composition of
	\eqref{equation:cohomology-composite-to-standard} with $\varepsilon$
	yields a map $H^i([{\mathscr H}_{n+Mdr, T}^{c}/K], \mathbb Q_\ell) \to 
H^i(\cquohur{G}{K}{n} {c}{T}, \mathbb Q_\ell)$ which agrees with the restriction of $U^{g,q,M,K}$ as in
\eqref{equation:transfer-u}
when $T = \spec \mathbb C$ and
agrees with $U^{g,q,M,K}_{\overline{\mathbb F}_q}$ 
when $T = \spec \overline{\mathbb
F}_q$.
\end{proposition}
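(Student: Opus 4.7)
The plan is to construct $\varepsilon$ by exploiting a canonical K\"unneth-type splitting attached to the standard log structure. For a Deligne--Mumford stack $X$ over a field, the log \'etale cohomology of $(X)^{\std}$ admits a natural decomposition
\begin{align*}
H^i(X^{\std}, \mathbb Q_\ell) \;\cong\; H^i(X, \mathbb Q_\ell) \,\oplus\, H^{i-1}(X, \mathbb Q_\ell)(-1),
\end{align*}
coming from the Kato--Nakayama description $X^{\std,KN} = X^{\an} \times S^1$ over $\mathbb C$ and from the analogous computation via log nearby cycles over $\overline{\mathbb F}_q$. Under this decomposition, the map $\gamma$ is pullback along the projection $X^{\std} \to X$, and so is the inclusion of the first summand. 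I will define $\varepsilon$ as the projection onto the first summand; then $\varepsilon \circ \gamma = \id$ by construction. Frobenius equivariance of $\varepsilon$ over $\overline{\mathbb F}_q$ is automatic: the first summand is characterized as the image of $\gamma$, and the second is canonically the Tate twist of $H^{i-1}(X, \mathbb Q_\ell)$, so the Galois action respects the direct sum.

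To compare $\varepsilon \circ \alpha_x^* \circ \delta$ with the standard stabilization map over $\mathbb C$, I will use the preceding lemma, which identifies $\alpha_x^* \circ \delta$ with the map induced on cohomology by the transfer along the $K$-quotient of the topological gluing map from \autoref{construction:gluing}. That gluing map depends on a rotation parameter $\tau \in S^1$, and under the K\"unneth decomposition $H^i(\mathscr H_n^c \times S^1) = H^i(\mathscr H_n^c) \oplus H^{i-1}(\mathscr H_n^c)$ it maps to both factors. Projecting onto the first summand is equivalent to pulling back along a fixed section $\tau = \mathrm{const}$, which recovers precisely the topological gluing at a fixed rotation angle, and by direct inspection of \autoref{construction:gluing} this coincides with $U^{g,q,M,K}$. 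The agreement over $\overline{\mathbb F}_q$ then follows by specialization: the map $\alpha_x$ is defined over the full Henselian base $B$ in \autoref{lemma:gluing}, $\delta$ is compatible with specialization by log smooth and proper base change applied to the normal crossings compactification of \autoref{lemma:nc-compactification}, and $\varepsilon$ is compatible with base change by its canonical characterization; combining this with the defining commutative square \eqref{equation:transfer-u} of $U^{g,q,M,K}_{\overline{\mathbb F}_q}$ yields the claim, with Frobenius equivariance coming for free from canonicity.

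The main obstacle I anticipate is establishing the canonicity and naturality of the K\"unneth decomposition in the $\ell$-adic log \'etale framework, and checking that it is respected by $\alpha_x^*$ and by Frobenius. I would address this by developing it as a special case of the log nearby cycles formalism: the standard log point is the log special fiber of $\mathbb A^1$ at the origin, and its nearby cycles sheaf is $\mathbb Q_\ell \oplus \mathbb Q_\ell(-1)[-1]$, with the first factor canonically characterized as the restriction of the unit from the generic fiber and the second as the image of the monodromy operator. The argument of \cite[Appendix A]{ellenbergL:homological-stability-for-generalized-hurwitz-spaces}, transposed to the present setting which incorporates quotients by $K$ and the slightly more delicate boundary monodromy data provided by the choice of $x$, then yields simultaneously the equivalence of $\varepsilon \circ \alpha_x^* \circ \delta$ with the topological gluing map over $\mathbb C$ and its Frobenius-equivariant extension over $\overline{\mathbb F}_q$.
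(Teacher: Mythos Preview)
Your proposal is correct and follows essentially the same approach as the paper, which explicitly defers to \cite[Proposition A.4.4]{ellenbergL:homological-stability-for-generalized-hurwitz-spaces} and omits the details. You have accurately reconstructed the mechanism underlying that argument: the K\"unneth splitting of $H^i(X^{\std})$ coming from the Kato--Nakayama identification $X^{\std,\mathrm{KN}} \simeq X^{\an}\times S^1$ (and its $\ell$-adic analogue via log nearby cycles), defining $\varepsilon$ as the projection to the $H^i(X)$ summand, and then recognizing that this projection amounts to fixing the rotation parameter $\tau$ in \autoref{construction:gluing}, which collapses the gluing to the ordinary stabilization operator. The paper further notes that the present situation is slightly simpler than the cited reference because a certain auxiliary map $\eta$ appearing there can be taken to be the identity; your write-up implicitly already treats it this way.
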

\begin{proof}
	The proof of this is analogous to \cite[Proposition
	A.4.4]{ellenbergL:homological-stability-for-generalized-hurwitz-spaces}
	and we omit the details.
	(However, it is slightly simpler because the map $\eta$ in 
	\cite[Proposition
	A.4.4]{ellenbergL:homological-stability-for-generalized-hurwitz-spaces}
	does not show up for us. Said another way, we can treat the map $\eta$
	defined there as the identity.)
\end{proof}

Combining our work so far easily yields a proof of 
\autoref{theorem:frob-equivariant-stabilization}.

\subsubsection{Proof of
\autoref{theorem:frob-equivariant-stabilization}}
\label{subsection:equivariant-proof}
By 
\autoref{proposition:splitting},
the composite of the map 
\eqref{equation:cohomology-composite-to-standard}
with
$\varepsilon$ defined in 
\autoref{proposition:splitting}
yields a map
which agrees with the restriction of
the map $U^{g,q,M,K}_{\overline{\mathbb F}_q}$ from
\eqref{equation:transfer-u}.
Hence, it suffices to show
$\varepsilon$ and
\eqref{equation:cohomology-composite-to-standard},
which in turn is the composite of the maps $\delta$ and $\alpha^*_x$,
are both equivariant for the actions of Frobenius.
The equivariance of 
$\varepsilon$ was stated in \autoref{proposition:splitting}.
Next, $\alpha^*_x$ is equivariant for Frobenius 
because it was induced from the base change of a map of log schemes over
$\mathbb F_q$;
hence the action of Frobenius on cohomology is equivariant because it is
functorially induced by an equivariant action
of Frobenius on these log schemes.
Finally, $\delta$ is equivariant for the actions of Frobenius because it is the
base change of an isomorphism over $\mathbb F_q$ coming from
\cite[Corollary 7.5]{illusie:an-overview}.
\qed
\newline

\subsection{Complements to Frobenius stabilization}
\label{subsection:verifying-hypotheses}

As a complement to \autoref{theorem:frob-equivariant-stabilization},
we would like to show its hypotheses are often verified. That is, we would like
to show that the component $W$ defined there frequently has many $\mathbb F_q$
points.

\begin{lemma}
	\label{lemma:w-irreducible}
	For $M$ sufficiently large, the component $W$ from
	\autoref{theorem:frob-equivariant-stabilization} is geometrically
	irreducible.
\end{lemma}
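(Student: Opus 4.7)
My plan is to verify directly that the Frobenius action on the set of geometric components of $\phur{G'}{Mdr}{c}{\overline{\mathbb F}_q}$ fixes the component indexed by the tuple $T := \prod_{j=0}^{r-1}[g^{q^j}]^{Md}$, so that $W$ consists of a single geometric component over $\overline{\mathbb F}_q$. By \autoref{definition:powering}, this amounts to showing that the $B_n$-orbit $[T] \subset c^n$ (with $n = Mdr$) is preserved by the $q^{-1}$-powering operation described in \autoref{lemma:g-irred-components}(2). That lemma is stated for $\cquohur G K n c {\mathbb F_q}$, but the analogous description of components as $B_n$-orbits of tuples, together with the same identification of the Frobenius action with $q^{-1}$-powering, applies to $\phur{G'}{n}{c}{\mathbb F_q}$ via the same fundamental group calculation.

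First, I would compute the entrywise $q^{-1}$-powering of $T$. Since $g^{q^r} = g$, we have $g^{q^{-1}} = g^{q^{r-1}}$, so this operation sends the $j$th block of $Md$ copies of $g^{q^j}$ to a block of $Md$ copies of $g^{q^{j-1}}$, with indices read cyclically modulo $r$. The resulting tuple $T'$ is therefore the cyclic rotation of $T$ that brings the last block to the front, and in particular $T'$ is a permutation of the entries of $T$.

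Second, I would show that $T$ and $T'$ lie in the same $B_n$-orbit. Every entry of $T$ is a power of $g$, so all entries pairwise commute in $G$. For the conjugation rack structure on $c$, the braid generator $\sigma_i$ acts on any adjacent commuting pair $(x,y)$ as $(y, y^{-1}xy) = (y,x)$, so the restriction of the $B_n$-action to tuples with pairwise commuting entries coincides with the standard $S_n$-permutation action. Hence every reordering of $T$, and in particular $T'$, lies in $[T]$.

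Combining these steps shows that $[T]$ is Frobenius-invariant, so $W$ is geometrically irreducible. I do not anticipate any serious obstacle here; in fact the argument applies for every $M \geq 1$, so I expect the qualifier ``$M$ sufficiently large'' in the statement to be merely conservative (presumably stated in anticipation of an application that also requires $W(\mathbb F_q) \neq \emptyset$, which can be ensured for $M$ large by Lang--Weil once geometric irreducibility and a dimension bound are in hand).
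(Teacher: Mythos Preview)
Your proof is correct and takes a more elementary route than the paper's. The paper works in the structure group $U(G,c)$: it uses \cite[Corollary 12.6]{liuWZB:a-predicted-distribution} to identify, for $M$ sufficiently large (so that each of the conjugacy classes $c_1,\ldots,c_s$ appears enough times), the geometric components with their lifting invariants, and then verifies that the discrete action $q^{-1} * \bigl(\prod_{j}[g^{q^j}]^{Md}\bigr)$ fixes this element via an explicit computation in $U(G,c)$ using centrality of $[g]^{\ord(g)}$. You bypass the lifting-invariant machinery entirely: since every entry of $T$ is a power of the single element $g$, all entries commute, so the braid action on $T$ factors through the symmetric group, and the $q^{-1}$-powered tuple (a block-cyclic permutation of $T$) visibly lies in the same $B_n$-orbit. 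Your approach is simpler and, as you correctly note, proves the lemma for every $M \geq 1$; the hypothesis ``$M$ sufficiently large'' is an artifact of the paper's proof strategy and is only genuinely needed later (to ensure $W(\mathbb F_q) \neq \emptyset$ via \autoref{lemma:weak-point-bound}, as you anticipated). The paper's route has the virtue of exercising the same lifting-invariant formalism used elsewhere (e.g.\ in \autoref{lemma:geom-irred-mod-g}), but for this particular lemma your direct argument is cleaner.
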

\begin{proof}
	By \cite[Corollary 12.6]{liuWZB:a-predicted-distribution} we can identify
	geometric components of Hurwitz spaces where all conjugacy classes
	appear sufficiently many times
	with their lifting invariants in the sense of
	\cite[Theorem 5.2]{wood:an-algebraic-lifting-invariant}.
	We use the notation $U(G,c)$ for what we defined as $U(c)$ in
	\autoref{lemma:components-factors}.
	Let $c \subset G$ denote 
	the union of the $G$ conjugacy classes containing $g, g^q, \ldots,
	g^{q^{r-1}}.$
	By \cite[Theorem 12.1]{liuWZB:a-predicted-distribution}(2)
	we wish to show that the Frobenius action on 
	the lifting invariant (corresponding to descent data for the component
	from $\overline{\mathbb F}_q$ to $\mathbb F_q$) associated to $W$ is trivial.
	In other words, if $q^{-1} * g$ denotes the discrete action of $q^{-1}$
	on $g \in U(G, c)$, as defined in
	\cite[\S4, p. 8]{wood:an-algebraic-lifting-invariant},
	we wish to show that 
	$q^{-1} * \left(\prod_{j=0}^{r-1} [g^{q^j}]^{Md} \right) = \prod_{j=0}^{r-1} [g^{q^j}]^{Md}$.
Indeed, 
using that $[g]^{\on{ord(g)}}$ is central in $U(G,c)$
and $q$th powering is invertible on $c$,
so $g^{q^j}$ has the same order as $g$,
\begin{align*}
	q^{-1} * \left(\prod_{j=0}^{r-1} [g^{q^j}]^{Md} \right) &=
\left(\prod_{j=0}^{r-1} (([(g^{q^j})^{q^{-1}} ])^{q})^{Md}\right)^{q^{-1}} \\
&= \left(\prod_{j=0}^{r-1} (([(g^{q^j})^{q^{-1}} ])^{Md})^q) \right)^{q^{-1}} \\
&= \prod_{j=0}^{r-1} ([(g^{q^j})^{q^{-1}} ])^{Md} \\
&= \prod_{j=0}^{r-1} ([g^{q^{j-1}} ])^{Md} \\
&= \prod_{j=0}^{r-1} ([g^{q^{j}} ])^{Md}.
\qedhere
\end{align*}
\end{proof}
The above shows the component $W$ is geometrically irreducible, so we would next
like to show geometrically irreducible components have many $\mathbb F_q$
points.
\begin{lemma}
	\label{lemma:exponential-cohomology-bound}
	For any finite rack $c$,
	there is some constant $K$, depending on $c$, so that 
	$\dim H_i(\cphurc n c ;\mathbb Q) \leq K^{i+1}$.
\end{lemma}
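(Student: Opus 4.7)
My plan is to combine the homological stability results of \autoref{theorem:some-large-homology-stabilizes} with a bound arising from the fact that $\cphurc n c$ is a finite \'etale cover of $\conf_n$. The main obstacle will be obtaining a bound that is truly exponential in $i$, as opposed to of the form $i^i$, which falls out of naive cell counts for braid group classifying spaces.

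First, by applying the stability isomorphism of \autoref{theorem:some-large-homology-stabilizes} one color coordinate at a time, there exist constants $I$ and $J$ depending only on $c$ so that
\begin{equation*}
	H_i(\cphurc{n_1,\ldots,n_\upsilon}{c};\mathbb{Q})\cong H_i(\cphurc{m_1,\ldots,m_\upsilon}{c};\mathbb{Q}),
\end{equation*}
where $m_j:=\min(n_j,Ii+J+1)$. Hence it suffices to bound $\dim H_i(\cphurc m c;\mathbb{Q})$ when the total $m=\sum_j m_j$ is at most $\upsilon(Ii+J+1)$, i.e., when $m$ is linear in $i$.

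Second, for such $m$, the map $\cphurc m c \to \conf_m$ is a finite \'etale cover of degree at most $|c|^m$. Since its fibers are discrete, the Leray spectral sequence collapses and yields $H^i(\cphurc m c;\mathbb{Q})\cong H^i(B_m;L)$, where $L$ is a $\mathbb{Q}[B_m]$-module of dimension at most $|c|^m$. Using a CW model for $K(B_m,1)$ (for instance the Salvetti complex of the Artin group of type $A$) with at most $\binom{m-1}{i}$ cells in dimension $i$, one obtains $\dim H^i(\cphurc m c;\mathbb{Q})\le\binom{m-1}{i}\cdot|c|^m$. Since $m$ is linear in $i$, the asymptotic $\binom{ci}{i}\le\bigl(c^c/(c-1)^{c-1}\bigr)^{i}$ together with $|c|^m\le K_0^{i+1}$ yields a bound of the form $K^{i+1}$ for a suitable constant $K$ depending only on $c$. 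Summing over the (polynomially many in $i$) reduced multi-indices $(m_1,\ldots,m_\upsilon)$ with $m_j\le Ii+J+1$ adjusts $K$ by at most a polynomial factor, which can be absorbed into a slightly larger exponential base.

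The main obstacle is ensuring the CW model has the polynomial-in-$m$ cell count $\binom{m-1}{i}$ in dimension $i$. Naive Fox--Neuwirth--Fuks cells on $\operatorname{PConf}_m$ give cells counted by the Stirling numbers $c(m,m-i)$, which grow super-exponentially in $i$ when $m$ is linear in $i$; one must instead exploit the $S_m$-quotient down to $K(B_m,1)$ and the structure of the Salvetti complex to obtain the tighter binomial bound.
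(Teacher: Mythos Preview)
Your approach is essentially the paper's: reduce via homological stability to the regime where the total number of branch points $m$ is linear in $i$, then bound $\dim H_i$ by something exponential in $m$. The paper's second step is considerably simpler than yours. It invokes the bound $\dim H_i(\cphurc m c;\mathbb Q)\le (2|c|)^m$ (the argument of \cite[Proposition~2.5]{EllenbergVW:cohenLenstra}), which holds because $\cphurc m c$ is a cover of $\conf_m$ of degree at most $|c|^m$ and $K(B_m,1)$ admits a CW model with at most $2^{m-1}$ cells \emph{in total}. This bound is already uniform in $i$, so your worry about super-exponential growth is a red herring: once $m\le \upsilon(Ii+J)$ one simply takes $K=\max\bigl((2|c|)^{\upsilon I},(2|c|)^{\upsilon J}\bigr)$ and obtains $(2|c|)^m\le K^{i+1}$. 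Your refined Salvetti count $\binom{m-1}{i}$ is correct but unnecessary, since it is dominated by $2^{m-1}$ anyway; there is no need to distinguish Fox--Neuwirth--Fuks cells on $\operatorname{PConf}_m$ from cells on the quotient, and no need for the asymptotic analysis of $\binom{ci}{i}$.
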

\begin{proof}
	First, 	suppose $n = \sum_{j=0}^\upsilon n_j$ and $Z \subset \cphurc {n_1,
	\ldots, n_\upsilon} c$.
Using \autoref{theorem:some-large-homology-stabilizes}, 
if $n_\lambda > Ii + J$, we can identify $\dim H_i(\cphurc {n_1,
\ldots,n_\lambda+1,\ldots,  n_\upsilon} c ; \mathbb Q)$ with
	$\dim H_i(\cphurc {n_1,\ldots, n_\lambda, \ldots, n_\upsilon} c ;\mathbb
	Q)$, and so we may assume that $n_1, \ldots, n_\upsilon \leq Ii + J$,
	and hence $n \leq \upsilon(Ii+J)$.
	Therefore, it suffices to bound
	$\dim H_i(\cphurc n c ;\mathbb Q) \leq K^{i+1}$ for $n <
	\upsilon(Ii+J)$.
	By the same argument as in \cite[Proposition 2.5]{EllenbergVW:cohenLenstra},
	$\dim H_i(\cphurc n c ;\mathbb Q) \leq (2 |c|)^n,$
	so it suffices to find some $K$ so that $(2|c|)^n < K^{i+1}$ for all $n
	< \upsilon(Ii + J)$.
	This holds upon taking $K := \max((2|c|)^{\upsilon J}, (2|c|)^{\upsilon I})$.
\end{proof}

\begin{remark}
	\label{remark:transfer}
	In what follows, we will need to repeatedly use that
	there is an isomorphism between the cohomology of our Hurwitz spaces
	over $\overline{\mathbb F}_q$ and over $\mathbb C$.
	This follows from
	\cite[Proposition 7.7]{EllenbergVW:cohenLenstra},
	which requires the existence of a normal crossings compactification of
	Hurwitz space, which is provided by 
	\cite[Corollary
	B.1.4]{ellenbergL:homological-stability-for-generalized-hurwitz-spaces}.
\end{remark}

We next obtain a bound on the number of finite field points of a
component of Hurwitz space.
For $X$ a groupoid, we use the notation $| X | := \sum_{x \in G}
\frac{1}{ |\aut(x)|}$ to denote the groupoid cardinality of $X$.
We also use $\|
x\|$ to denote the absolute value of a complex number $x$.
\begin{lemma}
	\label{lemma:weak-point-bound}
	Fix a finite group $G'$, a normal subgroup $G \subset G'$, and a union of conjugacy classes $c \subset G$ as in
	\autoref{notation:frobenius-stabilization}. 
	Let $H \subset G'$ be a (possibly trivial) subgroup. Let $q$ be a prime
	power with $\gcd(q, |G|) = 1$ such that $c$ is closed under $q$th
	powering. Fix a geometrically connected component 
	$Z \subset \cquohur {G}{H} n {c} {\mathbb F_q}$.	
Suppose $q^{1/2} >
	K$, for $K$ as in \autoref{lemma:exponential-cohomology-bound}.
	Then, is some constant $D_q$, depending on $c$ and $q$ but not on $n$ or $Z$, so that 
	$\| |Z(\mathbb F_q)| -q^n \| < D_q q^{n-1/2}$.
	Moreover $D_q$ is bounded as a function of $q$ as $q$ tends to $\infty$.
\end{lemma}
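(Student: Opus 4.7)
The plan is to apply the Grothendieck-Lefschetz trace formula for Deligne-Mumford stacks, isolate the main term $q^n$ coming from top cohomology of the geometrically connected component $Z$, and bound the remaining contributions using Deligne's weight estimates combined with the exponential cohomology bound of \autoref{lemma:exponential-cohomology-bound}.

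First, the stack $\cquohur{G}{H}{n}{c}{\mathbb{F}_q}$ is smooth of pure dimension $n$ over $\mathbb{F}_q$, since $\cphur{G'}{n}{c}{\mathbb{F}_q}$ is finite étale over $\conf_{n,\mathbb{F}_q}$, and quotienting by $H$ (whose action I may assume has order invertible in $\mathbb{F}_q$ in the relevant applications) preserves smoothness. Since $Z$ is geometrically connected of dimension $n$, Poincaré duality gives $H^{2n}_c(Z_{\overline{\mathbb{F}}_q}, \mathbb{Q}_\ell) \cong \mathbb{Q}_\ell(-n)$, so Frobenius acts on this top group by the scalar $q^n$. Applying Behrend's Grothendieck-Lefschetz trace formula (keeping in mind $|Z(\mathbb{F}_q)|$ is groupoid cardinality) and then Deligne's Weil~II weight estimates, which apply in the open smooth DM setting by virtue of the smooth proper normal crossings compactification of \autoref{lemma:nc-compactification}, yields
\begin{align*}
\| \, |Z(\mathbb{F}_q)| - q^n \, \| \leq \sum_{i=0}^{2n-1} \dim H^{i}_c(Z_{\overline{\mathbb{F}}_q}, \mathbb{Q}_\ell) \cdot q^{i/2}.
\end{align*}

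Second, I will bound $\dim H^{i}_c(Z_{\overline{\mathbb{F}}_q}, \mathbb{Q}_\ell)$ uniformly in $n$. By Poincaré duality this equals $\dim H^{2n-i}(Z_{\overline{\mathbb{F}}_q}, \mathbb{Q}_\ell)$. By \autoref{remark:transfer} this agrees with the corresponding complex cohomology, and the cohomology of a single geometric component of the stack $[\cphur{G'}{n}{c}{\mathbb{C}}/H]$ is dimension-bounded by the cohomology of $\cphur{G'}{n}{c}{\mathbb{C}}$, which in turn sits as a union of components of $\cphurc{n}{c}$ viewed as a rack Hurwitz space. The exponential bound in \autoref{lemma:exponential-cohomology-bound} then gives $\dim H^{2n-i}(Z_{\overline{\mathbb{F}}_q}, \mathbb{Q}_\ell) \leq K^{2n-i+1}$.

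Finally, substituting and reindexing by $j = 2n-i$, the estimate becomes a geometric series
\begin{align*}
K q^n \sum_{j=1}^{2n} (K/q^{1/2})^j \;\leq\; \frac{K^2 \, q^{n-1/2}}{1 - K/q^{1/2}},
\end{align*}
which converges precisely because of the hypothesis $q^{1/2} > K$. Setting $D_q := K^2/(1 - K/q^{1/2})$ yields the stated inequality, and $D_q \to K^2$ as $q \to \infty$, giving the final claim about boundedness in $q$. The main technical subtlety to verify carefully will be Deligne's weight bounds on compactly supported cohomology in the open smooth DM setting, and identifying the top compactly supported cohomology with $\mathbb{Q}_\ell(-n)$; both are handled by the existence of the smooth proper normal crossings compactifications of Hurwitz stacks used throughout the paper.
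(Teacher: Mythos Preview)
Your proof is correct and takes essentially the same approach as the paper: Grothendieck--Lefschetz trace formula, isolation of the main term from the top/degree-zero cohomology of the geometrically connected $Z$, Deligne's weight bounds, and the exponential cohomology bound of \autoref{lemma:exponential-cohomology-bound} to sum the resulting geometric series, arriving at the identical constant $D_q = K^2/(1-Kq^{-1/2})$. The only cosmetic difference is that you phrase the trace formula via compactly supported cohomology and then Poincar\'e-dualize, whereas the paper uses the dual form $|Z(\mathbb F_q)|/q^n = \sum_i (-1)^i \tr(\frob_q^{-1}\mid H^i)$ directly and invokes Sun's extension of Deligne's bounds to stacks rather than the normal crossings compactification.
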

\begin{proof}
	This follows in a standard fashion from the Grothendieck-Lefschetz trace formula, Deligne's
	bounds on the eigenvalues of Frobenius, and
	\autoref{lemma:exponential-cohomology-bound}. We now spell out the
	details.
	The Grothendieck Lefschetz trace formula yields
	\begin{align*}
		\frac{|Z(\mathbb F_q)|}{q^n} &= \sum_{i=0}^{2n}(-1)^i \on{tr}\left(
		\on{Frob}_q^{-1}| H^i(Z_{\overline{\mathbb F}_q}, \mathbb
	Q_\ell)	\right).
	\end{align*}
	Since $Z$ is geometrically irreducible, and the eigenvalue of Frobenius on $H^0(Z_{\overline{\mathbb F}_q}, \mathbb
	Q_\ell)$ can be read off from its action on the geometric
	components of $Z$, we find that the eigenvalue of Frobenius on
	$H^0(Z_{\overline{\mathbb F}_q}, \mathbb
	Q_\ell)$ is $1$.
	Combining this with
Sun's generalization to algebraic stacks of Deligne's bounds on the
eigenvalues of Frobenius \cite[Theorem 1.4]{sun:l-series-of-artin-stacks},
	we obtain
\begin{align*}
	\left \| \frac{|Z(\mathbb F_q)| -q^n}{q^n} \right \|  &= \left \| \sum_{i=1}^{2n}(-1)^i \on{tr}\left(
		\on{Frob}_q^{-1}| H^i(Z_{\overline{\mathbb F}_q}, \mathbb
	Q_\ell)	\right) \right \| \\
	&\leq \sum_{i=1}^{2n} q^{-i/2} \dim H^i(Z_{\overline{\mathbb F}_q}, \mathbb Q_\ell)
	\\
	&\leq \sum_{i=1}^\infty q^{-i/2} K^{i+1} \\
	&\leq K^2 q^{-1/2} \sum_{i=0}^\infty q^{-i/2} K^i \\
	&\leq K^2 q^{-1/2} \frac{1}{1- Kq^{-1/2}}.
\end{align*}
The second inequality above uses 
\autoref{lemma:exponential-cohomology-bound}
and the fact that
$\dim H^i(Z_{\overline{\mathbb F}_q}, \mathbb Q_\ell)$
can be bounded above by the dimension of 
the $i$th cohomology of a component of
$\cphur {G'}n {c} {\overline{\mathbb F}_q}$ over $\overline{\mathbb F}_q$,
which can in turn be identified with the $i$th singular cohomology of a
component of the complex variety
$\cphurc n {c}$
using \autoref{remark:transfer}.
We can then take $D_q := \frac{K^2 }{1- Kq^{-1/2}}$.
As $q$ grows, $D_q$ tends to the constant $K^2$.
\end{proof}

We now record the main consequence of
\autoref{theorem:frob-equivariant-stabilization} we will need for future
applications. This gives a good approximation of the number of $\mathbb F_q$
points of components of Hurwitz spaces, and shows this number is periodic, in a
suitable sense, using the Frobenius equivariant stabilization from
\autoref{theorem:frob-equivariant-stabilization}.

\begin{lemma}
	\label{lemma:component-point-bound}
	Fix a prime power $q$.
	Suppose $G'$ is a finite group, $G \subset G'$ is a normal subgroup, and
	$c \subset G'$ is a union of conjugacy classes of $G'$ which is moreover
	contained in $G$ and closed under $q$th powering.
	Let $H \subset G'$ be a (possibly trivial) subgroup.
	Using notation from
	\autoref{notation:frobenius-stabilization},
	fix $g \in c_1 \subset c$ and let $s$ denote the associated constant defined
	in 
	\autoref{notation:frobenius-stabilization}.
	There are constants $C, I$ and $J$
	with the following properties.
	\begin{enumerate}
		\item Fix a non-negative integer $i$.
			Write $(\alpha, (n_1, \ldots, n_\upsilon))$ to index a
			component of
			$\cphurc {n_1, \ldots, n_{\upsilon}} {c}$ with $n = n_1 + \cdots
			+ n_\upsilon$.
			Suppose that $j$ is an integer satisfying $s \leq j \leq \upsilon$, 
	$q > C$ is a prime power with $\gcd(q, |G|) = 1$,
	and $n_1, \ldots, n_j > Ii + J$.
	Assume $Z$ is a geometrically irreducible component of 
	$\cquohur{G}{H}{n} {c}{\mathbb F_q}$
	corresponding to the $H$ orbit of $(\alpha, (n_1, \ldots, n_\upsilon))$, in the
	sense of \autoref{lemma:g-irred-components}.
	Then,
	there is a constant $\phi_{(\alpha, (n_1, \ldots, n_\upsilon)),c,G, H,q}$,
	so that
\begin{align}
	\label{equation:point-count-to-phi}
	\left | \frac{|Z(\mathbb F_q)|}{q^n} - \phi_{(\alpha, (n_1, \ldots,
	n_\upsilon)),c,G,H,q} \right | \leq
	\frac{2C}{1-\frac{C}{\sqrt{q}}} \left( \frac{C}{\sqrt{q}}
	\right)^{\frac{n-J}{I}}.
\end{align}
\item Using notation as in the previous part, 
	\begin{align*}
	\phi_{(\alpha, (n_1, \ldots, n_\upsilon)),c, G,H, q} =
	\phi_{(\alpha', (n_1 + dr/s, \ldots, n_s + dr/s, n_{s+1}, \ldots, n_\upsilon),
		c,G,H, q},
	\end{align*}
where the image of the component associated to $\alpha$ under the map
$U_{\overline{\mathbb F}_q}^{g,q,M+1,K}$
from \autoref{theorem:frob-equivariant-stabilization}
agrees with the image of the component associated to $\alpha'$ under
$U_{\overline{\mathbb F}_q}^{g,q,M,K}$.
In particular,
\begin{align*}
	\phi_{(\alpha, (n_1, \ldots, n_\upsilon)),c, G,H, q} =
\phi_{(\alpha'', (n_1 + |G|^2, \ldots, n_s + |G|^2, n_{s+1}, \ldots, n_\upsilon)), c,G,H,
q}
\end{align*}
for a suitable component $\alpha''$.
	\end{enumerate}
\end{lemma}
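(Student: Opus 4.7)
The plan is to combine the Grothendieck–Lefschetz trace formula with the homological stability result \autoref{theorem:some-large-homology-stabilizes}, the Frobenius equivariance of the stabilization map \autoref{theorem:frob-equivariant-stabilization}, and the exponential bound on Betti numbers in \autoref{lemma:exponential-cohomology-bound}. The main point is that on each cohomology group $H^i$ that lies in the stable range, both the underlying vector space and the Frobenius action become eventually constant as $n$ grows, so the $q^{-i/2}$-scaled contribution to $|Z(\mathbb F_q)|/q^n$ is asymptotically a fixed constant (giving $\phi$), while the unstable degrees contribute a tail which is a geometric series in $K/\sqrt q$.

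For part (1), I will apply Grothendieck–Lefschetz to a geometrically irreducible component $Z$, using Sun's generalization of Deligne's bounds \cite[Theorem 1.4]{sun:l-series-of-artin-stacks} as in \autoref{lemma:weak-point-bound}. Since $Z$ is geometrically irreducible, the $H^0$-contribution is exactly $q^n$, so setting
$$\phi_{(\alpha,(n_1,\ldots,n_\upsilon)),c,G,H,q} := 1 + \sum_{i\geq 1}(-1)^i q^{-n}\operatorname{tr}(\operatorname{Frob}_q^{-1}\mid H^i(Z_{\overline{\mathbb F}_q},\mathbb Q_\ell))$$
one reduces (1) to showing that this quantity depends only on the component class and to estimating the tail. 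For fixed $i$ with $i < (n-J)/I$, \autoref{theorem:some-large-homology-stabilizes} gives an isomorphism between $H^i$ of the given component and $H^i$ of the stabilized component, and \autoref{theorem:frob-equivariant-stabilization} says this isomorphism is Frobenius equivariant; hence the stable contribution in each such degree is intrinsic to the component class. The tail $i \geq (n-J)/I$ is bounded by $\sum_{i\geq (n-J)/I} K^{i+1}q^{-i/2}$ using \autoref{lemma:exponential-cohomology-bound}, and setting $C := 2K$ (so that $C/\sqrt q < 1$) yields the geometric-series bound \eqref{equation:point-count-to-phi}.

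For part (2), the equality $\phi_{\alpha,(n_1,\ldots,n_\upsilon)} = \phi_{\alpha',(n_1+dr/s,\ldots,n_s+dr/s,n_{s+1},\ldots,n_\upsilon)}$ is essentially a direct consequence of \autoref{theorem:frob-equivariant-stabilization}: the stabilization map $U^{g,q,M,K}_{\overline{\mathbb F}_q}$ is Frobenius equivariant, and in the stable range \autoref{theorem:some-large-homology-stabilizes} shows it is an isomorphism on each $H^i$ that contributes to the sum defining $\phi$. Since by hypothesis the image of $\alpha$ under $U^{g,q,M+1,K}_{\overline{\mathbb F}_q}$ coincides with that of $\alpha'$ under $U^{g,q,M,K}_{\overline{\mathbb F}_q}$, the Frobenius traces on each cohomology group in the stable range match, and hence the defining sums for $\phi$ agree term by term.

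The \emph{in particular} statement is obtained by iterating: one chooses an integer $N$ with $N\cdot dr/s = |G|^2$, which exists because $d\mid|G|$ and $r/s \mid \phi(d) \mid |G|$, so $dr/s \mid |G|^2$. Iterating the single-step identification $N$ times produces a component $\alpha''$ whose multiplicity profile has $|G|^2$ added to each of $n_1,\ldots,n_s$ and whose $\phi$ value equals that of $\alpha$. The main obstacle I anticipate is carefully tracking which component appears at each stage of the iteration and checking that the stable ranges are respected uniformly in $M$; the rest is bookkeeping combined with the bounds already established in \autoref{lemma:weak-point-bound} and \autoref{lemma:exponential-cohomology-bound}.
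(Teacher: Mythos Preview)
Your overall strategy matches the paper's: Grothendieck--Lefschetz plus Deligne's bounds, homological stability plus Frobenius equivariance on the stable range, and the exponential Betti number bound on the tail. The paper packages the tail estimate by invoking \cite[Lemma 5.2.2]{landesmanL:the-stable-homology-of-non-splitting}, while you unfold that argument directly; this is fine. However, there are two genuine gaps and one minor error.

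\textbf{The definition of $\phi$ is incoherent as written.} Your displayed formula
\[
\phi := 1 + \sum_{i\geq 1}(-1)^i q^{-n}\operatorname{tr}\bigl(\operatorname{Frob}_q^{-1}\mid H^i(Z_{\overline{\mathbb F}_q},\mathbb Q_\ell)\bigr)
\]
depends on $n$ through $Z$ itself (and the stray $q^{-n}$ should not be there, since $|Z(\mathbb F_q)|/q^n$ already equals $\sum_i(-1)^i\operatorname{tr}(\operatorname{Frob}_q^{-1}\mid H^i)$). As written, this is just the normalized point count, so the left side of \eqref{equation:point-count-to-phi} would vanish identically and part (2) would be false. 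What you need, and what the paper does, is to define $\phi$ as $\sum_{i\ge 0}(-1)^i\operatorname{tr}(\operatorname{Frob}_q^{-1}\mid V_i)$ where $V_i$ is the \emph{stable} cohomology in degree $i$ (the common value of $H^i$ once $n_1,\dots,n_s$ are large enough). Your subsequent discussion of ``stable contribution'' versus ``tail'' makes clear you have the right picture, but the formula must be corrected for the argument to make sense.

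\textbf{The hypothesis $W(\mathbb F_q)\neq\emptyset$ in \autoref{theorem:frob-equivariant-stabilization} is never verified.} This is not automatic: a priori the component $W$ indexed by $\prod_j[g^{q^j}]^{Md}$ could be geometrically irreducible yet have no $\mathbb F_q$-points for small $M$. The paper handles this by first invoking \autoref{lemma:w-irreducible} to show $W$ is geometrically irreducible for $M$ large, and then \autoref{lemma:weak-point-bound} to force $W(\mathbb F_q)\neq\emptyset$ once $Mdr$ exceeds some threshold $L$ (and $q>C$). Only then can one apply \autoref{theorem:frob-equivariant-stabilization}. You must insert this step; without it your appeal to Frobenius equivariance is unjustified.

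\textbf{The divisibility $dr/s\mid |G|^2$.} Your argument via $r/s\mid\phi(d)\mid|G|$ fails: $\phi(d)$ need not divide $|G|$ (take $G=\mathbb Z/7\mathbb Z$, $d=7$, $\phi(d)=6$). The correct reason $r/s\mid|G|$ is that $g^{q^s}$ is $G$-conjugate to $g$, so the element $q^s\in(\mathbb Z/d\mathbb Z)^\times$ lies in the image of $N_G(\langle g\rangle)/C_G(g)$, whence its order $r/s$ divides $|N_G(\langle g\rangle)/C_G(g)|$, which divides $|G|$. Combined with $d\mid|G|$ this gives $dr/s\mid|G|^2$.
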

\begin{remark}
	\label{remark:}
	The constant $\phi_{(\alpha, (n_1, \ldots,
	n_\upsilon)),c,G,H,q}$ can be interpreted as the trace of arithmetic Frobenius on
	the stable cohomology of
	the component $\alpha$, where one stabilizes ``in the direction of
	$g$.''
\end{remark}

\begin{proof}
	For the purposes of proving (1), we start by defining a constant $L$.
	Using \autoref{lemma:weak-point-bound}, there is some $L$ so that for
	any $n> L$,
	any geometrically integral
	component $Z \subset \cquohur {G}{H} n {c}  {\mathbb
	F_q}$, satisfies $Z(\mathbb F_q) \neq
	\emptyset$ once $q > C$, where we can take $C = K^2$ with $K$ as in
	\autoref{lemma:weak-point-bound}.

	We will explain why (1) follows from 
	\cite[Lemma 5.2.2]{landesmanL:the-stable-homology-of-non-splitting}
	applied to any sequence of spaces $Y_n$, where $Y_n$ is
	indexed by
	the geometrically irreducible component of 
	$\cquohur {G}{H} {n} c {\mathbb F_q}$
	corresponding to $(\alpha_n, n_1, \ldots, n_\upsilon)$ 
		with $n_{s+1}, \ldots, n_\upsilon$ fixed but $n_1, \ldots, n_s$ varying,
	such that $n_1, \ldots, n_s > Ii + J$,
	$n := \sum_{j=1}^\upsilon n_j$, 
	with the component of $\alpha_{i_1}$ mapping to the component of
	$\alpha_{i_2}$ under some $U^{g,q,M,H}$,
	and
	such that there are certain residues $r_1, \ldots, r_s$ modulo
	$dr/s$ 
	with
	$n_t \equiv r_t \bmod rd/s$ for $1 \leq t \leq s$.

	Fix a prime $\ell$ which is prime to $q$ and $|G'|$.
	If we
fix values of $n_{s+1}, \ldots, n_\upsilon$
	and let $n_1, \ldots, n_s > Ii + J$ such that $n_t \equiv r_t \bmod dr/s$ for
	$1 \leq t \leq j$,
	we obtain from \autoref{theorem:some-large-homology-stabilizes} 
and \autoref{theorem:frob-equivariant-stabilization} (to compare $\mathbb C$
with $\overline{\mathbb F}_q$)
that the multiplication map 
$U^{g,q,M,H}_{\overline{\mathbb F}_q}$
on $i$th cohomology 
in 
\autoref{theorem:frob-equivariant-stabilization},
is an isomorphism.
The hypothesis of 
\autoref{theorem:frob-equivariant-stabilization}
that $W(\mathbb F_q) \neq \emptyset$
is verified for $Mdr > L$ using \autoref{lemma:weak-point-bound}
and \autoref{lemma:w-irreducible}.
Since
\autoref{theorem:frob-equivariant-stabilization},
shows the isomorphism 
$U^{g,q,M,H}_{\overline{\mathbb F}_q}$
is also frobenius equivariant, 
	we can choose constants $I$ and $J$ so that 
	the action of $\on{Frob}_q^{-1}$ on 
	$H^i(Z_{\overline{\mathbb F}_q}, \mathbb Q_\ell)$
	is independent of the choice of $n_1, \ldots, n_\upsilon$ so long as $n_{s+1}, \ldots, n_\upsilon$
	are fixed, $n_1, \ldots, n_s > Ii + J$ and $n_t \equiv r_t \bmod dr/s$ for
	$1 \leq t \leq s$.
	Applying the above statement both for 
$U^{g,q,M+1,H}_{\overline{\mathbb F}_q}$ and for
$U^{g,q,M,H}_{\overline{\mathbb F}_q}$,
we find that the traces of $\on{Frob}_q^{-1}$ on the stable cohomology of the
		component
		$(\alpha_n, (n_1, \ldots,
	n_\upsilon))$ agrees with that on 
	$(\alpha_{n+Mdrs+dr}, (n_1+M dr+dr/s, \ldots, n_s+Mdr+dr/s,n_{s+1}, \ldots, n_\upsilon))$,
	which in turn agrees with that on 
	$(\alpha_{n+dr}, (n_1+dr/s, \ldots, n_s+dr/s,n_{s+1}, \ldots, n_\upsilon))$.

	We now fix $n_1, \ldots, n_\upsilon$.
	We can then take $V_t$ to be the vector space with action of geometric
	Frobenius, $\on{Frob}_q$ equal to $H^t(Z_{\overline{\mathbb F}_q},
	\mathbb Q_\ell)$
	for $Z$ some geometrically irreducible component corresponding to
	$(\alpha_n, (n'_1, \ldots, n'_s, n_{s+1},\ldots, n_\upsilon))$ with 
	$n'_1, \ldots, n'_s > It + J$ and $n'_r-n_r = Mrd/s$ for some value of
	$M$ and $1 \leq
	r \leq s$.
	Take $\phi_{(\alpha_n, (n_1, \ldots, n_\upsilon)),c,G,H,q} := \sum_{i=0}^\infty
	(-1)^i \tr\left( \on{Frob}_q^{-1}|V_i \right)$.
	By construction we see the first statement in part (2) of this lemma holds.
	The second statement in $(2)$ follows from the first statement in $(2)$
	because $\frac{r}{s} \mid |G|$ and $d \mid |G|$ so
	$\frac{dr}{s} \mid |G|^2$.

	We conclude by proving part (1).
This will follow from 
\cite[Lemma 5.2.2]{landesmanL:the-stable-homology-of-non-splitting}
once we verify the hypotheses (1) and (2) there.
Indeed, hypothesis (1) there follows from what we have done so far in this
proof.
To verify hypothesis (2), it wish to bound $\dim H^i(Y_{n, \overline{\mathbb F}_q}, \mathbb
Q_{\ell}) \leq C'\cdot C^i$ for some constant $C$, which follows from
\autoref{lemma:exponential-cohomology-bound} taking $C = C' = K$.
\end{proof}

We will also use the following consequence of Frobenius stabilization for
computing the moments predicted by Cohen--Lenstra--Martinet, which
compares the point counts of two related Hurwitz spaces.
\begin{lemma}
	\label{lemma:component-comparison}
	Suppose $G_1$ and $G_2$ are two finite groups and $c_1 \subset G_1$ and
	$c_2 \subset G_2$ are two unions of $\upsilon$
	conjugacy classes, both closed under $q$th powering.
	Assume we have a group homomorphism $G_1 \to G_2$ inducing a map $c_1 \to
	c_2$.
	There are constants $C, I$, and $J$ (depending on both $c_1$ and $c_2$)
	with the following property.
	Let $q$ be a prime power with $q > C$, and $\gcd(q, |G_1||G_2|) = 1$.
	Suppose
	$n_1, \ldots, n_\upsilon > Ii + J$.
Suppose
$Z_1 \subset\cphur {G_1} {n_1, \ldots, n_\upsilon} {c_1} {\mathbb F_q}$
is a geometrically irreducible component,
indexed by $\alpha_1$,
mapping to 
$Z_2 \subset\cphur {G_2} {n_1, \ldots, n_\upsilon} {c_2} {\mathbb F_q}$,
indexed by $\alpha_2$.
Using notation from \autoref{lemma:component-point-bound},
$\phi_{(\alpha_1, (n_1, \ldots, n_\upsilon)), c_1, G_1, \id, q} = \phi_{(\alpha_2, (n_1,
	\ldots, n_\upsilon)), c_2, G_2,\id,
q}$.
\end{lemma}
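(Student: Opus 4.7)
The plan is as follows. The group homomorphism $G_1 \to G_2$ and its induced rack map $c_1 \to c_2$ produce a morphism of Hurwitz stacks over $\mathbb F_q$
\[
\pi \colon \cphur{G_1}{n_1, \ldots, n_\upsilon}{c_1}{\mathbb F_q} \to \cphur{G_2}{n_1, \ldots, n_\upsilon}{c_2}{\mathbb F_q}
\]
that carries $Z_1$ to $Z_2$. Both source and target also carry natural branch-locus morphisms $b_j$ to $\conf_{n_1, \ldots, n_\upsilon, \mathbb F_q}$ satisfying $b_2 \circ \pi = b_1$, and all these morphisms are defined over $\mathbb F_q$. The strategy will be to identify the stable $\ell$-adic cohomologies of $Z_1$ and $Z_2$ with that of the configuration space and to observe that this identification is Frobenius equivariant and compatible with $\pi$.

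To execute this, choose constants $C, I, J$ large enough so that \autoref{theorem:all-large-stable-homology} and \autoref{lemma:component-point-bound} apply simultaneously to both $c_1$ and $c_2$, and fix an auxiliary prime $\ell$ coprime to $q |G_1| |G_2|$. Since $|G^0_{c_j}|$ divides $|G_j|$, applying \autoref{theorem:all-large-stable-homology} and transporting from $\mathbb C$ to $\overline{\mathbb F}_q$ via \autoref{remark:transfer} (which is available by virtue of the smooth normal-crossings compactifications supplied by \cite[Corollary~B.1.4]{ellenbergL:homological-stability-for-generalized-hurwitz-spaces}), we conclude that for each $j \in \{1, 2\}$ and each $i$ with $n_1, \ldots, n_\upsilon > Ii + J$, the pullback
\[
b_j^* \colon H^i(\conf_{n_1, \ldots, n_\upsilon, \overline{\mathbb F}_q}; \mathbb Q_\ell) \to H^i(Z_{j, \overline{\mathbb F}_q}; \mathbb Q_\ell)
\]
is an isomorphism. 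Since each $b_j$ is a morphism of $\mathbb F_q$-schemes, these isomorphisms are automatically Frobenius equivariant.

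The relation $b_2 \circ \pi = b_1$ then forces the induced pullback $\pi^* \colon H^i(Z_{2, \overline{\mathbb F}_q}; \mathbb Q_\ell) \to H^i(Z_{1, \overline{\mathbb F}_q}; \mathbb Q_\ell)$ to intertwine the two configuration-space isomorphisms above, hence to be itself a Frobenius-equivariant isomorphism in the stable range. Consequently
\[
\tr(\on{Frob}_q^{-1} \mid H^i(Z_{1, \overline{\mathbb F}_q}; \mathbb Q_\ell)) = \tr(\on{Frob}_q^{-1} \mid H^i(Z_{2, \overline{\mathbb F}_q}; \mathbb Q_\ell))
\]
for every $i$ in the stable range, and substituting these equalities into the definition of $\phi$ from \autoref{lemma:component-point-bound} yields the claimed equality of the two constants. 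The main thing to be careful about is the Frobenius equivariance of the identification with configuration-space cohomology; while this turns out to be automatic here because every comparison map comes from a morphism defined over $\mathbb F_q$, it does rely crucially on having the integral stability statement of \autoref{theorem:all-large-stable-homology} rather than only a complex-analytic version.
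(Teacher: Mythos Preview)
Your argument is correct and follows essentially the same route as the paper's own proof: both factor through the branch map to multicolored configuration space, invoke \autoref{theorem:all-large-stable-homology} together with the comparison of \autoref{remark:transfer} to see that $\pi^*$ is an isomorphism on each $H^i$ in the stable range, and then use that $\pi$ is defined over $\mathbb F_q$ to conclude Frobenius equivariance and hence equality of traces. One small point of care: the assertion that each $b_j$ is itself a morphism of $\mathbb F_q$-schemes requires the multicolor labeling to be Frobenius-stable, which is not automatic when $q$th powering permutes conjugacy classes; however this is inessential, since the Frobenius equivariance you actually need is that of $\pi^*$, and $\pi$ is visibly defined over $\mathbb F_q$ as a restriction of the moduli-theoretic induced-cover map $\cphur{G_1}{n}{c_1}{\mathbb F_q} \to \cphur{G_2}{n}{c_2}{\mathbb F_q}$.
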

\begin{proof}
First, we claim
$H^i(Z_1, \mathbb Q_\ell) \to H^i(Z_2, \mathbb Q_\ell)$
is an isomorphism.
To see this, we have maps
$Z_{1, \overline{\mathbb F}_q} \to Z_{2, \overline{\mathbb F}_q} \to 
\conf_{n_1, \ldots, n_\upsilon, \overline{\mathbb
F}_q}$.
The composite induces an isomorphism on $H^i$ and the second map induces an
isomorphism on $H^i$ by
\autoref{theorem:all-large-stable-homology} and \autoref{remark:transfer}.
Hence,
$H^i(Z_1, \mathbb Q_\ell) \to H^i(Z_2, \mathbb Q_\ell)$
is an isomorphism.

Since 
$H^i(Z_1, \mathbb Q_\ell) \to H^i(Z_2, \mathbb Q_\ell)$
is an isomorphism, the trace of geometric Frobenius on 
$H^i(Z_{m,\overline{\mathbb F}_q}, \mathbb Q_\ell)$ is independent of $m \in
\{1,2\}$, for
$n_1, \ldots, n_j > iI + J$.
From this it follows that
$\phi_{(\alpha_m, (n_1, \ldots, n_\upsilon)), c_m, G_m, \id,q}$
is independent of $m$, as desired.
\end{proof}

\section{Application to the Cohen-Lenstra-Martinet heuristics}
\label{section:clm}

In this section, we prove our main result toward the Cohen--Lenstra--Martinet
heuristics.
In order to give the proof, we first introduce some notation.

\begin{notation}
	\label{notation:stable-components}
	Fix a finite group $G$ and a union of $\upsilon$ conjugacy classes $c = c_1
	\cup \cdots \cup c_\upsilon$ for $c_i \subset G$ conjugacy classes so that
	$c$ generates $G$.
	Following \cite{wood:an-algebraic-lifting-invariant},
	we define $S_c \to G$ to be some reduced Schur cover for $c$ as in
	\cite[Definition, p. 3]{wood:an-algebraic-lifting-invariant}
	and $\hat{G} := S_c \times_{G^{\ab}} \mathbb Z^\upsilon$.
	Note that $\hat{G}$ implicitly depends on $c$.
	By \cite[Theorem 2.5 and Theorem
	3.1]{wood:an-algebraic-lifting-invariant}, elements of $\hat{G}$ such
that the projection to $\mathbb Z^\upsilon$ is $(n_1, \ldots, n_\upsilon)$ with all $n_i$ sufficiently large are in bijection
	with components of
	$\cphurc {n_1, \ldots, n_\upsilon} c$.
\end{notation}

We now aim to prove \autoref{theorem:clm}.
We recall the statement now.

\clm*

The proof is very similar to that of \cite[Theorem
1.1]{liu:non-abelian-cohen-lenstra-in-the-presence-of-roots} except that we use
our main homological stability results to obtain better control of the finite field point counts and
thereby remove the large $q$ limit appearing in 
\cite[Theorem 1.1]{liu:non-abelian-cohen-lenstra-in-the-presence-of-roots}.
It will also be crucial to know this homological stability holds Frobenius
equivariantly, as we established in
\autoref{theorem:frob-equivariant-stabilization}.

\subsubsection{Proof of \autoref{theorem:clm}}
\label{subsubsection:proof-clm}
	Let $G := H \rtimes \Gamma$.
	Let $c_2 := \Gamma - \{\id\} \subset \Gamma$ and let $c_1 \subset
	G-\{\id\}$ denote the set of elements with the same order as their image
	in $\Gamma$.
	Suppose $c_2$ consists of $P$ conjugacy classes.
	It is argued in the first two paragraphs of the proof of
	\cite[Theorem 10.4]{liuWZB:a-predicted-distribution}, using admissibility
	of the $\Gamma$ action, that $c_1$
	is also a union of $P$ conjugacy classes.
	
	Fix $\delta: \hat{\mathbb Z}(1)_{(|\Gamma| q)'} \to H_2(H \rtimes
	\Gamma, \mathbb Z)_{(|\Gamma| q)'}$ and let
	$Z^\delta_{q,n}$ denote the union of geometrically irreducible
	components of $\cphur {G} n {c_1}
	{\mathbb F_q}$ whose $\overline {\mathbb F}_q$ points have $\omega$
	invariant $\delta$, in the sense of \cite[Definition
	2.13]{liu:non-abelian-cohen-lenstra-in-the-presence-of-roots}.
		It follows from
	\cite[Lemma 4.6]{liu:non-abelian-cohen-lenstra-in-the-presence-of-roots}
	that
	\begin{align}
		\label{equation:numerator-clm-count}
		\sum_{K
			\in E_\Gamma(q^n, \mathbb F_q(t)} \left | \left\{ \pi \in \surj_\Gamma(
	G^\sharp_\emptyset(K), H) : \pi_* \circ \omega_{K^\sharp/K} = \delta
\right\}  \right |
&=
\frac{|Z_{q,n}^\delta (\mathbb F_q)|}{[H:H^\Gamma]}.
	\end{align}
	Moreover, when we take $H$ to be the trivial group,
	it follows from 
	\cite[Lemma 4.6]{liu:non-abelian-cohen-lenstra-in-the-presence-of-roots}
	that
	\begin{align}
		\label{equation:denominator-clm-count}
		| E_{\Gamma}(q^n, \mathbb F_q(t)) | = 
		|\cphur {\Gamma} n {c_2} {\mathbb F_q}(\mathbb F_q)|.
	\end{align}
	Therefore, plugging \eqref{equation:numerator-clm-count} and
	\eqref{equation:denominator-clm-count} into \eqref{equation:clm-ratio},
		to complete the proof, it suffices to prove
	\begin{equation}
		\label{equation:ratio-clm-count}	
		\frac{\sum_{n \leq N} |Z_{q,n}^\delta(\mathbb F_q)|}{\sum_{n
		\leq N} |\cphur {\Gamma} n {c_2} {\mathbb F_q}(\mathbb F_q)|}
	= 1 + O_G(N^{-1}).
	\end{equation}
		
Let $\pi_{G,c_1}^\delta(q,n)$ denote the number of components
	of $Z^\delta_{q,n}$ and let 
	$\pi_{\Gamma,c_2}(q,n)$ denote the 
	number of geometrically irreducible components of
	$\cphur {\Gamma} n {c_2} {\mathbb F_q}$.
Let $d_{G,c_1}(q)$ be the number of orbits of the $q$th powering action on
$c/G$, which sends a conjugacy class to the conjugacy class of its $q$th power.
Then, it follows by combining 
\cite[Lemma 4.4 and Lemma 4.5]{liu:non-abelian-cohen-lenstra-in-the-presence-of-roots}
with
\cite[Proposition 12.7]{liuWZB:a-predicted-distribution}
that 
$\pi_{G,c_1}^\delta(q,n) = \pi_{\Gamma, c_2}(q,n) + O(n^{d_{G,c_1}(q)-2})$,
while
$\pi_{G,c_1}^\delta(q,n)$ is a polynomial in $n$ of degree 
$d_{G,c_1}(q)-1$.
Combining the above with 
\cite[Corollary 12.9]{liuWZB:a-predicted-distribution},
we also obtain $d_{\Gamma,c_2}(q) = d_{G, c_1}(q)$.

Next, we make two observations.
First, 
by \autoref{lemma:weak-point-bound},
there is some constant $D_q$, independent of $n$ so that for any geometrically
connected component $W$ either in $Z_{q,n}^\delta$ or in
$\cphur {\Gamma} n {c_2} {\mathbb F_q}$, we have
$|W(\mathbb F_q)| \leq D_q q^n$.

Recall from \autoref{notation:stable-components} that the stable components of
Hurwitz space are indexed by elements $(\alpha, (n_1, \ldots, n_\upsilon)) \in \hat{G}.$
Our second observation is that there is a subset of $O_G(n^{d_{G,c_1}(q)-2})$
many geometrically irreducible components of
$Z_{q,n}$ and of $\cphur {\Gamma} n {c_2} {\mathbb F_q}$
and constants
$C, I, J$
such that the following holds:
for any geometrically irreducible $W$ outside of that set,
associated to $\alpha$,
there is a specific number
$\phi_{(\alpha, (n_1, \ldots, n_\upsilon)), c_1,G,\id,q}$
from \autoref{lemma:component-point-bound}
such that, if $q > C$ is a prime power with $\gcd(q, |G|) = 1$,
\begin{align}
	\label{equation:stable-component-estimate}
	\left | \frac{|W(\mathbb F_q)|}{q^n} - \phi_{(\alpha, (n_1, \ldots,
	n_\upsilon)), c_1,G,\id,q} \right | \leq
	\frac{2C}{1-\frac{C}{\sqrt{q}}} \left( \frac{C}{\sqrt{q}}
	\right)^{\frac{n-J}{I}}.
\end{align}
This follows from \autoref{lemma:component-point-bound}, using that the number
of geometrically irreducible components with some 
component $n_j \leq Ii + J$ accounts for 
$O_G(n^{d_{G,c_1}(q)-2})$ many geometrically irreducible components of
$Z_{q,n}$ and of $\cphur {\Gamma} n {c_2} {\mathbb F_q}$, as is explained in the third paragraph of the
proof of \cite[Proposition 12.7]{liuWZB:a-predicted-distribution}.

To use the above two observations, we next note that
the proof of \cite[Lemma 4.5]{liu:non-abelian-cohen-lenstra-in-the-presence-of-roots}
not only shows
$\pi_{G,c_1}^\delta(q,n) = \pi_{\Gamma, c_2}(q,n) + O(n^{d_{G,c_1}(q)-2})$ but this equality
is induced by a specific bijection obtained from the map $c_1 \to c_2$.
To state this precisely, we introduce some more notation.
For each $\delta: \hat{\mathbb Z}(1)_{(|\Gamma| q)'} \to H_2(H \rtimes
\Gamma, \mathbb Z)_{(|\Gamma| q)'}$, let $S^\delta_{q,n}$ denote the set of geometrically irreducible components of
$\cphur {G} n {c_1} {\mathbb F_q}$ with $\omega$ invariant $\delta$, which are in the set to which 
\eqref{equation:stable-component-estimate} applies.
Similarly, let
$S_{q,n}$ denote the set of geometrically irreducible components of
$\cphur {\Gamma} n {c_2} {\mathbb F_q}$ to which 
\eqref{equation:stable-component-estimate} applies.
By modifying these sets $S^{\delta}_{q,n}$ and $S_{q,n}$ by at most
$O_G(n^{d_{G,c_1}(q)-2})=O_G(n^{d_{\Gamma,c_2}(q)-2})$ elements, we can arrange
that $S^{\delta}_{q,n}$ maps bijectively to $S_{q,n}$
under the map induced by $c_1 \to
c_2$,
as follows from the proof
of 
\cite[Lemma 4.5]{liu:non-abelian-cohen-lenstra-in-the-presence-of-roots}.

We now combine the above two observations to 
complete the verification of
\eqref{equation:ratio-clm-count}.
The above two observations imply
we can estimate $|Z^\delta_{q,n}(\mathbb F_q)|$ as
\begin{equation}
\begin{aligned}
	\label{equation:z-estimate}
|Z^\delta_{q,n}(\mathbb F_q)|
=
&\sum_{(\alpha_1, (n_1, \ldots, n_\upsilon)) \in S^\delta_{q,n}}
\phi_{(\alpha_1, (n_1, \ldots, n_\upsilon)), c_1,G,\id, q}  \left(1 + 
M\frac{2C}{1-\frac{C}{\sqrt{q}}} \left( \frac{C}{\sqrt{q}}
	\right)^{\frac{n-J}{I}} \right)q^n \\
	&+ O_G(n^{d_{G,c_1}(q)-2})q^n
\end{aligned}
\end{equation}
for some $M \in [-1,1]$.
Similarly, we can estimate
\begin{equation}
\begin{aligned}
	\label{equation:hur-gamma-estimate}
	|\cphur {\Gamma} n {c_2} {\mathbb F_q}(\mathbb F_q)| =
	&\sum_{(\alpha_2, (n_1, \ldots, n_\upsilon)) \in S_{q,n}}
	\phi_{(\alpha_2, (n_1, \ldots, n_\upsilon)), c_2,\Gamma,\id, q}\left(1 + M'
\frac{2C}{1-\frac{C}{\sqrt{q}}} \left( \frac{C}{\sqrt{q}}
	\right)^{\frac{n-J}{I}} \right) q^n \\
	&+ O_G(n^{d_{\Gamma,c_2}(q)-2})q^n
\end{aligned}
\end{equation}
for some $M' \in [-1,1]$.
Recall here that 
$\sum_{(\alpha_1, (n_1, \ldots, n_\upsilon)) \in S^\delta_{q,n}}
\phi_{(\alpha_1, (n_1, \ldots, n_\upsilon)), c_1,G,\id, q}$
as well as
$\sum_{(\alpha_2, (n_1, \ldots, n_\upsilon)) \in S_{q,n}}
	\phi_{(\alpha_2, (n_1, \ldots, n_\upsilon)), c_2,\Gamma,\id, q}$
both grow as
$q^n n^{d_{\Gamma,c_1}(q)-1}=q^n n^{d_{\Gamma,c_2}(q)-1}$.
Hence, the error terms 
$O_G(n^{d_{\Gamma,c_i}(q)-2})q^n$
are roughly a factor of $1/n$ smaller than the main term.
Moreover, for $n$ sufficiently large, we may bound 
$M' \frac{2C}{1-\frac{C}{\sqrt{q}}} \left( \frac{C}{\sqrt{q}}
	\right)^{\frac{n-J}{I}}$
	and
$M \frac{2C}{1-\frac{C}{\sqrt{q}}} \left( \frac{C}{\sqrt{q}}
	\right)^{\frac{n-J}{I}}$
by $O_G(1/n)$ since we are assuming $q$ is sufficiently large so that $C <
\sqrt{q}$, and so eventually this exponentially decaying term will be dominated
by the polynomially decaying function $1/n$.
This means that for $n$ sufficiently large, we may simplify
\eqref{equation:z-estimate} and \eqref{equation:hur-gamma-estimate}
to
\begin{equation}
\begin{aligned}
	\label{equation:z-estimate-simplified}
|Z^\delta_{q,n}(\mathbb F_q)|
=
&\sum_{(\alpha_1, (n_1, \ldots, n_\upsilon)) \in S^\delta_{q,n}}
\phi_{(\alpha_1, (n_1, \ldots, n_\upsilon)), c_1,G,\id, q} q^n + O_G(n^{d_{G,c_1}(q)-2})q^n
\end{aligned}
\end{equation}
and
\begin{equation}
\begin{aligned}
	\label{equation:hur-gamma-estimate-simplified}
	|\cphur {\Gamma} n {c_2} {\mathbb F_q}(\mathbb F_q)| =
	&\sum_{(\alpha_2, (n_1, \ldots, n_\upsilon)) \in S_{q,n}}
	\phi_{(\alpha_2, (n_1, \ldots, n_\upsilon)), c_2,\Gamma,\id, q}q^n+ O_G(n^{d_{\Gamma,c_2}(q)-2})q^n.
\end{aligned}
\end{equation}

We showed above that
the map $c_1 \to c_2$ induces a bijection $S^\delta_{q,n} \to S_{q,n}$
and also that $d_{\Gamma,c_2}(q) = d_{G, c_1}(q)$.
Since
$\phi_{(\alpha_1, (n_1, \ldots, n_\upsilon)), c_1,G,\id, q}
=
\phi_{(\alpha_2, (n_1, \ldots, n_\upsilon)), c_2,\Gamma,\id, q}$
by 
\autoref{lemma:component-comparison},
it follows that upon summing \eqref{equation:z-estimate-simplified}
over $n \leq N$
and dividing it by the sum of \eqref{equation:hur-gamma-estimate-simplified} over $n \leq
N$, we obtain
\eqref{equation:ratio-clm-count}.
\qed

\section{Malle's conjecture}
\label{section:malle}

Our final application is to prove several versions of Malle's conjecture.
While the inverse Galois problem predicts the number of $G$-extensions of
$\mathbb Q$ or $\mathbb F_q(t)$ is nonzero,
Malle's conjecture goes further and predicts the asymptotic number of
$G$-extensions.

We first state our main results toward Malle's conjecture in
\autoref{subsection:notation-malle}, including a version without constants in
\autoref{theorem:turkelli} and a simplified version with constants in
\autoref{theorem:malle-g-connected}.
We introduce notation to reorganize the components of Hurwitz space according to
a given counting invariant in
\autoref{subsection:invariant-hurwitz-spaces}.
We then prove the simplified version of Malle's conjecture with constants in 
\autoref{subsection:reduced-discriminant-malle}
and the more comprehensive version without constants in 
\autoref{subsection:invariant-counting}.

\subsection{Notation and statements of versions of Malle's conjecture}
\label{subsection:notation-malle}

We next introduce some notation to state generalizations of Malle's conjecture, and T\"urkelli's
modification of Malle's conjecture.
For a nontrivial finite group $G \subset S_d$
and a global field $K$,
Malle's conjecture predicts the number of $G$ extensions 
of discriminant bounded by $X$,
(viewed as a
permutation group,) to be of the form
$C(K, G-\id) X^{\frac{1}{a(G-\id)}}(\log X)^{b_{M}(K,G, G-\id)-1}$,
for some unspecified $C(K,G-\id)$ and specified $a(G-\id)$ and $b_{M}(K,G, G-\id)$.
We will discuss a more general version of this conjecture depending on a
counting invariant, which
we define next.
In what follows we will restrict ourselves to global fields $K$ of the form $\mathbb F_q(t)$
for some prime power $q$.

\begin{definition}
	\label{definition:invariant}
	Fix a finite group $G$. Let $\inv: G - \id \to \mathbb Z_{>0}$ be a
	function which is constant on conjugacy classes (so $\inv(g) =
	\inv(h^{-1}gh)$ for any $g,h \in G$) and such that $\inv(g) = \inv(g^j)$
for any $j$ relatively prime to $\on{ord}(g)$.
We refer to any such function $\inv$ as a {\em counting invariant}.
If $c_i \subset G$ is a conjugacy class, we use $\inv(c_i) := \inv(g)$ for any
$g \in c_i$.

Given a field $\kappa$ and a $G$ cover $f: Y \to \mathbb A^1_\kappa$,
which we sometimes think of as
a $\kappa$ point of the Hurwitz space
$x \in \cquohur {G}{G}{n} {G-\id} {\mathbb Z[\frac 1 {|G|}]}(\kappa)$,
if $f$ has $n_i$ geometric points whose inertia lies in the conjugacy class $c_i \subset
G$,
we define the {\em invariant} of the cover associated to the counting invariant
$\inv$ by $\invc(x) := \sum_i n_i \inv(c_i)$.
We also use the notation $\invc(f)$ to mean the same thing as $\invc(x)$.
\end{definition}

\begin{example}
	\label{example:discriminant}
	Perhaps the most ubiquitous example of a counting invariant as above is the
	function $\Delta(g) := |G| -r(g)$, where $r(g)$ is the
	number of orbits of $g$ acting on $G$. Then, the 
	associated invariant function yields the degree of the discriminant of the cover.
	More generally, if $G \to S_d$ is a permutation group, one may consider
	the invariant $\inv(g) := d- r(g)$, where $(g)$ is the number of orbits
	of $g$ on $\{1, \ldots, d\}$. This counting invariant corresponds to the
	discriminant of a degree $d$ cover corresponding to $G$ acting via the
	above permutation representation.
\end{example}
\begin{example}
	\label{example:reduced-discriminant}
	Another example of a counting invariant is the counting invariant
	$\on{rDisc}(g) = 1$ for every $g \in G -\id$. This is the {\em reduced
	discriminant} and corresponds to counting by
	the radical of the discriminant of the cover, i.e., each branch point is
	counted according to its degree.
	Many people also refer to counting by the
	reduced discriminant as counting by ``the product of ramified primes.''
\end{example}

As mentioned, Malle's conjecture predicts values for certain constants $a(G-\id,\inv)$ and $b(K,
(G-\id)_{\inv})$ related to counting the number of $G$ extensions. We now define
these constants.

\begin{notation}
	\label{notation:malle}
	We fix a global field $K$ of the form $\mathbb F_q(t)$ with field of constants $\mathbb F_q$
and a nontrivial finite permutation group $G \subset S_d$
whose order is invertible on
$K$.
Also fix a counting invariant $\inv: G - \id \to \mathbb Z_{>0}$ as in
\autoref{definition:invariant}. 
For $c \subset G - \id$ a subset closed under conjugation, define
\begin{align*}
	a(c, \inv) := \min_{g \in c} \inv(g).
\end{align*}
For $c \subset G - \id$ a subset closed under conjugation, define $c_{\inv}
\subset c$ to be the subset of elements $g \in c$ with $\inv(g)
= a(c,\inv)$.

Suppose $q$ is a prime power with $\gcd(q,|G|) = 1$.
Let $N \subset G$ be a normal subgroup.
Choose a subset $c \subset N - \{\id\}$ closed under $G$-conjugation which generates $N$. 
Suppose $c$ is closed under conjugation by
elements of $G$. 
Also assume $c$ is closed under $q$th powering in the sense of
\autoref{definition:powering}; that is, if $g \in c$ so
is $g^q$.
We will next define a constant $b_{M}(K, N, c)$.
We let $c/N$ denote the set of $N$-conjugacy classes of $c$
and $c/G$ to denote the set of $G$-conjugacy classes of $c$.
Since $\gcd(q,|G|) = 1$, $q$th powering acts invertibly on $c$, so we have a
bijective operation given by $q^{-1}$ powering, which we define to be the inverse of $q$th
powering.
For $h \in G/N$ a generator, we let $\rho(K, N, c, h)$ be the quotient of $c/N$ by 
the equivalence relation given by $x \sim h x^{q^{-1}}h^{-1}$ for all $x
\in c/N$.
(This means that if we choose a representative $\tilde{x} \in N$ for $x$ and
$\tilde{h} \in G$ for $h$, then we take the $N$-conjugacy class of
$\widetilde{h} \widetilde{x}^{q^{-1}} \widetilde{h}^{-1}$; one can check
the resulting conjugacy class is independent of the choice of lifts.)  
We then define 
\begin{align*}
b_{M}(K, N, c) := \max_{h \in G/N, \langle h \rangle = G/N} |\rho(K,N, c,h)|.
\end{align*}

Finally, we define
\begin{align*}
	b_{T}(K, c) := \max_{K', N} b_{M}(K', N, c),
\end{align*}
where the maximum is taken over all $(K', N)$ where $a(c \cap N,\inv) = a(c,\inv)$, $N \subset G$
is a normal subgroup with $G/N$ cyclic, and $K' = \mathbb F_{q^{|G/N|}}
\otimes_{\mathbb F_q} K$.
\end{notation}

\begin{remark}
	\label{remark:}
	In the case $\inv = \Delta$ from \autoref{example:discriminant},
	if there are within a constant factor of $X^{\frac{1}{a(G-\id,\Delta)}}(\log
	X)^{b(K,G-\id)-1}$ many field extensions of $K$ of
	discriminant at most $X$ as $X \to \infty$,
	then one may interpret Malle's prediction in the function field setting
	to be that $b(K,G)$ is $b_{M}(K, G,
	(G-\id)_{\Delta})$.
	Hence, $b_M(K,N,(G-\id)_{\inv} )$ can be thought of as a generalization of Malle's
	predicted value of $b(K,G-\id)$. The value $b_{T}(K,(G-\id)_{\Delta})$ is the value which
	T\"urkelli predicted for $b(K,(G-\id)_{\Delta})$. 
	We note that T\"urkelli was in fact the first to make the predictions for 
	$b_{M}(K, N,(G-\id)_{\Delta})$ when $N \neq G$, but, to avoid confusion,
	we still opt to label these constants with the subscript $M$.
\end{remark}

\begin{remark}
	\label{remark:}
	In the case $\inv = \Delta$, (the discriminant invariant from
	\autoref{example:discriminant},) Malle originally used the notation
	$\frac{1}{a(G)}$ to denote what we are calling $a(G-\id, \inv)$. The
	reader may wish to keep in mind that there is some inconsistency
	for this notation throughout the literature.
	We prefer the definition we give so that $a(G-\id,\inv)$ is
	an integer. Additionally, essentially all other authors use $a(G)$ in place of $a(G-\id,
	\inv)$,
	but we prefer to use the notation $a(G-\id,\inv)$ to indicate that we are really dealing
with the rack which omits the identity element from $G$.
\end{remark}

We next record some notation for counting the number of $G$ extensions.

\begin{notation}
	\label{notation:}
	Let $G \subset S_d$ be a group. Let $K$ be a global function field of
	the form $\mathbb F_q(t)$ with field
	of constants $\mathbb F_q$ with a
	single infinite place and
	$O_K := \mathbb F_q[t]$.
Suppose $\gcd(q,|G|) = 1$.
We let $c \subset G$ be a union of conjugacy classes, and
$\inv(K, c, X)$ denote the groupoid cardinality of the groupoid of $G$ field extensions $L$ of $K$
corresponding to an finite extension $f: \spec \mathscr O_L \to \spec \mathscr O_K$, 
where $\mathscr O_L$ denotes
the normalization of $\mathscr O_K$ in $L$,
with inertia in $c$, and such that
$q^{\invc(f)} \leq X$.
Here, by groupoid cardinality, we mean that each extension is counted inversely
proportionally to its automorphisms.

We let $\inv(K, N, c, X)$ denote the groupoid cardinality of extensions $L$ over
$K$ as above, but with the
additional condition $\spec L \otimes_{\mathbb F_q} \overline{\mathbb F}_q$ has
$|G|/|N|$ connected components, any pair of which are isomorphic, and so that
each such component corresponds to an $N$ extensions of $K \otimes_{\mathbb F_q} \overline{\mathbb
F}_q(t)$.
\end{notation}

The next result establishes T\"urkelli's revision of Malle's conjecture as in
\cite[Conjecture 6.7]{turkelli:connected-components-of-hurwitz-schemes}
over $\mathbb F_q(t)$ when one takes $c = G - \id$.
As mentioned in the introduction, 
\cite[Theorem 1.3]{wang:counterexamples-for-turkelli}
shows 
\cite[Conjecture 6.7]{turkelli:connected-components-of-hurwitz-schemes}
is wrong over $\mathbb Q$, but nevertheless we are able to establish many cases
of it over function fields.
We prove a generalized version of T\"urkelli's conjecture where we only allow
ramification to only lie in a union of conjugacy classes $c$ closed under
$q$th powering.
We also count by a general invariant instead of restricting ourselves to the
discriminant.
\begin{theorem}
	\label{theorem:turkelli}
	Using notation from \autoref{notation:malle},
	fix a finite group $G$ and a union of conjugacy classes
	$c \subset G$ closed under $q$th powering. 
	Let $N \subset G$ denote a normal subgroup such that $G/N$ is cyclic.
	There is a constant $C$
	depending on $G$ so that for $q > C$ and $\gcd(q, |G|) = 1$,
	there are positive constants $C_-$ and $C_+$ depending on $q$ and $c$ so that for $X$ sufficiently
	large,
	\begin{align}
		\label{equation:sub-malle-count}
		&C_- X^{\frac{1}{a(c \cap N,\inv)}} (\log X)^{b_M(\mathbb F_{q^{|G|/|N|}}(t),
		N, (c \cap N)_{\inv})-1} \leq
			\inv(\mathbb F_q(t),N, c, X) \\
			&\leq C_+
			X^{\frac{1}{a(c \cap N,\inv)}} (\log
			X)^{b_M(\mathbb F_{q^{|G|/|N|}}(t), N, (c \cap N)_{\inv}) -1}.
	\end{align}
	In particular, 	
	\begin{align}
		\label{equation:all-malle-count}
		C_- X^{\frac{1}{a(c,\inv)}} (\log X)^{b_T(\mathbb F_q(t), c_{\inv})-1} \leq
		 \inv(\mathbb F_q(t),c , X) \leq C_+ X^{\frac{1}{a(c,\inv)}} (\log
		X)^{b_T(\mathbb F_q(t), c_{\inv}) -1}.
	\end{align}
\end{theorem}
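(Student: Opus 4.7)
The plan is to reduce $\inv(\mathbb F_q(t),N,c,X)$ to a sum of $\mathbb F_q$-point counts on Hurwitz stacks, and then apply the Frobenius-equivariant homological stability of \autoref{theorem:frob-equivariant-stabilization} through its arithmetic consequence \autoref{lemma:component-point-bound}. First I would observe that a $G$-field extension $L/\mathbb F_q(t)$ whose base change to $\overline{\mathbb F}_q$ consists of $|G|/|N|$ pairwise-isomorphic $N$-extensions, with inertia multiplicity vector $(n_1,\ldots,n_\upsilon)$ with respect to the $G$-conjugacy classes comprising $c$, is classified up to isomorphism by an $\mathbb F_q$-point of $\cquohur{N}{G}{n_1,\ldots,n_\upsilon}{c\cap N}{\mathbb F_q}$. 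Consequently $\inv(\mathbb F_q(t),N,c,X)$ becomes a weighted sum of $|\cquohur{N}{G}{n_1,\ldots,n_\upsilon}{c\cap N}{\mathbb F_q}(\mathbb F_q)|$ over tuples with $\sum_i n_i\inv(c_i)\leq \log_q X$.

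Next, once all $n_j$ exceed the linear threshold $Ii+J$ of \autoref{lemma:component-point-bound}, each geometrically irreducible $\mathbb F_q$-component contributes approximately $\phi\cdot q^n$ points for a constant $\phi$ depending only on the underlying stable component (with $n=\sum n_i$), and with error exponentially suppressed in $n$. Tuples below threshold contribute lower-order terms handled as in the proof of \autoref{theorem:clm}. By \autoref{lemma:g-irred-components}, the geometrically irreducible $\mathbb F_q$-components correspond to $G$-orbits of tuples $(g_1,\ldots,g_n)\in(c\cap N)^n/B_n$ such that there exists $h\in G$ whose image generates $G/N$ with $(g_1,\ldots,g_n)$ equivalent under $B_n$ and $G$-conjugation to $(hg_1^{q^{-1}}h^{-1},\ldots,hg_n^{q^{-1}}h^{-1})$. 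A necessary condition is that the multiplicity vector be fixed by the permutation of $(c\cap N)/N$ induced by $q^{-1}$-powering and $h$-conjugation.

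The third step is to count these Frobenius-fixed components and sum by invariant. Via Wood's parameterization by the Schur cover (\autoref{notation:stable-components}), for each Frobenius-fixed multiplicity vector the number of geometrically irreducible components descending to $\mathbb F_q$ is bounded above and below by positive constants, uniformly in $n$. The number of multiplicity vectors with $\sum n_i = n$ supported on the classes of minimum invariant $a:=a(c\cap N,\inv)$ and fixed by the stated permutation grows polynomially of degree $|\rho(K',N,(c\cap N)_{\inv},h)|-1$, and the optimal $h$ realizes the exponent $b_M(K',N,(c\cap N)_{\inv})-1$ for $K'=\mathbb F_{q^{|G/N|}}(t)$. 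Summing $\sum_n n^{b_M-1}q^n$ over $n\leq(\log_q X)/a$ yields $X^{1/a}(\log X)^{b_M-1}$, producing \eqref{equation:sub-malle-count}. Then \eqref{equation:all-malle-count} follows by decomposing $\inv(\mathbb F_q(t),c,X)$ according to the normal subgroup of $G$ generated by the inertia of each cover, and identifying the dominant pair $(K',N)$ as one maximizing $b_M$ subject to $a(c\cap N,\inv)=a(c,\inv)$, which is precisely $b_T(\mathbb F_q(t),c_{\inv})$.

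The hard step will be the combinatorial analysis underlying the third paragraph: extracting the precise exponent $b_M-1$ simultaneously in the upper and lower bounds. This requires combining Wood's identification of components with the Schur cover, the compatibility of the $\phi$-values with stabilization recorded in \autoref{lemma:component-point-bound}(2) (so that the $\phi$-values can be grouped along orbits under repeated stabilization), and a careful analysis of how the composite action of $q^{-1}$-powering and $h$-conjugation partitions $(c\cap N)/N$ into orbits. A secondary difficulty will be confirming that contributions from multiplicity vectors supported on conjugacy classes with $\inv(c_i)>a(c\cap N,\inv)$, as well as contributions from tuples below the stability threshold, genuinely contribute only lower-order terms relative to the asserted main term.
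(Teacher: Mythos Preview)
Your overall strategy matches the paper's: translate into point counts on Hurwitz stacks, invoke \autoref{lemma:component-point-bound} to control each geometrically irreducible component, and count the number of such components via the orbit structure of $q^{-1}$-powering combined with $h$-conjugation on $(c\cap N)/N$. The paper packages the component count into \autoref{lemma:component-count-malle-upper} and \autoref{lemma:component-count-malle-lower}, and carries out the invariant-weighted sum via the Tauberian \autoref{lemma:tuple-counting}; your combinatorial sketch of that sum is essentially the same argument.

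There is, however, a genuine gap in your lower bound. You write that $\inv(\mathbb F_q(t),N,c,X)$ ``becomes a weighted sum of $|\cquohur{N}{G}{n_1,\ldots,n_\upsilon}{c\cap N}{\mathbb F_q}(\mathbb F_q)|$'', but this is only an \emph{upper} bound: an $\mathbb F_q$-point of that stack is a $G$-cover of $\mathbb A^1_{\mathbb F_q}$ which is geometrically a disjoint union of connected $N$-covers, and such a cover need not be connected over $\mathbb F_q$. It could already split over $\mathbb F_q$ through some intermediate $N'\subsetneq G$, in which case it is not a field extension at all. The paper handles this in two steps. First it introduces the refined substack $\cgquohur{N}{G}{\invc\leq n}{c}{\mathbb F_q}$ (\autoref{notation:connected-quotient-hur}), discarding exactly those geometrically irreducible components whose preimage in $\cquohur{N}{N'}{n}{c}{\mathbb F_q}$ already consists of $|G|/|N'|$ geometrically irreducible pieces for some proper $N'$; \autoref{lemma:connected-cover-observation} confirms that every connected $G$-extension lands in this substack. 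Second, and this is the step you are missing entirely, \autoref{lemma:n-extensions-constant} shows via an inclusion--exclusion over the intermediate $N'$ that at least a fixed positive proportion (namely $|N|/2|G|$) of the $\mathbb F_q$-points on each such component actually correspond to connected $G$-covers. Without this, your lower bound only bounds the number of $\mathbb F_q$-points on the stack, not the number of field extensions.

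A related but smaller point: your restriction to $h$ whose image generates $G/N$ is correct, but it does not come from \autoref{lemma:g-irred-components}, which only requires $h\in G$. The restriction to generators is precisely what the $\cgquohur$ condition encodes, and it is what makes the exponent come out as $b_M$ rather than a maximum over all $h\in G/N$. You should make explicit that for a component to contribute to the count of connected $G$-extensions, the Frobenius on geometric components of the cover must act transitively, forcing $h$ to generate $G/N$.
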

We prove \autoref{theorem:turkelli} in \autoref{subsubsection:turkelli-proof}

\begin{remark}
	\label{remark:}
	Although
$\inv(\mathbb F_q(t),N, c, X)$ in
\autoref{theorem:turkelli} counts the number of extensions, weighted inversely
proportionally to their automorphisms, the same statement holds if one counts
all extensions with weight $1$, because any such extension has between $1$ and $|G|$
automorphisms.
\end{remark}

Although Malle's prediction for the $b$ constant is not correct in general, we
also prove it is correct if one restricts to geometrically connected covers, 
counts by $\on{rDisc}$, and takes $q$ sufficiently large.
Suppose $c = c_1 \cup \cdots \cup c_\upsilon$, with $c_i \subset G$ pairwise
distinct conjugacy classes. For the statement of the next result only, we use 
$\cquohur {G}{G}{n_1, \ldots, n_\upsilon} {c} {\mathbb F_q}$ to denote the union of
geometrically connected components of 
$\cquohur {G}{G} {n_1 + \cdots + n_\upsilon} {c} {\mathbb F_q}$
whose basechange to $\overline{\mathbb F}_q$ corresponds to the $G$-orbit of a
component of 
$\cphur {G} {n_1,\ldots, n_\upsilon} {c} {\overline{\mathbb F}_q}$.

\begin{theorem}
	\label{theorem:malle-g-connected}
	Fix residues $r_1, \ldots, r_\upsilon \bmod |G|^2$.
	Suppose $c = c_1 \cup \cdots \cup c_\upsilon$ is a disjoint union of $\upsilon$ conjugacy
	classes in $G$.
	There is a constant $C$, depending on $G$ and $c$,
	and a constant $C_{r_1, \ldots, r_\upsilon, c,G,q}$,
	depending on $r_1, \ldots, r_\upsilon, G$, and $q$,
	so that for $q > C$, $\gcd(q,
	|G|) = 1$, and $c$ closed under $q$th powering, 	
	\begin{align}
		\label{equation:malle-g-connected}
		\lim_{n \to \infty} \frac {\sum_{\substack{n_1, \ldots, n_\upsilon \\ n_1 + \cdots + n_\upsilon = n
		\\ n_j \equiv r_j \bmod |G|^2 \text{ for } 1 \leq j \leq \upsilon}}
	\left | [\cphur {G} {n_1, \ldots, n_\upsilon} {c} {\mathbb
F_q}/G](\mathbb F_q) \right | }{q^n n^{b_M(\mathbb F_q(t), G, c)-1}} = C_{r_1,
	\ldots, r_\upsilon, c, G,q}.
	\end{align}
	Moreover, there is some tuple of residue classes 
$r_1, \ldots, r_\upsilon \bmod |G|^2$
for which 
$C_{r_1, \ldots, r_\upsilon, c, G,q}\neq 0$.
\end{theorem}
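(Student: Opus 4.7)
My plan is to combine the Grothendieck--Lefschetz trace formula with the refined point-count estimate of \autoref{lemma:component-point-bound} and the Frobenius-equivariant homological stability of \autoref{theorem:frob-equivariant-stabilization}. First I would observe that $|[\cphur{G}{n_1,\ldots,n_\upsilon}{c}{\mathbb F_q}/G](\mathbb F_q)| = 0$ unless the tuple $(n_1,\ldots,n_\upsilon)$ is invariant under the permutation $\pi$ of $\{c_1,\ldots,c_\upsilon\}$ induced by $q$-th powering, because the definition selects only those components of $\cquohur{G}{G}{n}{c}{\mathbb F_q}$ whose geometric components form a $G$-orbit with tuple $(n_1,\ldots,n_\upsilon)$, and Frobenius-stability of such a $G$-orbit forces $n_j = n_{\pi(j)}$. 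Hence the relevant tuples are parameterized by a single value $m_{\mathcal O}$ per $\pi$-orbit $\mathcal O$, satisfying $\sum_{\mathcal O} |\mathcal O| m_{\mathcal O} = n$ with each $m_{\mathcal O}$ in a fixed residue class modulo $|G|^2$. Standard lattice-point counting shows the number of such tuples grows like $c \cdot n^{s-1}$, where $s$ is the number of $\pi$-orbits, and unwinding \autoref{notation:malle} in the case $N = G$ (so $G/N$ is trivial, the only candidate generator is $h = \mathrm{id}$, and the relation collapses to $x \sim x^{q^{-1}}$) gives $s = b_M(\mathbb F_q(t), G, c)$, matching the exponent of $n$ in the denominator.

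Next, for each $\pi$-invariant tuple with all $n_i$ sufficiently large, \autoref{lemma:component-point-bound} applied to the (finitely many, by \autoref{notation:stable-components}) $G$-orbits of geometrically irreducible components over $(n_1,\ldots,n_\upsilon)$ yields
\[
\bigl|[\cphur{G}{n_1,\ldots,n_\upsilon}{c}{\mathbb F_q}/G](\mathbb F_q)\bigr| = \Phi(r_1,\ldots,r_\upsilon) \cdot q^n + O\bigl(q^n \cdot (C/\sqrt{q})^{(n-J)/I}\bigr),
\]
where $\Phi(r_1,\ldots,r_\upsilon)$ is the sum of the stabilized constants $\phi_{\alpha,(n_1,\ldots,n_\upsilon),c,G,G,q}$ over $G$-orbits of stable components; this sum is independent of the specific tuple (only depending on residues) by the periodicity statement in \autoref{lemma:component-point-bound}(2) and by Wood's Schur-cover parameterization of stable components in \autoref{notation:stable-components}. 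Summing over all $\pi$-invariant tuples with the fixed residues and dividing by $q^n n^{b_M-1}$ produces the claimed limit $C_{r_1,\ldots,r_\upsilon,c,G,q} = \Phi(r_1,\ldots,r_\upsilon) \cdot \lambda$, where $\lambda = \lim_n (\text{tuple count})/n^{s-1}$ is an explicit positive rational depending on $|G|$, the $|\mathcal O|$, and the residues.

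For the non-vanishing assertion, I would choose $(r_1,\ldots,r_\upsilon)$ that are $\pi$-invariant (so the tuple sum is nonempty) and so that the corresponding tuples are compatible with the image of some element of $\hat G = S_c \times_{G^{\ab}} \mathbb Z^\upsilon$ in $\mathbb Z^\upsilon$. By \autoref{notation:stable-components}, for all sufficiently large $n_i$ in this residue class there exists at least one geometrically irreducible component of $\cphurc{n_1,\ldots,n_\upsilon}{c}$ (the one indexed by the trivial lifting invariant in $\hat G$), and \autoref{lemma:weak-point-bound} implies each such component has point count $q^n(1 + O(q^{-1/2}))$ since the Frobenius eigenvalue on $H^0$ of a geometrically irreducible component equals $1$ and higher cohomology is controlled by Deligne's bound. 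Hence the corresponding $\phi$ equals $1 + O(q^{-1/2})$ and is positive for $q > C$, so $\Phi(r_1,\ldots,r_\upsilon) > 0$.

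The main technical obstacle is bookkeeping: one must precisely match the combinatorics of $\pi$-invariant tuples, the $G$-action on the set $\hat G$ of stable components (arising from the failure of $G$-conjugation to act trivially on the Schur cover in general), and the residue class conditions. This is handled uniformly by combining the Schur-cover description of \autoref{notation:stable-components} with the Frobenius equivariance of the stability maps from \autoref{theorem:frob-equivariant-stabilization}, which ensures that, within each congruence class mod $|G|^2$, both the component-indexing data and the traces of Frobenius on the stable cohomology are constant in the tuple.
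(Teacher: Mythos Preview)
Your approach is essentially the paper's: reduce to $\pi$-invariant tuples (where $\pi$ is $q^{-1}$-powering on $c/G$), identify the number of $\pi$-orbits with $b_M(\mathbb F_q(t),G,c)$, and feed \autoref{lemma:component-point-bound} into a lattice-point count. The paper packages the periodicity of the \emph{set} of geometrically irreducible components (not just of the $\phi$-values) into \autoref{lemma:geom-irred-mod-g}(2), whereas you gesture at ``Wood's Schur-cover parameterization''; this is fine in spirit, but you should say explicitly that the Frobenius action on $\hat G$ from \cite[p.~8,(4)]{wood:an-algebraic-lifting-invariant} depends only on the $n_i \bmod |G|$, since \autoref{lemma:component-point-bound}(2) by itself only gives periodicity of $\phi$ along one stabilization direction for a \emph{given} component.

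There is one real gap in your non-vanishing argument. You exhibit a component over $\mathbb C$ via the trivial lifting invariant and then apply \autoref{lemma:weak-point-bound} to it, but that lemma is stated for a geometrically connected component $Z$ of a Hurwitz space \emph{over $\mathbb F_q$}; you have not checked that your $\overline{\mathbb F}_q$-component is fixed by Frobenius descent, so the assertion ``Frobenius eigenvalue on $H^0$ equals $1$'' is unjustified. This Frobenius-fixedness is precisely the content of \autoref{lemma:geom-irred-mod-g}(1) (resting on the computation in \autoref{lemma:w-irreducible} that $q^{-1} * \prod_j [g^{q^j}]^{Md} = \prod_j [g^{q^j}]^{Md}$ when the $n_i$ are multiples of $|G|$), and it is how the paper handles the non-vanishing. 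Once you invoke that lemma with $r_1 \equiv \cdots \equiv r_\upsilon \equiv 0 \bmod |G|$, your argument is complete.
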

We prove this in \autoref{subsubsection:connected-malle-proof}.

\begin{remark}
	\label{remark:}
	We note that \autoref{theorem:malle-g-connected} can be viewed as a
	periodic version of Malle's conjecture where one counts by the reduced
	discriminant from \autoref{example:reduced-discriminant}, and restricts
	to counting geometrically connected covers.
	It seems likely that a similar statement to
	\autoref{theorem:malle-g-connected} could be proven for all $G$
	extensions
	rather than the geometrically connected ones.
	We believe it would be
	interesting to verify this.
	See also \autoref{question:extend-constant}.
\end{remark}

\subsection{Defining Hurwitz spaces for counting by invariant}
\label{subsection:invariant-hurwitz-spaces}

We next want to define Hurwitz spaces which count points by a given invariant.
In order to define these, we first show that these invariants of a cover is constant
along irreducible components. 
\begin{lemma}
	\label{lemma:invariants-on-components}
	Suppose $c \subset G$ is a conjugacy invariant subset.	Suppose $B$ is a local henselian scheme with residue field $\kappa$
	containing $\mathbb F_q$ that has
	characteristic prime to $|G|$. If $\kappa$ is
	algebraically closed, for any irreducible component $Z \subset \phur {G} {n} {c} {B}$
	and $x, x'$ points of $Z$,
	we have $\invc(x) = \invc(x')$.
	If $c$ is closed under $q$th powering, this also holds when $\kappa = \mathbb F_q$.

	With the same hypotheses as above, if $c \subset N \subset N' \subset G$ with $N$ and $N'$
	normal subgroups of $G$, for $Z \subset \quohur {N}{N'}{n} {c} {B}$ any irreducible
	component and $x, x' \in Z$, we have $\invc(x) = \invc(x')$.
\end{lemma}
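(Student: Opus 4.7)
The plan is to reduce the statement to the combinatorial description of components given in \autoref{lemma:g-irred-components}, and then to verify directly that $\invc$ depends only on that combinatorial data.

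First I would reduce to the case $B = \spec \kappa$. By \cite[Corollary B.1.4]{ellenbergL:homological-stability-for-generalized-hurwitz-spaces}, the relevant Hurwitz stacks admit smooth normal crossings compactifications over $B$ in which the Hurwitz locus meets every component of every fiber, so \autoref{lemma:irreducible-generization-criterion} yields a bijection between irreducible components over $B$ and over $\kappa$. Since $\invc(x)$ depends only on the multiset of inertia conjugacy classes of the geometric branch points of the cover classified by $x$, and this multiset is locally constant on any connected family of covers, it suffices to verify the claim over the residue field.

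When $\kappa$ is algebraically closed of characteristic prime to $|G|$, \autoref{lemma:g-irred-components}(1) identifies components of $\phur{G}{n}{c}{\kappa}$ with $B_n$-orbits on $c^n$. The braid generators $\sigma_i$ act, as in \autoref{definition:rack}, by replacing $g_i$ with $g_{i+1}$ and $g_{i+1}$ with a conjugate of $g_i$, which visibly preserves the multiset $\{[g_1],\ldots,[g_n]\}$ of conjugacy classes. Hence the counts $n_i$ of entries in each $c_i$ are $B_n$-invariant, and so is $\invc = \sum_i n_i \inv(c_i)$. When $\kappa = \mathbb F_q$ and $c$ is closed under $q$-th powering, \autoref{lemma:g-irred-components}(2) identifies components over $\mathbb F_q$ with Frobenius-orbits of components over $\overline{\mathbb F}_q$, where Frobenius acts by $q^{-1}$-powering per \autoref{definition:powering}. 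Because $\gcd(q,|G|)=1$, the exponent $q^{-1}$ is coprime to $\on{ord}(g)$ for each $g \in c$, so $\inv(g^{q^{-1}}) = \inv(g)$ by the definition of a counting invariant in \autoref{definition:invariant}; hence $\invc$ is Frobenius-invariant and the claim follows from the algebraically closed case.

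For $\quohur{N}{N'}{n}{c}{B}$, the components are $N'$-orbits on components of $\phur{N'}{n}{c}{B}$, with the $N'$-action given by simultaneous conjugation of the indexing tuple $(g_1,\ldots,g_n)$. Conjugation by any $n' \in N' \subset G$ preserves each $G$-conjugacy class within $c$, and $\inv$ is constant on $G$-conjugacy classes by hypothesis, so $\invc$ is $N'$-invariant; combined with the earlier cases, this settles the lemma. The principal subtlety is the initial reduction to the residue field: it requires a suitable smooth compactification to invoke \autoref{lemma:irreducible-generization-criterion}, and one must argue carefully that $\invc$ is locally constant on the Hurwitz stack itself, which ultimately comes from the fact that the inertia of each branch point is a discrete topological invariant of the associated monodromy representation.
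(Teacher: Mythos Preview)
Your approach is correct and runs parallel to the paper's, with the main difference being what you take as given versus what you prove. You assert as a standard fact that the multiset of inertia conjugacy classes is locally constant on any connected family of tame covers; once granted, this immediately gives constancy of $\invc$ on each irreducible (hence connected) component, and the rest is bookkeeping. The paper instead \emph{proves} this constancy by explicitly exhibiting, for each component $Z$, a factorization of the branch map $Z \to \conf_{n,B}$ through the colored configuration space $\conf_{n_1,\ldots,n_\upsilon,B}$: first over $\mathbb{C}$ via \autoref{definition:rack-pointed-hurc}, then over characteristic-$0$ algebraically closed fields by base change, then over $\overline{\mathbb{F}}_q$ by specialization along a mixed-characteristic henselian DVR, and finally over $\mathbb{F}_q$ via the Frobenius description in \cite{liuWZB:a-predicted-distribution} and \cite{wood:an-algebraic-lifting-invariant}. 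So the paper is more self-contained about precisely the step you black-box, while your route is shorter once that fact is accepted.

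Two small technical points. First, \autoref{lemma:g-irred-components} as written treats $\cquohur$ over $\overline{\mathbb{F}}_q$, not $\phur$ over an arbitrary algebraically closed $\kappa$; for the latter you should appeal directly to \autoref{definition:rack-pointed-hurc} (components of $\phurc{n}{c}$ are $B_n$-orbits on $c^n$) and then use \autoref{lemma:component-bijection} to pass to other algebraically closed fields. Second, your reduction to $B=\spec\kappa$ is slightly redundant: once local constancy of the inertia multiset is granted, constancy on connected components follows without ever passing to the residue field, so the combinatorial verification over $\kappa$ is an independent confirmation rather than a logically necessary step.
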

\begin{proof}
	We will first prove the statement about components of
	$\phur {G}{n} {c} {B}$.
	First, we handle the case $\kappa$ is algebraically closed.
	To verify this case, it suffices so show there
	is a well defined map
	$Z \to \conf_{n_1, \ldots, n_\upsilon,B}$ for any component $Z
	\subset\phur {G}{n}
	{c} {B}$
	with some point of $Z$ parameterizing covers branched at $n_i$ points
	with inertia in $c_i$. Let $n := \sum_i
	n_i$.
	We certainly have a well defined map $Z \to \conf_{n,B}$ and we wish to show
	this factors through 
	$\conf_{n_1, \ldots, n_\upsilon,B}$.
	Using \autoref{lemma:component-bijection},
	we can deduce the statement for arbitrary henselian $B$ from the case
	that $B = \spec \kappa$ using that 
	the
	specialization map on geometric components of Hurwitz space is compatible with the
	specialization map on geometric components of configuration space.
	This factorization is induced over $B= \spec \mathbb C$ via the map described in
	\autoref{definition:rack-pointed-hurc}.
	For $B = \spec \kappa$  characteristic $0$, we then obtain the claim via base
	change to a common algebraically closed field containing $\kappa$ and
	$\mathbb C$.
	To conclude the proof, it suffices to treat algebraically closed fields
	$\kappa$ of positive characteristic prime to $|G|$.
	In turn, via base change, one can reduce to verifying this for fields of
	the form
	$\kappa = \overline{\mathbb F}_q$. 
	In this case, one can take $B$ to be a
	henselian dvr with residue field 
	$\overline{\mathbb F}_q$ and generic characteristic $0$, in which case
	the specialization map induces a bijection on 
	components by
	\autoref{lemma:component-bijection}, and again this specialization is
	compatible with the corresponding specialization map on configuration
	space.

	Next, we verify the statement for $Z \subset \phur {G}{n} {c} {B}$
when $B = \spec \mathbb F_q$.
	Next, note that the Frobenius action preserves the value of $\inv(x)$ by
	\cite[Theorem 12.1(2)]{liuWZB:a-predicted-distribution}, which describes
	how Frobenius acts on components of Hurwitz spaces,
	and \cite[Remark 5.3]{wood:an-algebraic-lifting-invariant}, which
	explains why this action preserves the invariant of the cover using the
	assumption that $\inv(g) = \inv(g^q)$ for any $q$ with $q$
	prime to $|G|$.
	It follows that 
	$\invc(x) = \invc(x')$ for any $x, x' \in \phur {G}{n} {c} {\mathbb F_q}$.

	We next verify the statement for 
	$Z \subset \phur {G}{n} {c} {B}$
for an arbitrary henselian scheme $B$ with
	residue field $\mathbb F_q$.
	Indeed, let $B'$ denote the strict henselization of $B$. In order to
	verify the statement for some irreducible component $Z \subset \phur {G}{n}
	{c} {B}$,
	it suffices to show that for any $x_1, x_2 \in Z \times_B B'$ we have
	$\inv(x_1) = \inv(x_2)$. 
	Choose the two irreducible components $Z_1, Z_2 \subset Z
	\times_B B'$, such that $x_1 \in Z_1, x_2 \in Z_2$. Take $x'_1 \in
	(Z_1)_{\overline{\mathbb F}_q}$ and
	$x'_2 \in (Z_1)_{\overline{\mathbb F}_q}$. By the settled case when $B$
	is strictly henselian we have $\invc(x_1) = \invc(x'_1)$ and $\invc(x_2)
	= \invc(x'_2)$ and by the settled
	case when $B = \spec \mathbb F_q$, we have $\invc(x'_1) = \invc(x'_2)$.
	Hence, $\inv(x_1) = \inv(x_2)$.
	
The final part of the statement about components of 
$\quohur {N}{N'}{n} {c} {B}$
follows similarly to the above, using that the invariant
is constant on conjugacy classes, and hence conjugating a point of Hurwitz space
by the $G$ action
(i.e., changing the marked point of a given cover over $\infty$)
will preserve the value of the invariant.
\end{proof}

We next define the relevant Hurwitz spaces we will use
where we reorder the components according to a given counting invariant.
To prove \autoref{theorem:malle-g-connected}, we will only concern ourselves
with geometrically connected covers, so we will only need the case $N = G$
in the definition below to prove \autoref{theorem:malle-g-connected}. However, it will be convenient to make the following
notation for more general $N$ in order to prove \autoref{theorem:turkelli}.

\begin{notation}
	\label{notation:invariant-hur}
	Fix a finite group $G$, normal subgroups $N \subset N' \subset G$, and $c \subset N$ a conjugacy invariant subset.
	Let $B$ be a Henselian local scheme on which $|G|$ is invertible.
	Assume the residue field of $B$ is either $\mathbb F_q$ and $c$ is
	closed under $q$th powering, or just assume the residue field is algebraically closed.
We define 
$\cquohur N {N'} {\invc \leq n} c B$ to be the union of components of $\cup_{n'
\geq 0}\cquohur N {N'} {n'} c B$ parameterizing covers whose invariant is at
most $n$.
This is well defined by
\autoref{lemma:invariants-on-components}.
\end{notation}

\begin{notation}
	\label{notation:connected-quotient-hur}
	Continuing with notation as in \autoref{notation:invariant-hur},
define
$\cgquohur N G {n} c B \subset \cquohur N G {n} c B$
to be the union of components which are geometrically irreducible
and whose preimage in 
$\cquohur N {N'} {n} c B$
does not consist of $|G|/|N'|$ geometrically irreducible components
for any $N' \subsetneq G$ containing $N$.

We also define 
$\cgquohur N {G} {\invc \leq n} c B$
to denote the union of components of
$\cup_{n' \geq 0} \cgquohur N {G} {n'} c B$
parameterizing covers
whose invariants are at most $n$.
\end{notation}

\begin{notation}
	\label{notation:connected-hur-by-invariants}
	Continuing to use notation as in
	\autoref{notation:connected-quotient-hur},
when  $B$ is an algebraically closed field, 
we use 
$\cgquohur N G {n_1,\ldots, n_\upsilon} c B$
to denote the union of components of 
$\cgquohur N G {n_1+\ldots+ n_\upsilon} c B$
which correspond to $G$-conjugation orbits of components of 
$\quohur N G {n_1+\ldots+ n_\upsilon} c B$
which additionally parameterize covers with $n_i$ branch points with inertia in
$c_i$. (Here,we still impose the condition that these components are
geometrically irreducible and their preimage in 
$\cquohur N {N'} {n} c B$
does not consist of $|G|/|N'|$ geometrically irreducible components
for any $N' \subsetneq G$ containing $N$.)
\end{notation}

Before continuing, we prove a quick lemma explaining that 
$\quohur N G {\invc \leq n} {c\cap N} {\mathbb F_q}$ has components
parameterizing points that could potentially correspond to connected $G$ covers.

\begin{lemma}
	\label{lemma:connected-cover-observation}
	Fix a prime power $q$.
	Let $G$ be a group, $N \subset G$ be a normal subgroup, and 
$c \subset N-\id$ 
	a union of
	conjugacy classes generating $N$ and closed under $q$th powering.
	Any 
	$x \in \quohur N G {\invc \leq n} c {\mathbb F_q}(\mathbb F_q)$
corresponding to a connected $G$ cover which geometrically becomes a connected
$N$ cover must lie in
$\cgquohur N G {\invc \leq n} c {\mathbb F_q}(\mathbb F_q)$.
\end{lemma}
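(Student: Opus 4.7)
The plan is to verify directly the three defining properties of $\cgquohur N G {\invc \leq n} c {\mathbb F_q}$ for the $\mathbb F_q$-irreducible component $Z$ containing $x$: (a) that $Z \subset \cquohur$, (b) that $Z$ is geometrically irreducible, and (c) that for every subgroup $N \subseteq N' \subsetneq G$, the preimage of $Z$ under the étale degree-$|G/N'|$ map $\mu : \cquohur N {N'} {\invc \leq n} c {\mathbb F_q} \to \cquohur N G {\invc \leq n} c {\mathbb F_q}$ does not split into $|G/N'|$ geometrically irreducible components. Condition (a) is immediate from the combinatorial description of geometric components in \autoref{lemma:g-irred-components}, since the geometric monodromy tuple of a cover that is geometrically a connected $N$-cover must generate $N$. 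Condition (b) follows from the standard observation that an $\mathbb F_q$-irreducible component admitting an $\mathbb F_q$-rational point $x$ is automatically geometrically irreducible: Frobenius permutes the geometric components of $Z_{\overline{\mathbb F}_q}$ transitively yet must fix the one containing $\bar x$, forcing a single orbit.

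The main obstacle is condition (c), which I plan to establish by contradiction. Suppose $\mu^{-1}(Z)$ consists of $|G/N'|$ geometrically irreducible components $W_1, \ldots, W_{|G/N'|}$. Since the degrees $\deg(\mu|_{W_i})$ sum to $|G/N'|$ and there are $|G/N'|$ summands, each $\deg(\mu|_{W_i}) = 1$, so each $W_i$ contains exactly one point of the geometric fiber $\mu^{-1}(x)$. Each $W_i$ is $\mathbb F_q$-defined (as a geometrically irreducible component), so its unique preimage of $x$ is Frobenius-fixed, hence $\mathbb F_q$-rational. This would produce $|G/N'|$ distinct $\mathbb F_q$-rational lifts of $x$ to $\cquohur N {N'} {\invc \leq n} c {\mathbb F_q}$.

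To derive a contradiction, I plan to interpret these lifts directly on the cover: an $\mathbb F_q$-rational lift of $x$ corresponds to a Frobenius-fixed $N'$-equivalence class of markings on $X$, where the geometric Frobenius acts on the $G$-torsor of markings as multiplication by some $\phi \in G$ (well defined up to $G$-conjugation). A coset calculation shows the orbit $N' g$ is Frobenius-fixed if and only if $g \phi g^{-1} \in N'$, which, since $N \subseteq N'$, is equivalent to the image of $g \phi g^{-1}$ in $G/N$ lying in $N'/N$. The hypothesis that $X$ is connected over $\mathbb F_q$ forces $G/N$ to be cyclic and generated by $\bar\phi$, the image of $\phi$; in the resulting abelian quotient the condition reduces to $\bar\phi \in N'/N$, which would force $N'/N = G/N$ and hence $N' = G$, contradicting $N' \subsetneq G$. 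Thus there are zero $\mathbb F_q$-rational lifts, contradicting the existence of $|G/N'|$ such lifts and completing the verification of (c).
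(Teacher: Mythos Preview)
Your proof is correct and follows essentially the same approach as the paper's. Both argue: (b) the component $Z$ containing $x$ is geometrically irreducible because it has an $\mathbb F_q$-point; and (c) if the preimage $\mu^{-1}(Z)$ in the $N'$-quotient consisted of $|G/N'|$ geometrically irreducible components, each would map isomorphically to $Z$, so $x$ would lift to an $\mathbb F_q$-point upstairs, forcing the cover to have Galois group contained in $N' \subsetneq G$ --- contradicting connectedness. Your final paragraph unpacks this last implication more explicitly than the paper does, computing directly with the Frobenius element $\phi$ on the $G$-torsor of markings that no $N'$-coset is Frobenius-stable once $\bar\phi$ generates the cyclic group $G/N$; the paper compresses this into the single clause ``each such cover has covering group contained in $N' \subsetneq G$.''
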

\begin{proof}
	We first note $x$ must lie in a geometrically irreducible component
$Z \subset \quohur N G {\invc \leq n} c {\mathbb F_q}$, as any component $Z$ which is not geometrically irreducible has no $\mathbb
F_q$ points.
Second, if there is some $N' \subsetneq G$ so that 
$\quohur N {N'} {\invc\leq n} c {\mathbb F_q}(\mathbb F_q)$
has $|G|/|N'|$ geometrically irreducible components over such a component $Z$,
then those components are permuted by the $G/N'$ action, and each map
isomorphically to $Z$ in the quotient.
Therefore, all $\mathbb F_q$ points of $Z$ lift to some $\mathbb F_q$ point of a
component of 
$\quohur N {N'} {\invc\leq n} c {\mathbb F_q}$,
which implies that $Z$ has no $\mathbb F_q$ points corresponding to a connected
$G$ cover, as each such cover has covering group contained in $N'\subsetneq G$.
\end{proof}
\begin{remark}
	\label{remark:}
	\autoref{lemma:connected-cover-observation}
	is meant to partially explain the superscript $G$ on the $C$  in the notation
$\cgquohur N G {\invc \leq n} {c\cap N} {\mathbb F_q}$.
Namely, it expresses
	that the covers should not only be geometrically connected $N$ covers,
	but the components should even contain many connected $G$ covers.
In fact, we will later show that all components of 
$\cgquohur N G {n} {c\cap N} {\mathbb F_q}$,
for $n$ sufficiently
large, contain connected covers, which follows from
\autoref{lemma:n-extensions-constant}.
\end{remark}

\subsection{Counting extensions by reduced discriminant}
\label{subsection:reduced-discriminant-malle}

Our next goal is to prove \autoref{theorem:malle-g-connected}.
This will be substantially easier than \autoref{theorem:turkelli}
due to two simplifications. First, we are only counting geometrically connected
covers, so we do not need to worry about subtleties related to connected covers
which are disconnected geometrically. Second, we count by the product of
ramified primes, for which it is simpler to organize the information from the perspective of our
homological stability results. In the case of counting by a more general
invariant, we will need to use a Tauberian theorem from analytic number theory.

Note that
$\cgquohur N G {\invc\leq n} c {\mathbb F_q}$
is defined over $\mathbb F_q$ and so there is descent data for
$\cgquohur N G {\invc \leq n} c {\overline{\mathbb F}_q}$
along $\overline{\mathbb F}_q$ over $\mathbb F_q$.
Hence it makes sense to ask whether one of its components, or, in turn, a
component of 
$\cgquohur N G {n_1,\ldots, n_\upsilon} c {\overline{\mathbb F}_q}$ is fixed by
this descent data.
Being fixed corresponds to being geometrically irreducible.
The next lemma shows this is often the case, and also that irreducible
components occur with a certain periodicity, in a suitable sense.

\begin{lemma}
	\label{lemma:geom-irred-mod-g}
	We use notation from \autoref{notation:connected-hur-by-invariants}.
	Fix a prime power $q$.
	Let $G$ be a finite group, $N$ a normal subgroup, and $c \subset N -
	\id$ a union of conjugacy classes closed under $q$th powering.
\begin{enumerate}
	\item 	Suppose $h$ generates $G/N$,
	we have an equality of formal sums
	$\sum_{i=1}^j n_i c_i= \sum_{i=1}^j n_i h c_i^{q^{-1}}h^{-1}$, and
	$n_1, \ldots, n_j \equiv 0 \bmod |G|$. Then there is some component of
$\cgquohur N G {n_1,\ldots, n_j, 0, \ldots, 0} c {\overline{\mathbb F}_q}$
which is fixed by descent data for $\overline{\mathbb F}_q$ over $\mathbb F_q$,
	\item Moreover, there is a constant $J$ depending on $c$ so that
whether a component of
$\cgquohur N G {n_1,\ldots, n_j, 0, \ldots, 0} c {\overline{\mathbb F}_q}$
is fixed by the above descent data only depends on 
the residue classes of $n_1, \ldots, n_j$ modulo $|G|$, so long as
$n_1, \ldots, n_j > J$.
	\item In particular, those components fixed by such descent data correspond to
geometrically irreducible components of 
$\cgquohur N G {n_1 + \cdots + n_j} c {{\mathbb F}_q}$.
\end{enumerate}

\end{lemma}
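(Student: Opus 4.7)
The plan is to combine Wood's parameterization of stable components via the group $\hat{G}$ from \autoref{notation:stable-components} with the description of the Frobenius descent action on components via $q^{-1}$-powering given by Theorem 12.1(2) of \cite{liuWZB:a-predicted-distribution}, which we already exploited in the proof of \autoref{lemma:w-irreducible}. For part (2), once $n_1, \ldots, n_j$ exceed the threshold past which Wood's bijection applies, components of $\cphur N {n_1, \ldots, n_j, 0, \ldots, 0} c {\overline{\mathbb F}_q}$ are in bijection with elements of $\hat{G}$ projecting to $(n_1, \ldots, n_j, 0, \ldots, 0) \in \mathbb{Z}^\upsilon$, and components of $\cquohur N G {n_1, \ldots, n_j, 0, \ldots, 0} c {\overline{\mathbb{F}}_q}$ correspond to $G/N$-orbits on these. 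The conditions cutting out $\cgquohur N G$ inside $\cquohur N G$ are invariant under $\hat{G}$-translations whose image in $\mathbb{Z}^\upsilon$ is zero, so whether a component lies in $\cgquohur N G$ depends only on the class of $(n_1,\dots,n_j)$ modulo $|G|$ once $n_i > J$ for some threshold $J$ depending only on $c$. The Frobenius descent acts by $\alpha \mapsto h F(\alpha) h^{-1}$, where $F$ is the automorphism of $\hat{G}$ induced by $q^{-1}$-powering on conjugacy class labels and $h$ runs through the $G/N$-orbit; since the kernel of $\hat{G} \to \mathbb{Z}^\upsilon$ is finite of exponent dividing $|G|$, whether $F$ fixes a given orbit depends only on the residues of $(n_1,\dots, n_j)$ modulo $|G|$, proving (2).

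For part (1), the combinatorial hypothesis $\sum n_i c_i = \sum n_i h c_i^{q^{-1}} h^{-1}$ is the necessary condition of \autoref{lemma:g-irred-components}(3) for the Frobenius orbit on components with the given conjugacy-class multiplicities to be nonempty and acted on nontrivially. To produce an actual fixed point, I exhibit a tuple $(g_1, \ldots, g_n)$ with $n_i$ entries in $c_i$ whose image in $\hat{G}$ lies in the fixed locus of $\alpha \mapsto h F(\alpha) h^{-1}$. The divisibility hypothesis $n_i \equiv 0 \bmod |G|$ is used to kill any obstruction in $\ker(\hat{G}\to\mathbb{Z}^\upsilon)$: since this kernel has exponent dividing $|G|$, multiplying the chosen tuple by blocks of central powers $[x]^{\on{ord}(x)}$ that sum to a multiple of $|G|$ in each $c_i$ corrects the lifting invariant onto the Frobenius-fixed locus without altering the prescribed residues. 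I then verify that this component lies in $\cgquohur N G$ rather than merely in $\cquohur N G$: for $n_i$ sufficiently large, the subgroup generated by any representative tuple is forced to be all of $N$, and lifting-invariant genericity prevents the component from being the base change of $|G|/|N'|$ components for any $N \subsetneq N' \subseteq G$.

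Part (3) follows immediately from standard Galois descent for finite-type schemes over $\mathbb{F}_q$: a geometrically irreducible component $Z$ of the base change $X_{\overline{\mathbb F}_q}$ of an $\mathbb{F}_q$-scheme $X$ descends to a geometrically irreducible component of $X$ over $\mathbb{F}_q$ precisely when $Z$ is fixed by the Frobenius descent data, applied here to $X = \cgquohur N G {n_1+\cdots+n_j} c {\mathbb F_q}$ together with the observation that the $\cgquohur$-locus is cut out by geometric conditions stable under base change. The principal obstacle in the entire argument is the verification in part (1) that the constructed fixed component actually lies in $\cgquohur N G$ and not merely in $\cquohur N G$: the geometric-irreducibility half is automatic from being a single $\hat{G}$-class, but ruling out decomposition through every intermediate normal subgroup $N \subsetneq N' \subseteq G$ requires the indecomposability argument at the level of lifting invariants, and this is where the hypothesis $n_i \equiv 0 \bmod |G|$ (together with large $n_i$) is essential.
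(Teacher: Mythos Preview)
Your approach is the same as the paper's: both use Wood's parametrization of stable components by $\hat{N} = S_c \times_{N^{\ab}} \mathbb Z^{c/N}$ together with the explicit discrete $q^{-1}$-action from \cite[p.~8, (4)]{wood:an-algebraic-lifting-invariant}. Parts (2) and (3) are fine and match the paper. For part (1), the paper also exhibits the specific element $(\id,\underline{m}) \in \hat{N}$: the divisibility $n_i \equiv 0 \bmod |G|$, combined with $\exp(S_c) \mid |G|$, forces the $S_c$-coordinate of $q^{-1}*(\id,\underline{m})$ to remain $\id$, so that $q^{-1}*(\id,\underline{m}) = (\id,\underline{m}^{q^{-1}})$, and the hypothesis on $h$ then makes the $G$-orbit Frobenius-fixed.

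The genuine gap is your verification that this component lies in $\cgquohur N G$ rather than merely in $\cquohur N G$. Your phrase ``lifting-invariant genericity'' is not an argument, and you misattribute the role of the divisibility hypothesis: $n_i \equiv 0 \bmod |G|$ is what makes $(\id,\underline{m})$ Frobenius-fixed, not what rules out decomposition through intermediate $N'$. The paper's argument for the $\cgquohur$ condition instead uses that $h$ \emph{generates} $G/N$. Unpacked: suppose for some $N \subset N' \subsetneq G$ the preimage in $\cquohur N {N'}$ consisted of $|G|/|N'|$ geometrically irreducible components. This forces both $\text{Stab}_G(\underline{m}) \subset N'$ and every $N'$-orbit in the $G$-orbit of $(\id,\underline{m})$ to be Frobenius-fixed; the latter means $N' \cap \{g : g\,\underline{m}^{q^{-1}} = \underline{m}\} \neq \emptyset$. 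But this set is the coset $h \cdot \text{Stab}_G(\underline{m}) \subset h N'$, which is disjoint from $N'$ since $h \notin N'$. This is the step you need to supply in place of ``genericity''.
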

\begin{proof}
	Statement (3) is a general fact about descent.

	We next verify (1).
	We now follow notation from 
\cite{wood:an-algebraic-lifting-invariant}.
In particular, we will use $\underline{m}$ to denote a tuple $(n_1, \ldots,
n_j) \in \mathbb Z^j$ and we use $S_c\to G$ to denote a particular finite group
called a reduced Schur cover for $G$. We will use that $\ker(S_c \to G)$ has
exponent dividing $|G|$.
We use $q^{-1} *$ to denote the discrete action of $q^{-1}$ on $S_c
\times_{G^{\on{ab}}} \mathbb Z^{G/c}$.
The action of such descent data on the components of $\cquohur N {N'} {n_1,
\ldots, n_j, 0, \ldots, 0} c {\overline{\mathbb F}_q}$
can be deduced from the explicit description of the discrete action in
\cite[p. 8, (4)]{wood:an-algebraic-lifting-invariant}.
(Technically, it is assumed there that $c$ is closed under invertible powering,
but the same proof works if we only assume $c$ is closed under $q$th powering.)
Namely, if we take $h = \id \in S_c$ and $\underline{m} = (n_1,
\ldots, n_j)$ with all $n_1, \ldots, n_j \equiv 0 \bmod |G|$, 
we find $q^{-1} * (\id, \underline{m}) = (\id, \underline{m}^{q^{-1}})$,
using that each $n_i$ divides the order of $|G|$
and the components of 
$\cquohur N {N'} {n_1,
\ldots, n_j, 0, \ldots, 0} c {\overline{\mathbb F}_q}$
correspond to $N'$ orbits of such data.
In our setting, where 
$\sum_{i=1}^j n_i c_i= \sum_{i=1}^j n_i h c_i^{q^{-1}}h^{-1}$,
we claim there cannot be $|G|/|N'|$ orbits for any $N' \subsetneq G$,
so that 
the set of $h \in G/N$ for which 
$\sum_{i=1}^j n_i c_i= \sum_{i=1}^j n_i h c_i^{q^{-1}}h^{-1}$
consists precisely of those $h \in N'/N$. 
Indeed, the definition
$\cgquohur N G {n_1,\ldots, n_j, 0, \ldots, 0} c {\overline{\mathbb F}_q}$
implies some generator
$h \in G/N$ satisfies the above, so $h \notin N'/N$.
This proves the first statement.

Finally, we verify (2). This follows from the explicit description of the
action of the descent data on the components given in
\cite[p. 8, (4)]{wood:an-algebraic-lifting-invariant},
whose first component only depends on the values of $n_1, \ldots, n_\upsilon \bmod
|G|$ using that the exponent of $S_c$ divides $|G|$, see also \cite[Remark
4.1]{wood:an-algebraic-lifting-invariant}.
\end{proof}

The proof of 
\autoref{theorem:malle-g-connected} now follows fairly straightforwardly from the above
lemma and our results on Frobenius equivariant stabilization of cohomology of
Hurwitz spaces.

\subsubsection{Proof of \autoref{theorem:malle-g-connected}}
\label{subsubsection:connected-malle-proof}

	We first show
	\eqref{equation:malle-g-connected} holds.
	This will follow from \autoref{lemma:component-point-bound} once we verify
	the number of geometrically irreducible components of 
	$[\cphur {G} {n} {c} {\mathbb F_q}/G]$ with all $n_1, \ldots, n_\upsilon > J$
	such that $n_j \equiv r_j \bmod |G|^2$
	is a polynomial in $n$ of degree $b_M(\mathbb F_q(t), G, c)-1$.

	Using \autoref{lemma:geom-irred-mod-g}(2) and (3), in order to verify
	\eqref{equation:malle-g-connected},
	it suffices to prove the number 
	of tuples $\sum_{i=1}^j n_i c_i$ with
	$\sum_{i=1}^j n_i c_i =\sum_{i=1}^j n_i c_i^{q^{-1}}$
	and $n_j \equiv r_j \bmod |G|^2$
	is a polynomial in $n$ of degree 
	$b_M(\mathbb F_q(t), G, c)-1$.
	Note that here, since $N = G$, we have $hc_i h^{-1} = c_i$ for any $h
	\in G$.
	We next observe that any such tuple must be of the form 
	$\sum_i m_i \mathscr O_i$, for some $m_i \in \mathbb N$,
	where the $\mathscr O_i$ denote the orbits of $c$ under the $q^{-1}$
	powering action; indeed, the conjugacy classes in each of the orbits
	$\mathscr O_i$ are cyclically permuted by $q^{-1}$ powering and so the condition that the
	coefficient of $c_i$ agrees with the coefficient of $c_i^{q^{-1}}$
	implies all coefficient of conjugacy classes in this orbit must appear
	in such a tuple.
	Hence, the number of such 
	$(n_1, \ldots, n_\upsilon)$ is a polynomial whose degree is one less than the number of
	such orbits $\mathscr O_i$.
	(If we counted all $n_1, \ldots, n_\upsilon$ with $n_1 + \cdots +
		n_\upsilon \leq n$ and $n_j \equiv r_j \bmod |G|^2$, we would get a polynomial of degree equal to the
		number of orbits, but since we only want
		$n_1 + \cdots + n_\upsilon = n$ we get a polynomial of one degree
	less.)
		By definition, the number of such orbits is precisely 
		$b_M(\mathbb F_q(t), G, c)$.
		This proves \eqref{equation:malle-g-connected}.

		The final statement of
		\autoref{theorem:malle-g-connected}
		that $C_{r_1, \ldots, r_\upsilon, c, G,q}
		\neq 0$ follows from
		\autoref{lemma:geom-irred-mod-g}(1).
		\qed

\subsection{Counting extensions by invariant}
\label{subsection:invariant-counting}

We next embark on some preparations to prove \autoref{theorem:turkelli}.
We first aim to produce an upper bound. In order to do so, we will need a
Tauberian theorem, and some notation to state the Tauberian theorem.

\begin{notation}
	\label{notation:twisted-conf}
	Fix a normal subgroup $N \subset G$ and let $c \subset N - \id$
be a subset closed under conjugation by $G$ and closed under $q$th powering.
Write $c = c_1 \cup \cdots \cup c_j$ with $c_i \subset N$ pairwise distinct
$N$-conjugacy classes.
Consider the free $\mathbb Z$ module $\mathbb Z^{c/N}$ where $c/N$ denotes the
quotient of $c$ by the $N$ conjugation action.
We define actions of $\widehat{\mathbb Z}$
and $G$ on $\mathbb Z^{c/N}$.
First, $G$ acts on $c$ by conjugation, and this induces an action on $c/N$,
hence on $\mathbb Z^{c/N}$; explicitly $g \in G$ sends the basis vector indexed by $xN \in c/N$ to
the basis vector indexed by $gxg^{-1}N \in G/N$.
The topological generator $1 \in \widehat{\mathbb Z}$ acts on $c$ by sending
sending $g \mapsto g^q$.
Again, this induces an action on 
$\mathbb Z^{c/N}$.
\end{notation}

\begin{lemma}
	\label{lemma:tuple-counting}
	We use notation as in
	\autoref{notation:malle} and
	\autoref{notation:twisted-conf}.
	Let $r, n_1, \ldots, n_j, \delta \in \mathbb N$.
	Fix $h \in G$.
	Define $b_{r,\delta}$ as the number of tuples
	tuples of $N$-conjugacy classes of formal sums $\sum_{i=1}^j n_i c_i$ 
	so that
	$r=n_1 + \cdots + n_j$,
	$\inv(c_1)n_1 + \cdots + \inv(c_j) n_j = \delta$,
	and
	the action of $-1 \in \widehat{\mathbb Z}$
	on $\mathbb Z^{c/N}$,
	which is given by $q^{-1}$ powering,
	sends
	$\sum_{i=1}^j n_i c_i$
	to the tuple of $N$-conjugacy classes
	$\sum_{i=1}^j n_i c_i^{q^{-1}}$, which we assume agrees with the tuple $\sum_{i=1}^j n_i h^{-1} c_i h$.
Define	
	$a_\delta := \sum b_{r,\delta} q^\delta$.
	Then, the function
	$n\mapsto \sum_{\delta \leq n} a_\delta$ is bounded above and below by a constant
	multiple of 
	$q^n n^{|\rho(K, N,c_{\inv},h)|-1} + O_G(n^{|\rho(K,
	N,c_{\inv},h)|-2})$.
	\end{lemma}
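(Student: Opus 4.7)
My plan to prove \autoref{lemma:tuple-counting} is to reformulate it as a rational generating function problem and then apply a standard singularity-analysis/Tauberian argument. First, I would unwind the invariance constraint: the condition that $\sum_{i=1}^j n_i c_i^{q^{-1}}$ equals $\sum_{i=1}^j n_i h^{-1}c_ih$ in $\mathbb Z^{c/N}$ is equivalent to the function $c_i \mapsto n_i$ being constant on the orbits of the permutation $c_i \mapsto hc_i^{q^{-1}}h^{-1}$ of $c/N$; these orbits are exactly the equivalence classes whose set is $\rho(K,N,c,h)$. Since $\inv$ is constant on $N$-conjugacy classes and is preserved by both $G$-conjugation and $q^{-1}$-powering, each orbit lies entirely inside $c_{\inv}$ or entirely outside, and the orbits contained in $c_{\inv}$ are exactly the elements of $\rho(K,N,c_{\inv},h)$. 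Labelling all orbits $\mathcal O_1,\ldots,\mathcal O_\rho$ with $w_k := |\mathcal O_k|$ and common invariant values $\iota_k := \inv(\mathcal O_k)$, a valid tuple corresponds bijectively to $(m_1,\ldots,m_\rho) \in \mathbb N^\rho$ via $n_i = m_k$ for $c_i \in \mathcal O_k$, yielding $r = \sum_k m_k w_k$ and $\delta = \sum_k m_k w_k \iota_k$.

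Next, I would form the generating function $A(x) = \sum_\delta a_\delta x^\delta$ and factor it, using the encoding above, as a finite product of geometric series indexed by the orbits $\mathcal O_k$. The dominant singularity is located on the positive real axis at a radius controlled by $a := a(c,\inv)$, and its pole order is the number of orbits $\mathcal O_k$ with $\iota_k = a$, which is exactly $|\rho(K,N,c_{\inv},h)|$; the orbits with $\iota_k > a$ contribute only to singularities at strictly larger radius and hence only to exponentially smaller tails. Multiplying $A(x)$ by $(1-x)^{-1}$ produces the generating function whose $n$-th coefficient equals $\sum_{\delta \leq n} a_\delta$, and a standard singularity-analysis Tauberian theorem (in the style of Flajolet--Sedgewick, or Delange's theorem) then extracts a leading term of the stated form from the pole at the dominant singularity.

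The hardest part will be obtaining simultaneous upper and lower bounds with an error of $O_G(n^{|\rho(K,N,c_{\inv},h)|-2})$, rather than only a precise asymptotic equivalent. The generating function carries additional singularities at various roots of unity on the same circle of convergence, producing oscillatory contributions that prevent the partial sums from having a clean asymptotic equivalent; this is presumably why the lemma is phrased as a two-sided bound. Showing that every such oscillatory contribution is of strictly smaller polynomial order in $n$ reduces to a careful inventory of the orders of poles at the various secondary singularities. The strict positivity of the leading constant, needed for the lower bound, follows from the existence of at least one valid tuple supported entirely on $c_{\inv}$, which is guaranteed by the nonemptiness of $c_{\inv}$ and the admissibility of the equivalence relation on it.
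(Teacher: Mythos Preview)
Your proposal is correct and follows essentially the same approach as the paper: the paper also rewrites the invariance constraint in terms of orbits $\mathscr O_i$ of the permutation $x \mapsto h x^{q^{-1}} h^{-1}$, factors the Dirichlet series $\sum a_\delta q^{-\delta s}$ as the product $\prod_i (1 - q^{|\mathscr O_i|(1 - \inv(\mathscr O_i)s)})^{-1}$, and reads off the pole location and order before invoking a Tauberian theorem (citing T\"urkelli and Rosen rather than Flajolet--Sedgewick/Delange). Your extra care about secondary singularities on the critical circle is exactly what those cited Tauberian theorems package up, so there is no genuine difference in the argument.
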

	Our proof closely follows \cite[Theorem
	5.7]{turkelli:connected-components-of-hurwitz-schemes}.
\begin{proof}
	Consider the Dirichlet series
	$R(s) := \sum_{\delta \geq 1} a_\delta q^{- \delta s}$.
	We let $t = q^s$ and define the function $f(t) := a_\delta t^s$ so that
	$R(s) = f(t)$.
	By \cite[Lemma 5.8]{turkelli:connected-components-of-hurwitz-schemes},
	(or alternatively see \cite[Theorem 17.4]{rosen:number-theory-in-function-fields},)
	it suffices to show the Dirichlet series $f(t)$ has a pole of order 
	$|\rho(K, N,c_{\inv},h)|$ at $t = q^{\frac{1}{a(c,\inv)}}$ and no poles with $|t| >
	q^{\frac{1}{a(c,\inv)}}$.

	To compute the number of tuples $\sum_{i=1}^\upsilon n_i c_i$ as in the
	statement,
	any such tuple must be of the form 
	$\sum_{i=1}^v m_i \mathscr O_i$, for some $m_i \in \mathbb N$
	where $\mathscr O_1, \ldots, \mathscr O_v$ denote the orbits of
	$N$-conjugacy classes in $c$ under the equivalence relation
generated by
$x \sim hx^{q^{-1}} h^{-1}$.
Moreover, any such tuple gives an $N$-conjugacy class satisfying this
constraint.
We use $|\mathscr O_i|$ to denote the number of $N$-conjugacy classes in $\mathscr
O_i$.
We use $\inv(\mathscr O_i)$ to denote the index of any $N$-conjugacy class in $c$ in
the orbit $\mathscr O_i$; we note that all of these $N$-conjugacy classes have the
same invariant since they all lie in the same $G$ conjugacy class.
Then, observe that the dimension of the component associated to 
$\sum_i m_i |\mathscr O_i|\mathscr O_i$
is $\sum_i m_i$ and the invariant of that component is $\sum_i m_i |\mathscr
O_i|\inv(\mathscr O_i)$.
Therefore,
\begin{align*}
	\sum_{\delta =1}^\infty a_\delta q^{- \delta s} =
	\sum_{(m_1, \ldots, m_v)} q^{ \sum_{i=1}^v m_i |\mathscr O_i| } q^{- \sum_{i=1}^v
	m_i |\mathscr O_i|\inv(\mathscr O_i)  }
	= \prod_{i=1}^v \frac{1}{1 - q^{|\mathscr O_i| (1 - \inv(\mathscr O_i)
	s)}}.
\end{align*}
This function has a pole of order $|\rho(K, N,c_{\inv},h)|$ 
at
$t = q^{\frac{1}{a(c,\inv)}}$ 
(as is immediate from the definition of $\rho(K, N,c_{\inv},h)$ given in 
\autoref{notation:malle})
and no smaller poles, which proves the result.
\end{proof}

We are now prepared to prove our upper bound on the number of extensions.

\begin{lemma}
	\label{lemma:component-count-malle-upper}
	For $c \subset N-\id$ and $N \subset G$ normal as in \autoref{notation:malle},
	let $K = \mathbb F_q(t)$.
	Suppose $c = c_1 \cup  \cdots \cup c_\upsilon$, with $c_i$ pairwise
	distinct $N$-conjugacy classes.

There is a constant $C> 0$ depending on $c$ so that
if $q> C$ is a prime power with $\gcd(q, |G|) = 1$,
there is a
positive constant $D_+(K,N,c)$
so that 
\begin{align*}
	|\cgquohur N G {\invc \leq n} c {\mathbb F_q}(\mathbb F_q)| \leq 
	q^{\frac{n}{a(c,\inv)}} \cdot (D_+(K,N,c) n^{b_{M}(K,N,c_{\inv})-1} +
O_G(n^{b_{M}(K,N,c_{\inv})-2})).
\end{align*}
\end{lemma}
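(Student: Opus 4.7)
The plan is to stratify $\cgquohur N G {\invc \leq n} c {\mathbb F_q}(\mathbb F_q)$ by its geometrically irreducible components and bound each component's point count via \autoref{lemma:weak-point-bound}. For a geometrically irreducible component $Z$ of dimension $d(Z) := n_1(Z) + \cdots + n_\upsilon(Z)$, that lemma (together with \autoref{remark:transfer}) gives $|Z(\mathbb F_q)| \leq D_q q^{d(Z)}$ for a constant $D_q$ that remains bounded as $q \to \infty$. So the point count is at most $D_q \sum_Z q^{d(Z)}$, where the sum is over geometrically irreducible components $Z \subset \cgquohur N G {\invc \leq n} c {\mathbb F_q}$.

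Next, I would use \autoref{lemma:g-irred-components} to encode which tuples $(n_1, \ldots, n_\upsilon)$ correspond to geometrically irreducible components. Geometric irreducibility forces the existence of some $h \in G$ with $\sum_i n_i c_i = \sum_i n_i h c_i^{q^{-1}} h^{-1}$ as formal sums of $N$-conjugacy classes. Moreover, since $Z$ lies in $\cgquohur N G \cdot c \cdot$ and not in $\cquohur N {N'} \cdot c \cdot$ for any proper $N'\subsetneq G$ containing $N$, the image $\bar{h} \in G/N$ must generate $G/N$; otherwise, by the definition of $\cgquohur$ in \autoref{notation:connected-quotient-hur} together with the $G/N'$-action on components argued via \autoref{lemma:connected-cover-observation}, the $G$-orbit would split into $|G/N'|$ pieces over $N'$ and $Z$ would be excluded. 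Each tuple also gives rise to at most a bounded number of components, with the bound depending only on $c$.

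Then I would bound $\sum_Z q^{d(Z)}$ by grouping components according to a representative $h \in G$ of the boundary monodromy class. For each such $h$ whose image generates $G/N$, the contribution is
\begin{align*}
\sum_{(n_1,\ldots,n_\upsilon)} q^{n_1+\cdots+n_\upsilon}, \quad \text{subject to } \sum_i n_i c_i = \sum_i n_i h c_i^{q^{-1}} h^{-1}, \ \sum_i n_i \inv(c_i) \leq n.
\end{align*}
Parameterizing tuples satisfying the monodromy constraint by multiplicities $m_j$ on the orbits $\mathscr O_j$ of the operation $x \mapsto h x^{q^{-1}} h^{-1}$ on $c/N$, and writing $u_j = |\mathscr O_j|$, $\nu_j = \inv(\mathscr O_j) \geq a(c,\inv)$, the weighted sum becomes $\sum_{(m_j): \sum u_j\nu_j m_j \leq n} q^{\sum u_j m_j}$. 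This is exactly the partial sum studied in the proof of \autoref{lemma:tuple-counting}: the associated Dirichlet series factors as $\prod_j (1 - q^{u_j(1-\nu_j s)})^{-1}$ with largest pole at $s = 1/a(c,\inv)$ of order equal to the number of $j$ with $\nu_j = a(c,\inv)$, namely $|\rho(K,N,c_{\inv},h)|$. By the same Tauberian input cited there, this partial sum is $O(q^{n/a(c,\inv)}\, n^{|\rho(K,N,c_{\inv},h)|-1})$.

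Finally, summing over the finitely many $G$-conjugacy classes of $h \in G$ with $\bar h$ generating $G/N$, and using $b_M(K,N,c_{\inv}) = \max_h |\rho(K,N,c_{\inv},h)|$, yields the claimed upper bound $D_+(K,N,c)\, q^{n/a(c,\inv)}\, n^{b_M(K,N,c_{\inv})-1} + O_G(n^{b_M(K,N,c_{\inv})-2})$. The main obstacle I anticipate is bookkeeping: keeping straight which tuples give geometrically irreducible components in $\cgquohur$ (as opposed to merely irreducible ones, or ones in a $\quohur N {N'}$ with proper $N'$), and reconciling the orbit structure that appears in the generating function with the definition of $\rho$ in \autoref{notation:malle}. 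The bound on ``small'' tuples (with some $n_i$ small) is absorbed into the error term exactly as in the proof of \autoref{theorem:clm}.
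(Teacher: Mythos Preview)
Your proposal is correct and follows essentially the same route as the paper: bound each geometrically irreducible component by a constant times $q^{\dim Z}$ (the paper cites \autoref{lemma:component-point-bound} rather than \autoref{lemma:weak-point-bound}, but either suffices here), reduce to counting admissible tuples $(n_1,\ldots,n_\upsilon)$, and finish with the Tauberian estimate of \autoref{lemma:tuple-counting}.

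The one assertion you leave unjustified is that ``each tuple gives rise to at most a bounded number of components.'' This is not automatic: it is the uniform bound on the number of Frobenius-fixed geometric components over a fixed $(n_1,\ldots,n_\upsilon)$. The paper establishes it by invoking stabilization in degree $0$ (\autoref{theorem:some-large-homology-stabilizes}) together with the Frobenius equivariance of \autoref{theorem:frob-equivariant-stabilization}, which makes the number of Frobenius-fixed components eventually periodic in each $n_i$ and hence bounded by the maximum over the finitely many tuples with all $n_i\le J$. You should cite one of these (or, alternatively, the Conway--Parker type result in \cite{wood:an-algebraic-lifting-invariant}) rather than asserting the bound.

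Conversely, your argument that the witness $h$ in \autoref{lemma:g-irred-components} must have $\bar h$ generating $G/N$ is a point the paper leaves implicit but which is genuinely needed to land on $b_M(K,N,c_{\inv})$ rather than a maximum over all $h\in G$. Your reasoning is right (since $q^{-1}$-powering commutes with $G$-conjugation, one Frobenius-fixed $N'$-orbit forces all $|G|/|N'|$ of them to be fixed, contradicting the defining condition of $\cgquohur{N}{G}{n}{c}{\cdot}$), though the reference to \autoref{lemma:connected-cover-observation} is not quite apt; the argument is purely about the $G/N'$-equivariance of the Frobenius action on components.
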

\begin{proof}
	Using \autoref{lemma:component-point-bound},
	there is some fixed constant $R$ only depending on $G, c$, and $q$, so that any
	geometrically irreducible component
	$Z \subset \cgquohur N G {\invc \leq n} c {\mathbb F_q}(\mathbb F_q)$
	has $Z(\mathbb F_q) \leq R q^{\dim Z}$.
	Hence, for the purposes of proving this result, we may assume that $R =
	1$. That is, we may assume $Z(\mathbb F_q) \leq q^{\dim Z}$.

	First, by \autoref{lemma:component-bijection}, there is a bijection between
	components of Hurwitz spaces over $\overline{\mathbb F}_q$ (when
	$\gcd(q, |G|) = 1$) and $\mathbb
	C$.
Then, combining this with \autoref{theorem:some-large-homology-stabilizes}
	yields some constant $J$ so that for any $n_1, \ldots, n_j$, there are
	some $n_1', \ldots, n_j' \leq J$ and a bijection between the components of
	$\cgquohur N G {n_1, \ldots, n_\upsilon} c {\overline{\mathbb F}_q}$
	and the components of
	$\cgquohur N G {n'_1, \ldots, n'_\upsilon} c {\overline{\mathbb F}_q}$.

	We next claim that, after possibly
	increasing $J$, the Frobenius action on
	$\cgquohur N G {\invc \leq n} c {\overline{\mathbb F}_q}$
fixes a component of 
	$\cgquohur N G {n_1, \ldots, n_\upsilon} c {\overline{\mathbb F}_q}$
	if and only if it fixes the corresponding component of
	$\cgquohur N G {n'_1, \ldots, n'_\upsilon} c {\overline{\mathbb F}_q}$
	for some $n'_1, \ldots, n'_\upsilon \leq J$.
	Indeed, in this range
	we already know from 
	\autoref{theorem:some-large-homology-stabilizes}
	and \autoref{remark:transfer}
	that the map
$U^{g,q,M,G}_{\overline{\mathbb F}_q}$ is an
isomorphism, and it is moreover Frobenius equivariant by 
\autoref{theorem:frob-equivariant-stabilization}. Applying this for $i = 0$
implies that the 
	Frobenius on the components of these spaces must stabilize (periodically
	in the values of $n_i$) as well.

	Therefore, 
	taking the maximum over all $n'_1, \ldots, n'_\upsilon \leq J$, we obtain a
	upper bound, uniform in $n_1, \ldots, n_\upsilon$, for the number of
	components $\overline{Z}$ of
	$\cgquohur N G {n_1, \ldots, n_\upsilon} c {\overline{\mathbb F}_q}$ for
	which there exists a component $Z$ of
	$\cup_n \cgquohur N G {\invc \leq n} c {\mathbb F_q}$
	with $\overline{Z} = Z_{\overline{\mathbb F}_q}$.

	There is a finite \'etale cover
$\cgquohur N G {n_1, \ldots,
	n_\upsilon} c {\overline{\mathbb F}_q}$ over a configuration space
$[\on{Conf}_{n_1, \ldots, n_\upsilon,\overline{\mathbb F}_q}/G]$ where
the covers being parameterized have $n_i$ branch points in conjugacy class
$c_i$.
Moreover, for each such component which is the base change of a component of
$\cgquohur N G {n_1+ \cdots + n_\upsilon} c {\overline{\mathbb F}_q}$,
there is descent data along the extension $\overline{\mathbb F}_q$ over $\mathbb
F_q$ corresponding to the Frobenius
action.
Hence, the number of components of dimension $r$ we are trying to count
	is bounded above by a constant factor times the number of 
	fixed components under the descent data for Frobenius of
$[\on{Conf}_{n_1, \ldots, n_\upsilon,\overline{\mathbb F}_q}/G]$
with $n_1 + \cdots + n_\upsilon = r$,
and $\inv(c_1)n_1 + \cdots + \inv(c_\upsilon) n_\upsilon \leq n$.
Such components can exactly be identified with elements of 
$\mathbb Z^{c/N}$ where $c/N$ denotes the quotient of $c$ by the
$N$-conjugation action, with such tuples considered up to $G$-conjugation,
which are fixed under 
the descent data for Frobenius acting on each $c_i$ by the $q^{-1}$ 
powering action.
The claim on the total number of $\mathbb F_q$ points then follows from
\autoref{lemma:tuple-counting}.
\end{proof}

The next step in our proof will be to produce a lower bound on the number of
geometrically irreducible components.

\begin{lemma}
	\label{lemma:component-count-malle-lower}
	For $c \subset N-\id$ and $N \subset G$ normal as in \autoref{notation:malle},
	let $K = \mathbb F_q(t)$.
	Suppose $c = c_1 \cup \cdots \cup c_\upsilon$, with $c_i$ pairwise
	distinct $N$-conjugacy classes.
	There is a constant $C$ depending on $G$ so that for $q >C$ and $\gcd(q, |G|) = 1$,
there is a positive constant $D_-(K,N,c)$
so that 
\begin{align*}
	|\cgquohur N G {\invc \leq n} c {\mathbb F_q}(\mathbb F_q)|
\geq q^{\frac{n}{a(c,\inv)}} \cdot (D_-(K,N,c) n^{b_{M}(K,N,c_{\inv})-1} +
O_G(n^{b_{M}(K,N,c_{\inv})-2})).
\end{align*}
\end{lemma}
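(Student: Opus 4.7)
The plan is to mirror the upper bound of \autoref{lemma:component-count-malle-upper} from below, combining (i) a uniform lower bound $|Z(\mathbb F_q)| \geq \tfrac{1}{2} q^{\dim Z}$ for each geometrically irreducible component $Z$, and (ii) a lower bound on the number of such components in $\cgquohur{N}{G}{n_1, \ldots, n_\upsilon}{c}{\mathbb F_q}$ with total invariant at most $n$. The pointwise bound (i) is immediate from \autoref{lemma:weak-point-bound}: once $q$ exceeds the threshold $K^2$ there, every geometrically irreducible component $Z$ of relative dimension $r$ satisfies $|Z(\mathbb F_q)| \geq q^r - D_q q^{r-1/2}$, and enlarging $C$ if necessary forces this to exceed $\tfrac{1}{2}q^r$.

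For the count (ii), I would fix a generator $h \in G/N$ realizing $|\rho(K,N,c_{\inv},h)| = b_M(K,N,c_{\inv})$, and restrict attention to components where all ramification indices lie in $c_{\inv}$, so that total invariant $n$ corresponds to relative dimension exactly $n/a(c,\inv)$. By \autoref{lemma:geom-irred-mod-g}(1), tuples $(n_1,\ldots,n_\upsilon)$ supported on $c_{\inv}$, with each $n_i$ a multiple of $|G|$, and satisfying $\sum_i n_i c_i = \sum_i n_i h c_i^{q^{-1}} h^{-1}$, each give rise to a geometrically irreducible component of $\cgquohur{N}{G}{n_1 + \cdots + n_\upsilon}{c}{\mathbb F_q}$. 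Applying \autoref{lemma:tuple-counting} to count such tuples with $\sum_i \inv(c_i) n_i \leq n$ yields at least a constant multiple of $n^{b_M(K,N,c_{\inv})-1}$ contributions, each of which contributes $\geq \tfrac{1}{2} q^{n/a(c,\inv)}$ points to $|\cgquohur N G {\invc \leq n} c {\mathbb F_q}(\mathbb F_q)|$ by step (i). Multiplying and summing produces the claimed asymptotic lower bound.

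The main obstacle will be verifying that the components produced genuinely lie in $\cgquohur{N}{G}$ rather than in some $\cquohur{N}{N'}$ with $N \subsetneq N' \subsetneq G$, i.e., that they do not split further. Here the choice of $h$ as a generator of $G/N$ is decisive: the explicit form of the discrete Frobenius action used in the proof of \autoref{lemma:geom-irred-mod-g} shows that the stabilizer in $G/N$ of the resulting component is all of $G/N$, precluding any splitting through a proper $N'$. A secondary technical point is that the homological stability constants $I,J$ from \autoref{lemma:component-point-bound} (which enter when appealing to Frobenius-equivariant stabilization in \autoref{lemma:geom-irred-mod-g}) require $n_1,\ldots,n_\upsilon > Ii+J$; restricting the counting to tuples satisfying this removes only $O(n^{b_M(K,N,c_{\inv})-2})$ terms, which is absorbed into the error in the stated bound.
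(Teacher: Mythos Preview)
Your approach is essentially the same as the paper's: restrict to $c_{\inv}$, use \autoref{lemma:geom-irred-mod-g}(1) with $n_i\equiv 0\bmod |G|$ to produce geometrically irreducible components of $\cgquohur{N}{G}{\cdot}{c}{\mathbb F_q}$, and combine with a per-component point-count lower bound. The paper cites \autoref{lemma:component-point-bound} rather than \autoref{lemma:weak-point-bound} here, but your choice works equally well.

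There is one imprecision worth fixing in your counting step. You invoke \autoref{lemma:tuple-counting} to obtain ``at least a constant multiple of $n^{b_M-1}$ contributions, each of which contributes $\geq\tfrac12 q^{n/a(c,\inv)}$ points.'' But \autoref{lemma:tuple-counting} bounds the \emph{weighted} sum $\sum_{\delta\le n}a_\delta$ (with weight $q^r$ built in), not the raw number of tuples; and tuples with invariant $\le n$ can have dimension anywhere in $[0,n/a(c,\inv)]$, so they do not each contribute $q^{n/a(c,\inv)}$. The paper avoids this by counting directly: one takes tuples $(n_1,\ldots,n_j)$ supported on $c_{\inv}$ with $n_i\equiv 0\bmod|G|$ and with $\sum n_i$ as close to $n/a(c,\inv)$ as the divisibility constraint allows (so the dimension lies in $[n/a(c,\inv)-j|G|,\,n/a(c,\inv)]$). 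Since admissible tuples are exactly the $\sum_k m_k\mathscr O_k$ for $\mathscr O_k$ the orbits of $c_{\inv}/N$ under $x\sim hx^{q^{-1}}h^{-1}$, the number of such tuples with $\sum n_i$ in this range is a polynomial in $n$ of degree $|\rho(K,N,c_{\inv},h)|-1=b_M(K,N,c_{\inv})-1$, and now each genuinely has dimension within a bounded additive constant of $n/a(c,\inv)$, so step~(i) applies as you intended. Your identification of the ``main obstacle'' and its resolution via the generator $h$ is correct and is exactly what \autoref{lemma:geom-irred-mod-g}(1) already packages; likewise your secondary point about discarding $O(n^{b_M-2})$ unstable tuples is well taken.
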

\begin{proof}
For $G$ a group and $c \subset G$ a union of conjugacy classes, following
\cite[\S2]{wood:an-algebraic-lifting-invariant},
we define $U(G,c)$ to be the group with generators $[g]$ for $g \in c$ and
relations $[x][y][x]^{-1} = [xyx^{-1}]$ for $x, y \in c$.

	If all the $n_\upsilon$ are sufficiently large, 
	the components of $\cphur {N} n {c} {\mathbb F_q}$ can be described as 
	elements of $U(N,c)$
	\cite[Theorem 3.1]{wood:an-algebraic-lifting-invariant}.
	Now, $G$ acts by conjugation on 
	$U(N,c)$, with the action given by $g \cdot [n] = [gng^{-1}]$ for
	$g \in G$ and $n \in N$.
	Suppose $c_1,\ldots, c_j$ are the $N$ conjugacy classes
	of minimal index in $c$ so that $c_{\inv} := c_1 \cup \cdots
	\cup c_j$. We will produce
	$D_-(K,N,c) n^{b_{M}(K,N,c_{\inv})-1} +
	O_G(n^{b_{M}(K,N,c_{\inv})-2})$ geometrically irreducible components of
	$\cgquohur N G {\invc \leq n} {c{\inv}} {\mathbb F_q}$,
	whose dimension is between $\frac{n}{a(c,\inv)}$ and
	$\frac{n}{a(c,\inv)} - j|G|$,
	and so the result will follow from
	\autoref{lemma:component-point-bound}.

If we restrict to $n_1, \ldots, n_j \equiv 0 \bmod |G|$, 
and whose sum is as close to $\frac{n}{a(c,\inv)}$ as possible
(meaning it will be at least $\frac{n}{a(c,\inv)} - j|G|$)
it follows from \autoref{lemma:geom-irred-mod-g},
that it suffices to verify there are
at least
$D_-(K,N,c) n^{b_{M}(K,N,c_{\inv})-1}$
many tuples 
$\sum_{i=1}^j n_i c_i$
which agree with
$\sum_{i=1}^j n_i h c_i^{q^{-1}}h^{-1}$
for some generator $h \in G/N$.
Indeed, this holds because any such tuple must be of the form 
$\sum_i m_i \mathscr O_i$, for some $m_i \in \mathbb N$,
where $\mathscr O_i$ denote the orbits of $c'$ under the equivalence relation
generated by
$x \sim hx^{q^{-1}} h^{-1}$.
By definition, there are $|\rho(K, N,c_{\inv},h)|$ such orbits.
Hence, there is a polynomial in $n$ of degree 
$|\rho(K, N,c_{\inv},h)|$
such orbits if we restrict to the region where $n_1 + \ldots+ n_j \leq n$.
This will then become a polynomial of degree 
$|\rho(K, N,c_{\inv},h)|-1$ if we also assume $n_1 + \cdots + n_j \geq n - j
|G|$, completing the proof.
\end{proof}
\begin{remark}
	\label{remark:}
	The geometric meaning of the end of the proof of
	\autoref{lemma:component-count-malle-lower} is as follows:
$\cgquohur N G {n_1, \ldots, n_\upsilon, 0, \ldots, 0} c {\overline{\mathbb F}_q}$
covers a twisted configuration space
$\conf_{n_1, \ldots, n_j,0, \ldots, 0,\overline{\mathbb F}_q}$ which has a
compatible Galois action induced by sending the component
of the configuration space indexed by
$\sum_{i=1}^j n_i c_i$  to 
$\sum_{i=1}^j n_i c_i^{q^{-1}}$.
Above, we are computing the number of fixed components under the descent data
for Frobenius of this twisted
configuration space as we vary over $n_1, \ldots, n_j$.
\end{remark}

Having controlled the number of geometrically irreducible components in
$\cgquohur N G {\invc \leq n} c {\mathbb F_q}$
from below,
we next wish to relate that number to the number of connected $G$ extensions.
To do this, we will investigate when connected $G$ extensions become
geometrically disconnected.

\begin{lemma}
	\label{lemma:n-extensions-constant}
	There is a constant $C$ so that for $q > C$ with $\gcd(q,|G|) =1$, the
	following statement holds.
	Let $c \subset N -\id \subset G$ be a subset closed under conjugation by
	$G$ and closed under $q$th powering.
	For $n$ sufficiently large and 
	any geometrically irreducible component $Z \subset \cgquohur N G {\invc \leq n} c {\mathbb F_q}$
	parameterizing points of invariant exactly $n$, the number of points of
	$Z(\mathbb F_q)$ corresponding to connected $G$
	covers is at least $\frac{|N|}{2|G|} \cdot |Z(\mathbb F_q)|$.
\end{lemma}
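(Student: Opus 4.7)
The plan is to bound the number of $\mathbb F_q$-points of $Z$ that correspond to \emph{disconnected} $G$-covers of $\mathbb A^1_{\mathbb F_q}$. Such a point $z$ corresponds to a $G$-cover $X$ whose geometric monodromy equals $N$ (since $c$ generates $N$) but whose \emph{arithmetic} monodromy is a proper subgroup $N' \subsetneq G$ containing $N$; equivalently, the image of Frobenius in $G/N$ fails to generate this quotient. Thus $z$ is disconnected if and only if it lies in the image of the natural map $\cquohur N {N'} {\invc \leq n} c {\mathbb F_q}(\mathbb F_q) \to Z(\mathbb F_q)$ for some such intermediate $N'$.

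The main step is to estimate, for each intermediate $N \subseteq N' \subsetneq G$, the contribution of disconnected points arising from $N'$. The quotient map
\[
\pi_{N'} \colon \cquohur N {N'} {\invc \leq n} c {\mathbb F_q} \to \cquohur N G {\invc \leq n} c {\mathbb F_q}
\]
is a $G/N'$-quotient, and the $\overline{\mathbb F}_q$-fiber above any $z \in Z(\mathbb F_q)$ is naturally a free $G/N'$-set of size $[G:N']$, parameterizing the marked sections above $\infty$ modulo the $N'$-action. This fiber contains a Frobenius-fixed point precisely when the arithmetic monodromy of $z$ is contained in $N'$, in which case the fiber has exactly $[G:N']$ such fixed points. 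Combining this fiber structure with \autoref{lemma:component-point-bound} applied to each geometrically irreducible component of $\pi_{N'}^{-1}(Z)$, together with the defining condition for $Z$ to lie in $\cgquohur N G {\invc \leq n} c {\mathbb F_q}$, which forces $\pi_{N'}^{-1}(Z)$ to have strictly fewer than $[G:N']$ geometrically irreducible components over $\overline{\mathbb F}_q$, yields
\[
\#\{z \in Z(\mathbb F_q) : \text{arith.\ mono.\ of $z$ lies in $N'$}\} \leq \frac{[G:N']-1}{[G:N']}\, |Z(\mathbb F_q)| \cdot (1 + o(1))
\]
for $n$ and $q$ sufficiently large.

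Finally, I would combine these bounds across all $N \subseteq N' \subsetneq G$ by an inclusion--exclusion on the subgroup lattice to obtain the desired lower bound $\frac{|N|}{2|G|} \cdot |Z(\mathbb F_q)|$ for the count of connected $G$-covers; the explicit constant $\frac{1}{2[G:N]}$ is deliberately loose in order to absorb the worst-case combinatorics. The main obstacle will be the rigorous execution of this lattice combinatorics, since a cover with arithmetic monodromy $H \subsetneq G$ contributes to the count associated to every $N' \supseteq H$, producing overlapping contributions that must be disentangled. A secondary subtlety is the stacky nature of the Hurwitz spaces: the $\mathbb F_q$-point counts must be interpreted as groupoid cardinalities, and the quotient maps $\pi_{N'}$ may fail to be free when preimage components carry non-trivial $G/N'$-stabilizers, so one must carefully account for these stabilizers when converting point counts between $\cquohur N {N'} {\invc \leq n} c {\mathbb F_q}$ and $Z$.
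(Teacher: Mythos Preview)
Your approach is essentially the same as the paper's: bound the disconnected locus by lifting through the intermediate quotients $\cquohur N {N'} {\invc \leq n} c {\mathbb F_q}$, use the defining condition of $\cgquohur N G {\invc \leq n} c {\mathbb F_q}$ to bound the number of geometrically irreducible preimage components, apply a point-count estimate on each component, and finish with inclusion--exclusion over the subgroups $N \subseteq N' \subsetneq G$.

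The one genuine gap is that your displayed bound
\[
\#\{z \in Z(\mathbb F_q) : \text{arith.\ mono.\ of $z$ lies in $N'$}\} \leq \frac{[G:N']-1}{[G:N']}\, |Z(\mathbb F_q)| \cdot (1 + o(1))
\]
is too coarse to feed into an inclusion--exclusion. Already a union bound over the maximal $N'$ (those of prime index $p$ in $G$) gives $\sum_{p \mid [G:N]} \tfrac{p-1}{p}$, which typically exceeds $1$; and since your estimates are only upper bounds, not asymptotic equalities, M\"obius-type alternating sums do not directly apply. The paper closes this gap with one further reduction you did not make: if a geometrically irreducible $\mathbb F_q$-component $Z^{N'}_i$ of $\pi_{N'}^{-1}(Z)$ is not fixed by the $G/N'$-action, then its $\mathbb F_q$-points already lift to some strictly larger $N''$, so one may restrict attention to those $N'$ for which $\pi_{N'}^{-1}(Z)$ has a \emph{unique} geometrically irreducible component. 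For such $N'$ the point-count estimate (the paper uses \autoref{lemma:weak-point-bound} rather than \autoref{lemma:component-point-bound}, but either works) gives the much sharper asymptotic $\approx \tfrac{1}{[G:N']}\,|Z(\mathbb F_q)|$, and now the inclusion--exclusion over the cyclic group $G/N$ yields a disconnected proportion of at most $1 - \tfrac{\phi([G:N])}{[G:N]} + \varepsilon$, which is strictly below $1 - \tfrac{|N|}{2|G|}$ for $\varepsilon$ small. Your final paragraph correctly flags the lattice combinatorics and the stacky counting as the remaining issues; the unique-component reduction is precisely the missing idea that resolves both.
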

\begin{proof}
	We now assume $N \subsetneq G$ as otherwise the result is trivial.
First, we observe that 
the $\mathbb F_q$ points of 
$\cgquohur N G {\invc \leq n} c {\mathbb F_q}$ correspond bijectively to isomorphism classes
of not-necessarily-connected $G$ extensions
such that their base change to $\overline{\mathbb F}_q$ is a disjoint union of
$|G|/|N|$ connected $N$
extension over $\overline{\mathbb F}_q$.

Therefore,
every connected $G$ extension which becomes a connected $N$ extension geometrically
corresponds to a point of
$\cgquohur N G {\invc \leq n} c {\mathbb F_q}$.
We now fix a particular geometrically connected component $Z \subset 
\cgquohur N G {\invc \leq n} c {\mathbb F_q}$
and its base change $\overline Z := Z \times_{\mathbb F_q} \overline{\mathbb
F}_q$.
We assume this parameterizes covers with invariant $n$ and $n$ is sufficiently
large.
To complete the proof,
it suffices to show the
points of
$Z(\mathbb F_q)$ 
corresponding to disconnected $G$ extensions over $\mathbb F_q$ form a
proportion at most $1 - \frac{|N|}{2|G|}$
of $Z(\mathbb F_q)$.

Now, consider the set of components $Z_1^{N'}, \ldots,
Z_t^{N'}$ of 
$\cgquohur N {N'} {\invc \leq n} c {\mathbb F_q}$
over $Z$ for each $N \subset N' \subset G$.
In total, the map 
$\cgquohur N {N'} {\invc \leq n} c {\mathbb F_q}
\to \cgquohur N G {\invc \leq n} c {\mathbb F_q}$
has degree $|G|/|N'|$, 
and irreducibility of $Z$ implies the action of $G/N$ permutes these
components 
$Z_1^{N'}, \ldots,
Z_t^{N'}$
transitively.
Note that by
\autoref{notation:connected-quotient-hur},
we can assume there is no $N'$ so that 
$\cgquohur N {N'} {\invc \leq n} c {\mathbb F_q}$
contains $|G|/|N'|$ geometrically irreducible components over $Z$. That is, $t
< |G|/|N'|$.

Therefore, for each $N' \subset G$, we may now assume 
the number of geometrically irreducible components of 
$\cgquohur N {N'} {\invc \leq n} c {\mathbb F_q}$
over $Z$
is strictly less than
$|G|/|N'|$.
We wish to bound the total number of $\mathbb F_q$ points in the image of maps
$Z^{N'}_i(\mathbb F_q) \to Z(\mathbb F_q)$, for $Z^{N'}_i$ some geometrically irreducible component.
If $Z^{N'}_i$ is not stabilized by the $G/N'$ action,
$Z^{N'}_i(\mathbb F_q) \to Z(\mathbb F_q)$ will all factor through
$Z^{N''}_j(\mathbb F_q)$, where
$Z^{N''}_j \subset
\cgquohur N {N''} {\invc \leq n} c {\mathbb F_q}$
is some component with $N' \subset N'' \subset G$.
Hence, it suffices to count $\mathbb F_q$ points from geometrically irreducible
components corresponding to $N' \subset N$ which possess a unique geometrically
irreducible component $Z^{N'}$ of 
$\cgquohur N {N'} {\invc \leq n} c {\mathbb F_q}$
over $Z$.

Due to the stacky way in which we are counting points,
the image of the points
$Z^{N'}(\mathbb F_q) \to Z(\mathbb F_q)$
account for $\frac{|N'|}{|G|} \cdot |Z_1^{N'}(\mathbb F_q)|$
many points of 
$Z(\mathbb F_q)$.
Recall that the final part of
\autoref{lemma:weak-point-bound} implies that $Z^{N'}(\mathbb F_q)$ is
arbitrarily well approximated by $q^{\dim Z}$ for $q$ sufficiently large.
Using this, a simple inclusion exclusion then shows that for any $\varepsilon > 0$, the points in the image
of
$\cup_{N'} Z^{N'}(\mathbb F_q) \to Z(\mathbb F_q)$
(where the union is taken over those $N' \subset G$ containing $N$ such that
there is a unique geometrically irreducible component $Z^{N'}$ in
$\cquohur N {N'} n c {\mathbb F_q}(\mathbb F_q)$,
over $Z$)
account for at most
$1-\frac{\phi(|G|/|N|)}{|G|/|N|}+ \varepsilon$ of the points of
$\cgquohur N {G} n c {\mathbb F_q}(\mathbb F_q)$.
Here, we may have to increase the value of the constant $C$ which is our lower
bound for $q$ in order to make this true.
Taking $\varepsilon < \frac{|G|}{2|N|}$, we can make
$0 < 1- \frac{\phi(|G|/|N|)}{|G|/|N|}+ \varepsilon <  1-\frac{|G|}{2|N|}$, as
claimed.
\end{proof}

Finally, we conclude the proof of 
\autoref{theorem:turkelli} by combining our upper and lower bounds above.

\subsubsection{Proof of \autoref{theorem:turkelli}}
\label{subsubsection:turkelli-proof}

To prove \eqref{equation:all-malle-count}, we wish to count connected $G$ covers of $\mathbb F_q(t)$ of
	invariant at most $n$ with
	inertia in $c$. Each such cover geometrically becomes a disjoint union
	of $|G|/|N|$ Galois $N$ covers over $\overline{\mathbb F}_q(t)$
	where $N \subset G$ is normal with cyclic quotient, using structure of
	the absolute Galois group of $\mathbb F_q(t)$. Thus, we can separately
	count such extensions for each such $N \subset G$. That is, it only
	remains to prove \eqref{equation:sub-malle-count}.

Fixing $N \subset G$ a normal subgroup with cyclic quotient, the upper bound on the number of such  extensions then follows from
	\autoref{lemma:component-count-malle-upper}, since every such $G$
	extensions corresponds to a point on 
	some geometrically irreducible component of $\cgquohur N G {\invc \leq
	n} {c} {\mathbb F_q}$ using
	\autoref{lemma:connected-cover-observation}.

	We finally deduce the lower bound by combining
	\autoref{lemma:component-count-malle-lower},
	\autoref{lemma:connected-cover-observation},
	and
	\autoref{lemma:n-extensions-constant}.
	Indeed, \autoref{lemma:component-count-malle-lower} shows $|\cgquohur N G {\invc \leq n} {c} {\mathbb
	F_q}(\mathbb F_q)|$
	is bounded below by our desired lower bound, up to a constant.
	However, not all points of $\cgquohur N G {\invc \leq n} {c} {\mathbb
	F_q}(\mathbb F_q)$
	correspond to connected $G$ covers. 
	To this end, \autoref{lemma:connected-cover-observation}
	shows that every connected $G$-cover of invariant at most $n$
	with inertia in $c$
	corresponds to some $\mathbb F_q$
	points of $\cgquohur N G {\invc \leq n} {c} {\mathbb
	F_q}$
	and
	\autoref{lemma:n-extensions-constant}
	shows that at least a proportion $|N|/2|G|$ of these covers
	do in fact correspond to connected $G$ covers.
	Hence, we obtain our desired lower bound.
	\qed

\section{Further questions and conjectures}
\label{section:further-questions}
In this section, we collect various additional questions and conjectures.
We raise questions relating to Malle's conjecture in
\autoref{subsection:malle-questions}, relating to Gerth's conjecture in 
\autoref{subsection:gerth}, 
relating to the Picard rank conjecture in
\autoref{subsection:integral-picard-rank},
and questions relating to homological stability in
\autoref{subsection:stability-questions}.
We conclude with an interesting conjecture related to higher genus Hurwitz
spaces in
\autoref{subsection:higher-genus-conjecture}.

\subsection{Malle's conjecture}
\label{subsection:malle-questions}

Recall that Malle's original conjecture predicts the number of $G$ field extensions of
$\mathbb Q$ which are Galois of discriminant at most $X$ to be of the form 
$C(\mathbb Q, G) X^{a(G-\id,\Delta)} (\log X)^{b_M(\mathbb Q, G, (G-\id)_\Delta)}$, for
some constant $C(\mathbb Q, G)$ depending on $G$ and 
$a(G-\id,\Delta), b_M(\mathbb Q, G, (G-\id)_\Delta)$ having analogous
definitions to those given in
\autoref{notation:malle}, where $\Delta$ is the discriminant invariant of
\autoref{example:discriminant}.
When working over function fields, as explained in
\autoref{remark:malle-periodic}, this asymptotic has no chance of holding.
Nevertheless, 
one may conjecture there exists a collection of constants which are ``periodic''
as in \autoref{theorem:malle-g-connected}, and we refer to such a collection of
constants as the {\em constant in Malle's conjecture for $G$}
when working over $\mathbb F_q(t)$.

The recent preprint
\cite{loughranS:malles-conjecture-and-brauer-groups} proposes a
conjecture for the constant in Malle's conjecture for counting
the number of $G$ extensions of $\mathbb Q$
which intersect $\mathbb Q(\mu_{\exp(G)})$ trivially, for $\exp(g)$ the least
common multiple of the orders of all $g \in G$.
This appears to be a very reasonable condition to impose from the
function field perspective as it corresponds to counting geometrically
connected $G$ covers.
We begin by posing a question to which the answer is surely "yes," but we state
it as a question to encourage someone to do it!
\begin{question}
	\label{question:}
	Can one generalize the conjecture 
	\cite[Conjecture 9.3]{loughranS:malles-conjecture-and-brauer-groups}
	to counting $G$ extensions of $\mathbb F_q(t)$? 
\end{question}
\begin{question}
	\label{question:}
	Can one use the ideas of
	appearing in \autoref{section:malle} to compute what the constant should
	be in Malle's conjecture for counting extensions of $\mathbb F_q(t)$?
	Can one compute what the constant should
	be in Malle's conjecture for counting extensions of $\mathbb F_q(t)$
	which are geometrically connected?
	Does this agree with (a suitable generalization of) the predictions of \cite{loughranS:malles-conjecture-and-brauer-groups}?
\end{question}

\begin{remark}
	\label{remark:bhargava-conj}
	One well known example where the constant in Malle's 
	conjecture for counting by discriminant over $\mathbb Q$ has been predicted
	is the case that $G$ is the symmetric group $S_d$ and $c = G - \id$.
	This is the subject of Bhargava's conjecture
	\cite[Conjecture
	1.2]{bhargava:mass-formulae-for-extensions-of-local-fields}.
	Although it should be possible to compute the constant 
	when counting by reduced discriminant over $\mathbb F_q(t)$ by making the constants in \autoref{section:malle}
	explicit, the tools developed in this paper are not yet sufficient to
	compute the constant when counting by discriminant. 
	The reason for this is that, when counting $S_d$ extensions over
	$\mathbb F_q(t)$ by discriminant, there is a
	contribution to the constant coming from extensions parameterizing 
	covers with a single branch point whose inertia is a $3$-cycle, and the
	remaining inertial elements are transpositions. In order to obtain the exact constant
	when counting by discriminant, one would therefore want to compute the stable homology
	when one has a single $3$-cycle and many transpositions. However,  
	\autoref{theorem:all-large-stable-homology} does not cover this case.
	We are currently working on computing the stable cohomology in this
	sense.
\end{remark}

\begin{remark}
	\label{remark:}
	One can ask for even more than computing the stable cohomology in specific directions,
	as described in 
	\autoref{remark:bhargava-conj}.
	Namely, one can fix a normal subgroup $H \subset G$, fix a $G/H$
	extension $K/\mathbb F_q(t)$ and ask to count $G$ extensions $L/\mathbb
	F_q(t)$ containing $K$.
	This is closely related to \cite[Conjecture
	9.6]{loughranS:malles-conjecture-and-brauer-groups}
	and also related to understanding the Poonen-Rains heuristics for
	quadratic twist families of elliptic curves.
	This is also related to questions we are currently working on.
\end{remark}

\autoref{remark:bhargava-conj} leads to the following
questions:

\begin{question}
	\label{question:}
	Can one compute the constant in Malle's conjecture for counting
	$S_d$ extensions of $\mathbb F_q(t)$ by reduced discriminant?
	Can one similarly compute the constant Malle's conjecture for other
	groups when counting by reduced discriminant?
\end{question}
The next question seems interesting and potentially quite approachable via similar techniques to
those in this paper, but we have not pursued it.
\begin{question}
	\label{question:extend-constant}
	Can one show there is a (suitably periodic) constant in Malle's
	conjecture over $\mathbb F_q(t)$ when counting by an arbitrary
	invariant, by proving a generalization of
	\autoref{theorem:malle-g-connected}?
	If so, can one compute the constant in Malle's conjecture over $\mathbb
	F_q(t)$
	when counting by an arbitrary invariant?
	Can one verify such constants exist when one counts all extensions,
	instead of just geometrically connected such extensions?
\end{question}

\subsection{Gerth's conjecture}
\label{subsection:gerth}

Recall that the original Cohen-Lenstra heuristics predict the distribution of
the odd part of the class group.
In 
\cite[Conjecture
(C14')]{gerth:densities-for-the-ranks-of-certain-p-class-groups}
and
\cite{gerth:extension-of-conjectures}
Gerth proposed
a generalization of these conjectures, which, among other things,
aims to understand the even part of the class
group.
There has been substantial progress toward these conjectures in
\cite[Theorem 1.9 and 1.12]{smith:the-distribution-of-selmer-groups-1}
and also
\cite[Theorem 1.1]{koymansP:on-the-distribution-l-cyclic-fields}.
One way of characterizing the essence of Gerth's conjecture (although this is
not literally stated by Gerth) is that even though
the average size
$\on{Cl}(\mathscr O_K)[2]$ for $\mathscr O_K$ varying over imaginary quadratic
fields is infinite, 
if $n$ is an integer,
$\on{Cl}(\mathscr O_K)[n]/\on{Cl}(\mathscr O_K)[2]$ should conjecturally be
finite.

\begin{question}
	\label{question:}
	Can one compute the moments associated to
$\on{Cl}(\mathscr O_K)[n]/\on{Cl}(\mathscr O_K)[2]$
as one varies over quadratic fields $K/\mathbb F_q(t)$ ramified over $\infty$, in the function field
setting?
Is there some way to phrase Gerth's conjecture in terms of counting
	points on Hurwitz spaces?
\end{question}

One proposal for how one could try to make sense of this is given in
\cite{liu:on-the-distribution-of-class-groups-of-abelian-extensions}.

\begin{remark}
	\label{remark:}
	It seems plausible that one could carry out a similar procedure to that
	in \autoref{section:clm} to count the total number of elements of a
	fixed even order in class groups of bounded discriminant.
	While this wouldn't exactly yield Gerth's conjectures, we still believe
	it would be quite interesting to carry out.
\end{remark}

\subsection{An integral asymptotic Picard rank conjecture}
\label{subsection:integral-picard-rank}

We now mention some questions relating to an integral version of the Picard rank
conjecture.

\begin{remark}
	\label{remark:}
	We proved that the rational Picard group of the Hurwitz space
	$\churp {G} n c$
	of $G$ covers of $\mathbb P^1$ with inertia in a conjugacy class $c$
	stabilizes to $0$ as $n$ grows in \autoref{theorem:stable-picard}.
	We show more generally that the localization of the Picard group of each
	component at $\mathbb Z[\frac{1}{2|G|}]$ is $\mathbb Z/(2n-2) \mathbb Z \otimes \mathbb
	Z[\frac 1 {2|G|}]$.
	In particular, in contrast to the situation over $\mathbb A^1$, the integral Picard groups of covers of $\mathbb P^1$ do
	not stabilize in general.
\end{remark}

Note that
$H^2(\on{Conf}_{\mathbb P^1,n}, \mathbb Z) \simeq
\mathbb Z/(2n-2)\mathbb Z$, as is shown, for example, in \cite[Theorem
1.3]{schiessl:integral-cohomology-of-configuration-spaces-of-the-sphere}.
This leads us to ask whether the failure of stabilization of the Picard
group
is fully accounted for
by the fact that 
$H^2(\on{Conf}_{\mathbb P^1,n}, \mathbb Z)$
fails to stabilize in $n$.
\begin{question}
	\label{question:integral-picard-stabilization}
	There is a map $\pic(\on{Conf}_{\mathbb P^1, n} \times BS_d) \to \pic(\churp {S_d} n
	c )$,
	for $n$ even,
	where $c \subset S_d$ is the conjugacy class of transpositions.
Does $\pic(\churp {S_d} n c )/ \pic(\on{Conf}_{\mathbb P^1,
n} \times BS_d)$ stabilize to a fixed finite group, depending only on $d$, as $n \to
\infty$, over even numbers?

More generally, for $G$ an arbitrary finite group, $c$ a conjugacy class, 
and $Z \subset \churp {G} n c$ a component,
we ask whether
$\pic(Z)/ \pic(\on{Conf}_{\mathbb P^1, n} \times BS_d)$
stabilizes as $n$ ranges over numbers dividing the order of the image of $c$ in
$G^{\ab}$, and if this stable value is independent of the choice of $Z$.

We also ask whether
$\pic(\churz {S_d} n c )/ \pic([(\on{Conf}_{\mathbb P^1, n} \times BS_d)/\pgl_2])$
stabilizes.
\end{question}

\subsection{Homological stability}
\label{subsection:stability-questions}

We now mention a few interesting directions in which one may attempt to improve
our results.

\begin{question}
	\label{question:completelyintegral}
	Can one prove a version of \autoref{theorem:all-large-stable-homology}
	with $\mathbb Z$ coefficients, instead of only $\mathbb Z[1/G^0_c]$
	coefficients?
\end{question}
\begin{remark}
	\label{remark:}
The ideas in 	
\cite{bianchi:deloopings-of-hurwitz-spaces}
may be helpful in computing the integral dominant stable homology for $c$ a
finite rack.
We believe it would be interesting to work this out in detail. We note that the results of \cite{fennRS:the-rack-space} imply that understanding the group completion of Hurwitz spaces for racks can be reduced to the case of quandles, which is the case studied by Bianchi.
\end{remark}

\begin{question}
	\label{question:optimalline}
	Can one improve the proof of
	\autoref{theorem:some-large-homology-stabilizes}
	to yield smaller constants
	$I$ and $J$ than those obtained by making the proof of
	\autoref{theorem:some-large-homology-stabilizes}
	explicit?
\end{question}
In particular, if one were able to improve the constants, one could prove more
cases of the Picard rank conjecture \autoref{conjecture:picard-rank}. It would be quite interesting if it were
possible to sufficiently improve it so as to prove all (or even almost all) of the
remaining cases of the Picard rank conjecture.

Given that higher order stability phenomena are present in many examples where homological stability is, such as in the homology of multicolored configuration spaces and the homology of the moduli of curves \cite{galatius20192}, we ask if this holds for Hurwitz spaces.

\begin{question}
	\label{question:higherorderstability}
	Do there exist higher order stability phenomena for the homology of connected Hurwitz spaces?
\end{question}

Finally, we ask for a generalization to higher genus base curves, which we are
currently thinking about.

\begin{question}
	\label{question:highergenus}
	Can one prove a version of our results over higher genus punctured
	curves in place of $\mathbb A^1_{\mathbb C}$?
\end{question}

\subsection{A conjecture on stable homology for higher genus bases}
\label{subsection:higher-genus-conjecture}

In \autoref{theorem:some-large-homology-stabilizes}
and \autoref{theorem:all-large-stable-homology},
we show that the homology of Hurwitz spaces associated to unramified covers of punctured genus $0$ curves
stabilize, in a suitable sense, and we compute the dominant stable homology.
Letting $\Sigma_{g,n}$ denote a genus $g$ surface with $n$ punctures.
We can view this as a homological stability result over
$\Sigma_{0,n}$ as $n$ grows. It is natural to ask if a similar homological
stability phenomenon can hold as $g$ grows. It is also natural
to try to allow both $n$ and $g$ to grow, but for simplicity, we will now fix $n
= 0$ and consider the case that only $g$ grows.

To make a precise statement with $g$ growing, we first define the relevant
Hurwitz stack.
Let $g \geq 2$ be an integer and $G$ be a finite group.
Let $\on{CHur}_{g}^{G}$ denote the algebraic stack over $\mathbb C$
whose $S$ points are given by
smooth proper curves $C \to S$ of genus $g$ with geometrically connected fibers
together with a finite geometrically connected \'etale Galois $G$ cover $X \to C$, up to
isomorphism of covers over $S$. This stack is a gerbe over a finite \'etale
cover of $\mathscr M_g$, the moduli stack of curves of genus $g$.
For example, this can be constructed as an open substack of the stack of twisted $G$-covers from
\cite[\S2.2]{abramovichCV:twisted-bundles}, which parameterizes covers of smooth
curves.
In the case of $\Sigma_{0,n}$ our result in
\autoref{theorem:all-large-stable-homology} roughly says that when we increase
$n$, the homology of the Hurwitz space agrees with the stable homology of some
version of $\mathscr M_{0,n}$. This leads us to the following conjecture:

\begin{conjecture}
	\label{conjecture:genus-hurwitz}
	Fix a finite group $G$.
	There are constants $A$ and $B>0$, depending on $G$,
	so that for 
	any connected component component $Z \subset \on{CHur}_{g}^{G}$
	and
	$g > A + Bi$, the map $H_i(Z, \mathbb Z[\frac 1 {|G|}]) \to
	H_i(\mathscr M_g, \mathbb Z[\frac 1 {|G|}]) $
	is an isomorphism.
\end{conjecture}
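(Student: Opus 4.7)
The plan is to reduce Conjecture~\ref{conjecture:genus-hurwitz} to the parametrized Madsen--Weiss theorem, applied to the moduli of closed surfaces equipped with a map to $BG$. First I would identify $\on{CHur}_g^G$ with the moduli of pairs $(C, f)$ where $C$ is a smooth complex projective genus $g$ curve and $f: C \to BG$ is a continuous map whose induced map $\pi_1(C) \to G$ is surjective (up to conjugation); this identification is standard, going through the Galois correspondence, and realizes $\on{CHur}_g^G \to \mathscr{M}_g$ as the forgetful map. Since $|G|$ is invertible, the underlying $Z(G)$-gerbe is invisible to $\mathbb{Z}[\tfrac{1}{|G|}]$-homology, so we may work with the coarse moduli of such maps.

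Second, I would prove homological stability in the genus direction component-by-component. Using a stabilization map obtained by connect-summing $C$ with a trivially-$G$-covered torus, each component $Z \subset \on{CHur}_g^G$ maps to a component of $\on{CHur}_{g+1}^G$, and one applies a $G$-cover analog of the Galatius--Randal-Williams stability theorem for moduli of surfaces with tangential structure. Stable Dunfield--Thurston identifies $\pi_0 \on{CHur}_\infty^G$ with $H_2(G;\mathbb{Z})$ via the Schur invariant, and each component $Z$ stabilizes to a fixed component of $\on{CHur}_\infty^G$ determined by this invariant.

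Third, by the Cohen--Madsen generalization of the Madsen--Weiss theorem (building on Galatius--Madsen--Tillmann--Weiss), the stable homology of the moduli of closed genus $g$ surfaces equipped with a map to $X$ is the homology of the infinite loop space $\Omega^\infty(\mathbf{MTSO}(2) \wedge X_+)$. Taking $X = BG$ and inverting $|G|$, the splitting $BG_+ \simeq_{\mathbb{Z}[1/|G|]} \mathrm{pt}_+$ (which holds since $\widetilde{H}_*(BG;\mathbb{Z}[\tfrac{1}{|G|}])=0$) reduces the stable value to $H_*(\Omega^\infty \mathbf{MTSO}(2); \mathbb{Z}[\tfrac{1}{|G|}])$. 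By classical Madsen--Weiss this is identified with the stable homology of $\mathscr{M}_g$, and combined with Harer--Ivanov stability for $\mathscr{M}_g$ the stable identification propagates down into a linear range in $g$, yielding the conjecture.

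The main obstacle is establishing homological stability in the genus direction. Unlike the $\mathbb{A}^1$ setting of the body of the paper, where everything is driven by the $\mathbb{E}_2$-algebra structure from the little $2$-disks operad, genus-stabilization is governed by the surface operad, and the Galatius--Randal-Williams machinery for tangential structures must be adapted to moduli parametrized by maps to $BG$ (which is not literally a tangential structure). A secondary difficulty is that one must stabilize \emph{each component} rather than merely the whole space: one needs to control how the stabilization map acts on the set of Schur invariants, verify that the stable range is uniform in this invariant, and argue (using, for instance, the group completion theorem) that the stable homology of each component matches that of the basepoint component of the relevant infinite loop space.
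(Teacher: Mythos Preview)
The statement you are attempting to prove is stated in the paper as an open \emph{conjecture}, not a theorem; the paper offers no proof. In the remarks immediately following the conjecture, the authors explicitly write that ``very few cases of [this conjecture] are known,'' citing only $i=0$ (tautological) and the case $G \simeq \mathbb{Z}/\ell\mathbb{Z}$ as ``almost established'' by several hundred pages of Putman's work (and even then only with $\mathbb{Q}$-coefficients and a quadratic range). They also note that Putman has proven homological stability for the variant with a boundary component, but without identifying the stable value. So there is nothing in the paper for your proposal to be compared against.

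Your program via the parametrized Madsen--Weiss theorem is a natural one, and you correctly flag the two principal obstacles at the end. Let me sharpen them. First, the Galatius--Randal-Williams stability machinery and the identification of the stable homology with $\Omega^\infty(\mathbf{MTSO}(2)\wedge BG_+)$ are results for surfaces with nonempty boundary; passing to closed surfaces requires a separate ``closing the last boundary component'' argument, and your proposed stabilization by connect-sum with a torus is not a well-defined map of moduli stacks of \emph{closed} curves without an auxiliary marked point. This is not a mere technicality: it is precisely the gap between what Putman has proven (the bounded case) and the conjecture as stated. Second, even granting stability, the linear range in $g$ is not automatic from the general GRW framework; obtaining it for this particular tangential structure, uniformly across components indexed by $H_2(G;\mathbb{Z})$, would require additional argument. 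Your third step (the splitting $\Sigma^\infty_+ BG \simeq \mathbb{S}$ after inverting $|G|$, hence $\mathbf{MTSO}(2)\wedge BG_+ \simeq \mathbf{MTSO}(2)$ after inverting $|G|$) is correct and is indeed the heuristic reason to believe the conjecture, but it does not by itself circumvent the first two obstacles.
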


In other words, we conjecture that the homology of each component of Hurwitz space agrees
with that of $\mathscr M_g$, and this stability occurs in a linear range.

\begin{remark}
	\label{remark:}
	A version of
	\autoref{theorem:all-large-stable-homology} has been proven in the large
	$g$ limit by Putman in
	\cite[Theorem C]{putman:partial-torelli}.
	Namely, if one considers a variant of 
	$\on{CHur}_{g}^{G}$ which parameterizes $G$ covers of genus $g$ curves
	with a boundary component (instead of with no boundary component)
	then 
	\cite[Theorem C]{putman:partial-torelli}
	shows the integral homology of these spaces stabilize in $g$.
	However, Putman does not determine the stable value of these homology
	groups.
\end{remark}

\begin{remark}
	\label{remark:}
	It is known that the $0$th homology of 
	$\on{CHur}_{g}^{G}$ stabilizes
	to $H_2(G, \mathbb Z)/\on{Out}(G)$
	by \cite[Theorem 6.20]{dunfield-thurston}.
	Therefore, the conjecture above, in conjunction with the fact that the
	homology of $\mathscr M_g$ stabilizes would imply that the homology
	of $\on{CHur}_g^G$ stabilizes.
\end{remark}

\begin{remark}
	\label{remark:}
	Very few cases of \autoref{conjecture:genus-hurwitz} are known.
	The case $i = 0$ holds tautologically.
	The case that $G \simeq \mathbb Z/\ell \mathbb Z$ is almost established in
	\cite[Theorem A]{putman:the-stable-cohomology-with-level-structure},
	which builds on several other of Putman's recent papers, spanning
	several hundred pages.
	Even this work does not quite establish
	\autoref{conjecture:genus-hurwitz} for two reasons. First, the isomorphism is only
	established with rational coefficients. Second, the stability range is
	only shown to be quadratic in the homology index $i$ instead of our conjectured linear
	stability range.

	We are not aware of any other known cases. We will mention that the case
	of $i = 1$ is closely related to Ivanov's conjecture,
	see
	\cite[\S7]{Ivanov:problems} and \cite[Problem 2.11.A]{Kirby:problems}.
	The $i=1$ case is also related to 
	the Putman-Wieland conjecture \cite[Conjecture
	1.2]{putmanW:abelian-quotients}, although the Putman-Wieland conjecture
	only asserts that a
	certain subspace of the first rational cohomology stabilizes to $0 = H^1(\mathscr
	M_g, \mathbb Q)$, not that 
	$H^1(\on{CHur}_{g}^{G}, \mathbb Q)$
	stabilizes to $0$.
	It was established that this above mentioned subspace of $H^1$ does indeed stabilize to $0$ in
	\cite[Theorem 1.4.1]{landesmanL:canonical-representations}.
	It is also possible to deduce this from
	\cite[Theorem 1.1(i)]{looijenga:arithmetic-representations}, using an
	argument similar to that in \cite[Corollary 6.17]{dunfield-thurston}.
\end{remark}

\begin{remark}
	\label{remark:}
	Above, we made a conjecture for the homology of spaces of maps from
	curves of growing genus to $BG$, where $G$ is a finite group. It would be
	interesting if one could extend this to the case that $G$ is an
	algebraic group. In this case, such a conjecture would be closely
	related to important results in the burgeoning field of mapping class
	group actions on character varieties.
\end{remark}

\bibliographystyle{alpha}
\bibliography{../bibliography}

\end{document}